\documentclass[11pt,twoside]{article}

\usepackage{amsmath, amsthm,amsfonts,amssymb,mathtools,mathdots,scalerel}
\usepackage[utf8]{inputenc}
\usepackage[T1]{fontenc}
\usepackage{lmodern, tikz-cd}

\usepackage{enumitem, tocloft, csquotes}
\usepackage[hidelinks]{hyperref}
\usepackage{tikz, fancyhdr, xparse, xcolor}
\usepackage[british]{babel}
\usepackage[a4paper, top=4.5cm, bottom=4.5cm, left=4cm, right=4cm, asymmetric]{geometry}

\usepackage{ahtitle,titlesec}
\usepackage{etoolbox}
\makeatletter
\patchcmd{\ttlh@hang}{\parindent\z@}{\parindent\z@\leavevmode}{}{}
\patchcmd{\ttlh@hang}{\noindent}{}{}{}
\makeatother

% tikz diagrams specification
\usetikzlibrary{arrows, decorations.markings, shapes.geometric, decorations.pathmorphing, decorations.pathreplacing, intersections, patterns,calc, backgrounds}

\tikzset{
	0c/.style={circle, draw, fill, inner sep=.7pt},
	1c/.style={->, shorten <=2pt, shorten >=2pt},
	1clong/.style={->},
	edge/.style={shorten <=2pt, shorten >=2pt},
	equal/.style={shorten <=2pt, shorten >=2pt, double},
	1cinc/.style={right hook->, shorten <=2pt, shorten >=2pt},
	1csurj/.style={->>, shorten <=2pt, shorten >=2pt},
	1cincl/.style={left hook->, shorten <=2pt, shorten >=2pt},
	arloop/.tip={Glyph[glyph math command=looparrowleft, swap]},
	1cloop/.style={arloop->, shorten <=2pt, shorten >=2pt},
	2c/.style={double, shorten <=6pt, shorten >=8pt, decoration={markings,mark=at position -6pt with {\arrow[scale=1.75]{>}}}, preaction={decorate}},
	3c1/.style={double, double distance=3pt, shorten <=9pt, shorten >=11pt},
    	3c2/.style={shorten <=9pt, shorten >=10pt},
	3c3/.style={shorten <=9pt, shorten >=10pt, decoration={markings,mark=at position -8pt with {\arrow[scale=2]{>}}},preaction={decorate}},
	follow/.style={->, >=stealth, ultra thick, shorten <=3pt, shorten >=3pt, color=gray!70},
	arlabel/.style={scale=.8}
}
% end tikz diagrams specification

% general macros
\newcommand\eqdef{\coloneqq}
\newcommand\nbd{\nobreakdash-\hspace{0pt}}
\newcommand\idd[1]{\mathrm{id}_{#1}}
\newcommand\bigid[1]{\mathrm{Id}_{#1}}
\newcommand\invrs[1]{#1^{-1}}
\newcommand\restr[2]{{#1}{\raisebox{0pt}{$|_{#2}$}}}
\newcommand\powset[1]{\mathcal{P}(#1)}

\newcommand\incl{\hookrightarrow}
\newcommand\incliso{\stackrel{\sim}{\hookrightarrow}}

\newcommand\surj{\twoheadrightarrow}
\newcommand\pfun{\rightharpoonup}
\newcommand\cfun{\looparrowright}

\newcommand\undl[1]{\underline{#1}}
\newcommand\slice[2]{{#1}/{\raisebox{-2pt}{$#2$}}}

\newcommand\gray{\,{\otimes}\,}
\newcommand\join{\,{\star}\,}

\newcommand\opp[1]{{#1}^\mathrm{op}}
\newcommand\coo[1]{{#1}^\mathrm{co}}
\newcommand\oppall[1]{{#1}^\circ}
\newcommand\oppn[2]{D_{#1}{#2}}

% combinatorics macros
\newcommand\hasse[1]{\mathcal{H}#1}
\newcommand\hasseo[1]{\mathcal{H}^o#1}
\newcommand\dmn[1]{\mathrm{dim}(#1)}
\newcommand\clos[1]{\mathrm{cl}#1}
\NewDocumentCommand \bord{g g} {\IfNoValueTF{#2}{%
	\IfNoValueTF{#1}{\partial}{\partial_{#1}}}{\partial_{#1}^{#2}}}
\NewDocumentCommand \sbord{g g} {\IfNoValueTF{#2}{%
	\IfNoValueTF{#1}{\Delta}{\Delta_{#1}}}{\Delta_{#1}^{#2}}}
\newcommand\cp[1]{\,{\scriptstyle\#}_{#1}\,}

\newcommand\mol[2]{\mathcal{M}o\ell{#2}_{#1}}
\newcommand\submol{\sqsubseteq}
\newcommand\supmol{\sqsupseteq}
\newcommand\closub[1]{\mathcal{C}\ell(#1)}

\newcommand\infl[1]{O{(#1)}}
\newcommand\eps[1]{\varepsilon_{#1}}
\newcommand\cls[1]{\mathcal{#1}}

\newcommand\rev[1]{\mathrm{rev}(#1)}
\newcommand\lunit[3]{L^{#3}_{{#1} \incl {#2}}}
\newcommand\lmap[3]{\ell^{#3}_{{#1} \incl {#2}}}
\newcommand\runit[3]{R^{#3}_{{#1} \incl {#2}}}
\newcommand\rmap[3]{r^{#3}_{{#1} \incl {#2}}}

\newcommand\extr[2]{E_{#1}(\Delta^{#2})}
\newcommand\extrtil[2]{E'_{#1}(\Delta^{#2})}
\newcommand\horn[2]{\Lambda_{{#1} \incl {#2}}}

\newcommand\face[1]{\mathfrak{F}{#1}}

% diagrammatic set macros
\newcommand\celto{\Rightarrow}
\newcommand\compos[1]{\langle#1\rangle}

\newcommand\atom{{\raisebox{-.02em}{%
\begin{tikzpicture}[baseline={(current bounding box.south)}]%
	\node[circle, draw, line width=.05em, inner sep=.25em] at (0,0) {};%
	\node[circle, fill, inner sep=.06em] at (0,0) {};%
	\node[circle, fill, inner sep=.06em] at (0,.35em) {};%
\end{tikzpicture}}}}

\newcommand\degg[1]{\mathrm{Deg}{#1}}
\newcommand\equi[1]{\mathrm{Eqv}{#1}}
\newcommand\winv[1]{\mathrm{Inv}{#1}}
\newcommand\cdgm[1]{\mathrm{Dgm}^\mathcal{C}{#1}}
\newcommand\dvd[1]{\mathrm{Dvd}{(#1)}}
\newcommand\divis[2]{\mathrm{Sol}{((#1), #2)}}

\newcommand\monad[1]{\mathsf{T}_{#1}}
\newcommand\algmon[1]{{#1}\cat{Alg}}
\newcommand\imonad{\mathsf{I}}
\newcommand\mmonad{\mathsf{M}}
\newcommand\mimonad{\mathsf{M}\oplus\mathsf{I}}
\newcommand\tmonad{\mathsf{T}}

\newcommand\equifun[2]{\mathcal{E}^{#1}(#2)}
\newcommand\twothree[2]{\mathcal{T}^{#1}(#2)}
\newcommand\winfun[1]{\mathcal{I}(#1)}
\newcommand\pasting[2]{\mathcal{P}st^{#1}(#2)}

\newcommand\lunitor[3]{\lambda^{#3}_{{#1} \submol {#2}}}
\newcommand\runitor[3]{\rho^{#3}_{{#1} \submol {#2}}}

\newcommand\skel[2]{\sigma_{\leq #1}#2}
\newcommand\coskel[2]{\tau_{\leq #1}#2}

\newcommand\nerve[1]{N_{\scalebox{.7}{\atom}} {#1}}
\newcommand\sing[1]{S_{\scalebox{.7}{\atom}} {#1}}

\newcommand\delres[1]{#1_\Delta}
\newcommand\pin[3]{\pi_{#1}^{#2}(#3)}
\newcommand\subdiv{\mathrm{Sd}}
\newcommand\exfun{\mathrm{Ex}}
\newcommand\realis[1]{|#1|}

% special categories
\newcommand\cat[1]{\mathbf{#1}}
\newcommand\psh[2]{\mathrm{PSh}_{#1}(#2)}

\newcommand\ogpos{\cat{ogPos}}
\newcommand\ogposin{\cat{ogPos}_\textit{in}}
\newcommand\pos{\cat{Pos}}
\newcommand\set{\cat{Set}}

\newcommand\omegagph{\omega\cat{Gph}}
\newcommand\omegagphref{\omega\cat{Gph}_\textit{ref}}
\newcommand\omegacat{\omega\cat{Cat}}
\newcommand\pomegacat{\cat{p}\omega\cat{Cat}}

\newcommand\rdcpx{\cat{DCpx}^\cls{R}}
\newcommand\rdcpxin{\cat{DCpx}^\cls{R}_\textit{in}}
\newcommand\dcpxin{\cat{DCpx}^\cls{C}_\textit{in}}
\newcommand\dcpx{\cat{DCpx}^\cls{C}}
\newcommand\dcpxfun{\cat{DCpx}^\cls{C}_\textit{fun}}
\newcommand\dcpxomega{\cat{DCpx}^\cls{C}_{\omega}}

\newcommand\rmolin{\cat{Mol}^\cls{R}_\textit{in}}

\newcommand\atomin{\atom_\textit{in}}
\newcommand\dgmset{\atom\cat{Set}}
\newcommand\cpol{\cat{Pol}^\cls{C}}
\newcommand\nucat{\omegacat^\cls{C}_\textit{nu}}
\newcommand\dgmcat{\atom\cat{Cat}}

\newcommand\deltacat{\Delta}
\newcommand\sset{\cat{sSet}}
\newcommand\cghaus{\cat{cgHaus}}

% other

\newtheoremstyle{ittheorem}
  {\topsep}   % ABOVESPACE
  {\topsep}   % BELOWSPACE
  {\itshape}  % BODYFONT
  {0pt}       % INDENT (empty value is the same as 0pt)
  {\itshape \bfseries} % HEADFONT
  { ---}         % HEADPUNCT
  {5pt plus 1pt minus 1pt} % HEADSPACE
  {}          % CUSTOM-HEAD-SPEC

\newtheoremstyle{itdfn}
  {\topsep}   % ABOVESPACE
  {\topsep}   % BELOWSPACE
  {}  % BODYFONT
  {0pt}       % INDENT (empty value is the same as 0pt)
  {\bfseries} % HEADFONT
  {.}         % HEADPUNCT
  {5pt plus 1pt minus 1pt} % HEADSPACE
  {}          % CUSTOM-HEAD-SPEC

\makeatletter
  \renewcommand\@upn{\textit}
\makeatother

% theorem styles
\theoremstyle{ittheorem}
\newtheorem{thm}{Theorem}[section]
\newtheorem{prop}[thm]{Proposition}
\newtheorem{cor}[thm]{Corollary}
\newtheorem{lem}[thm]{Lemma}
\newtheorem{conj}[thm]{Conjecture}
\newtheorem*{simp}{Simpson's homotopy hypothesis}
\theoremstyle{itdfn}
\newtheorem{dfn}[thm]{}
\theoremstyle{remark}
\newtheorem{rmk}[thm]{Remark}
\newtheorem{comm}[thm]{Comment}
\newtheorem{exm}[thm]{Example}
% end theorem styles

\setlength\parindent{1em}

\relpenalty=10000
\binoppenalty=10000

\setlist{leftmargin=20pt,itemsep=0pt,topsep=1ex}

\linespread{1.1}

\fancyhf{}

\fancyhead[CO] {\textsc{\runtitle}}
\fancyhead[RO,LE] {\oldstylenums{\thepage}}
\fancyhead[CE] {\textsc{\runauthor}}
\setlength{\headheight}{14pt}

\pagestyle{fancy} 

\titleformat{\section}
 {\large\scshape}{\thesection.}{1em}{}

\titleformat{\subsection}
 {\normalsize\itshape}{\thesubsection.}{1em}{}
\titlespacing*{\subsection}
{0pt}{1.5ex plus 1ex minus .2ex}{1.5ex plus .2ex}

\setlength{\cftbeforesecskip}{2pt}
\setlength{\cftbeforetoctitleskip}{5pt}
\setlength{\cftaftertoctitleskip}{5pt}

\renewcommand{\cftsecpagefont}{\mdseries}

\makeatletter \renewcommand{\cftsecfillnum}[1]{%
  {\cftsecleader}\nobreak
  \makebox[\@pnumwidth][\cftpnumalign]{\cftsecpagefont \oldstylenums{#1}}\cftsecafterpnum\par
} \makeatother

\newcommand\runtitle{diagrammatic sets}
\newcommand\runauthor{amar hadzihasanovic}

\title{Diagrammatic sets\\ and rewriting in weak higher categories}

\author{Amar Hadzihasanovic}

\institution{IRIF, CNRS, Universit\'e de Paris}

\keywords{diagrammatic sets, higher categories, rewriting}

\begin{document}

% title and abstract

\maketitle 

\noindent\makebox[\textwidth][r]{%
\begin{minipage}[t]{.7\textwidth}
\small \textit{Abstract.}
We revisit Kapranov and Voevodsky's idea of spaces modelled on combinatorial pasting diagrams, now as a framework for higher-dimensional rewriting and the basis of a model of weak $\omega$\nbd categories. In the first part, we elaborate on Steiner's theory of directed complexes as a combinatorial foundation. We individuate convenient classes of directed complexes and develop the theory of diagrammatic sets relative to one such class. We study a notion of equivalence internal to a diagrammatic set, and single out as models of weak $\omega$\nbd categories those diagrammatic sets whose every composable diagram is connected by an equivalence to a single cell. We then define a semistrict model providing algebraic composites and study the embedding of strict $\omega$\nbd categories into this model. Finally, we prove a version of the homotopy hypothesis for the $\infty$-groupoids in the weak model, and exhibit a specific mistake in a proof by Kapranov and Voevodsky that had previously been refuted indirectly.
\end{minipage}}

\vspace{20pt}

\makeaftertitle

\normalsize

% end title and abstract

\noindent\makebox[\textwidth][c]{%
\begin{minipage}[t]{.5\textwidth}
\setcounter{tocdepth}{1}
\tableofcontents
\end{minipage}}

\section*{Introduction}

In 1991, Mikhail Kapranov and Vladimir Voevodsky published an article in the \emph{Cahiers} claiming that strict $\omega$\nbd categories model all homotopy types in the sense of Grothendieck's homotopy hypothesis \cite{kapranov1991infty}. In 1998, Carlos Simpson proved that their result is false and that strict 3-categories already do not model all 3-types \cite{simpson1998homotopy}. Due to the notorious intricacy of $\omega$\nbd categorical combinatorics and the paucity of details in Kapranov--Voevodsky, he could not point to a specific fatal mistake in their proof.

Homotopy theorists have since adopted a variety of models of weak higher categories satisfying strong forms of the homotopy hypothesis. Meanwhile, strict higher categories have remained popular in \emph{higher-dimensional and diagrammatic rewriting}, in particular
\begin{itemize}
	\item polygraph-based rewriting theory \cite{guiraud2016polygraphs}, stemming from Albert Burroni's work \cite{burroni1993higher}, and
	\item applied category theory \cite{fong2019invitation}, whose characteristic use of string diagrams relies on low-dimensional coherence and strictification theorems.
\end{itemize}
The use of higher categories in this context has some peculiarities which explain the persistence of strict models, such as a focus on \emph{presented} higher categories and the explicit manipulation of cells including surgery (cutting and pasting) and orientation-reversal.\footnote{In a certain sense, higher category theory applied to rewriting is closer to the combinatorial roots of homotopy theory than to its modern model-independent manifestations.}

To perform such manipulations, one needs a sufficiently rich combinatorial theory of \emph{shapes} of diagrams. To interpret this theory in a model of higher categories, one needs a \emph{pasting theorem}, granting that each diagram has a unique interpretation, given a well-formed assignment of cells to its constituents. 

Several strong pasting theorems were proved in the late 1980s and early 1990s for strict $\omega$\nbd categories \cite{johnson1989combinatorics, power1991pasting, steiner1993algebra}. To the best of our knowledge, these have no analogues for the weak models used in homotopy theory, which tend to privilege simple shapes.

As a result, in the current landscape of models there is a trade-off between the homotopy hypothesis and pasting theorems, between the ability to do homotopical algebra and the ability to do rewriting in higher categories. This reaches a critical point as soon as we apply rewriting techniques to a context where the ``strict Eckmann--Hilton argument'' applies.\footnote{By which a strict 3-category with one 0-cell and no nondegenerate 1-cells is a model of a commutative monoidal category, instead of a braided monoidal category.} A particular \emph{\'echec} in the author's work is detailed in \cite[Section 2.3]{hadzihasanovic2017algebra}.

This article aims to offer a way out of this trade-off by setting up a combinatorial framework for higher-dimensional rewriting which doubles as the basis of a model of higher categories satisfying the homotopy hypothesis. For this purpose, it revisits an idea from Kapranov--Voevodsky, namely, the notion of \emph{diagrammatic set}.

\subsection*{Diagrammatic sets}

Kapranov and Voevodsky recognised that the combinatorics developed in the service of pasting theorems for strict $\omega$\nbd categories could be turned to a different purpose: defining \emph{shape categories} that vastly generalise the simplex category or the cube category, while keeping some of their important properties, such as the unique factorisation of morphisms into co-degeneracies and co-faces. 

A diagrammatic set is a presheaf on such a shape category. Kapranov and Voevodsky based their definition on Michael Johnson's theory of \emph{composable pasting schemes}. This was a flawed choice, for reasons discussed in \cite[Appendix A.2]{henry2019non}: composable pasting schemes are not closed under a number of natural rewriting operations.

Instead, we base our revisitation on Richard Steiner's theory of \emph{directed complexes} \cite{steiner1993algebra}, continuing the elaboration that we started in \cite{hadzihasanovic2018combinatorial}. The basic data structure is a finite graded poset together with an orientation, that is, an edge-labelling of the poset's Hasse diagram in the set $\{+,-\}$. The intuition is that of an \emph{oriented face poset} of a directed cell complex, by analogy with the face poset of a cell complex.

The model of a cell shape is called an \emph{atom}, and the model of a diagram shape is called a \emph{molecule}. Each molecule has a \emph{boundary} which is subdivided into an \emph{input} and an \emph{output}. We obtain more or less well-behaved classes of directed complexes by demanding that the input and output boundaries of atoms belong to a certain class $\cls{C}$ of molecules. 

The paradigmatic class that we consider is the class $\cls{S}$ of molecules \emph{with spherical boundary}, and correspondingly the class of \emph{regular} directed complexes. Regular directed complexes have the property that their underlying poset is the face poset of a regular CW complex: their atoms ``are homeomorphic to'' balls and have boundaries ``homeomorphic to'' spheres.\footnote{We first tried to capture regularity with the notion that we call \emph{constructibility} in \cite{hadzihasanovic2018combinatorial}. Then we learnt and adopted a more general notion from Simon Henry \cite{henry2018regular}.} 

We realised that all our proofs could be made relative to a class $\cls{C} \subseteq \cls{S}$ closed under a number of operations, all natural from the point of view of rewriting. These are: reversing the orientation of all atoms of a certain dimension; forming a new atom whose boundaries are molecules in $\cls{C}$ with isomorphic boundaries; pasting two molecules along the entire input or output boundary of one or the other; forming ``units'' and ``unitors'' over a molecule in $\cls{C}$; and taking \emph{Gray products} and \emph{joins} of two molecules.\footnote{Gray products and joins are important in the theory of higher categories as a means of describing higher-order morphisms and (co)limits, respectively. In \cite[Chapter 2]{hadzihasanovic2017algebra} we argued that Gray products also have a r\^ole in universal algebra and rewriting.}

We call such a class \emph{convenient}, and define diagrammatic sets relative to a fixed convenient class $\cls{C}$.

\subsection*{Models of higher categories}

Diagrammatic sets have a rich combinatorics of face and degeneracy operations but no notion of composition. To define weak composition in a diagrammatic set, we adopt the ``non-algebraic'' approach of having $(n+1)$\nbd dimensional cells exhibit weak composites of $n$\nbd dimensional diagrams. This is shared with the complicial \cite{verity2008weak} and opetopic \cite{baez1998higher} models of higher categories.

First, we define a notion of \emph{equivalence} in a diagrammatic set. An equivalence is a diagram $e$ with the property that certain well-formed ``equations'' of the form $x \cup e = y$ in the indeterminate $x$ admit a ``solution'' exhibited by a higher-dimensional diagram which is itself an equivalence. This coinductive definition turns out to identify the same class of diagrams as a characterisation based on Eugenia Cheng's notion of pseudo-invertibility \cite{cheng2007omega}. We prove that equivalences have various expected properties, such as a form of two-out-of-three.

We say that a diagrammatic set \emph{has weak composites} if every \emph{composable} diagram --- one whose shape is a molecule in the convenient class $\cls{C}$ --- is connected by an equivalence to a single cell. A diagrammatic set with weak composites is our model of a weak $\omega$\nbd category.

We also define a semistrict algebraic model which has composition of composable diagrams as an associative algebraic operation. This continues a project that we set up in \cite[Section 6]{hadzihasanovic2018weak}. The underlying idea is that degeneracy operations and composition operations are separately well-behaved in conjunction with face operations over diagrams with spherical boundary, and that it is their unrestricted interaction that produces ``strict Eckmann--Hilton''. Then a promising strategy towards semi-strictification results is to separate the two algebraic structures and fine-tune their interaction.

With this aim, we identify diagrammatic sets with algebras of a monad $\imonad$ on a category of \emph{combinatorial polygraphs} that only have face operations, and define notions of \emph{non-unital $\omega$\nbd categories} as algebras of another monad $\mmonad$ encoding composition. We define \emph{diagrammatic $\omega$\nbd categories} as combinatorial polygraphs with separate $\imonad$\nbd algebra and $\mmonad$\nbd algebra structures, satisfying a condition that we call \emph{unit-merge compatibility}.\footnote{This is the same compatibility that determined a distributive law between the analogues of $\mmonad$ and $\imonad$ in \cite[Section 6]{hadzihasanovic2018weak}. For now we do not see a credible pathway to a distributive law of $\mmonad$ over $\imonad$ in the present context.} 

We show that the underlying diagrammatic set of a diagrammatic $\omega$\nbd category has weak composites, and set up a framework for the proof of a semi-strictification result --- every diagrammatic set with weak composites is weakly equivalent to a diagrammatic $\omega$\nbd category in a suitable sense --- by turning it into a combinatorial problem about morphisms of directed complexes. Unfortunately, a full proof seems beyond our current understanding.

We also prove that, subject to an additional condition on $\cls{C}$ which may or may not hold of all convenient classes, the category of strict $\omega$\nbd categories fully embeds into the category of diagrammatic $\omega$\nbd categories.

\subsection*{Homotopy hypothesis}

Kapranov and Voevodsky used diagrammatic sets as an intermediate step towards a proof of the homotopy hypothesis for strict $\omega$\nbd categories. First they claimed to prove the homotopy hypothesis for \emph{Kan diagrammatic sets}\footnote{These are to diagrammatic sets what Kan complexes are to simplicial sets.} \cite[Theorem 2.9]{kapranov1991infty}. Then they claimed that the homotopy theory of Kan diagrammatic sets is equivalent to the homotopy theory of $\infty$\nbd groupoids in strict $\omega$\nbd categories [Theorem 3.7, \emph{ibid.}].

Because Simpson's refutation did not point to a specific mistake, it has remained open whether one half or the other or both are incorrect. This work vindicates in part the first half by reproving a version of its main result. We prove specifically the minimal version of the homotopy hypothesis that Simpson refuted for strict $\omega$\nbd categories \cite[Theorem 4.4.2]{simpson2009homotopy}, to which we refer as \emph{Simpson's homotopy hypothesis}.

The setting is a category whose objects we interpret as higher categories and morphisms as functors, together with a full subcategory of higher groupoids. There are notions of \emph{$n$\nbd cells} in a higher category for each $n \in \mathbb{N}$. Each $n$\nbd cell has a \emph{boundary}, subdivided into two or more $(n-1)$\nbd cells. A \emph{composite} of two $n$\nbd cells exists whenever their boundaries satisfy a matching condition. There are higher-dimensional \emph{units} on each cell.

We have a notion of homotopy groups of a higher groupoid, with a combinatorial presentation along the following lines. For each $n > 0$, if $X$ is a higher groupoid pointed with a 0-cell $v$, then $\pi_n(X,v)$ is a set of $n$\nbd cells in $X$ whose boundary consists of units on $v$, quotiented by a relation $x \sim y$ generated by $(n+1)$\nbd cells whose boundary contains both $x$ and $y$. The group multiplication and unit are induced by the composition and units in $X$. A set $\pi_0(X)$ of connected components is similarly defined. These definitions are functorial.

Finally, each higher groupoid $X$ has a functorial \emph{Postnikov tower}, that is, a functorial sequence of morphisms under $X$
\begin{equation*}
	\ldots \to \coskel{n}{X} \to \coskel{n-1}{X} \to \ldots \to \coskel{0}{X}
\end{equation*}
such that $\pi_m(\coskel{n}{X},v)$ is trivial for all $m > n$ and 0-cells $v$, and $X \to \coskel{n}{X}$ induces isomorphisms of homotopy groups for all $k \leq n$. Higher groupoids for which $X \to \coskel{n}{X}$ is an isomorphism model $n$\nbd groupoids.

\begin{simp}
There exist 
\begin{enumerate}
	\item a functor $R$ from (pointed) higher groupoids to (pointed) spaces and
	\item natural isomorphisms between $\pi_0(X)$ and $\pi_0(RX)$ and between $\pi_n(X,v)$ and $\pi_n(R(X,v))$ for all $n > 0$.
\end{enumerate}
The functor $R$ is surjective on homotopy types, that is, for each space $X'$ there exists a higher groupoid $X$ such that $X'$ and $RX$ are weakly equivalent.
\end{simp}

This is a relatively weak version of the homotopy hypothesis and we believe that stronger forms are within reach. Still, we focussed on this version for two reasons. The first is that it suffices to localise the fatal mistake to the second half of \cite{kapranov1991infty}. The second is that we were able to give a fairly simple combinatorial proof, relying on the concrete reduction of a Kan diagrammatic set to the Kan complex it contains. Thus the proof doubles as a proof-of-concept of the combinatorial capabilities of our framework.

Incidentally, as a result of this work, we are able to confidently pinpoint the crucial mistake in Kapranov--Voevodsky. As suspected already by others,\footnote{See the answers to the MathOverflow question \url{https://mathoverflow.net/q/234492}.} the mistake is in the proof of Lemma 3.4, specifically in the sentence
\begin{displayquote}
	--- \emph{Therefore there exists a sequence of ``elementary'' transformations (rebracketings, introducing and collapsing of degenerate cells) starting from $(S, \gamma_S)$ and ending in $(T, \gamma_T)$.}
\end{displayquote}
We exhibit a counterexample to this claim at the end of the article.

\subsection*{Related work}

This work is strongly related and indebted to Simon Henry's recent articles \cite{henry2019non,henry2018regular}. Like ours, Henry's work stems from Kapranov and Voevodsky's article and Simpson's response. Its principal aim is the construction of semistrict algebraic models of homotopy types. In this respect, the version of the homotopy hypothesis proved for ``fibrant regular $\omega$\nbd categories'' in \cite{henry2018regular} is stronger than Simpson's homotopy hypothesis that we proved for Kan diagrammatic sets.

On the other hand, Henry does not present a model of higher categories that specialises to a model of higher groupoids: the underlying structure of regular $\omega$\nbd categories lacks units, either algebraic or non-algebraic. Due to the absence of units, Henry's framework is also not a solution to the problem of rewriting in weak higher categories, where units are constantly needed to model ``nullary'' inputs or outputs.

While Henry's approach is to start from the algebra of strict $\omega$\nbd categories and place minimal constraints that render it susceptible of a combinatorial exploration, our approach is to start from ``tame'' combinatorics and prove their soundness for various fragments of the theory of $\omega$\nbd categories. The two complement each other and a synthesis seems within reach. 

Specifically, we think that Conjecture \ref{conj:algebraically_free} implies that Henry's \emph{regular polygraphs} are equivalent to what we call combinatorial $\cls{S}$\nbd polygraphs and that our \emph{subdivisions} model Henry's \emph{generic morphisms}, in which case Henry's regular $\omega$\nbd categories are equivalent to our non-unital $\cls{S}$\nbd $\omega$\nbd categories.

We are aware of only one other approach to rewriting in weak higher categories, namely, the \emph{associative} model of $n$\nbd categories developed by Christoph Dorn, David Reutter, and Jamie Vicary as a generalisation of Gray categories to $n > 3$ and a foundation for the \texttt{homotopy.io} proof assistant \cite{dorn2018associative,reutter2019high}. It is still open whether this model satisfies the homotopy hypothesis in higher dimensions. 

\subsection*{Structure of the article}

Section \ref{sec:complexes}, Section \ref{sec:operations}, and Section \ref{sec:classes} are our elaboration on Steiner's theory of directed complexes and contain the combinatorial foundation for the rest of the article. They are followed by a brief Section \ref{sec:diagrammatic} consisting of the essential definitions relative to diagrammatic sets. Section \ref{sec:equivalences} starts with the definition of equivalence and proves its various properties; this is mostly technical. Section \ref{sec:omegacats} presents both our weak and our semistrict model of higher categories. Section \ref{sec:nerves} studies the embedding of strict $\omega$\nbd categories into the semistrict model. Section \ref{sec:homotopy} is devoted to the proof of Simpson's homotopy hypothesis and ends with the mistake in Kapranov--Voevodsky.

There are many possible paths for reading. The reader interested only in the combinatorics of higher-categorical shapes may read the first three sections, then skip to Section \ref{sec:nerves} and add the first part of Section \ref{sec:homotopy}.

Everyone else should at least read Section \ref{sec:diagrammatic}. Unless they are willing to treat the shape category as a black box and uninterested in the formal description of diagrams, they should also read Section \ref{sec:complexes}. 

If they only care about homotopy types, they may skip directly to Section \ref{sec:homotopy}: it uses a few results about equivalences and weak composites, otherwise it is mostly independent. The counterexample to Kapranov--Voevodsky, however, also relies on Section \ref{sec:nerves}.

The reader interested in rewriting should read Section \ref{sec:operations} and the definition of convenient classes in Section \ref{sec:classes}, as should anyone who wants to follow any of the proofs in the rest of the article. 

The reader interested in models of higher categories should read at least the definitions and results in Section \ref{sec:equivalences} and the first part of Section \ref{sec:omegacats}. The reader interested in \emph{algebraic} models of higher categories should add Section \ref{sec:classes} and the rest of Section \ref{sec:omegacats}.

\subsection*{Outlook and open problems}

We state two numbered conjectures, Conjecture \ref{conj:weak_equivalence} and Conjecture \ref{conj:algebraically_free}. The first would imply that the semistrict model is a ``harmless'' strictification of the weak model, the second that strict $\omega$\nbd categories embed into every version of the semistrict model. Both would be very desirable features of the semistrict model. Both proofs seem to require, mainly, a further understanding of the combinatorics of directed complexes and their morphisms.

Another welcome development would be the proof of a stronger homotopy hypothesis, such as the construction of a model structure on diagrammatic sets whose fibrant objects are the Kan diagrammatic sets, Quillen equivalent to the classical model category of spaces. A related problem is constructing a model structure whose fibrant objects are diagrammatic sets with weak composites.

Then there is the question of comparing ours to other models of weak higher categories. The most straightforward comparison may be with Dominic Verity's complicial model, using the relations between diagrammatic sets and simplicial sets established in Section \ref{sec:homotopy}. Emily Riehl has announced in \cite{riehl2018complicial} a proof of the Barwick--Schommer-Pries axioms \cite{barwick2011unicity} for the complicial model: relating the models may lead to further proofs of equivalence of models at the level of their homotopy theory.

To conclude, much of this work originates from the problem of formalising the ``smash product of algebraic theories'' of \cite[Section 2.3]{hadzihasanovic2017algebra}. We are confident that diagrammatic sets answer that problem, and plan to return to it soon.

\section{Regular directed complexes} \label{sec:complexes}

%%%%%%%%%%%%%%%%%%%%%%%%%%%%%

\subsection{Oriented graded posets}

\begin{dfn}[Graded poset]
Let $P$ be a finite poset with order relation $\leq$. For all elements $x, y \in P$, we say that $y$ \emph{covers} $x$ if $x < y$ and, for all $y' \in X$, if $x < y' \leq y$ then $y' = y$. 

The \emph{Hasse diagram} of $P$ is the finite directed graph $\hasse{P}$ with $\hasse{P}_0 \eqdef P$ as set of vertices and $\hasse{P}_1 \eqdef \{y \to x \mid y \text{ covers } x\}$ as set of edges. 

Let $P_\bot$ be $P$ extended with a least element $\bot$. We say that $P$ is \emph{graded} if, for all $x \in P$, all directed paths from $x$ to $\bot$ in $\hasse{P_\bot}$ have the same length. If this length is $n+1$, we let $\dmn{x} \eqdef n$ be the \emph{dimension} of $x$.
\end{dfn}

\begin{dfn}[Closed and pure subsets]
Let $P$ be a poset and $U \subseteq P$. The \emph{closure} of $U$ is the subset $\clos{U} \eqdef \{x \in P \mid \exists y \in U \; x \leq y\}$ of $P$. We say that $U$ is \emph{closed} if $U = \clos{U}$. 

Suppose $P$ is graded and $U \subseteq P$ is closed. Then $U$ is graded with the partial order inherited from $P$. The \emph{dimension} $\dmn{U}$ of $U$ is $\max\{\dmn{x} \mid x \in U\}$ if $U$ is inhabited, $-1$ otherwise. In particular, $\dmn{\clos{\{x\}}} = \dmn{x}$. 

We say that $U$ is \emph{pure} if its maximal elements all have dimension $\dmn{U}$.
\end{dfn}

\begin{dfn}[Oriented graded poset]
An \emph{orientation} on a finite poset $P$ is an edge-labelling $o\colon \hasse{P}_1 \to \{+,-\}$ of its Hasse diagram.

An \emph{oriented graded poset} is a finite graded poset with an orientation.
\end{dfn}

\begin{dfn} We will often let variables $\alpha, \beta$ range implicitly over $\{+,-\}$.
\end{dfn}

\begin{dfn}[Boundaries]
Let $P$ be an oriented graded poset and $U \subseteq P$ a closed subset. Then $U$ inherits an orientation from $P$ by restriction. 

For all $\alpha \in \{+,-\}$ and $n \in \mathbb{N}$, we define
\begin{align*}
	\sbord{n}{\alpha} U & \eqdef \{x \in U \mid \dmn{x} = n \text{ and if $y \in U $ covers $x$, then $o(y \to x) = \alpha$} \}, \\
	\bord{n}{\alpha} U & \eqdef \clos{(\sbord{n}{\alpha} U)} \cup \{ x \in U \mid \text{for all $y \in U$, if $x \leq y$, then $\dmn{y} \leq n$} \}, \\
	\sbord{n} U & \eqdef \sbord{n}{+}U \cup \sbord{n}{-}U, \quad \qquad \quad \bord{n}U \eqdef \bord{n}{+}U \cup \bord{n}{-} U.
\end{align*}
We call $\bord{n}{-}U$ the \emph{input $n$\nbd boundary}, and $\bord{n}{+}U$ the \emph{output $n$\nbd boundary} of $U$. 

If $U$ is $(n+1)$\nbd dimensional, we write $\sbord{}{\alpha}U \eqdef \sbord{n}{\alpha}U$ and $\bord{}{\alpha}U \eqdef \bord{n}{\alpha}U$. For each $x \in P$, we write $\sbord{n}{\alpha}x \eqdef \sbord{n}{\alpha}\clos{\{x\}}$ and $\bord{n}{\alpha}x \eqdef \bord{n}{\alpha}\clos{\{x\}}$.
\end{dfn}

\begin{lem} \label{lem:higherthandim}
Let $U$ be a closed subset of an oriented graded poset. Then $\bord{m}{\alpha}U = U$ for all $m \geq \dmn{U}$.
\end{lem}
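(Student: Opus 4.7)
The plan is to unwind the definition of $\bord{m}{\alpha}U$ and verify containment in both directions, using that $m \geq \dmn{U}$ forces the ``dimension cap'' clause in the second component of the boundary to be vacuous on $U$.

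For the inclusion $U \subseteq \bord{m}{\alpha}U$, I would focus on the second of the two sets in the union defining $\bord{m}{\alpha}U$, namely
\begin{equation*}
	\{x \in U \mid \text{for all } y \in U,\ \text{if } x \leq y,\ \text{then } \dmn{y} \leq m\}.
\end{equation*}
Given any $x \in U$ and any $y \in U$ with $x \leq y$, we have $\dmn{y} \leq \dmn{U} \leq m$ by definition of $\dmn{U}$ and the hypothesis on $m$. Hence every $x \in U$ lies in this set, and \emph{a fortiori} in $\bord{m}{\alpha}U$.

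For the converse inclusion $\bord{m}{\alpha}U \subseteq U$, I would treat each part of the union separately. The second set is a subset of $U$ by definition. For the first set $\clos{(\sbord{m}{\alpha}U)}$, the set $\sbord{m}{\alpha}U$ is by definition contained in $U$, and $U$ is closed, so its closure in $P$ is again contained in $U$.

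Combining the two inclusions yields equality. There is no real obstacle here: the statement is essentially a sanity check confirming that the ``boundary'' operator is idempotent above the dimension of its argument, and the only subtle point is remembering that the second clause in the definition of $\bord{n}{\alpha}U$ is precisely what guarantees this.
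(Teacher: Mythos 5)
Your proof is correct and follows essentially the same approach as the paper: the key step is exactly the observation that when $m \geq \dmn{U}$ every $x \in U$ belongs to the second component of the union defining $\bord{m}{\alpha}U$. The paper only states this forward inclusion and leaves the reverse inclusion $\bord{m}{\alpha}U \subseteq U$ implicit; you spell it out, which is harmless and slightly more complete.
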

\begin{proof}
If $U$ is $n$-dimensional, then for all $x \in U$ and $m \geq n$, for all $y \in U$ such that $x \leq y$, it holds that $\dmn{y} \leq m$, so $x \in \bord{m}{+}U = \bord{m}{-}U$. 
\end{proof}

\begin{lem} \label{lem:minimaldim}
Let $U$ be a closed inhabited subset of an an oriented graded poset. Then 
\begin{equation*}
	\dmn{U} = \mathrm{min}\{n \in \mathbb{N} \mid \bord{n}{+}U = \bord{n}{-}U = U\}.
\end{equation*}
\end{lem}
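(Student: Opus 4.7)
The plan is to prove two inclusions, namely that $\dmn{U}$ belongs to the set on the right, and that no smaller natural number does. Let me write $n \eqdef \dmn{U}$.

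For the first direction, I would simply invoke Lemma \ref{lem:higherthandim} with $m = n$, which immediately gives $\bord{n}{+}U = \bord{n}{-}U = U$, so $n$ lies in the set whose minimum we are computing.

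For the second direction, I need to show that whenever $m < n$, at least one of $\bord{m}{+}U$ or $\bord{m}{-}U$ is a proper subset of $U$ — in fact both, with the same witness. Since $U$ is inhabited and $n$-dimensional, pick any $x \in U$ with $\dmn{x} = n$; I claim $x \notin \bord{m}{\alpha}U$ for either choice of $\alpha$. Unfolding the definition, $\bord{m}{\alpha}U = \clos{(\sbord{m}{\alpha}U)} \cup \{y \in U \mid \text{for all } z \in U,\, y \leq z \Rightarrow \dmn{z} \leq m\}$. Membership in the first piece would force $\dmn{x} \leq m < n$, since elements of $\sbord{m}{\alpha}U$ are $m$-dimensional and closure under $\leq$ can only lower dimension. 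Membership in the second piece fails by taking $z \eqdef x$: we have $x \leq x$ but $\dmn{x} = n > m$. Hence $x \notin \bord{m}{\alpha}U$, so $\bord{m}{\alpha}U \neq U$, as required.

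There is no real obstacle here; the only subtle point is checking that the closure of $\sbord{m}{\alpha}U$ contains no element of dimension strictly greater than $m$, which follows from the definition of $\clos{-}$ together with the fact that dimension in $P$ restricts correctly to closed subsets (a remark already used implicitly in the definition of $\dmn{U}$ for closed subsets).
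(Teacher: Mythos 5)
Your proof is correct and is essentially the paper's argument: the first inclusion via Lemma \ref{lem:higherthandim}, and for $m < n$ the observation that $\bord{m}{\alpha}U$ is at most $m$\nbd dimensional and hence omits any $n$\nbd dimensional element of $U$. You merely spell out the two pieces of the definition of $\bord{m}{\alpha}U$ a bit more explicitly than the paper does, which is harmless.
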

\begin{proof}
If $\dmn{U} = n$, by Lemma \ref{lem:higherthandim} we have $\bord{n}{+}U = \bord{n}{-}U = U$. For all $k < n$, $\bord{k}{\alpha}x$ is at most $k$\nbd dimensional, so it cannot cointain any $n$\nbd dimensional element of $U$.
\end{proof}

\begin{dfn}[Maps and inclusions]
A \emph{map} $f\colon P \to Q$ of oriented graded posets is a function of their underlying sets that satisfies
\begin{equation*}
	\bord{n}{\alpha}f(x) = f(\bord{n}{\alpha}x)
\end{equation*}
for all $x \in P$, $n \in \mathbb{N}$, and $\alpha \in \{+,-\}$. 

An \emph{inclusion} $\imath\colon P \incl Q$ is an injective map. An inclusion is an \emph{isomorphism} if it is also surjective.

Oriented graded posets and maps form a category $\ogpos$. We denote by $\ogposin$ the subcategory of oriented graded posets and inclusions.
\end{dfn}

\begin{lem}
Let $f\colon P \to Q$ be a map of oriented graded posets. Then $f$ is a closed, order-preserving, dimension-non-increasing function of the underlying posets.
\end{lem}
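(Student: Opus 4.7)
The plan is to extract from the definition a single identity, $f(\clos{\{x\}}) = \clos{\{f(x)\}}$ for every $x \in P$, from which all three properties follow essentially at once. To obtain this identity, I would instantiate the defining condition $\bord{n}{\alpha}f(x) = f(\bord{n}{\alpha}x)$ at some $n \geq \max\{\dmn{x}, \dmn{f(x)}\}$. For such $n$, Lemma \ref{lem:higherthandim} collapses $\bord{n}{\alpha}x$ to $\clos{\{x\}}$ and $\bord{n}{\alpha}f(x) = \bord{n}{\alpha}\clos{\{f(x)\}}$ to $\clos{\{f(x)\}}$, giving the desired equality.

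From this identity, order-preservation is immediate: if $y \leq x$ then $y \in \clos{\{x\}}$, so $f(y) \in f(\clos{\{x\}}) = \clos{\{f(x)\}}$, that is, $f(y) \leq f(x)$. Closedness of the function follows because any closed subset $U$ is the union of the closures of its elements, so $f(U) = \bigcup_{x \in U} f(\clos{\{x\}}) = \bigcup_{x \in U}\clos{\{f(x)\}}$ is a union of closed sets, hence closed.

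For the dimension bound, I would return to the map condition with the weaker instantiation $n = \dmn{x}$: Lemma \ref{lem:higherthandim} again gives $\bord{\dmn{x}}{\alpha}x = \clos{\{x\}}$, so
\begin{equation*}
	\bord{\dmn{x}}{+}\clos{\{f(x)\}} = \bord{\dmn{x}}{-}\clos{\{f(x)\}} = f(\clos{\{x\}}) = \clos{\{f(x)\}},
\end{equation*}
and Lemma \ref{lem:minimaldim} then forces $\dmn{f(x)} \leq \dmn{x}$. The argument is essentially an unpacking of the two preceding lemmas; the only minor subtlety, and hence the closest thing to an obstacle, is picking $n$ large enough to collapse the boundary operator on both sides of the map equation simultaneously, after which the three properties fall out as one-line consequences.
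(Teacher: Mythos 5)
Your proposal is correct and matches the paper's proof almost step for step: both extract $f(\clos\{x\}) = \clos\{f(x)\}$ by evaluating the map condition at large $n$ via Lemma \ref{lem:higherthandim}, derive closedness and order-preservation from it, and then re-instantiate at $n = \dmn{x}$ together with Lemma \ref{lem:minimaldim} to obtain the dimension bound. Your version just spells out the closedness argument (a closed set as a union of closures of its elements) in slightly more detail.
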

\begin{proof}
Let $x \in P$, and let $m$ be larger than the dimensions of both $x$ and $f(x)$. By Lemma \ref{lem:higherthandim}, $\clos\{f(x)\} = \bord{m}{\alpha}f(x) = f(\bord{m}{\alpha}x) = f(\clos\{x\})$. Since $y \leq x$ if and only if $y \in \clos\{x\}$, this proves that $f$ is both closed and order-preserving.

If $\dmn{x} = n$, then $\bord{n}{\alpha}f(x) = f(\bord{n}{\alpha}x) = f(\clos{\{x\}}) = \clos{\{f(x)\}}$. By Lemma \ref{lem:minimaldim}, the dimension of $f(x)$ is at most $n$.
\end{proof}

\begin{rmk}
It follows that there is a forgetful functor from $\ogpos$ to the category $\pos$ of posets and order-preserving functions.
\end{rmk}

\begin{lem} \label{lem:sameinclusions}
Let $\imath\colon P \incl Q$ be an inclusion of oriented graded posets. Then $\imath$ is order-reflecting, hence a closed embedding of the underlying posets, and preserves dimensions and orientations.
\end{lem}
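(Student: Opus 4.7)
The previous lemma supplies closedness, order-preservation, and the identity $\imath(\clos{\{x\}}) = \clos{\{\imath(x)\}}$ for every $x \in P$. From this, order-reflection is immediate: if $\imath(x) \leq \imath(y)$, then $\imath(x) \in \clos{\{\imath(y)\}} = \imath(\clos{\{y\}})$, and injectivity of $\imath$ forces $x \in \clos{\{y\}}$, that is, $x \leq y$. Combined with injectivity, this makes $\imath$ a closed embedding of underlying posets.

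To see that $\imath$ preserves dimensions, I would fix $x \in P$ with $\dmn{x} = n$ and read off a maximal chain $x_0 < x_1 < \ldots < x_n = x$ from a directed path in $\hasse{P_\bot}$ of length $n+1$, so that $\dmn{x_i} = i$. Applying $\imath$ yields a strict chain $\imath(x_0) < \ldots < \imath(x_n) = \imath(x)$ in $Q$, and strict inequality in a graded poset implies strictly increasing dimension. Hence $\dmn{\imath(x_i)}$ is a strictly increasing sequence of non-negative integers whose $i$-th term is bounded above by $\dmn{x_i} = i$ via the previous lemma, forcing $\dmn{\imath(x)} = n = \dmn{x}$.

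For the preservation of orientations, suppose $y$ covers $x$ in $P$, and write $\alpha \eqdef o(y \to x)$, $n \eqdef \dmn{x}$. Dimension preservation gives $\dmn{\imath(y)} = n+1$ and $\dmn{\imath(x)} = n$, and no element can be squeezed strictly between $\imath(x)$ and $\imath(y)$ on dimensional grounds, so $\imath(y)$ covers $\imath(x)$ in $Q$. Since $\dmn{y} = n+1$, the second clause in the definition of $\bord{n}{\alpha}y$ is vacuous, hence $\bord{n}{\alpha}y = \clos{(\sbord{n}{\alpha}y)}$; the defining equation $\imath(\bord{n}{\alpha}y) = \bord{n}{\alpha}\imath(y)$, together with the fact that an $n$-dimensional element in the closure of an $n$-dimensional set necessarily belongs to the set, yields $\imath(x) \in \sbord{n}{\alpha}\imath(y)$. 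The definition of $\sbord{n}{\alpha}$ then forces $o(\imath(y) \to \imath(x)) = \alpha$, as required.

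The only delicate step is this last one, where one must unwind the two-part definition of $\bord{n}{\alpha}$ and combine it with dimension preservation both to rule out intermediate elements and to descend from $\bord{n}{\alpha}$ back to $\sbord{n}{\alpha}$; everything else is a straightforward combination of injectivity, the previous lemma, and elementary properties of graded posets.
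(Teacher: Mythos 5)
Your proof is correct and takes essentially the same route as the paper's: order-reflection from injectivity, then a closed embedding, then preservation of covers and dimensions, and finally orientations by combining the defining identity $\imath(\bord{n}{\alpha}x) = \bord{n}{\alpha}\imath(x)$ with the observation that for an atom's boundary the second clause in the definition of $\bord{n}{\alpha}$ is vacuous. The only difference is one of granularity: you spell out the chain argument for dimension preservation and the dimension-counting descent from $\bord{n}{\alpha}$ to $\sbord{n}{\alpha}$ element-by-element, where the paper simply asserts that a closed embedding of graded posets preserves covers and dimensions and phrases the orientation step via $\sbord x = \sbord{}{+}x \cup \sbord{}{-}x$.
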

\begin{proof}
Let $x, y$ be such that $\imath(x) \leq \imath(y)$. Then $\imath(x) \in \clos\{\imath(y)\} = \imath(\clos\{y\})$, so $\imath(x) = \imath(x')$ for some $x' \leq y$. Since $\imath$ is injective, $x = x'$. 

It follows that $\imath$ is a closed embedding of graded posets; in particular it preserves the covering relation and dimensions. It follows that $\imath(\sbord x) = \sbord \imath(x)$ for all $x \in P$, and since $\imath(\bord{}{\alpha}x) = \bord{}{\alpha}\imath(x)$, necessarily $\imath(\sbord{}{\alpha}x) = \sbord{}{\alpha}\imath(x)$, which is equivalent to $\imath$ preserving orientations.
\end{proof}

\begin{rmk}
Conversely, a closed embedding of posets that is compatible with the orientations is an inclusion of oriented graded posets. This was the definition of inclusion used in \cite{hadzihasanovic2018combinatorial}. 
\end{rmk}

\begin{lem} \label{lem:boundary_to_boundary}
Let $\imath\colon P \incliso Q$ be an isomorphism of oriented graded posets and suppose $\dmn{P} = \dmn{Q}$. Then $\imath(\bord{}{\alpha}P) = \bord{}{\alpha}Q$.
\end{lem}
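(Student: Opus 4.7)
The plan is to simply unfold the definitions of both sides and transport them across $\imath$ using the properties accumulated in Lemma \ref{lem:sameinclusions}. Since $\imath$ is an inclusion, it is a closed order-preserving embedding that preserves dimensions and orientations; since it is also surjective, $\imath^{-1}$ is again an inclusion, so $\imath$ is in fact an order-isomorphism that preserves and reflects the covering relation, dimensions, and orientation labels.

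Let $n + 1 \eqdef \dmn{P} = \dmn{Q}$, so $\bord{}{\alpha}P = \bord{n}{\alpha}P$ and $\bord{}{\alpha}Q = \bord{n}{\alpha}Q$. Using the definition of $\bord{n}{\alpha}$, I would split into the two pieces
\begin{equation*}
	\bord{n}{\alpha}P = \clos{(\sbord{n}{\alpha}P)} \,\cup\, \{ x \in P \mid \forall y \in P, \; x \leq y \,\Rightarrow\, \dmn{y} \leq n\}
\end{equation*}
and treat them separately. For the first piece, since $\imath$ bijects $P$ with $Q$ preserving dimensions, the covering relation, and orientation labels, the definition of $\sbord{n}{\alpha}$ as the set of $n$\nbd dimensional elements all of whose upward covers are labelled $\alpha$ transports across $\imath$, giving $\imath(\sbord{n}{\alpha}P) = \sbord{n}{\alpha}Q$; then since $\imath$ preserves and reflects order, it commutes with closure, so $\imath(\clos{(\sbord{n}{\alpha}P)}) = \clos{(\sbord{n}{\alpha}Q)}$. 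For the second piece, order-preservation and reflection plus preservation of dimensions directly give that $\imath$ maps it bijectively to the analogous set in $Q$.

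Taking the union yields $\imath(\bord{n}{\alpha}P) = \bord{n}{\alpha}Q$, as desired. The degenerate cases $\dmn{P} = -1$ or $\dmn{P} = 0$ (where the shorthand $\bord{}{\alpha}$ does not strictly apply) are trivial, since then both boundaries are empty. There is no real obstacle here; the whole content is bookkeeping, and the only thing worth being careful about is that it is the surjectivity assumption (so that $\imath^{-1}$ is also an inclusion) that licences transporting the universally quantified conditions defining $\sbord{n}{\alpha}$ and the second piece of $\bord{n}{\alpha}$ in both directions.
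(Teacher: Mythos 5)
Your proposal is correct and follows essentially the same route as the paper: split $\bord{}{\alpha}$ into the closure of $\sbord{}{\alpha}$ and the maximal-dimension piece, then transport each across $\imath$ using the preservation of dimension, covering, and orientation from Lemma~\ref{lem:sameinclusions}. The paper's written proof only checks the inclusion $\imath(\bord{}{\alpha}P) \subseteq \bord{}{\alpha}Q$, leaving the converse implicit; your observation that surjectivity makes $\imath^{-1}$ an inclusion, so the argument runs in both directions, is exactly the right way to close that gap.
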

\begin{proof}
Let $x \in \sbord{}{\alpha}P$ and $n \eqdef \dmn{P}$. Because $\imath$ preserves dimensions, $\imath(x)$ is $(n-1)$\nbd dimensional. Suppose $\imath(x)$ is covered by $y$. Because $\imath$ is surjective, $y = \imath(y')$ for some $y'$, and because it is order-reflecting, $x$ is covered by $y'$, necessarily with orientation $\alpha$. Since $\imath$ preserves orientations, $y$ covers $\imath(x)$ with orientation $\alpha$. This proves that $\imath(x) \in \sbord{}{\alpha}Q$. 

Now let $x \in P$ be such that $x \leq y$ implies $\dmn{y} < n$. If $\imath(x) \leq y$, then $y = \imath(y')$ for some $y'$ of the same dimension and $x \leq y'$. It follows that $\dmn{y}  = \dmn{y'} < n$.
\end{proof}

\begin{prop} 
Every map $f\colon P \to Q$ of oriented graded posets factors as a surjective map $P \surj \widehat{P}$ followed by an inclusion $\widehat{P} \incl Q$. This factorisation is unique up to isomorphism.
\end{prop}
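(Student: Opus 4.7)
The plan is to construct the factorisation as the standard image factorisation through the set-theoretic image of $f$. Concretely, I would set $\widehat{P} \eqdef f(P) \subseteq Q$; by the preceding lemma every map of oriented graded posets is a closed function on the underlying posets, so $\widehat{P}$ is a closed subset of $Q$ and inherits an orientation by restriction, making it an oriented graded poset in its own right. Corestriction of $f$ then yields a surjection $p\colon P \surj \widehat{P}$ and the set-inclusion yields an injection $\imath\colon \widehat{P} \incl Q$, with $f = \imath \circ p$ on underlying sets.

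The technical core of the proof is to verify that both $p$ and $\imath$ are maps of oriented graded posets. To this end I would first observe that for any closed subset $U$ of an oriented graded poset, the covering relation of $U$ as a subposet coincides with the restriction of the covering relation of the ambient poset: if $x, y \in U$ and there exists $z$ with $x < z < y$, then $z \leq y \in U$ forces $z \in U$ by closure. Consequently, for each $y \in U$ and each $n, \alpha$, the subsets $\sbord{n}{\alpha}y$ and $\bord{n}{\alpha}y$ computed inside $U$ agree with those computed in the ambient poset. Applied to $\widehat{P} \subseteq Q$, this yields $\imath(\bord{n}{\alpha}y) = \bord{n}{\alpha}\imath(y)$ for every $y \in \widehat{P}$, so $\imath$ is a map (and an inclusion, being injective). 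The corresponding equation for $p$ then follows by reading off the equation for $f$ in $Q$ through the identification provided by $\imath$.

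For uniqueness, suppose $P \stackrel{p'}{\surj} P' \stackrel{\imath'}{\incl} Q$ is another such factorisation. Since $p'$ is surjective and $\imath'$ is injective, the image of $\imath'$ equals $f(P) = \widehat{P}$, so $\imath'$ corestricts to a bijection $\varphi\colon P' \to \widehat{P}$ with $\imath \circ \varphi = \imath'$. By Lemma \ref{lem:sameinclusions}, both $\imath$ and $\imath'$ preserve and reflect order, dimension, and orientation; consequently $\varphi$ is an isomorphism of oriented graded posets. The main subtlety throughout is the compatibility of the boundary operators with the formation of closed subposets, which I expect to be the only step requiring an explicit verification; everything else reduces to the usual image factorisation of functions.
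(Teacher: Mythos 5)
Your proposal is correct and takes essentially the same route as the paper's own proof: factor $f$ through its set-theoretic image $\widehat{P} = f(P)$, which is a closed subset of $Q$ by the preceding lemma, equip it with the restricted orientation, and observe that the poset-level image factorisation lifts to $\ogpos$. You simply spell out in more detail the verification — in particular that boundary operators in a closed subposet agree with those in the ambient poset, and that $p$ inherits the map equation from $f$ via injectivity of $\imath$ — where the paper compresses this to a remark that the factorisation "lifts"; both arguments are the same in substance.
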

\begin{proof}
The underlying map of posets admits an essentially unique factorisation as a surjective map followed by a closed embedding. Given a closed embedding $\widehat{P} \incl Q$, there is a unique orientation on $\widehat{P}$ which makes it an inclusion of oriented graded posets. Thus the factorisation lifts to $\ogpos$.
\end{proof}

%%%%%%%%%%%%%%%%%%%%%%%%%%%%%

\subsection{Directed complexes}

\begin{dfn}[Atoms and molecules]
Let $P$ be an oriented graded poset. We define a family of closed subsets of $P$, the \emph{molecules} of $P$, by induction on proper subsets. If $U$ is a closed subset of $P$, then $U$ is a molecule if either
\begin{itemize}
	\item $U$ has a greatest element, in which case we call it an \emph{atom}, or
	\item there exist molecules $U_1$ and $U_2$, both properly contained in $U$, and $n \in \mathbb{N}$ such that $U_1 \cap U_2 = \bord{n}{+}U_1 = \bord{n}{-}U_2$ and $U = U_1 \cup U_2$.
\end{itemize}
We define $\submol$ to be the smallest partial order relation such that, if $U_1$ and $U_2$ are molecules and $U_1 \cap U_2 = \bord{n}{+}U_1 = \bord{n}{-}U_2$, then $U_1, U_2 \submol U_1 \cup U_2$.

We say \emph{$n$\nbd molecule} for an $n$\nbd dimensional molecule. We say that $P$ itself is a molecule if $P \subseteq P$ is a molecule.
\end{dfn}

\begin{lem} \label{lem:molecule_codim}
Let $U$ be an $n$\nbd molecule in an oriented graded poset, $n > 0$, and let $x \in U$ be an $(n-1)$\nbd dimensional element.
\begin{itemize}
	\item If $x \in \sbord{}{+}U \cap \sbord{}{-}U$, then $x$ is not covered by any element,
	\item if $x \in \sbord U \setminus (\sbord{}{+}U \cap \sbord{}{-}U)$, then $x$ is covered by a single element, and 
	\item if $x \notin \sbord U$, then $x$ is covered by exactly two elements with opposite orientations.
\end{itemize}
\end{lem}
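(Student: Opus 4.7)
The plan is to induct on the molecular construction of $U$, using the fact that every molecule is either an atom or a pasting of two proper sub-molecules along a boundary.

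For the base case, I take $U$ to be an atom with greatest element $y$, noting that Lemma \ref{lem:minimaldim} gives $\dmn{y} = n$. Any $(n-1)$-dimensional $x \in U$ satisfies $x < y$, and gradedness of the poset makes $y$ the unique cover of $x$ in $U$. Setting $\alpha \eqdef o(y \to x)$, the element $x$ lies in $\sbord{n-1}{\alpha}U$ but not $\sbord{n-1}{-\alpha}U$, placing it in the second bullet.

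For the inductive step I write $U = U_1 \cup U_2$ with $U_1 \cap U_2 = \bord{k}{+}U_1 = \bord{k}{-}U_2$. The crucial preliminary observation is that the set of covers of $x$ in $U$ is the disjoint union of its covers in $U_1$ and its covers in $U_2$: closedness of $U_i$ ensures a cover of $x$ lying in $U_i$ is already a cover within $U_i$, while $\dmn{(U_1 \cap U_2)} \leq k$ together with the dimension $n$ of any cover prevents any cover from belonging to both. I also note $k < \dmn{U_i}$ for both $i$, since otherwise $\bord{k}{\alpha}U_i = U_i$ by Lemma \ref{lem:higherthandim}, forcing one of $U_1, U_2$ to contain the other in violation of properness. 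I then split on where $x$ sits. If $x \in U_1 \setminus U_2$ (or symmetrically), all covers come from $U_1$, and the inductive hypothesis on $U_1$ transfers directly; the degenerate possibility $\dmn{U_1} = n - 1$ is handled by hand, since $x$ is then top-dimensional in $U_1$ with no covers and lands in the first bullet.

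The main obstacle is the remaining case $x \in U_1 \cap U_2$. Here I first argue that $k$ must equal $n - 1$: the intersection has dimension at most $k$ yet contains the $(n-1)$-dimensional $x$, so $k \geq n - 1$, while $k < \dmn{U_i} \leq n$ gives $k < n$. Both $U_i$ are therefore $n$-molecules and the inductive hypothesis applies. Unpacking the definition of $\bord{n-1}{+}U_1$ on a top-minus-one-dimensional element forces $x \in \sbord{n-1}{+}U_1$, and symmetrically $x \in \sbord{n-1}{-}U_2$; by the inductive hypothesis this restricts $x$ to at most one cover in each $U_i$, necessarily of the prescribed orientation when it exists. Summing, $x$ has $0$, $1$, or $2$ covers in $U$, with opposite orientations in the two-cover case, and each possibility matches one of the three bullets.
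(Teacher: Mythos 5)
Your proof is correct and follows the same strategy as the paper's: induction on the molecular decomposition, with the atom case handled via the greatest element and the pasting case handled by observing that the covers of $x$ in $U$ split disjointly between $U_1$ and $U_2$. The only difference is organizational --- the paper cases on $k < n-1$ versus $k = n-1$ and then on the $\sbord$-status of $x$ in $U$, while you case on whether $x \in U_1 \cap U_2$ and compute the cover count directly, closing the loop via the exhaustivity of the three bullets --- but this is a reorganization of the same argument, not a different route.
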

\begin{proof}
We proceed by induction on submolecules of $U$. When $U$ is an atom, $\sbord{}{+}U$ and $\sbord{}{-}U$ are disjoint, $x \in \sbord U$, and it is covered only by the greatest element.

Suppose $U = U_1 \cup U_2$ for some proper subsets with $U_1 \cap U_2 = \bord{k}{+}U_1 = \bord{k}{-}U_2$. If $k < n-1$, then $U_1 \cap U_2$ contains no $(n-1)$\nbd dimensional elements. Since no $(n-1)$\nbd dimensional element of $U$ can be covered both by an element of $U_1$ and by an element of $U_2$, it follows that $\sbord{}{\alpha}U$ is the disjoint union of $\sbord{n-1}{\alpha}U_1$ and $\sbord{n-1}{\alpha}U_2$. The statement then holds by the inductive hypothesis.

Let $k = n-1$. In this case, if $x \in U_1 \cap U_2$, then $x \in \sbord{n-1}{+}U_1 = \sbord{n-1}{-}U_2$. If $x \in \sbord{}{+}U \cap \sbord{}{-}U$, then by definition $x$ cannot be covered by any element.

Suppose that $x \in \sbord{}{+}U \setminus \sbord{}{-}U$. Then $x$ is covered at least by one element; suppose it is in $U_1$. Then $x \in \sbord{n-1}{+}U_1 \setminus \sbord{n-1}{-}U_1$, and by the inductive hypothesis it is only covered by one element of $U_1$. Suppose it is also covered by another element, necessarily in $U_2$. Then $x \in \sbord{n-1}{+}U_2 \setminus \sbord{n-1}{-}U_2$, but $\sbord{n-1}{+}U_1 = \sbord{n-1}{-}U_2$, a contradiction. 

The other possibilities with $x \in \sbord U \setminus (\sbord{}{+}U \cap \sbord{}{-}U)$ are handled in a similar way. Finally, suppose $x \notin \sbord U$. If $x \notin U_1 \cap U_2$, then $x \in U_1 \setminus \sbord{n-1}{}U_1$ or $x \in U_2 \setminus \sbord{n-1}{}U_2$, and the inductive hypothesis applies. If $x \in U_1 \cap U_2$, then $x \in \sbord{n-1}{+}U_1 \setminus \sbord{n-1}{-}U_1$, so $x$ is covered by a single element of $U_1$ with orientation $+$, and $x \in \sbord{n-1}{-}U_2 \setminus \sbord{n-1}{+}U_2$, so $x$ is covered by a single element of $U_2$ with orientation $-$.
\end{proof}

\begin{dfn}[Directed complex]
An oriented graded poset $P$ is a \emph{directed complex} if, for all $x \in P$ and $\alpha, \beta \in \{+,-\}$, 
\begin{enumerate}
	\item $\bord{}{\alpha}x$ is a molecule, and
	\item $\bord{}{\alpha}(\bord{}{\beta}x) = \bord{n-2}{\alpha}x$ if $n \eqdef \dmn{x} > 1$.
\end{enumerate}
\end{dfn}

\begin{rmk}
There is a slight difference with Steiner's definition \cite{steiner1993algebra}. There, the underlying structure of a directed complex is encoded by the functions $x \mapsto \sbord{}{\alpha}x$. The axioms ensure that 
\begin{center}
	``$y \to x$ if and only if $x \in \sbord{}{+}y \cup \sbord{}{-}y$''
\end{center}
is the covering relation of a graded poset, but they allow for the possibility that $\sbord{}{+}y$ and $\sbord{}{-}y$ are not disjoint, which is excluded in our setting.
\end{rmk}

\begin{dfn}[Globe]
For each $n \in \mathbb{N}$, let $O^n$ be the poset with a pair of elements $\undl{k}^+, \undl{k}^-$ for each $k < n$ and a greatest element $\undl{n}$, with the partial order defined by $\undl{j}^\alpha \leq \undl{k}^\beta$ if and only if $j \leq k$. This is a graded poset, with $\dmn{\undl{n}} = n$ and $\dmn{\undl{k}^\alpha} = k$ for all $k < n$. 

With the orientation $o(y \to \undl{k}^\alpha) \eqdef \alpha$ if $y$ covers $\undl{k}^\alpha$, $O^n$ becomes a directed complex; in fact, it is the smallest $n$\nbd dimensional directed complex. We call $O^n$ the \emph{$n$\nbd globe}.
\end{dfn}

\begin{dfn}[$\omega$-Graph]
Let $\cat{O}_\textit{in}$ be the full subcategory of $\ogposin$ whose objects are the globes. For all $n$ and $k < n$ there are exactly two inclusions $\imath^+, \imath^-\colon O^k \hookrightarrow O^n$, defined by $\imath^\alpha(\undl{k}) \eqdef \undl{k}^\alpha$ and $\imath^\alpha(\undl{j}^\beta) \eqdef \undl{j}^\beta$ for all $j < k$.

An \emph{$\omega$\nbd graph}, also known as globular set \cite[Section 1.4]{leinster2004higher}, is a presheaf on $\cat{O}_\textit{in}$. With their morphisms of presheaves, $\omega$\nbd graphs form a category $\omegagph$. 

We represent an $\omega$\nbd graph $X$ as a diagram
\begin{equation*}
\begin{tikzpicture}[baseline={([yshift=-.5ex]current bounding box.center)}]
	\node (0) at (0,0) {$X_0$};
	\node (1) at (2,0) {$X_1$};
	\node (2) at (4,0) {$\ldots$};
	\node (3) at (6,0) {$X_n$};
	\node (4) at (8,0) {$\ldots$};
	\draw[1c] (1.west |- 0,.15) to node[auto,swap,arlabel] {$\bord{}{+}$} (0.east |- 0,.15);
	\draw[1c] (1.west |- 0,-.15) to node[auto,arlabel] {$\bord{}{-}$} (0.east |- 0,-.15);
	\draw[1c] (2.west |- 0,.15) to node[auto,swap,arlabel] {$\bord{}{+}$} (1.east |- 0,.15);
	\draw[1c] (2.west |- 0,-.15) to node[auto,arlabel] {$\bord{}{-}$} (1.east |- 0,-.15);
	\draw[1c] (3.west |- 0,.15) to node[auto,swap,arlabel] {$\bord{}{+}$} (2.east |- 0,.15);
	\draw[1c] (3.west |- 0,-.15) to node[auto,arlabel] {$\bord{}{-}$} (2.east |- 0,-.15);
	\draw[1c] (4.west |- 0,.15) to node[auto,swap,arlabel] {$\bord{}{+}$} (3.east |- 0,.15);
	\draw[1c] (4.west |- 0,-.15) to node[auto,arlabel] {$\bord{}{-}$} (3.east |- 0,-.15);
\end{tikzpicture}
\end{equation*}
of sets and functions, where $X_n \eqdef X(O^n)$ and $\bord{}{\alpha} \eqdef X(\imath^\alpha) \colon X_n \to X_{n-1}$. 

The elements of $X_n$ are the \emph{$n$\nbd cells} of $X$. Identifying $\cat{O}_\textit{in}$ with a full subcategory of $\omegagph$ via the Yoneda embedding, we have a correspondence between $n$\nbd cells of $X$ and morphisms $O^n \to X$.

For each $x \in X_n$ and $k < n$, we let
\begin{equation*}
	\bord{k}{\alpha}x \eqdef \underbrace{\bord{}{\alpha}(\ldots(\bord{}{\alpha}}_{n-k} x)).
\end{equation*}
For all $n$-cells $x$ and $j < k < n$, we have $\bord{j}{\alpha}(\bord{k}{\beta}x) = \bord{j}{\alpha}x$ (\emph{globularity}).
\end{dfn}

\begin{dfn}[Reflexive $\omega$\nbd graph]
Let $\cat{O}$ be the full subcategory of $\ogpos$ whose objects are the globes. For all $n$ and $k < n$, there is a unique surjective map $\tau\colon O^n \surj O^k$, defined by $\tau(\undl{n}),\tau(\undl{j}^\alpha) \eqdef \undl{k}$ if $j \geq k$ and $\tau(\undl{j}^\alpha) \eqdef \undl{j}^\alpha$ if $j < k$. 

A \emph{reflexive $\omega$\nbd graph} is a presheaf on $\cat{O}$. With their morphisms of presheaves, $\omega$\nbd graphs form a category $\omegagphref$. 

In a reflexive $\omega$\nbd graph, we write $\eps{} \eqdef X(\tau)\colon X_{n-1} \to X_n$. For each $x \in X_k$ and $n > k$, let
\begin{equation*}
	\eps{n}x \eqdef \underbrace{(\eps{}\ldots\eps{})}_{n-k} x.
\end{equation*}
\end{dfn}

\begin{dfn}[$\omega$-Category] 
A \emph{partial $\omega$\nbd category} is a reflexive $\omega$\nbd graph $X$ together with partial $k$-composition operations
\begin{equation*}
	\cp{k}\colon X_n \times X_n \pfun X_n
\end{equation*}
for all $n \in \mathbb{N}$ and $k < n$, satisfying the following axioms:
\begin{enumerate}
	\item for all $n$\nbd cells $x$ and all $k < n$, 
	\begin{equation*}
		x \cp{k} \eps{n}(\bord{k}{+}x) = x = \eps{n}(\bord{k}{-}x) \cp{k} x,
	\end{equation*}
	where the two $k$\nbd compositions are always defined;
	\item for all $(n+1)$\nbd cells $x, y$ and all $k < n$, whenever the left-hand side is defined,
	\begin{align*}
		\bord{}{-}(x \cp{n} y) & = \bord{}{-} x, \\
		\bord{}{+}(x \cp{n} y) & = \bord{}{+} y, \\
		\bord{}{\alpha}(x \cp{k} y) & = \bord{}{\alpha} x \cp{k} \bord{}{\alpha} y;
	\end{align*}
	\item for all cells $x, y, x', y'$ and all $n$ and $k < n$, whenever the left-hand side is defined, 
	\begin{align*}
		\eps{}(x \cp{n} y) & = \eps{}x \cp{n} \eps{}y, \\
		(x \cp{n} x') \cp{k} (y \cp{n} y') & = (x \cp{k} y) \cp{n} (x' \cp{k} y');
	\end{align*}
	\item for all cells $x, y, z$ and all $n$, whenever either side is defined,
	\begin{equation*}
		(x \cp{n} y) \cp{n} z = x \cp{n} (y \cp{n} z);
	\end{equation*}
	\item for all cells $x, y$, if $x \cp{n} y$ is defined, then $\bord{n}{+}x = \bord{n}{-}y$.
\end{enumerate} 
An {$\omega$\nbd category} is a partial $\omega$\nbd category where the converse of the last condition holds, that is, if $\bord{n}{+}x = \bord{n}{-}y$ then $x \cp{n} y$ is defined.

A \emph{functor} of partial $\omega$\nbd categories is a morphism of the underlying reflexive graphs that respects the composition operations. Partial $\omega$\nbd categories and functors form a category $\pomegacat$, with a full subcategory $\omegacat$ on $\omega$\nbd categories. 
\end{dfn}

\begin{prop} \label{prop:partial_omegacat}
Let $P$ be a directed complex. There is a partial $\omega$\nbd category $\mol{}{P}$ where
\begin{enumerate}
	\item the set $\mol{n}{P}$ of $n$-cells is the set of molecules $U \subseteq P$ with $\dmn{U} \leq n$,
	\item $\bord{}{\alpha}\colon \mol{n}{P} \to \mol{n-1}{P}$ is $U \mapsto \bord{n-1}{\alpha}U$,
	\item $\eps{}\colon \mol{n-1}{P} \to \mol{n}{P}$ is $U \mapsto U$,
	\item $U \cp{n} V$ is defined when $U \cap V = \bord{n}{+}U = \bord{n}{-}V$, and in that case it is equal to $U \cup V$.
\end{enumerate}
\end{prop}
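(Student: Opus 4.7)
The plan is to verify the partial $\omega$-category axioms one at a time, with most reducing to set-theoretic identities about unions and closures, and the genuine work concentrated in two preliminary lemmas that extend the two clauses of the definition of directed complex from atoms to arbitrary molecules. The first lemma states that whenever $U$ is a molecule with $\dmn{U} \leq n$ and $\alpha \in \{+,-\}$, the subset $\bord{n-1}{\alpha}U$ is again a molecule (of dimension $\leq n-1$). The second states a globularity property: $\bord{m}{\alpha}(\bord{n}{\beta}U) = \bord{m}{\alpha}U$ whenever $m < n$. Both proceed by induction on the submolecule order $\submol$: the base case is an atom, where the statements specialise to clauses (1) and (2) of the definition of directed complex; for a pasting $U = U_1 \cup U_2$ with $U_1 \cap U_2 = \bord{k}{+}U_1 = \bord{k}{-}U_2$ one distinguishes cases by comparing $n-1$ to $k$. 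Lemma \ref{lem:molecule_codim} is essential here for tracking which $(n-1)$-dimensional elements lie in each of $\sbord{n-1}{\pm}U_i$ and how they combine to form $\sbord{n-1}{\pm}U$.

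Once these lemmas are in hand, the reflexive $\omega$-graph structure is immediate, since by Lemma \ref{lem:higherthandim} a molecule $U$ of dimension $\leq n-1$ satisfies $\bord{n}{\alpha}U = U$, so $\eps{}$ is literally the identity on underlying sets. The unit law $U \cp{k} \eps{n}(\bord{k}{+}U) = U$ reduces to the identity $U \cup \bord{k}{+}U = U$, with the matching condition $U \cap \bord{k}{+}U = \bord{k}{+}U = \bord{k}{-}(\bord{k}{+}U)$ confirmed by globularity. The boundary formulae for top composites, $\bord{}{-}(U \cp{n}V) = \bord{}{-}U$ and $\bord{}{+}(U \cp{n}V) = \bord{}{+}V$, follow by analysing which $n$-dimensional elements of $U \cup V$ have all their covers oriented a given way, using Lemma \ref{lem:molecule_codim} and the gluing condition. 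The formula $\bord{}{\alpha}(U \cp{k}V) = \bord{}{\alpha}U \cp{k} \bord{}{\alpha}V$ for $k < n$ amounts to $\bord{n}{\alpha}(U \cup V) = \bord{n}{\alpha}U \cup \bord{n}{\alpha}V$, which is straightforward once one knows both sides are molecules and that the gluing along a $k$-boundary persists under taking the $n$-boundary.

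Associativity reduces to the identity $(U \cup V) \cup W = U \cup (V \cup W)$ of sets, with the content being that each intermediate pasting is a valid molecule; and the exchange law $(U \cp{n} U') \cp{k} (V \cp{n} V') = (U \cp{k} V) \cp{n} (U' \cp{k} V')$ similarly reduces to the equality of the underlying unions once one has checked that both the left- and right-hand nested pastings exhibit a valid decomposition. Finally, $\eps{}(U \cp{n} V) = \eps{}U \cp{n} \eps{}V$ is trivial because $\eps{}$ is the identity and the matching condition is preserved by including one dimension higher. The main obstacle is the first preliminary lemma: the inductive step requires splitting into several cases according to whether $n-1$ is below, equal to, or above the gluing codimension $k$, and in each case one must argue not merely that $\bord{n-1}{\alpha}U$ is a union of molecules but that it admits a \emph{valid pasting decomposition} in the inductive sense of the definition, which is where Lemma \ref{lem:molecule_codim} is genuinely needed to rule out pathological overlaps.
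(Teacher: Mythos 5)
The paper's own ``proof'' of this proposition consists of a single sentence citing \cite[Proposition 2.9]{steiner1993algebra}; it does no independent work. Your proposal, by contrast, sketches a direct verification, which is in effect what Steiner's proof does. The overall architecture you describe is the right one: the genuine mathematical content is concentrated in the two preliminary lemmas (boundaries of molecules are themselves molecules of the expected dimension, and the globularity relations extend from atoms to molecules), and once those are in hand the remaining axioms of a partial $\omega$\nbd category reduce to set-theoretic bookkeeping about unions and closures. You correctly identify that $\eps{}$ is literally the inclusion $\mol{n-1}{P} \subseteq \mol{n}{P}$, that the unit law collapses to $U \cup \bord{k}{+}U = U$ with the matching condition supplied by globularity and Lemma~\ref{lem:higherthandim}, and that associativity and interchange become set-theoretic identities once validity of all the intermediate pastings is secured.

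The issue is that you explicitly defer the crux. Proving that $\bord{n-1}{\alpha}U$ is a molecule for an arbitrary molecule $U$, by induction on $\submol$, is not merely a matter of splitting into cases by the relative size of $n-1$ and the gluing codimension $k$: in each case one has to \emph{exhibit} a pasting decomposition of $\bord{n-1}{\alpha}(U_1 \cup U_2)$ with the correct interface (a molecule that equals $\bord{m}{+}$ of one piece and $\bord{m}{-}$ of the other for a suitable $m$), and this is where the difficulty actually lives. Similarly, the step you call ``straightforward'' --- showing $\bord{n}{\alpha}(U \cup V) = \bord{n}{\alpha}U \cup \bord{n}{\alpha}V$ together with a valid decomposition of the right-hand side --- requires precisely the interface analysis you acknowledge but do not carry out, with Lemma~\ref{lem:molecule_codim} controlling which $(n-1)$\nbd dimensional elements of $U_1 \cap U_2$ contribute to $\sbord{}{\pm}$ of which piece. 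In Steiner's paper this is a multi-page development; calling it ``the main obstacle'' without resolving it leaves the proposal as a correct roadmap rather than a proof. Since the paper sidesteps the issue entirely by citation, either complete the inductive argument or, like the paper, defer to Steiner for this particular step.
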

\begin{proof}
This is part of \cite[Proposition 2.9]{steiner1993algebra}.
\end{proof}

\begin{rmk}
In particular, if $U$ is a molecule in a directed complex, $\bord{n}{\alpha}U$ is a molecule for all $n \in \mathbb{N}$ and $\alpha \in \{+,-\}$.
\end{rmk}

\begin{lem} \label{lem:composition_form}
Let $U$ be an $n$\nbd molecule in a directed complex. Either $U$ is an atom, or
\begin{equation*}
	U = V_1 \cp{k} \ldots \cp{k} V_m
\end{equation*}
for some $m > 1$, $k < n$, and molecules $V_1, \ldots, V_m$ such that for $1 \leq i \leq j \leq m$
\begin{enumerate}[label=(\alph*)]
	\item $V_i$ contains exactly one atom $U_i$ with $\dmn{U_i} > k$ and 
	\item if $\dmn{U_i}, \dmn{U_j} > k+1$, then $i = j$.
\end{enumerate}
\end{lem}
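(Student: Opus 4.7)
The plan is to prove the decomposition via a two-fold maximality argument, without needing induction. Since $U$ is a non-atomic molecule of dimension $n$, the definition of molecule provides at least one level $k_0 < n$ at which $U$ admits a decomposition into proper submolecules. I would define $k$ as the \emph{maximum} such level, well-defined as the maximum over a non-empty finite subset of $\{0, \ldots, n-1\}$. At level $k$, iteratively splitting $U$ terminates in finitely many steps (since $|U|$ is finite) and produces a decomposition $U = V_1 \cp{k} \ldots \cp{k} V_m$ with $m$ maximal, meaning that each $V_i$ admits no further decomposition at level $k$ into proper submolecules. By construction, $m > 1$ and the composites are well-defined via Proposition \ref{prop:partial_omegacat}.

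To establish condition (a), suppose towards a contradiction that some $V_i$ contains two distinct maximal elements $y_1, y_2$ of dimension strictly greater than $k$. I would argue that the $k$-boundaries of the sub-atoms $\clos\{y_1\}$ and $\clos\{y_2\}$ inside $V_i$ can be arranged so that $V_i$ splits as $V' \cup V''$ with $V' \cap V'' = \bord{k}{+}V' = \bord{k}{-}V''$, where $V', V''$ partition $y_1$ and $y_2$ into different pieces. The rigorous construction uses Lemma \ref{lem:molecule_codim} to trace the flow of codimension-$1$ elements above dimension $k$ between maximal elements, which partitions the elements of $V_i$ into a linear sequence of sub-composites at level $k$; the existence of $y_1 \neq y_2$ forces at least two non-trivial pieces, contradicting the maximality of $m$.

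To establish condition (b), suppose towards a contradiction that $V_i$ and $V_j$ with $i < j$ both contain a maximal element of dimension $> k+1$. I would show that the intermediate $V_\ell$ for $i < \ell < j$, each containing a unique maximal element of dimension $\leq k+1$ by (a) and a failure of this argument, contribute only elements of dimension at most $k+1$, so the partial pastings $V_1 \cup \ldots \cup V_{j-1}$ and $V_j \cup \ldots \cup V_m$ remain molecules whose intersection is a molecule of dimension $k+1$ equal to $\bord{k+1}{+}$ of the former and $\bord{k+1}{-}$ of the latter. This would yield a decomposition of $U$ at level $k+1 > k$, contradicting the maximality of $k$.

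The main obstacle is making the splitting arguments in (a) and (b) fully rigorous, since they require carefully identifying the sets of elements on either side of the proposed split and verifying that these are in fact molecules in the sense of the inductive definition. This rests on the detailed local structure of molecules described in Lemma \ref{lem:molecule_codim}, together with the fact that in a directed complex the input and output $k$-boundaries of an atom are themselves molecules; in particular, one must show that removing and reattaching sub-atoms along their $k$- or $(k+1)$-boundaries preserves the molecule property, which should follow by tracing the inductive decomposition of $V_i$ itself but is a non-trivial combinatorial verification.
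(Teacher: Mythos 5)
The paper does not give a first\nbd principles proof: it simply observes that the argument of Steiner's Proposition~4.2, which proceeds by induction on the number of generators together with the interchange law, goes through unchanged for the partial $\omega$\nbd categories $\mol{}{U}$, which are composition\nbd generated by the atoms. Your proposal attempts a genuinely different, non\nbd inductive route via double maximality, and the obstacle you flag at the end is indeed where it breaks.

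The argument for (b) is circular. You suppose $V_i$ and $V_j$ both contain a maximal element of dimension $> k+1$, and then assert that the intermediate $V_\ell$ carry only material of dimension at most $k+1$; but condition (a) only forces the unique maximal atom of each $V_\ell$ to have dimension $> k$, and bounding that dimension above by $k+1$ is precisely what (b) is supposed to establish. Even after passing to an adjacent offending pair, the claim that $V_1 \cup \cdots \cup V_{j-1}$ and $V_j \cup \cdots \cup V_m$ intersect in a $(k+1)$\nbd molecule realising a level\nbd $(k+1)$ cut is not automatic: the two partial pastings as written are $k$\nbd composites whose overlap sits in dimension $k$, and the desired cutting locus must be manufactured by padding $\bord{k+1}{+}$ of one half with $(k+1)$\nbd dimensional material taken from the other (exactly as one does to cut a dumbbell of two $n$\nbd atoms joined at a vertex at level $n-1$, where the cut is not the naive intersection). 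Establishing that such a cut always exists \emph{is} the hard content of the lemma, not a step of its proof. The same difficulty sits under (a): two maximal elements of dimension $> k$ in a single $V_i$ show only that $V_i$ splits at \emph{some} level $l$, and the desired contradiction is immediate only if $l = k$; if $l \neq k$ one must lift the split of $V_i$ to a split of $U$ at level $l$ (contradicting maximality of $k$ when $l > k$), and that lift is exactly what Steiner's interchange\nbd based induction supplies and what your double\nbd maximality argument would have to reprove. Choosing $k$ to be the maximum decomposition level of $U$ presupposes, rather than produces, the fact that a decomposition at that level can be arranged to satisfy (a) and (b).
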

\begin{proof}
The proof of \cite[Proposition 4.2]{steiner2004omega}, stated for $\omega$\nbd categories with a set of composition-generators, also applies to the partial $\omega$\nbd categories $\mol{}{U}$, which are composition-generated by the atoms.
\end{proof}

\begin{lem} \label{lem:atom_boundary}
Let $U$ be a molecule in a directed complex, $n \eqdef \dmn{U}$, and suppose $U$ contains a unique element $x$ of dimension $n$. Then $\bord{}{\alpha} x \submol \bord{}{\alpha} U$ for all $\alpha \in \{+,-\}$.
\end{lem}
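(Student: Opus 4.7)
The plan is to induct on molecules $U$ following their recursive definition, i.e., on proper closed subsets. In the base case $U$ is an atom, so $x$ is its greatest element, $\bord{}{\alpha} x = \bord{}{\alpha} U$, and the conclusion holds by reflexivity of $\submol$.

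For the inductive step, decompose $U = U_1 \cup U_2$ with molecules $U_1, U_2 \submol U$ properly contained in $U$ and $U_1 \cap U_2 = \bord{k}{+} U_1 = \bord{k}{-} U_2$. Since neither $U_i$ can coincide with $U_1 \cap U_2$ (else $U_i \subseteq U_{3-i}$ and $U = U_{3-i}$, contradicting proper containment), Lemma \ref{lem:minimaldim} gives $k < \dmn{U_i} \leq n$ for $i \in \{1,2\}$. Hence $\dmn{U_1 \cap U_2} \leq k < n$, so the unique $n$\nbd dimensional element $x$ lies in exactly one of the two pieces; up to symmetry, in $U_1$. Then $U_1$ is an $n$\nbd molecule whose sole $n$\nbd dimensional element is $x$, and the induction hypothesis produces $\bord{}{\alpha} x \submol \bord{}{\alpha} U_1$.

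The second key step, and where I expect to need the most care, is the stronger bound $k < n-1$. If instead $k = n-1$, then $U_2$ does not contain $x$ and therefore has dimension at most $n-1 = k$, so Lemma \ref{lem:higherthandim} yields $\bord{k}{-} U_2 = U_2$; combined with $U_1 \cap U_2 = \bord{k}{-} U_2$ this would force $U_2 \subseteq U_1$ and $U = U_1$, a contradiction. With $k < n-1$ secured, Proposition \ref{prop:partial_omegacat} realises $U$ as $U_1 \cp{k} U_2$ in $\mol{}{P}$, and axiom 2 of partial $\omega$\nbd categories distributes the boundary over this composition, giving $\bord{}{\alpha} U = \bord{}{\alpha} U_1 \cp{k} \bord{}{\alpha} U_2$ as an equality in $\mol{}{P}$. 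This exhibits $\bord{}{\alpha} U_1$ as a submolecule of $\bord{}{\alpha} U$, and transitivity of $\submol$ closes the induction.
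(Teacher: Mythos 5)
Your proof is correct, and it follows the same skeleton as the paper's --- decompose $U$, establish $k < n-1$, apply the inductive hypothesis to the piece containing $x$, distribute $\bord{}{\alpha}$ across the pasting via the $\omega$\nbd category axioms, and finish by transitivity of $\submol$ --- but the decomposition it starts from is different. The paper invokes Lemma~\ref{lem:composition_form}, which yields a multi-factor decomposition $U = V_1 \cp{k} \ldots \cp{k} V_m$ in which each $V_i$ carries exactly one atom above dimension $k$; condition~(a) of that lemma makes $k < n-1$ immediate, since $k = n-1$ would force $m > 1$ distinct $n$\nbd dimensional elements. You instead work directly from the defining binary split $U = U_1 \cup U_2$ with $U_1 \cap U_2 = \bord{k}{+} U_1 = \bord{k}{-} U_2$, derive $k < \dmn{U_i}$ from properness of the $U_i$, and then derive $k < n-1$ from the fact that the piece not containing $x$ has dimension strictly less than $n$. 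This avoids the auxiliary structural lemma and keeps the induction entirely within the definition of molecule, at the cost of some extra bookkeeping; it is a valid and arguably more self-contained alternative. One minor tightening: once you have $k < \dmn{U_2}$ and $\dmn{U_2} < n$ (since $x \notin U_2$), the bound $k < n-1$ is already forced, so the detour through $\bord{k}{-} U_2 = U_2$ in your penultimate step is unnecessary.
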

\begin{proof}
If $U$ is an atom, $\bord{}{\alpha} x = \bord{}{\alpha}U$. Otherwise, let $U = V_1 \cp{k} \ldots \cp{k} V_m$ be the decomposition given by Lemma \ref{lem:composition_form}. Because at most one of the $V_i$ can contain $x$, necessarily $k < n-1$. Without loss of generality, suppose $x \in V_1$. Since $V_1$ is a proper submolecule of $U$, the inductive hypothesis applies, and $\bord{}{\alpha} x \submol \bord{}{\alpha} V_1$. By the axioms of $\omega$\nbd categories, $\bord{n-1}{\alpha}U = \bord{n-1}{\alpha}V_1 \cp{k} \ldots \cp{k} \bord{n-1}{\alpha}V_m$. We conclude that $\bord{}{\alpha} x \submol \bord{}{\alpha} U$. \end{proof}

%%%%%%%%%%%%%%%%%%%%%%%%%%%%%

\subsection{Regularity}

The following definition is based on \cite[Section 2.4]{henry2018regular}.

\begin{dfn}[Spherical boundary] Let $U$ be a molecule in a directed complex. By induction on $n \eqdef \dmn{U}$, we say that $U$ \emph{has spherical boundary} if $n = 0$, or if $n > 0$ and
\begin{enumerate}
	\item $\bord{}{-}U$ and $\bord{}{+}U$ are $(n-1)$\nbd molecules with spherical boundary, and
	\item $\bord{}{-}U \cap \bord{}{+}U = \bord (\bord{}{+}U) = \bord (\bord{}{-}U)$. 
\end{enumerate}
\end{dfn}

\begin{rmk} \label{rmk:spherical_k}
Unravelling the induction, and using globularity, we obtain the equivalent definition: $U$ has spherical boundary if, for all $k < n$,
\begin{equation*}
	\bord{k}{+}U \cap \bord{k}{-}U = \bord{k-1}{}U.
\end{equation*}
Moreover $\bord{k}{+}U$ is always a $k$\nbd molecule with spherical boundary.
\end{rmk}

\begin{dfn}[Regular directed complex]
A directed complex $P$ is \emph{regular} if, for all $x \in P$, the atom $\clos\{x\}$ has spherical boundary.
\end{dfn}

\begin{exm}
By \cite[Theorem 1]{hadzihasanovic2018combinatorial}, constructible molecules have spherical boundary. In particular, every constructible directed complex is a regular directed complex. However, the converse is not true: see [Remark 38, \emph{ibid.}] for a counterexample.
\end{exm}

\begin{dfn}
We write $\rdcpx$ and $\rdcpxin$ for the full subcategories of $\ogpos$ and $\ogposin$, respectively, on the regular directed complexes.
\end{dfn}

\begin{prop} 
The directed complex $1$ with a single element is the terminal object, and the empty directed complex $\emptyset$ is the initial object of $\rdcpx$.
\end{prop}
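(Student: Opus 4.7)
The plan is a direct verification by unfolding the relevant definitions; I expect no serious obstacle.

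First I would check that $\emptyset$ and $1$ actually lie in $\rdcpx$. For $\emptyset$ everything is vacuous. For $1$, writing $*$ for the unique element, $\dmn{*} = 0$, so axiom (1) of a directed complex (which requires $\dmn{x} > 0$ for $\bord{}{\alpha}x$ to be defined in the first place, per the convention in the definition of Boundaries) and axiom (2) (which requires $\dmn{x} > 1$) are both vacuous, as is the spherical boundary condition, whose base case is $n = 0$.

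Next, for initiality of $\emptyset$: given any $P \in \rdcpx$, the empty function $!\colon \emptyset \to P$ is the unique function on underlying sets, and the map condition $\bord{n}{\alpha}f(x) = f(\bord{n}{\alpha}x)$ is satisfied vacuously.

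For terminality of $1$: the constant function $f\colon P \to 1$ sending $x \mapsto *$ is clearly the unique function. To verify it is a map I would compute $\bord{n}{\alpha}*$ directly from the definition. For $n = 0$, since $*$ has no covering elements in $\clos\{*\} = \{*\}$, we have $\sbord{0}{\alpha}\{*\} = \{*\}$ and hence $\bord{0}{\alpha}* = \{*\}$. For $n > 0$, $\sbord{n}{\alpha}\{*\} = \emptyset$, but $\dmn{*} = 0 \leq n$ so $* $ lies in the second clause of the definition of $\bord{n}{\alpha}$, giving again $\bord{n}{\alpha}* = \{*\}$. On the other side, $f(\bord{n}{\alpha}x) = \{*\}$ as soon as $\bord{n}{\alpha}x$ is nonempty; and $\bord{n}{\alpha}x$ is always a molecule (by the remark following Proposition \ref{prop:partial_omegacat}), hence nonempty by the inductive definition of molecule, whose base case (an atom) requires a greatest element. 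Thus $\bord{n}{\alpha}f(x) = \{*\} = f(\bord{n}{\alpha}x)$ for every $x \in P$, $n \in \mathbb{N}$, $\alpha \in \{+,-\}$, so $f$ is a map.

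The only point requiring any thought is that $\bord{n}{\alpha}x$ is nonempty for every $x$ and $n$, which I would isolate and cite from the molecule machinery above rather than reprove. Otherwise, this is pure bookkeeping.
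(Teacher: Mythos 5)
The paper gives no proof of this statement (the following proposition about pushouts has ``Left to the reader'', and this one has nothing at all), so there is no official argument to compare against. Your verification is correct and complete. You rightly isolate the one non-vacuous step: that the constant map $P \to 1$ satisfies $\bord{n}{\alpha}f(x) = f(\bord{n}{\alpha}x)$, which reduces to showing $\bord{n}{\alpha}x$ is never empty, and you justify this cleanly via the fact that $\bord{n}{\alpha}\clos\{x\}$ is a molecule (by the remark after Proposition~\ref{prop:partial_omegacat}) and molecules are inhabited since the base case is an atom with a greatest element. One small remark: the computation $\bord{n}{\alpha}* = \{*\}$ for all $n$ also follows immediately from Lemma~\ref{lem:higherthandim} applied to the $0$\nbd dimensional closed set $\{*\}$, so you needn't split into cases $n = 0$ and $n > 0$ there; but the case split is harmless.
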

\begin{prop} \label{prop:rdcpxin_colimits}
The category $\rdcpxin$ has pushouts, created by the forgetful functor to $\pos$ and preserved by the inclusion in $\rdcpx$.
\end{prop}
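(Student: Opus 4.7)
The plan is to prove the \emph{created} claim directly, which subsumes the existence of pushouts in $\rdcpxin$. So consider a span $Q_1 \hookleftarrow P \incl Q_2$ in $\rdcpxin$. By Lemma \ref{lem:sameinclusions}, both inclusions are closed, dimension- and orientation-preserving embeddings of the underlying posets. The pushout in $\pos$ can then be constructed as the quotient $Q \eqdef (Q_1 \sqcup Q_2)/{\sim}$ identifying $\imath_1(p) \sim \imath_2(p)$ for every $p \in P$, equipped with the order $x \leq y$ iff some representatives of $x$ and $y$ lie in a common $Q_i$ and are related there. Antisymmetry and transitivity require that, for any chain $x \leq y \leq z$ with $x, y \in Q_1$ and $y, z \in Q_2$, the middle element $y$ lies in $\imath_1(P) = \imath_2(P)$, whence $x \in \imath_1(P)$ by closedness of $\imath_1$, and consequently $x \leq z$ also via $Q_2$.

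Next I would lift this poset pushout to $\ogposin$. Gradedness is inherited because every element has a unique representative in some $Q_i$, whose closure in $Q$ equals the image of its closure in $Q_i$ (again by closedness of the inclusions), so that dimensions transfer unambiguously. Each edge of $\hasse Q$ comes from $\hasse{Q_1}$ or $\hasse{Q_2}$, and when it comes from both it corresponds to an edge of $\hasse P$, where the two orientations agree by Lemma \ref{lem:sameinclusions}. This defines an orientation on $Q$ making the canonical functions $Q_i \to Q$ into inclusions of oriented graded posets. To show $Q$ is a regular directed complex, pick $x \in Q$, say coming from $Q_1$. Its closure in $Q$ is $\imath_1(\clos\{x\})$, and by Lemma \ref{lem:boundary_to_boundary} the boundaries $\bord{}{\alpha}x$ computed in $Q$ coincide with the images of those in $Q_1$; hence they are molecules with spherical boundary, and the globularity condition is inherited.

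For the universal property I would argue as follows. Suppose $f_i\colon Q_i \to R$ are maps in $\rdcpx$ with $f_1\imath_1 = f_2\imath_2$; the universal property in $\set$ yields a unique function $f\colon Q \to R$, and for any $x \in Q$ represented in $Q_i$ one has $f(\bord{n}{\alpha}x) = f_i(\bord{n}{\alpha}x) = \bord{n}{\alpha}f_i(x) = \bord{n}{\alpha}f(x)$, so $f$ is a map in $\rdcpx$. If both $f_i$ are inclusions and their images agree only on $\imath_i(P)$ (which follows from the agreement $f_1\imath_1 = f_2\imath_2$ together with injectivity), then $f$ is injective, hence an inclusion. This gives the universal property in $\rdcpxin$ and the preservation by the inclusion $\rdcpxin \incl \rdcpx$ simultaneously.

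The main technical step I expect to require care is the verification that the transitivity of the order on $Q$ and the equality $\clos\{x\}_Q = \imath_i(\clos\{x\}_{Q_i})$ both follow cleanly from closedness of the inclusions; everything else is a routine transfer of structure from $Q_1$ and $Q_2$ along the canonical injections, which are forced to be inclusions because closed embeddings compatible with orientations are inclusions by the remark following Lemma \ref{lem:sameinclusions}.
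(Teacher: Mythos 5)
The paper's own proof is just ``Left to the reader'', so there is no authoritative argument to compare against. Your construction of the underlying pushout poset, the transfer of grading, orientation, and regularity along the two closed embeddings, and the verification of the universal property in $\rdcpx$ are all essentially right. (A small nit: commutativity of $\bord{n}{\alpha}$ with the inclusions $Q_i \incl Q$ is literally the defining property of a map of oriented graded posets and needs no appeal to Lemma~\ref{lem:boundary_to_boundary}, which is about isomorphisms of the full posets.)

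There is, however, a genuine gap in the final paragraph. You assert that when $f_1, f_2$ are inclusions with $f_1\imath_1 = f_2\imath_2$, ``their images agree only on $\imath_i(P)$'', and deduce that the mediating map $f\colon Q \to R$ is injective. This is false. Take $P = 1$, $Q_1 = Q_2 = O^1$ with both $\imath_i$ sending the point of $1$ to $\undl{0}^-$, and $R = O^1$ with $f_1 = f_2 = \idd{O^1}$. Then $f_1\imath_1 = f_2\imath_2$ and both $f_i$ are inclusions, yet their images coincide on all of $O^1$, and the mediating map from the five-element pushout poset to the three-element $O^1$ cannot be injective. In fact no object of $\rdcpxin$ is a pushout for this span: a candidate $W$ would admit an injection into the ``V''-shaped poset forcing $|W| \geq 5$, while also admitting an injection into $O^1$, which is impossible. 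What your argument genuinely establishes --- and what the paper actually uses downstream, e.g.\ in Proposition~\ref{prop:gray_preserve} and the corollary that every regular directed complex is the colimit of its atoms --- is that pushouts of spans of inclusions exist in $\rdcpx$, are computed on underlying posets, and have inclusion legs. The stronger claim that the forgetful functor $\rdcpxin \to \pos$ creates pushouts, which you tried to prove literally, does not hold; the imprecision lies in the statement itself rather than only in your proof, and the honest fix is to restate what is being proved.
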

\begin{proof} 
Left to the reader.
\end{proof}

\begin{cor}
Every regular directed complex is the colimit of the diagram of inclusions of its atoms.
\end{cor}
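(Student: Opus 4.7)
The plan is to exhibit the colimit cocone explicitly and verify its universal property by hand. Let $D\colon P \to \rdcpxin$ be the functor sending each $x \in P$ to the atom $\clos\{x\}$ and each relation $x \leq y$ to the evident inclusion $\clos\{x\} \incl \clos\{y\}$; this is well-defined because $\clos\{x\}$ is closed in $\clos\{y\}$ with the same orientation inherited from $P$, and regularity of $P$ guarantees that each atom lies in $\rdcpx$. The inclusions $\imath_x\colon \clos\{x\} \incl P$ then assemble into a cocone under $D$ in $\rdcpx$, and this is the cocone I would prove to be universal.

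To show universality, I would take an arbitrary cocone $(f_x\colon \clos\{x\} \to Q)_{x \in P}$ in $\rdcpx$ and argue that the constraint $f \circ \imath_x = f_x$ forces any mediating $f\colon P \to Q$ to be given on underlying sets by $f(x) \eqdef f_x(x)$, where $x$ is read as the greatest element of $\clos\{x\}$. This pins down $f$ uniquely as a function, so it remains only to check that it really is a map of oriented graded posets.

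The remaining verification factors through one preliminary observation: $f$ restricted to $\clos\{x\}$ coincides with $f_x$. For any $y \leq x$, the cocone condition applied to the inclusion $\clos\{y\} \incl \clos\{x\}$ yields $f_x(y) = f_y(y) = f(y)$. Given this, for any $x \in P$, $n \in \mathbb{N}$, and $\alpha \in \{+,-\}$ I would compute
\begin{equation*}
f(\bord{n}{\alpha} x) = f_x(\bord{n}{\alpha} x) = \bord{n}{\alpha} f_x(x) = \bord{n}{\alpha} f(x),
\end{equation*}
using that $\bord{n}{\alpha}x \subseteq \clos\{x\}$, where $f$ agrees with $f_x$, together with the assumption that each $f_x$ is itself a map.

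I do not anticipate any serious obstacle: the argument is a direct lifting of the familiar fact that a finite poset is the colimit of its principal downsets, made possible here by the observation that each element of $P$ is the greatest element of a unique atom. The only point requiring care is compatibility across overlapping atoms, which is handed to us by the cocone condition; regularity enters solely to keep the diagram and its cocone inside $\rdcpx$.
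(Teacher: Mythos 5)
Your proof is correct, but it routes around the paper's argument rather than through it. The paper invokes Proposition \ref{prop:rdcpxin_colimits} --- that pushouts in $\rdcpxin$ are created by the forgetful functor to $\pos$ and preserved by $\rdcpxin \incl \rdcpx$ --- together with the fact that the canonical colimit over the atom diagram can be assembled from pushouts and the initial object, so that the claim is lifted from the corresponding (standard) statement in $\pos$. You instead verify the universal property by hand: you observe that the mediating map, if it exists, is forced on underlying sets by $f(x) \eqdef f_x(x)$, you check that the cocone condition gives compatibility on overlaps, and you derive the map axiom $f(\bord{n}{\alpha}x) = \bord{n}{\alpha}f(x)$ by noting that $\bord{n}{\alpha}x$ lies inside $\clos\{x\}$, where $f$ agrees with $f_x$. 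This is a genuinely different and more elementary route: it avoids appealing to the created-colimit machinery and is entirely self-contained, at the cost of redoing by hand the reduction to $\pos$ that the earlier proposition packages for free. One small point worth being explicit about: the ``diagram of inclusions of its atoms'' is formally the subpreorder of $\slice{\rmolin}{P}$ on atoms, and it is Proposition \ref{prop:molecule_noauto} (via Corollary \ref{cor:slice_preorder}) that lets you identify this preorder with $P$ itself, each element being the greatest element of its unique atom. You gesture at this (``each element of $P$ is the greatest element of a unique atom'') but the identification deserves a citation if this were to replace the paper's proof.
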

\begin{proof}
This is true of the underlying diagram in $\pos$, and the colimit can be constructed with pushouts and the initial object.
\end{proof}

\begin{lem} \label{lem:disjoint_sbord}
Let $U$ be an $n$\nbd molecule with spherical boundary. Then
\begin{enumerate}[label=(\alph*)]
	\item $U$ is pure,
	\item if $n > 0$, then $\sbord{}{+}U$ and $\sbord{}{-}U$ are disjoint and inhabited, and
	\item each $x \in \sbord U$ is covered by a single element.
\end{enumerate}
\end{lem}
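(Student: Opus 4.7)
The plan is to prove the three clauses in the order (b), (c), (a), deriving (b) directly from the sphericality condition, (c) immediately from (b) and Lemma \ref{lem:molecule_codim}, and (a) by induction on $n \eqdef \dmn{U}$ using (b) at each step. The key observation is that (b) does not depend on purity, so this ordering is self-consistent.

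For (b), sphericality gives that $\bord{}{-}U$ and $\bord{}{+}U$ are $(n-1)$-molecules, hence each contains some element $z$ of dimension $n-1$. Unpacking the definition $\bord{n-1}{\alpha}U = \clos{(\sbord{n-1}{\alpha}U)} \cup \{x \in U \mid x \leq y \Rightarrow \dmn{y} \leq n-1\}$, one of two things happens: either $z \in \clos{(\sbord{n-1}{\alpha}U)}$, and being of maximal dimension in that closure $z$ must itself lie in $\sbord{n-1}{\alpha}U$; or $z$ is not below any element of higher dimension in $U$, so nothing covers $z$, placing $z$ vacuously in $\sbord{n-1}{+}U \cap \sbord{n-1}{-}U \subseteq \sbord{n-1}{\alpha}U$. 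Either way $\sbord{}{\alpha}U$ is inhabited. For disjointness, any $x \in \sbord{}{+}U \cap \sbord{}{-}U$ would have dimension $n-1$ while lying in $\bord{}{+}U \cap \bord{}{-}U$, which by sphericality equals $\bord(\bord{}{+}U)$, a set of dimension at most $n-2$; contradiction. Clause (c) is then immediate: for $x \in \sbord U$, (b) ensures $x \notin \sbord{}{+}U \cap \sbord{}{-}U$, so the middle case of Lemma \ref{lem:molecule_codim} applies and $x$ is covered by exactly one element.

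For (a), I would induct on $n$. If $n = 0$, $U$ contains only 0-dimensional elements and is trivially pure. For the step, sphericality of $U$ ensures $\bord{}{-}U$ and $\bord{}{+}U$ are $(n-1)$-molecules with spherical boundary, hence pure by the inductive hypothesis. Suppose for contradiction some maximal $y \in U$ has $\dmn{y} = m < n$. Since $y$ has no element strictly above it in $U$, it lies in the second summand of $\bord{n-1}{\alpha}U$, hence in $\bord{}{\alpha}U$, and is still maximal there; inductive purity of $\bord{}{\alpha}U$ forces $m = n-1$. But then $y$ is an $(n-1)$-dimensional element not covered in $U$, which by Lemma \ref{lem:molecule_codim} forces $y \in \sbord{}{+}U \cap \sbord{}{-}U$, contradicting (b). The only real subtlety is the two-case analysis in (b) ensuring an $(n-1)$-dimensional element of $\bord{}{\alpha}U$ lands in $\sbord{n-1}{\alpha}U$ rather than contributing as an unrelated maximal element of $U$; everything else is formal manipulation of the boundary definitions.
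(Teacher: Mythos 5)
Your proof is correct, but it takes a genuinely different route from the paper's. The paper proves (a) first and directly: it invokes the unrolled $k$\nbd level form of sphericality from Remark \ref{rmk:spherical_k}, so a maximal element of dimension $k < n$ lands in $\sbord{k}{+}U \cap \sbord{k}{-}U \subseteq \bord{k-1}{}U$, which is too low-dimensional --- one contradiction, no induction. Then (b) is derived \emph{from} purity (an element of $\sbord{}{+}U \cap \sbord{}{-}U$ would be a maximal element of dimension $n-1$, impossible once $U$ is known pure), and (c) falls out of Lemma \ref{lem:molecule_codim}. You instead prove (b) straight from the $(n-1)$\nbd level intersection condition in the definition of spherical boundary, carefully handling the case of an $(n-1)$\nbd dimensional element that might a priori be maximal in $U$ (which you cannot yet exclude since purity is not available), and then prove (a) by induction on $n$ using (b) in each step. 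Both arguments are sound. What the paper's route buys is brevity: one application of Remark \ref{rmk:spherical_k} replaces your induction. What your route buys is independence from that remark --- you work directly from the recursive definition of spherical boundary --- and it exposes that disjointness and inhabitedness of $\sbord{}{\pm}U$ do not actually presuppose purity, which is a slightly finer observation than the paper makes. One small efficiency note: in your last step you invoke Lemma \ref{lem:molecule_codim} to place an uncovered $(n-1)$\nbd dimensional element in $\sbord{}{+}U \cap \sbord{}{-}U$, but this is immediate from the definition of $\sbord{}{\alpha}$ (the covering condition is vacuously satisfied), so the appeal to the lemma is not needed there.
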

\begin{proof}
If $n = 0$, there is nothing to prove. Let $n > 0$, and suppose $U$ has a maximal element of dimension $k < n$. Then $x$ is not covered by any element of dimension $>k$, so it belongs to $\sbord{k}{+}U \cap \sbord{k}{-}U$, but $\bord{k}{+}U \cap \bord{k}{-}U = \bord{k-1}{}U$ by Remark \ref{rmk:spherical_k}. The latter is at most $(k-1)$\nbd dimensional, a contradiction. This proves that $U$ is pure.

It follows from purity of $U$ that $\sbord{}{+}U$ and $\sbord{}{-}U$ are disjoint. They are inhabited because $\bord{}{+}U$ and $\bord{}{-}U$ have spherical boundary, so they are pure and $(n-1)$\nbd dimensional. The last point then follows from Lemma \ref{lem:molecule_codim}.
\end{proof}

\begin{lem} \label{lem:molecule_right_dimension}
Let $U$ be an $n$\nbd molecule in a regular directed complex. Then $\bord{k}{\alpha}U$ is $k$\nbd dimensional for all $k < n$.
\end{lem}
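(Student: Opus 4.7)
I proceed by outer induction on $n \eqdef \dmn{U}$. The base $n = 0$ is vacuous. For the inductive step, I first reduce to the \emph{codimension-$1$ claim} that $\bord{n-1}{\alpha}U$ is $(n-1)$\nbd dimensional: given this, the remark following Proposition \ref{prop:partial_omegacat} ensures that $\bord{n-1}{\alpha}U$ is an $(n-1)$\nbd molecule, whence iterated globularity in the partial $\omega$\nbd category $\mol{}{P}$ yields $\bord{k}{\alpha}U = \bord{k}{\alpha}(\bord{n-1}{\alpha}U)$ for all $k < n-1$, and the outer inductive hypothesis applies.

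The codimension-$1$ claim I prove by a secondary induction on the submolecule relation $\submol$. The base is the case of an atom $U = \clos\{x\}$ with $\dmn{x} = n$: by regularity, $\bord{}{\alpha}x = \bord{n-1}{\alpha}U$ is an $(n-1)$\nbd molecule with spherical boundary, hence is $(n-1)$\nbd dimensional by the very meaning of ``$(n-1)$\nbd molecule''. For the step, suppose $U = U_1 \cup U_2$ with $U_1, U_2 \submol U$ proper and $U_1 \cap U_2 = \bord{j}{+}U_1 = \bord{j}{-}U_2$. Since $\dmn{U} = n$, at least one of the $U_i$ has dimension $n$. When both do, the composition axioms of $\mol{}{P}$ give $\bord{n-1}{-}U = \bord{n-1}{-}U_1$ and $\bord{n-1}{+}U = \bord{n-1}{+}U_2$ if $j = n-1$, and $\bord{n-1}{\alpha}U = \bord{n-1}{\alpha}U_1 \cup \bord{n-1}{\alpha}U_2$ if $j < n-1$; in either subcase the inductive hypothesis concludes.

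The delicate subcase is when, say, $\dmn{U_1} = n$ and $\dmn{U_2} < n$. Lemma \ref{lem:higherthandim} together with $U_2 \not\subseteq U_1$ (forced by properness) gives $j < \dmn{U_2}$, hence $j \leq n-2$. Consequently $U_1 \cap U_2$ has dimension at most $j \leq n - 2$, so no $(n-1)$\nbd dimensional element of $U_1$ can lie in $U_2$; since $U_2$ is closed, such an element also has no cover in $U_2$, and its covers in $U$ therefore coincide with its covers in $U_1$. This gives $\sbord{n-1}{\alpha}U_1 \subseteq \sbord{n-1}{\alpha}U$, and it suffices to see that the left-hand side is nonempty. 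The inductive hypothesis supplies an $(n-1)$\nbd dimensional element of $\bord{n-1}{\alpha}U_1$, and any such element belongs automatically to $\sbord{n-1}{\alpha}U_1$ (either directly, as a dim-$(n-1)$ element of the closure of $\sbord{n-1}{\alpha}U_1$, or vacuously as an element of $U_1$ with no cover in $U_1$).

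The main obstacle is this mixed-dimension subcase, where I must rule out that gluing a lower-dimensional $U_2$ to $U_1$ could ``hide'' the $(n-1)$\nbd dimensional faces of $U_1$ by adding covers of the wrong orientation. The dimension bound $j \leq n-2$ is precisely what prevents this interference; the rest is direct bookkeeping with the partial $\omega$\nbd categorical axioms.
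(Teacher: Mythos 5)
Your proof is correct and self-contained; the paper itself defers to an external proof (\cite[Proposition 19]{hadzihasanovic2018combinatorial}), so a direct comparison is not possible, but the double-induction structure you use — an outer induction on $n$, reducing via globularity in $\mol{}{P}$ to the codimension-$1$ case, and an inner induction on $\submol$ — is the natural approach and almost certainly mirrors the cited argument. The reduction step is sound once one accepts (as Proposition \ref{prop:partial_omegacat} provides) that the concrete boundary operators on molecules satisfy the globularity relations, and your case analysis in the inner induction is complete: when both pieces are top-dimensional the $\omega$-categorical composition axioms do the work, and in the mixed-dimension case your key observation that properness together with Lemma \ref{lem:higherthandim} forces $j \leq n-2$ correctly rules out any interference with the $(n-1)$-skeleton, so that $\sbord{n-1}{\alpha}U_1 \subseteq \sbord{n-1}{\alpha}U$ and nonemptiness of the former (extracted from the inductive hypothesis, using the fact that an $(n-1)$-dimensional element of $\bord{n-1}{\alpha}U_1$ must lie in $\sbord{n-1}{\alpha}U_1$ in either branch of the definition of $\bord{n-1}{\alpha}$) yields the required lower bound on dimension, while the upper bound $\dmn{\bord{n-1}{\alpha}U} \leq n-1$ is immediate from the definition.
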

\begin{proof}
Replace ``constructible molecule'' with ``molecule with spherical boundary'' in the proof of \cite[Proposition 19]{hadzihasanovic2018combinatorial}.
\end{proof}

\begin{center}
\setlength{\fboxsep}{.6em}
\colorbox{gray!20}{From here on we assume implicitly that molecules are regular.}
\end{center}

\begin{lem} \label{lem:path_to_boundary}
Let $U$ be an $n$\nbd molecule, $x \in U$ an $n$\nbd dimensional element. There is a finite sequence $x = x_0 \to y_0 \to \ldots \to x_m \to y_m$ of elements of $U$ such that
\begin{enumerate}
	\item the $x_i$ are $n$\nbd dimensional and the $y_i$ are $(n-1)$\nbd dimensional, 
	\item $y_i \in \sbord{}{+}x_i \cap \sbord{}{-}x_{i+1}$ for $i < m$, and $y_m \in \sbord{}{+}U$.
\end{enumerate}
\end{lem}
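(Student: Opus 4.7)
The plan is to proceed by well-founded induction on the proper submolecule relation $\submol$. For the base case, $U$ is an atom with greatest element $x$; then $\sbord{}{+}U = \sbord{}{+}x$ is inhabited by Lemma \ref{lem:disjoint_sbord} (using that, under the standing regularity convention, $U$ has spherical boundary), so picking any $y_0 \in \sbord{}{+}x$ yields the single-step sequence with $m = 0$.

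For the inductive step, invoke Lemma \ref{lem:composition_form} to decompose $U = V_1 \cp{k} \ldots \cp{k} V_q$ with $q > 1$ and $k < n$, each $V_i$ containing a unique atom $U_i$ of dimension strictly greater than $k$, of which at most one has dimension $> k+1$. I split on $k$. If $k < n-1$, a single $V_j$ absorbs all the $n$\nbd dimensional elements of $U$, since the other $V_i$ have top dimension at most $k+1 < n$; hence $x \in V_j$, and the induction hypothesis applied to $V_j$ yields a sequence ending at some $y_m \in \sbord{}{+}V_j$. Any $n$\nbd dim cover of $y_m$ in $U$ must lie in $V_j$ by dimension, so $y_m \in \sbord{}{+}U$ and we are done. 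If $k = n-1$, each $V_i$ has a unique $n$\nbd atom $U_i$, so $x = U_i$ for some $i$, and the induction hypothesis on $V_i$ yields a sequence in $V_i$ from $x$ to some $y \in \sbord{}{+}V_i$. When $i < q$, the identity $\sbord{}{+}V_i = \sbord{}{-}V_{i+1}$ coming from the composition, combined with Lemma \ref{lem:molecule_codim} and the disjointness of input/output boundaries from Lemma \ref{lem:disjoint_sbord}, forces the unique cover of $y$ in $V_{i+1}$ to be $U_{i+1}$ with orientation $-$, placing $y \in \sbord{}{-}U_{i+1}$; then concatenate with the sequence obtained by applying the induction hypothesis to the proper submolecule $V_{i+1} \cp{n-1} \ldots \cp{n-1} V_q$ started from $U_{i+1}$. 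When $i = q$, one argues directly that $y \in \sbord{}{+}U$: any hypothetical additional cover of $y$ coming from a $V_j$ with $j < q$ would put $y \in V_j \cap V_q \subseteq \bord{}{-}V_q$, hence $y \in \sbord{}{-}V_q$ (being $(n-1)$\nbd dim), contradicting $y \in \sbord{}{+}V_q$ by Lemma \ref{lem:disjoint_sbord}.

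The main obstacle is the bookkeeping in the case $k = n-1$, where the induction does not reduce dimension: tracking which covers an $(n-1)$\nbd dim element $y$ admits inside each $V_i$ versus globally inside $U$ is the delicate part, and it relies crucially on Lemma \ref{lem:molecule_codim} and on the sphericality of regular molecules underwriting Lemma \ref{lem:disjoint_sbord}. The remaining steps --- counting dimensions, matching up output/input faces along the composition, and recognising that $V_j \cap V_q \subseteq \bord{}{-}V_q$ for $j < q$ --- are routine consequences of the $\omega$\nbd category axioms for $\mol{}{P}$ and of Proposition \ref{prop:partial_omegacat}.
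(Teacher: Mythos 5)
Your approach is close to the paper's (induction on proper submolecules, splitting on whether the pasting dimension $k$ is $n-1$ or lower), but you use the multi-factor decomposition of Lemma \ref{lem:composition_form} where the paper uses a bare two-factor decomposition $U = U_1 \cup U_2$. The base case and the $k < n-1$ branch are fine. The trouble is in the $k = n-1$ branch, and it comes from invoking Lemma \ref{lem:disjoint_sbord} for the factors $V_i$.

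Lemma \ref{lem:disjoint_sbord} requires spherical boundary. Under the standing regularity convention, \emph{atoms} have spherical boundary, but the factors $V_{i+1}$ and $V_q$ are arbitrary $n$\nbd molecules with a single $n$\nbd atom, and these are generally \emph{not} pure: $V_{i+1}$ can carry maximal $(n-1)$\nbd dimensional elements outside $U_{i+1}$ (for example, a $2$\nbd atom with an extra $1$\nbd cell pasted at $\cp{0}$). So $\sbord{}{+}V_{i+1}$ and $\sbord{}{-}V_{i+1}$ need not be disjoint, and your inference that $y$ has a unique cover in $V_{i+1}$, necessarily the top of $U_{i+1}$, does not follow. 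The element $y$ could be a maximal $(n-1)$\nbd dimensional element of $V_{i+1}$ (in $\sbord{}{+}V_{i+1} \cap \sbord{}{-}V_{i+1}$), hence not covered there at all, in which case you have neither $y \in \sbord{}{-}U_{i+1}$ nor a way to start the concatenated sequence from $U_{i+1}$. For exactly this reason the paper explicitly splits into two cases once it reaches $\sbord{}{+}U_1$: either $y_m$ has no cover in $U_2$, in which case Lemma \ref{lem:molecule_codim} applied to $U$ gives $y_m \in \sbord{}{+}U$ and the sequence terminates, or it has a cover in $U_2$ with orientation $-$, and only then does one continue.

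The $i = q$ branch has the same flaw in a milder form. The ``contradiction'' with Lemma \ref{lem:disjoint_sbord} is spurious because $V_q$ need not have spherical boundary; but this branch is repairable, since you can instead observe that $y$ \emph{is} covered in $V_q$ by $x_m$ (so $y \in \sbord{}{+}V_q \cap \sbord{}{-}V_q$ would contradict Lemma \ref{lem:molecule_codim} directly), and in any case an additional cover in $V_j$ for $j < q$ would lie in $V_{1\ldots q-1}$ where $y \in \sbord{}{+}V_{1\ldots q-1}$, so it has orientation $+$ and doesn't obstruct $y \in \sbord{}{+}U$. The $i < q$ branch, however, genuinely needs the missing ``terminate or skip'' alternative; as written it silently assumes that the chain always advances to the adjacent factor, which fails.
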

\begin{proof}
By induction on proper submolecules of $U$: if $U$ is an atom, it is equal to $\clos\{x\}$. By Lemma \ref{lem:disjoint_sbord}, $\sbord{}{+}x$ is inhabited, so we can pick $y_0 \in \sbord{}{+}x = \sbord{}{+}U$.

Otherwise, $U = U_1 \cup U_2$ with $U_1 \cap U_2 = \bord{k}{+}U_1 = \bord{k}{-}U_2$. Suppose $x \in U_1$; by the inductive hypothesis there is a sequence $x \to y_0 \to \ldots \to x_m \to y_m$ reaching $\sbord{}{+}U_1$. If $k < n-1$, then $\sbord{}{+}U_1 \subseteq \sbord{}{+}U$, and we are done. Otherwise, by Lemma \ref{lem:molecule_codim}, either $y_m$ is not covered by any element of $U_2$, in which case $y_m \in \sbord{}{+}U$, or $y_m \in \sbord{}{-}x_{m+1}$ for some $x_{m+1} \in U_2$. By the inductive hypothesis, there is a sequence $x_{m+1} \to y_{m+1} \to \ldots \to x_p \to y_p$ reaching $\sbord{}{+}U_2 \subseteq \sbord{}{+}U$. Stringing together the two sequences, we conclude.
\end{proof}

\begin{prop} \label{prop:molecule_noauto}
Let $U$ be a molecule and $\imath\colon U \incliso U$ an isomorphism. Then $\imath$ is the identity.
\end{prop}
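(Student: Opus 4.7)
The plan is a strong induction on $|U|$. The base case $|U| = 1$ is trivial, since then $U$ is a single point. For $|U| > 1$, let $n := \dmn{U} \geq 1$. By Lemma \ref{lem:boundary_to_boundary}, $\imath$ restricts to automorphisms of $\bord{}{-}U$ and $\bord{}{+}U$; both are $(n-1)$-dimensional by Lemma \ref{lem:molecule_right_dimension} and hence strictly smaller than $U$, so the inductive hypothesis gives that $\imath$ is the identity on both boundaries, and therefore on $\bord{n-1}U$.

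I then show that $\imath$ fixes every $n$-dimensional element $x \in U$. When $U$ is an atom this is immediate, since $x$ must be the greatest element. Otherwise, by Lemma \ref{lem:path_to_boundary} I choose a sequence $x = x_0 \to y_0 \to x_1 \to \ldots \to x_m \to y_m$ with $y_m \in \sbord{}{+}U$, and propagate fixedness backwards along it. The endpoint $y_m \in \sbord{}{+}U \subseteq \bord{n-1}U$ is already fixed; by Lemma \ref{lem:disjoint_sbord}(c) it is covered in $U$ by a unique element, which must be $x_m$, so the preservation of the Hasse diagram by $\imath$ forces $\imath(x_m) = x_m$. For the inductive step, supposing $\imath(x_i) = x_i$, the atom $\clos\{x_i\}$ is strictly smaller than $U$ (since $U$ is not an atom), so the main inductive hypothesis applied to the automorphism of $\clos\{x_i\}$ induced by $\imath$ gives that $\imath$ is the identity on $\clos\{x_i\}$. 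In particular $\imath(y_{i-1}) = y_{i-1}$, because $y_{i-1} \in \sbord{}{-}x_i \subseteq \clos\{x_i\}$. By Lemma \ref{lem:molecule_codim} the $(n-1)$-dimensional $y_{i-1}$ is covered by exactly two elements: $x_{i-1}$ with orientation $+$, and $x_i$ with orientation $-$. Since $\imath(x_{i-1})$ must cover $y_{i-1}$ with orientation $+$, it can only be $x_{i-1}$; iterating down to $i = 0$ yields $\imath(x) = x$.

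Finally, any remaining $z \in U$ of dimension below $n$ not lying in $\bord{n-1}U$ sits below some $n$-dimensional $x$ by purity (Lemma \ref{lem:disjoint_sbord}(a)). Since $\imath(x) = x$ by the previous step, $\imath$ restricts to an automorphism of $\clos\{x\}$; when $U$ is not an atom, $\clos\{x\} \subsetneq U$ and the inductive hypothesis delivers $\imath(z) = z$. If $U$ is itself an atom with greatest element $x$, one verifies directly that $\bord{n-1}U = U \setminus \{x\}$, making this case vacuous. The main obstacle is the backward propagation in the second paragraph, which has to interleave the induction on $|U|$ (applied to the strictly smaller atoms $\clos\{x_i\}$) with the covering-count constraints of Lemmas \ref{lem:disjoint_sbord} and \ref{lem:molecule_codim} in order to eliminate, step by step, the ambiguity in where $\imath$ can send each $x_i$ and each $y_{i-1}$.
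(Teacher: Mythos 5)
Your proof takes the same route as the paper's, with one cosmetic difference: you induct on $|U|$ rather than on $\dmn{U}$. This is in fact a little cleaner, because it lets you apply the inductive hypothesis directly to the atoms $\clos\{x_i\}$ (they have the same dimension as $U$ but strictly smaller cardinality when $U$ is not an atom), where the paper instead has to appeal to an informal ``by the same reasoning'' to handle this case. Your backward propagation along the path of Lemma \ref{lem:path_to_boundary} and your use of the orientation-preservation of $\imath$ to rule out any ambiguity at each step match the paper's argument precisely.

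There are, however, two misattributions that need repairing, both stemming from the same oversight: you invoke Lemma \ref{lem:disjoint_sbord} for the ambient molecule $U$, but that lemma requires spherical boundary, and a regular molecule need not have spherical boundary (only its atoms must). First, to conclude that $y_m$ is covered by a unique element you cite Lemma \ref{lem:disjoint_sbord}(c); instead use Lemma \ref{lem:molecule_codim}, which applies to any molecule: since $y_m$ is covered by $x_m$, it cannot lie in $\sbord{}{+}U \cap \sbord{}{-}U$, so it falls into the second case of that lemma and is covered by exactly one element. Second, in the final paragraph you invoke purity (Lemma \ref{lem:disjoint_sbord}(a)) to place $z$ below an $n$\nbd dimensional element; this is both unavailable for general $U$ and unnecessary, since the definition of $\bord{n-1}{\alpha}U$ already puts every element not below anything of dimension $n$ into $\bord{n-1}U$, so any $z \notin \bord{n-1}U$ automatically sits below an $n$\nbd dimensional element. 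With these two citations corrected, the proof is sound.

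Your verification that $\bord{n-1}U = U \setminus \{x\}$ when $U$ is an atom with greatest element $x$ is correct (this uses the spherical boundary of the atom, which is guaranteed by regularity), and your observation that this makes the final case vacuous for atoms is a nice explicit touch that the paper leaves implicit.
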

\begin{proof}
We proceed by induction on $n \eqdef \dmn{U}$. If $n = 0$, the statement is clearly true, so suppose $n > 0$. 

By Lemma \ref{lem:boundary_to_boundary}, $\imath(\bord{}{\alpha}U) = \bord{}{\alpha}U$. Because $\bord{}{\alpha}U$ is a molecule of dimension $k < n$, we know by the inductive hypothesis that the restriction of $\imath$ to $\bord{}{\alpha}U$ is the identity. If $x \in U$ is a maximal element such that $\imath(x) = x$, by the same reasoning we obtain that $\imath$ is the identity on $\clos\{x\}$. Thus it suffices to prove that $\imath$ fixes the maximal elements. 

If $U$ has a greatest element, this is obvious. Otherwise, let $x$ be a maximal element. If $\dmn{x} < n$, then $x \in \bord{}{\alpha}U$ and we have already established that $\imath(x) = x$. 

If $\dmn{x} = n$, we can construct a sequence $x = x_0 \to y_0 \to \ldots \to x_m \to y_m$ as in Lemma \ref{lem:path_to_boundary}. Such a sequence is mapped by $\imath$ to one with the same property, and $y_m = \imath(y_m)$ because $y_m \in \sbord{}{+} U$. By Lemma \ref{lem:molecule_codim}, $y_m$ is only covered by $x_m$, so necessarily $\imath(x_m) = x_m$. It follows that $\imath(y_{m-1}) = y_{m-1}$. Then $y_{m-1}$ is only covered by $x_{m-1}$ with orientation $+$, and proceeding backwards we find that $\imath(x) = x$. 
\end{proof}

\begin{dfn}
Let $\rmolin$ be the full subcategory of $\rdcpxin$ on molecules of any dimension. For each regular directed complex $P$, let $\slice{\rmolin}{P}$ be the comma category whose objects are inclusions $U \hookrightarrow P$ of molecules into $P$ and morphisms are commutative triangles
\begin{equation*}
\begin{tikzpicture}[baseline={([yshift=-.5ex]current bounding box.center)}]
	\node (0) at (-1.25,1.25) {$U$};
	\node (1) at (0,0) {$P$};
	\node (2) at (1.25,1.25) {$V$};
	\draw[1cinc] (0) to (2);
	\draw[1cincl] (0) to (1);
	\draw[1cinc] (2) to (1);
	\node at (1.5,0) {.};
\end{tikzpicture}
\end{equation*}
\end{dfn}

\begin{cor} \label{cor:slice_preorder}
Let $P$ be a regular directed complex. Then $\slice{\rmolin}{P}$ is a preorder.
\end{cor}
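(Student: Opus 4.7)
The plan is to observe that the claim reduces to a very small argument about injectivity of underlying functions. A morphism in $\slice{\rmolin}{P}$ from $\imath_U\colon U \incl P$ to $\imath_V\colon V \incl P$ is, by definition, an inclusion $f\colon U \incl V$ such that $\imath_V \circ f = \imath_U$. To establish that $\slice{\rmolin}{P}$ is a preorder, I need to show that between any two such objects there is at most one such $f$.

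Suppose $f_1, f_2\colon U \incl V$ are two morphisms in the slice with the same source and target. Then the commutativity conditions give $\imath_V \circ f_1 = \imath_U = \imath_V \circ f_2$, so $\imath_V(f_1(x)) = \imath_V(f_2(x))$ for every $x \in U$. Since $\imath_V$ is an inclusion, it is injective as a function of underlying sets, whence $f_1(x) = f_2(x)$ for all $x$. A map of oriented graded posets is determined by its underlying function, so $f_1 = f_2$, completing the argument.

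There is no real obstacle here: the statement is essentially a formal consequence of inclusions being monomorphic on underlying sets, and it uses neither the regularity hypothesis on $P$ nor Proposition \ref{prop:molecule_noauto}. The rigidity result \ref{prop:molecule_noauto} would enter only if one wished to strengthen ``preorder'' to ``partial order'': given a pair of mutually inverse morphisms in $\slice{\rmolin}{P}$, their compositions would be automorphisms of $U$ and $V$, hence identities by Proposition \ref{prop:molecule_noauto}, forcing the two objects to coincide.
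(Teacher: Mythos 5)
Your proof is correct, and it takes a genuinely simpler route than the paper's. You observe that $\imath_V$ is a monomorphism, so any two morphisms $(U, \imath_U) \to (V, \imath_V)$ in the slice must coincide; this holds for the slice of \emph{any} category whose morphisms are all monic, with no appeal to regularity or to Proposition~\ref{prop:molecule_noauto}. The paper instead uses Proposition~\ref{prop:molecule_noauto} to show that two inclusions $\imath_1, \imath_2\colon U \incl P$ of the \emph{same} molecule are equal as soon as they have the same image in $P$ (the composite $\invrs{\imath_2}\circ\imath_1$ is then an automorphism of $U$, hence the identity), and deduces that $(U, \imath_U) \mapsto \imath_U(U)$ exhibits $\slice{\rmolin}{P}$ as equivalent to a subposet of $\powset{P}$. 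The trade-off: your argument is more elementary and more general, but it proves only the literal preorder property. The paper's argument additionally yields that an inclusion of a molecule into $P$ is uniquely determined by its image, and it is \emph{that} stronger fact which is invoked when this corollary is cited in \S\ref{dfn:molecule_pasting} to produce the ``unique span of inclusions'' with prescribed images. Your proof as written would not license that later use without a separate invocation of Proposition~\ref{prop:molecule_noauto}.

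One caveat on your closing aside: Proposition~\ref{prop:molecule_noauto} does not by itself promote ``preorder'' to ``partial order''. If $f\colon (U,\imath_U) \to (V,\imath_V)$ and $g$ are mutually inverse in the slice, then $gf = \idd{U}$ and $fg = \idd{V}$ follow already from the preorder property (each endomorphism set contains the identity and has at most one element), not from the rigidity of molecules. This gives $U \cong V$, but since $\rmolin$ is not skeletal it does not give $U = V$, so the two objects of the slice need not coincide. What the proposition actually buys is the statement quoted above: once the underlying molecule is fixed, the inclusion into $P$ is determined by its image.
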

\begin{proof}
As a consequence of Lemma \ref{prop:molecule_noauto}, two inclusions $\imath_1, \imath_2\colon U \incl P$ are equal if and only if they have the same image in $P$. This establishes an equivalence between $\slice{\rmolin}{P}$ and a subposet of the subset lattice of $P$.
\end{proof}

%\begin{rmk}
%We will usually treat $\slice{\rmolin}{P}$ as a poset rather than a preorder. This can be made precise by making $\rmolin$ skeletal. 
%\end{rmk}

\section{Operations on molecules} \label{sec:operations}

\subsection{Pasting and substitution}

\begin{dfn}[Pasting of molecules] \label{dfn:molecule_pasting}
Let $U_1, U_2$ be molecules and suppose that $\bord{k}{+}U_1$ and $\bord{k}{-}U_2$ are isomorphic. By Corollary \ref{cor:slice_preorder}, given an isomorphic copy $V$ of the two, there is a \emph{unique} span of inclusions $V \incl U_1$ and $V \incl U_2$ whose images are, respectively, $\bord{k}{+}U_1$ and $\bord{k}{-}U_2$.

In this case, let $U_1 \cp{k} U_2$ be the pushout
\begin{equation*}
\begin{tikzpicture}[baseline={([yshift=-.5ex]current bounding box.center)}]
	\node (0) at (0,1.5) {$V$};
	\node (1) at (2.5,0) {$U_1 \cp{k} U_2$};
	\node (2) at (0,0) {$U_1$};
	\node (3) at (2.5,1.5) {$U_2$};
	\draw[1cinc] (0) to (3);
	\draw[1cincl] (0) to (2);
	\draw[1cinc] (2) to (1);
	\draw[1cincl] (3) to (1);
	\draw[edge] (1.6,0.2) to (1.6,0.7) to (2.3,0.7);
\end{tikzpicture}
\end{equation*}
in $\rdcpx$. Then $U_1 \cp{k} U_2$ is a molecule, decomposing as $U_1 \cup U_2$ with $U_1 \cap U_2 = \bord{k}{+}U_1 = \bord{k}{-}U_2$. 

This establishes $\cp{k}$ as a partial operation defined \emph{up to unique isomorphism} on molecules: we can call $U_1 \cp{k} U_2$ ``the pasting'' of $U_1$ and $U_2$ in the same way as we speak of ``the limit'' of a functor. 

Moreover, $\omega$\nbd categorical equations involving $\bord{n}{\alpha}$ and $\cp{n}$, which hold strictly about molecules \emph{in a directed complex} by Proposition \ref{prop:partial_omegacat}, hold up to unique isomorphism when they are interpreted as operations on molecules as individual directed complexes. For example, if $k < n$, there is a unique isomorphism between $\bord{n}{\alpha}(U_1 \cp{k} U_2)$ and $\bord{n}{\alpha}U_1 \cp{k} \bord{n}{\alpha}U_2$. 

In the rest of the article, we will deliberately mix the two perspectives, writing $U_1 \cp{k} U_2$ both for a decomposition inside a larger regular directed complex and for a pasting of individual molecules.
\end{dfn}

\begin{lem} \label{lem:boundary_isomorphism}
Let $U, V$ be molecules with spherical boundary. Suppose that $\bord{}{\alpha}U$ is isomorphic to $\bord{}{\alpha}V$ for all $\alpha \in \{+,-\}$. Then there is a unique isomorphism $\bord U \incliso \bord V$ restricting to isomorphisms $\bord{}{\alpha}U \incliso \bord{}{\alpha}V$.
\end{lem}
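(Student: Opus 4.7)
The plan is to induct on $n \eqdef \dmn{U}$, which also equals $\dmn{V}$ since their $(n-1)$\nbd boundaries are isomorphic. The base case $n = 0$ is vacuous, and for $n = 1$ the intersection $\bord{}{+}U \cap \bord{}{-}U$ is empty by sphericity, so there is nothing to check. Uniqueness in general is immediate: since $\bord U = \bord{}{+}U \cup \bord{}{-}U$, any isomorphism extending both $f^+$ and $f^-$ is forced on the union.

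For existence I would define $\phi\colon \bord U \to \bord V$ set-theoretically by $\phi(x) \eqdef f^\alpha(x)$ whenever $x \in \bord{}{\alpha}U$, and reduce the problem to showing that $f^+$ and $f^-$ agree on $A_U \eqdef \bord{}{+}U \cap \bord{}{-}U$. By sphericity of $U$ we have $A_U = \bord(\bord{}{+}U) = \bord(\bord{}{-}U)$, and analogously $A_V = \bord(\bord{}{+}V) = \bord(\bord{}{-}V)$; by Lemma \ref{lem:boundary_to_boundary} each $f^\alpha$ restricts to an isomorphism $A_U \incliso A_V$, so the whole problem collapses to showing these two restrictions are equal.

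I then apply the inductive hypothesis to the $(n-1)$\nbd molecule $\bord{}{+}U$ and its isomorphic copy $\bord{}{+}V$: it guarantees that a pair of isomorphisms $\bord{}{\beta}(\bord{}{+}U) \incliso \bord{}{\beta}(\bord{}{+}V)$ glues uniquely to an isomorphism $A_U \incliso A_V$. Both $f^+|_{A_U}$ and $f^-|_{A_U}$ qualify as such glues, via the restrictions provided by Lemma \ref{lem:boundary_to_boundary}; the key identifications $\bord{}{\beta}(\bord{}{+}U) = \bord{}{\beta}(\bord{}{-}U)$ and likewise for $V$ follow from globularity in the partial $\omega$\nbd categories $\mol{}{U}$ and $\mol{}{V}$ (Proposition \ref{prop:partial_omegacat} together with Remark \ref{rmk:spherical_k}). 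The two pairs of $\bord{}{\beta}$\nbd restrictions are then isomorphisms between the same $(n-2)$\nbd molecules, which are rigid by Proposition \ref{prop:molecule_noauto}; hence the restrictions coincide, and uniqueness in the inductive hypothesis yields $f^+|_{A_U} = f^-|_{A_U}$. Once $\phi$ is well-defined, it is automatically a bijection and preserves the oriented graded poset structure since each $f^\alpha$ does.

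The main subtlety I anticipate is the globularity identification $\bord{}{\beta}(\bord{}{+}U) = \bord{}{\beta}(\bord{}{-}U)$: one must verify that the $\partial^\beta$\nbd boundaries of $\bord{}{\alpha}U$ viewed as a molecule in its own right agree as closed subsets of $U$, a fact that ultimately comes from the partial $\omega$\nbd categorical axioms satisfied by molecules in a regular directed complex. Granting this, the proof is a clean two-level application of rigidity together with the inductive hypothesis.
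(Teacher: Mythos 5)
Your argument tracks the paper's proof in its essential mechanism: reduce to showing $f^+$ and $f^-$ agree on $A_U \eqdef \bord{}{+}U \cap \bord{}{-}U$, identify $A_U$ with $\bord(\bord{}{\alpha}U)$ by sphericity, restrict via Lemma \ref{lem:boundary_to_boundary} to the pieces $\bord{}{\beta}(\bord{}{\alpha}U)$, and invoke rigidity (Proposition \ref{prop:molecule_noauto}) together with the globularity identification $\bord{}{\beta}(\bord{}{+}U) = \bord{}{\beta}(\bord{}{-}U)$. Two remarks are in order.

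First, the inductive scaffolding is doing no real work and is not present in the paper's proof. Once rigidity gives you $f^+|_{\bord{}{\beta}(\bord{}{+}U)} = f^-|_{\bord{}{\beta}(\bord{}{+}U)}$ for each $\beta \in \{+,-\}$, the identity $A_U = \bord(\bord{}{+}U) = \bord{}{+}(\bord{}{+}U) \cup \bord{}{-}(\bord{}{+}U)$ immediately forces $f^+|_{A_U} = f^-|_{A_U}$, since two functions that agree on each piece of a union agree on the union. Appealing to the uniqueness clause of the inductive hypothesis for this same conclusion is circular packaging of the observation; the paper's argument is accordingly non-inductive.

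Second, and this is the genuine gap, the claim that $\phi$ is \emph{automatically} a bijection is not right: this is precisely where the sphericity of $V$ enters, and you never use it. If $x \in \bord{}{+}U$ and $y \in \bord{}{-}U$ with $\phi(x) = \phi(y)$, then $f^+(x) = f^-(y) \in \bord{}{+}V \cap \bord{}{-}V$. Only sphericity of $V$ identifies this intersection with $A_V = \bord(\bord{}{+}V)$, after which $f^+(A_U) = A_V$ (from Lemma \ref{lem:boundary_to_boundary} and sphericity of $U$) and injectivity of $f^+$ force $x \in A_U$, and symmetrically $y \in A_U$; then $f^+(x) = f^-(y) = f^+(y)$ on $A_U$, so $x = y$. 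Your proof uses the sphericity of $U$ throughout but the sphericity of $V$ is nowhere invoked, so as written the injectivity step is unproven. The paper's last sentence, ``Since $V$ has spherical boundary this map is injective,'' is exactly this missing point.
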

\begin{proof}
The boundaries of $U$ and $V$ are molecules, so there are unique isomorphisms $\imath_\alpha\colon \bord{}{\alpha}U \incliso \bord{}{\alpha}V$. They restrict to isomorphisms $\bord{}{\beta}(\bord{}{\alpha}U) \incliso \bord{}{\beta}(\bord{}{\alpha}V)$, which are also unique, so the restrictions of $\imath_+$ and $\imath_-$ to $\bord (\bord{}{\alpha}U)$ are equal after inclusion into $V$. 

Since $U$ has spherical boundary, this allows us to define a map $\bord U \to V$ restricting to $\imath_\alpha$ on $\bord{}{\alpha} U$, whose image is $\bord V$. Since $V$ has spherical boundary this map is injective.
\end{proof}

\begin{dfn}[Substitution]
Let $V$ and $W$ be $n$\nbd molecules with spherical boundary, let $U$ be an $n$\nbd molecule, and let $V \submol U$. Then $U \setminus (V \setminus \bord V)$ is a closed subset of $U$.

Suppose that $\bord{}{\alpha}V$ is isomorphic to $\bord{}{\alpha}W$ for all $\alpha \in \{+,-\}$. From Lemma \ref{lem:boundary_isomorphism} we obtain a unique isomorphism $\imath\colon \bord U \incliso \bord V$.

We define $U[W/V]$ to be the pushout
\begin{equation*}
\begin{tikzpicture}[baseline={([yshift=-.5ex]current bounding box.center)}]
	\node (0) at (0,1.5) {$\bord V$};
	\node (1) at (2.5,0) {$U[W/V]$};
	\node (2) at (0,0) {$W$};
	\node (3) at (2.5,1.5) {$U \setminus (V \setminus \bord V)$};
	\draw[1cinc] (0) to (3);
	\draw[1cincl] (0) to (2);
	\draw[1cinc] (2) to (1);
	\draw[1cincl] (3) to (1);
	\draw[edge] (1.6,0.2) to (1.6,0.7) to (2.3,0.7);
\end{tikzpicture}
\end{equation*}
in $\rdcpx$, and call it the \emph{substitution} of $W$ for $V \submol U$.
\end{dfn}

\begin{prop} \label{prop:substitution}
Suppose $U, V, W$ are $n$\nbd molecules such that $U[W/V]$ is defined. Then
\begin{enumerate}[label=(\alph*)]
	\item $U[W/V]$ is an $n$\nbd molecule with $W \submol U[W/V]$,
	\item there is a unique isomorphism $\bord U \incliso \bord U[W/V]$ restricting to isomorphisms $\bord{}{\alpha}U \incliso \bord{}{\alpha}U[W/V]$,
	\item if $V \submol V' \submol U$ for an $n$\nbd molecule $V'$, then $W \submol V'[W/V] \submol U[W/V]$, and
	\item if $U$ has spherical boundary, so does $U[W/V]$.
\end{enumerate}
\end{prop}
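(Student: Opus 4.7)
The plan is to prove all four parts simultaneously by induction on the submolecule relation $V \submol U$. For the base case $V = U$, we have $U \setminus (V \setminus \bord V) = \bord V$, so the pushout defining $U[W/V]$ collapses to $W$; all four statements then follow immediately, with the boundary isomorphism of (b) supplied by Lemma \ref{lem:boundary_isomorphism}.

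For the inductive step, since $V$ is a proper submolecule of $U$ we may fix a decomposition $U = U_1 \cp{k} U_2$ (for some $k < n$) in which, without loss of generality, $V \submol U_1$. The key combinatorial claim is that the interior $V \setminus \bord V$ is disjoint from the interface $U_1 \cap U_2 = \bord{k}{+}U_1$. By Lemma \ref{lem:molecule_codim}, the elements of $V \setminus \bord V$ are either $n$-dimensional or $(n-1)$-dimensional elements covered in $V$ by two $n$-cells with opposite orientations; the first kind is excluded on dimensional grounds, and the second kind, being dominated in $U_1$ by some $n$-cell and failing to lie in $\sbord{n-1}{+}U_1$, cannot belong to $\bord{k}{+}U_1$ for any $k \leq n-1$. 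It follows that
\begin{equation*}
U \setminus (V \setminus \bord V) = (U_1 \setminus (V \setminus \bord V)) \cup U_2,
\end{equation*}
intersecting along $U_1 \cap U_2$, and a standard pushout-pasting manipulation identifies $U[W/V]$ with $U_1[W/V] \cp{k} U_2$. This pasting is well-defined because the inductive hypothesis (b) supplies a unique isomorphism $\bord U_1[W/V] \incliso \bord U_1$, which by globularity of the partial $\omega$-category of molecules (Proposition \ref{prop:partial_omegacat}) restricts to $\bord{k}{+}U_1[W/V] \incliso \bord{k}{+}U_1 = \bord{k}{-}U_2$. This establishes (a).

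Part (b) follows by combining this inductive iso with the computation of $\bord{n-1}$ under $\cp{k}$: for $k < n-1$ it distributes over the pasting, and for $k = n-1$ the input and output $(n-1)$-boundaries of $U$ come respectively from $U_1$ and $U_2$. Part (c) is proved by a parallel induction on $V' \submol U$: in the nontrivial case $V' \submol U_1$, the hypothesis yields $V'[W/V] \submol U_1[W/V]$, and transitivity with $U_1[W/V] \submol U[W/V]$ concludes. For (d), the boundary isomorphisms $\bord{}{\alpha}U[W/V] \incliso \bord{}{\alpha}U$ from (b) transport both the sphericality of $\bord{}{\alpha}U$ and the intersection equality $\bord{}{+}U \cap \bord{}{-}U = \bord(\bord{}{\alpha}U)$ from $U$ to $U[W/V]$.

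The main obstacle is the combinatorial disjointness claim and the resulting identification $U[W/V] \cong U_1[W/V] \cp{k} U_2$ in the inductive step; these require a careful element-by-element classification, via Lemma \ref{lem:molecule_codim}, of what sits on the gluing interface, together with a routine manipulation of iterated pushouts. Once this identification is secured, parts (b)--(d) follow mechanically from the inductive hypotheses and Lemma \ref{lem:boundary_isomorphism}.
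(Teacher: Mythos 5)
Your proof follows the same inductive scheme as the paper's: induct along the chain from $V$ to $U$, decompose the current molecule as $U_1 \cp{k} U_2$ with $V \submol U_1$, apply the inductive hypothesis to $U_1$, and identify $U[W/V]$ with $U_1[W/V] \cp{k} U_2$. Parts (b)--(d) of your write-up then follow the paper's reasoning accurately. The paper, however, states the identification $V'[W/V] \cong V'_1[W/V] \cp{k} V'_2$ without detailed justification, treating it as a routine pushout manipulation; you attempt to supply the missing detail, and this attempt has a genuine gap.

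The problem is your assertion that the elements of $V \setminus \bord V$ are only of dimension $n$ or $n-1$, attributed to Lemma~\ref{lem:molecule_codim}. That lemma classifies only the $(n-1)$\nbd dimensional elements of an $n$\nbd molecule and says nothing about lower dimensions. In fact a spherical $n$\nbd molecule can have interior elements of dimension $\leq n-2$: a fan of 2\nbd cells pasted around a common $0$\nbd cell has that $0$\nbd cell in $V \setminus \bord V$, and analogous configurations exist in every dimension. Your dimension-count therefore excludes the top two dimensions from $\bord{k}{+}U_1$, which handles the case $k = n-1$, but when $k < n-1$ the interface $\bord{k}{+}U_1$ has dimension $k \leq n-2$, and these are precisely the dimensions your argument ignores. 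To close the gap one would want the stronger claim that the interior of a submolecule lies in the interior of the ambient molecule, $V \setminus \bord V \subseteq U_1 \setminus \bord U_1$; this is plausible, but it requires its own induction and is complicated by the fact that $U_1$, unlike $V$ and $W$, is not assumed to have spherical boundary and hence need not be pure. As it stands the disjointness claim driving your inductive step is not established, even though the overall architecture of your proof matches the paper's.
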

\begin{proof}
By induction on increasing $V'$ with $V \submol V' \submol U$. If $V' = V$, then $V'[W/V] = W$, which is an $n$\nbd molecule by assumption. The isomorphism $\bord V' \incliso \bord V'[W/V]$ also exists by assumption. 

Otherwise, $V'$ has a proper decomposition $V'_1 \cp{k} V'_2$, with $V \submol V'_i$. Without loss of generality, suppose $i = 1$. By the inductive hypothesis, $V'_1[W/V]$ is an $n$\nbd molecule with the same boundary as $V'_1$. Then $V'_1[W/V] \cp{k} V'_2$ is defined and uniquely isomorphic to $V'[W/V]$. Moreover $W \submol V'_1[W/V] \submol V'[W/V]$. 

The extension of the boundary isomorphism from $V'_1$ to $V_1$ can be derived from the axioms of $\omega$\nbd categories as discussed in \S \ref{dfn:molecule_pasting}. We conclude by the fact that chains of proper submolecules between $V$ and $U$ are finite.

Finally, the fact that $U[W/V]$ has spherical boundary when $U$ does is an immediate consequence of the isomorphism between the boundary of $U$ and the boundary of $U[W/V]$.
\end{proof}

\begin{lem} \label{lem:spherical_moves}
Let $U$ be an $n$\nbd molecule. Then
\begin{equation*}
	U = V_1 \cp{n-1} \ldots \cp{n-1} V_m
\end{equation*}
for some $n$\nbd molecules $V_1, \ldots, V_m$ such that, for $1 \leq i \leq m$,
\begin{enumerate}
	\item $V_i$ contains exactly one $n$\nbd atom $U_i$ and
	\item $\bord{}{\alpha}V_i$ is isomorphic to $\bord{}{-\alpha}V_i[\bord{}{\alpha}U_i/\bord{}{-\alpha}U_i]$.
\end{enumerate}
Moreover, if $\bord{}{\alpha}U$ has spherical boundary, so do all the $\bord{}{\alpha}V_i$.
\end{lem}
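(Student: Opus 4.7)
The approach is by induction on the total number of atoms of $U$, establishing the pasting decomposition first and then deducing the isomorphism and spherical-boundary claims. In the base case, $U$ is an atom, so $m = 1$ and $V_1 \eqdef U$; the substitution identity then reduces to the trivial fact that $X[Y/X] \cong Y$ when the argument and substituted submolecule coincide, which is immediate from the pushout definition together with $U \setminus (V \setminus \bord V) = \bord V$ when $V = U$.

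For the inductive step, apply Lemma \ref{lem:composition_form} to write $U = W_1 \cp{k} \ldots \cp{k} W_r$ with $r > 1$ and $k < n$; each $W_j$ has strictly fewer atoms than $U$, so the inductive hypothesis applies. If $k = n-1$, condition~(a) gives each $W_j$ a unique $n$\nbd atom, and concatenating the inductive decompositions of the $W_j$'s yields the desired level-$(n-1)$ decomposition of $U$. If $k < n-1$, condition~(b) forces all $n$\nbd atoms of $U$ into a single factor $W_{j_0}$, with the remaining factors of dimension at most $k+1 < n$. Writing $A \eqdef W_1 \cp{k} \ldots \cp{k} W_{j_0-1}$ and $B \eqdef W_{j_0+1} \cp{k} \ldots \cp{k} W_r$ for the low-dimensional prefix and suffix, the inductive hypothesis applied to $W_{j_0}$ gives $W_{j_0} = V_1' \cp{n-1} \ldots \cp{n-1} V_m'$ with each $V_i'$ containing a unique $n$\nbd atom. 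Setting $V_i \eqdef A \cp{k} V_i' \cp{k} B$, each $V_i$ is an $n$\nbd molecule with the same unique $n$\nbd atom as $V_i'$; iterated use of the $\omega$\nbd category axioms (Proposition \ref{prop:partial_omegacat}) and globularity shows that the level-$(n-1)$ boundaries of consecutive $V_i$ match and that the pasting $V_1 \cp{n-1} \ldots \cp{n-1} V_m$ recovers $U$.

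The substitution identity then follows by specialising to each $V_i$, itself an $n$\nbd molecule with a unique $n$\nbd atom $U_i$: Lemma \ref{lem:atom_boundary} gives $\bord{}{\alpha}U_i \submol \bord{}{\alpha}V_i$, and inspection of the Hasse diagrams shows that $\bord{}{-}V_i$ and $\bord{}{+}V_i$ agree outside $\bord U_i$ and differ exactly by the swap of $\bord{}{-}U_i$ for $\bord{}{+}U_i$. Finally, the spherical-boundary claim follows by iterating Proposition \ref{prop:substitution}(d): starting from $\bord{}{-}V_1 = \bord{}{-}U$ and using $\bord{}{+}V_i \cong \bord{}{-}V_i[\bord{}{+}U_i/\bord{}{-}U_i] = \bord{}{-}V_{i+1}$, each $\bord{}{\alpha}V_i$ inherits spherical boundary, with the symmetric argument starting from $\bord{}{+}V_m = \bord{}{+}U$ handling the output side. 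The main obstacle is the $k < n-1$ case, where one must carefully verify that absorbing the low-dimensional prefix and suffix into each $V_i$ preserves the molecule structure and yields a valid level-$(n-1)$ pasting — essentially checking that the relevant interchange-like identities hold strictly in the regular setting.
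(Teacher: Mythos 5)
Your overall shape is close to the paper's — both go through Lemma \ref{lem:composition_form}, Lemma \ref{lem:atom_boundary}, and Proposition \ref{prop:substitution} — but the extra induction on atoms is unnecessary, and it is precisely what manufactures the ``main obstacle'' you flag at the end. That obstacle is illusory, and leaving it unresolved is the one real gap in the write-up.

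In your case $k < n-1$, condition~(b) of Lemma \ref{lem:composition_form} isolates a single factor $W_{j_0}$ whose distinguished atom has dimension $> k+1$, and condition~(a) gives $W_{j_0}$ \emph{exactly one} atom of dimension $> k$, hence exactly one $n$\nbd atom. So $U$ itself has exactly one $n$\nbd atom, the inductive hypothesis applied to $W_{j_0}$ necessarily returns $m' = 1$ with $V_1' = W_{j_0}$, and $V_1 = A \cp{k} W_{j_0} \cp{k} B = U$. There is never a nontrivial level-$(n-1)$ pasting to assemble, nor any interchange-like identity to check. The paper exploits this by splitting directly on the number of $n$\nbd atoms: with one, set $m = 1$, $V_1 \eqdef U$; with more than one, Lemma \ref{lem:composition_form} is \emph{forced} to return $k = n-1$ (otherwise two factors would violate~(b)), and its output is already the desired decomposition. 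No induction is needed, because condition~(2) is then proved directly for any $n$\nbd molecule $V_i$ with a unique $n$\nbd atom $U_i$ — which is what your ``inspection of the Hasse diagrams'' gestures at. To make that step rigorous, state \emph{why} $\bord{}{-}V_i$ and $\bord{}{+}V_i$ agree outside $\bord{}{\alpha}U_i$: every element of $\bord{}{\alpha}V_i$ outside $\bord{}{\alpha}U_i$ is not covered by any $n$\nbd dimensional element, hence lies in $\bord (\bord{}{\alpha}V_i)$, so the isomorphism on $\bord{}{\alpha}U_i$ extends via the identity on $\bord(\bord{}{\alpha}V_i) = \bord(\bord{}{-\alpha}V_i)$; and one should also check the substitution is defined, using that $\bord{}{+}U_i$ and $\bord{}{-}U_i$ have spherical boundary with a common boundary since $U_i$ is an atom. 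Your spherical-boundary chain via Proposition \ref{prop:substitution} and the identities $\bord{}{-}U = \bord{}{-}V_1$, $\bord{}{+}V_i = \bord{}{-}V_{i+1}$, $\bord{}{+}V_m = \bord{}{+}U$ is exactly the paper's.
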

\begin{proof}
If $U$ contains a single $n$\nbd atom, then let $V_1 \eqdef U$; if $m > 1$, then Lemma \ref{lem:composition_form} provides a decomposition of the desired form.

If $U_i$ is the unique $n$\nbd atom contained in $V_i$, by Lemma \ref{lem:atom_boundary} $\bord{}{\alpha}U_i \submol \bord{}{\alpha}V_i$ for all $\alpha \in \{+,-\}$. The $\bord{}{\alpha}U_i$ have spherical boundary and $\bord{}{\beta}\bord{}{+}U_i = \bord{}{\beta}\bord{}{-}U_i$ for all $\beta \in \{+,-\}$, so $\bord{}{-\alpha}V_i[\bord{}{\alpha}U_i/\bord{}{-\alpha}U_i]$ is defined and has boundaries isomorphic to those of $\bord{}{-\alpha}V_i$. 

Every element outside of $\bord{}{\alpha}U_i \submol \bord{}{\alpha}V_i$ is not covered by any $n$\nbd dimensional element, so it belongs to $\bord (\bord{}{\alpha}V_i)$. Thus the inclusion 
\begin{equation*}
	\bord{}{\alpha}U_i \incl \bord{}{-\alpha}V_i[\bord{}{\alpha}U_i/\bord{}{-\alpha}U_i]
\end{equation*}
extends to an isomorphic inclusion $\bord{}{\alpha}V_i \incliso \bord{}{-\alpha}V_i[\bord{}{\alpha}U_i/\bord{}{-\alpha}U_i]$ via the identity of $\bord (\bord{}{\alpha}V_i)$ and $\bord (\bord{}{-\alpha}V_i)$. 

It follows from Proposition \ref{prop:substitution} that $\bord{}{+}V_i$ has spherical boundary if and only if $\bord{}{-}V_i$ has spherical boundary. Because $\bord{}{-}U = \bord{}{-}V_1$, $\bord{}{+}U = \bord{}{+}V_m$, and $\bord{}{+}V_i = \bord{}{-}V_{i+1}$ for $0 < i < m$, we conclude that if $\bord{}{\alpha}U$ has spherical boundary, so do all the $\bord{}{\alpha}V_i$. 
\end{proof}

\begin{dfn}[Pasting along a submolecule]
Let $U_1, U_2$ be $n$\nbd molecules and suppose that $\bord{}{-\alpha}U_1$ is isomorphic to a submolecule $V \submol \bord{}{\alpha}U_2$. There is a unique span of inclusions $V \incl U_1$ and $V \incl U_2$ whose images are, respectively, $\bord{}{-\alpha}U_1$ and $V \subseteq \bord{}{\alpha}U_2$. Take the pushout
\begin{equation*}
\begin{tikzpicture}[baseline={([yshift=-.5ex]current bounding box.center)}]
	\node (0) at (0,1.5) {$V$};
	\node (1) at (2.5,0) {$U_1 \cup U_2$};
	\node (2) at (0,0) {$U_1$};
	\node (3) at (2.5,1.5) {$U_2$};
	\draw[1cinc] (0) to (3);
	\draw[1cincl] (0) to (2);
	\draw[1cinc] (2) to (1);
	\draw[1cincl] (3) to (1);
	\draw[edge] (1.6,0.2) to (1.6,0.7) to (2.3,0.7);
\end{tikzpicture}
\end{equation*}
in $\rdcpx$. We claim that $U_1 \cup U_2$ is an $n$\nbd molecule with $U_1, U_2 \submol U_1 \cup U_2$. 

Without loss of generality, suppose $\alpha = -$. Note that $U_1 \cup U_2$ is isomorphic to $(U_1 \cup \bord{}{-}U_2) \cp{n-1} U_2$, so it suffices to show that $U_1 \cup \bord{}{-}U_2$ is an $n$\nbd molecule with $\bord{}{+}(U_1 \cup \bord{}{-}U_2) = \bord{}{-}U_2$ and $U_1 \submol U_1 \cup \bord{}{-}U_2$. This is proved by the same argument used for \cite[Lemma 14]{hadzihasanovic2018combinatorial}.

We call $U_1 \cup U_2$ the \emph{pasting of $U_1$ and $U_2$ along the submolecule} $V \submol \bord{}{\alpha}U_2$.
\end{dfn}

\begin{prop} \label{prop:spherical_pasting}
Let $U_1, U_2$ be $n$\nbd molecules such that the pasting $U_1 \cup U_2$ of $U_1$ and $U_2$ along $V \submol \bord{}{\alpha}U_2$ is defined. Then
\begin{enumerate}[label=(\alph*)]
	\item if $U_2$ has spherical boundary, so does $U_1 \cup U_2$, and
	\item if $U_1$ also has spherical boundary, then $\bord{}{\alpha}(U_1 \cup U_2)$ is isomorphic to $\bord{}{\alpha}U_2[\bord{}{\alpha}U_1/V]$.
\end{enumerate}
\end{prop}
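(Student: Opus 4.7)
Assume WLOG $\alpha = -$, so $V \cong \bord{}{+}U_1 \submol \bord{}{-}U_2$. From the decomposition $U_1 \cup U_2 \cong (U_1 \cup \bord{}{-}U_2) \cp{n-1} U_2$ established in the preceding definition, the $\omega$-category axioms immediately give
\[ \bord{}{+}(U_1 \cup U_2) \cong \bord{}{+}U_2, \qquad \bord{}{-}(U_1 \cup U_2) \cong \bord{}{-}(U_1 \cup \bord{}{-}U_2). \]
The former is an $(n-1)$-molecule with spherical boundary by hypothesis on $U_2$, so the entire content of (a) and (b) reduces to understanding the input $(n-1)$-boundary of $U_1 \cup \bord{}{-}U_2$.

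The plan is to decompose $U_1 = V_1 \cp{n-1} \ldots \cp{n-1} V_m$ via Lemma \ref{lem:spherical_moves} and induct on $m$. For the base case $m = 1$, the spherical-boundary hypothesis of (b) combined with Lemma \ref{lem:disjoint_sbord}(a) forces $U_1$ to be pure, so every element sits under some $n$-dimensional element and $U_1 = \clos\{x\}$ is the atom itself. A direct computation of the pushout defining $\clos\{x\} \cup \bord{}{-}U_2$ identifies its input $(n-1)$-boundary with the pushout presenting $\bord{}{-}U_2[\bord{}{-}x/V]$, and Proposition \ref{prop:substitution}(d) transports spherical boundary from $\bord{}{-}U_2$ to this boundary, settling both claims in this case.

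For the inductive step, write $U_1 \cong V_1 \cp{n-1} U_1'$ with $U_1' \eqdef V_2 \cp{n-1} \ldots \cp{n-1} V_m$. When $U_1$ has spherical boundary, Lemma \ref{lem:spherical_moves} ensures each $V_i$ does, and iterated pasting along full $(n-1)$-boundaries preserves sphericity, so $U_1'$ has spherical boundary. The inductive hypothesis applied to $U_1' \cup U_2$ yields a molecule with spherical boundary whose input boundary is $\bord{}{-}U_2[\bord{}{-}U_1'/V]$; by Proposition \ref{prop:substitution}(a) and (c), $\bord{}{-}U_1' \cong \bord{}{+}V_1$ is a submolecule of this. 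Pasting $V_1$ along this submolecule and applying the base case produces the nested substitution $\bord{}{-}U_2[\bord{}{-}U_1'/V][\bord{}{-}V_1/\bord{}{-}U_1']$, which a routine pushout-composition argument identifies with $\bord{}{-}U_2[\bord{}{-}V_1/V] = \bord{}{-}U_2[\bord{}{-}U_1/V]$ since $\bord{}{-}V_1 = \bord{}{-}U_1$.

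The main obstacle will be the element-by-element verification in the base case: confirming that for $U_1 = \clos\{x\}$, the closed subset $\bord{}{-}(\clos\{x\} \cup \bord{}{-}U_2)$ really is the pushout $\bord{}{-}U_2[\bord{}{-}x/V]$ of $\bord V \hookrightarrow \bord{}{-}x$ and $\bord V \hookrightarrow \bord{}{-}U_2 \setminus (V \setminus \bord V)$. This requires tracking which $(n-1)$-elements of $V \cong \bord{}{+}x$ get covered by $x$ with which orientation, and checking that the universal properties of both pushouts match. For (a) without a spherical-boundary hypothesis on $U_1$, the same induction applies since each atom of $U_1$ is spherical by regularity of the ambient directed complex, but one must additionally verify that the intermediate pastings always yield spherical-boundary molecules even if $U_1$ or the $V_i$ do not.
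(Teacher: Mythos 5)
Your reduction $\bord{}{-}(U_1 \cup U_2) = \bord{}{-}(U_1 \cup \bord{}{-}U_2)$ and the plan of inducting on the number of $n$-atoms of $U_1$ are sound, but what you flag as ``the main obstacle''---identifying $\bord{}{-}(\clos\{x\} \cup \bord{}{-}U_2)$ with the pushout presenting $\bord{}{-}U_2[\bord{}{-}x/V]$---is not something to verify element by element: it is exactly item~(2) of Lemma~\ref{lem:spherical_moves}, applied to the $n$-molecule $\clos\{x\} \cup \bord{}{-}U_2$, whose unique $n$-atom is $\clos\{x\}$ and whose output $(n-1)$-boundary is $\bord{}{-}U_2$. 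Recognising this collapses your base case to a citation, after which the iteration does produce the nested substitutions you describe.

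Two genuine gaps remain. First, for~(a) you never check the intersection condition $\bord{}{-}(U_1 \cup U_2) \cap \bord{}{+}(U_1 \cup U_2) = \bord(\bord{}{+}(U_1 \cup U_2))$; Proposition~\ref{prop:substitution}(d) gives sphericity of the two $(n-1)$-boundaries separately, but not this. After applying~(b), the condition reduces to $\bord(\bord{}{-}U_2) \cap V \subseteq \bord V$, i.e.\ that the $(n-2)$-boundary of $\bord{}{-}U_2$ can meet the submolecule $V$ only inside $\bord V$; this requires its own induction on the chain of elementary pastings witnessing $V \submol \bord{}{-}U_2$, using globularity. Second, you misattribute to Lemma~\ref{lem:spherical_moves} the claim that each $V_i$ has spherical boundary; the lemma only says that each $\bord{}{\alpha}V_i$ does. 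To conclude the $V_i$ are atoms---which your inductive step needs, since it applies the base case to $V_1$---you must add the observation that an $n$-molecule with a single $n$-atom whose $(n-1)$-boundaries are pure is itself pure, hence is that atom, hence is spherical by regularity.
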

\begin{proof}
Left to the reader.
\end{proof}

\begin{dfn}[$- \celto -$ construction]
Let $U, V$ be $n$\nbd molecules with spherical boundary such that $\bord{}{\alpha}U$ is isomorphic to $\bord{}{\alpha}V$ for all $\alpha \in \{+,-\}$. From Lemma \ref{lem:boundary_isomorphism} we obtain a unique isomorphism $\bord U \incliso \bord V$. 

Form the pushout $U \cup V$ of the span of inclusions $\bord U \incl U$, $\bord U \incl V$ whose images are $\bord U$ and $\bord V$, respectively. We define $U \celto V$ to be the oriented graded poset obtained from $U \cup V$ by adjoining a greatest element $\top$ with $\bord{}{-}\top \eqdef U$ and $\bord{}{+}\top \eqdef V$. Then $U \celto V$ is an $(n+1)$\nbd dimensional atom with spherical boundary.
\end{dfn}

\begin{dfn}[$\compos{-}$ construction]
Let $U$ be a molecule with spherical boundary. Then $\bord{}{-}U \celto \bord{}{+}U$ is defined, and we denote it by $\compos{U}$. 
\end{dfn}

%%%%%%%%%%%%%%%%%%%%%%%%%%%%%

\subsection{Gray products}

\begin{dfn}[Gray product]
Let $P, Q$ be oriented graded posets. The \emph{Gray product} $P \gray Q$ of $P$ and $Q$ is the cartesian product $P \times Q$ of their underlying posets --- a graded poset --- with the following orientation. Write $x \gray y$ for a generic element of $P \gray Q$. For all $x'$ covered by $x$ in $P$ and all $y'$ covered by $y$ in $Q$,
\begin{align*}
	o(x \gray y \to x' \gray y) & \eqdef o_P(x \to x'), \\
	o(x \gray y \to x \gray y') & \eqdef (-)^{\dmn{x}}o_Q(y \to y'),
\end{align*}
where $o_P$ and $o_Q$ are the orientations of $P$ and $Q$, respectively. 

Up to isomorphism, this defines an associative operation with unit $1$.
\end{dfn}

\begin{lem} \label{lem:gray_molecules}
Let $P, Q$ be directed complexes, $U \subseteq P$ an $n$\nbd molecule and $V \subseteq Q$ an $m$\nbd molecule. Then $U \gray V \subseteq P \gray Q$ is an $(n+m)$\nbd molecule. For all $k \in \mathbb{N}$ and $\alpha \in \{+,-\}$,
\begin{equation*}
	\bord{k}{\alpha}(U \gray V) = \bigcup_{i = 0}^k \bord{i}{\alpha}U \gray \bord{k-i}{(-)^i\alpha}V,
\end{equation*}
which is a molecule with $\bord{i}{\alpha}U \gray \bord{k-i}{(-)^i\alpha}V \submol \bord{k}{\alpha}(U \gray V)$.
\end{lem}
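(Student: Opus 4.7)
The plan is to proceed by induction on $|U| + |V|$, with a case split on whether each of $U$ and $V$ admits a proper submolecule decomposition.

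In the base case, both $U$ and $V$ are atoms with respective greatest elements $u, v$ of dimensions $n, m$. Then $u \gray v$ is the greatest element of $U \gray V$, which is therefore an atom of dimension $n + m$. To verify the Leibniz formula at the top codimension $k = n + m - 1$, I would compute $\sbord{n+m-1}{\alpha}(U \gray V)$ directly from the Gray-product orientation rules: its principal $(n+m-1)$-dimensional cells are $x \gray v$ with $x$ covered by $u$ and orientation $\alpha$, and $u \gray y$ with $y$ covered by $v$ and orientation $(-)^n \alpha$. Taking closures yields $\bord{}{\alpha}U \gray V \cup U \gray \bord{}{(-)^n \alpha} V$. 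One then confirms that this simplified form equals the Leibniz union, by observing that the summands with $i < n-1$ are absorbed into $\bord{}{\alpha}U \gray V$ via the filtration $\bord{i}{\alpha}U \subseteq \bord{n-1}{\alpha}U$, and the summands with $i > n$ are absorbed into $U \gray \bord{}{(-)^n\alpha}V$ via the analogous filtration on $V$, regardless of signs. For $k < n+m-1$, the formula reduces to applying the inductive hypothesis to the $(n+m-1)$-dimensional pieces just exhibited.

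For the inductive step, without loss of generality $U$ admits a proper decomposition $U = U_1 \cp{j} U_2$ with $j < n$; the case in which $V$ decomposes is symmetric. Distributivity of $\gray$ over union gives
\begin{equation*}
	U \gray V = (U_1 \gray V) \cup (U_2 \gray V), \qquad (U_1 \gray V) \cap (U_2 \gray V) = \bord{j}{+}U_1 \gray V.
\end{equation*}
By the inductive hypothesis applied to $U_1 \gray V$ and $U_2 \gray V$, the same collapse of the Leibniz union that occurred in the base case, now at dimension $j+m$, identifies this intersection with $\bord{j+m}{+}(U_1 \gray V) = \bord{j+m}{-}(U_2 \gray V)$. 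This exhibits $U \gray V$ as the pasting $(U_1 \gray V) \cp{j+m} (U_2 \gray V)$, a molecule of dimension $n+m$. The boundary formula for $U \gray V$ is then obtained by combining the partial $\omega$-category axioms of Proposition \ref{prop:partial_omegacat} with the inductive formulae on each factor, using that $\bord{i}{\alpha}U$ decomposes as $\bord{i}{\alpha}U_1 \cp{j} \bord{i}{\alpha}U_2$ when $i \geq j$. The submolecule claim $\bord{i}{\alpha}U \gray \bord{k-i}{(-)^i\alpha}V \submol \bord{k}{\alpha}(U \gray V)$ is extracted by pasting the corresponding submolecules across the decomposition.

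The chief obstacle is the bookkeeping required to show that the Leibniz union collapses appropriately at each inductive stage. This rests on monotonicity of the boundary filtration, $\bord{j}{\alpha}W \subseteq \bord{k}{\beta}W$ for $j \leq k$, independent of signs, which in turn follows from globularity via the partial $\omega$-category structure of Proposition \ref{prop:partial_omegacat}. Once this monotonicity is secured, tracking the alternating signs $(-)^i$ in the summation becomes purely organisational.
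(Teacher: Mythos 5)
The paper's own ``proof'' is merely a citation to Steiner's Theorem~7.4, so a self-contained argument would be a genuine contribution; unfortunately your inductive step has a structural gap. You claim that a decomposition $U = U_1 \cp{j} U_2$ induces a decomposition $U \gray V = (U_1 \gray V) \cp{j+m} (U_2 \gray V)$, i.e.\ that the intersection $(U_1 \cap U_2) \gray V = \bord{j}{+}U_1 \gray V$ coincides with $\bord{j+m}{+}(U_1 \gray V)$. This is false whenever $\dmn{V} \geq 1$. Granting the Leibniz formula for $U_1 \gray V$ by your induction, $\bord{j+m}{+}(U_1 \gray V)$ contains the summand $\bord{i}{+}U_1 \gray \bord{j+m-i}{(-)^i}V$ for every $i$ with $j < i \leq \dmn{U_1}$, and such a summand has elements $x \gray y$ with $\dmn{x} = i > j$, hence $x \notin \bord{j}{+}U_1$; so it cannot lie inside $\bord{j}{+}U_1 \gray V$. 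Concretely, take $U_1 = U_2 = O^2$, $j = 1$, $V = O^1$: the $i=2$ term is $O^2 \gray \{\undl{0}^+\}$, a copy of $O^2$ that is not contained in the square $\bord{}{+}O^2 \gray O^1 \cong \square^2$, whereas the latter is exactly $(U_1 \gray V) \cap (U_2 \gray V)$. So $U \gray V$ is not a single pasting $\cp{j+m}$ of the two Gray products, and the inductive step collapses. The same gap infects the base case at $k < n+m-1$: the two pieces $\bord{}{\alpha}U \gray V$ and $U \gray \bord{}{(-)^n\alpha}V$ of the top boundary do not meet along a boundary of a single codimension either, so ``applying the inductive hypothesis to the pieces'' does not go through.

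The paper's own Definition \ref{dfn:cylinder} is direct evidence that no two-piece pasting can work: there $O^1 \gray (U_1 \cp{k} U_2)$ is decomposed not as $(O^1 \gray U_1) \cp{k+1} (O^1 \gray U_2)$ but as the pasting of $\{\undl{0}^-\} \gray \bord{k+1}{-}U_1 \cup O^1 \gray U_2$ and $O^1 \gray U_1 \cup \{\undl{0}^+\} \gray \bord{k+1}{+}U_2$, each of which is itself built by an iterated ``staircase'' of further pastings. A correct direct proof must produce the analogous shuffle-like decomposition of $U \gray V$ and check the molecule axioms for every cell of the staircase; this is the actual content of Steiner's argument and is substantially more work than the two-piece decomposition you propose.
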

\begin{proof}
This is \cite[Theorem 7.4]{steiner1993algebra}.
\end{proof}

\begin{prop}
Let $U, V$ be molecules with spherical boundary. Then $U \gray V$ has spherical boundary.
\end{prop}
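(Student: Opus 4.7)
The plan is to use the characterization of spherical boundary from Remark \ref{rmk:spherical_k}: since $U \gray V$ is $(\dmn{U} + \dmn{V})$\nbd dimensional and each $\bord{k}{\alpha}(U \gray V)$ is a $k$\nbd molecule by Lemma \ref{lem:gray_molecules} and Lemma \ref{lem:molecule_right_dimension}, it suffices to prove that for all $k < \dmn{U} + \dmn{V}$,
\[
    \bord{k}{+}(U \gray V) \cap \bord{k}{-}(U \gray V) = \bord{k-1}{}(U \gray V).
\]
Once this identity is established at every such $k$, each boundary $\bord{k}{\alpha}(U \gray V)$ also has spherical boundary: by globularity in the $\omega$\nbd graph of molecules, the equivalent condition from Remark \ref{rmk:spherical_k} applied to $\bord{k}{\alpha}(U \gray V)$ reduces to the same identity at lower dimensions. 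Thus no separate induction is needed for the boundary molecules.

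To prove the identity, I would expand both sides using Lemma \ref{lem:gray_molecules}. The left-hand side becomes the union over $i, j \in \{0, \ldots, k\}$ of
\[
    \bigl(\bord{i}{+}U \cap \bord{j}{-}U\bigr) \gray \bigl(\bord{k-i}{(-)^i+}V \cap \bord{k-j}{(-)^j-}V\bigr),
\]
and the right-hand side unfolds as $\bigcup_{\alpha, l} \bord{l}{\alpha}U \gray \bord{k-1-l}{(-)^l\alpha}V$ for $\alpha \in \{+,-\}$ and $l \in \{0, \ldots, k-1\}$. The factors of the left-hand side simplify by case analysis: for $i = j$, Remark \ref{rmk:spherical_k} gives $\bord{i}{+}U \cap \bord{i}{-}U = \bord{i-1}{}U$ when $i < \dmn{U}$, and symmetrically on the $V$\nbd factor; for $i \neq j$, globularity (i.e., $\bord{p}{\alpha}X \subseteq \bord{q}{\beta}X$ whenever $p < q$) collapses one factor into the other. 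Edge cases where $i \geq \dmn{U}$ or $k - i \geq \dmn{V}$ replace the relevant boundary by $U$ or $V$ respectively.

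Matching the pieces is then mostly formal. For the inclusion RHS $\subseteq$ LHS, the piece $\bord{l}{\alpha}U \gray \bord{k-1-l}{(-)^l\alpha}V$ lies in $\bord{k}{\alpha}(U \gray V)$ by taking $i = l$, and in $\bord{k}{-\alpha}(U \gray V)$ by taking $i = l+1$, the orientation flip being absorbed by the parity identity $(-)^{l+1}(-\alpha) = (-)^l\alpha$. For LHS $\subseteq$ RHS, each simplified term --- such as $\bord{i-1}{\gamma}U \gray \bord{k-i-1}{\delta}V$ from the $i = j$ case or $\bord{i}{+}U \gray \bord{k-j}{(-)^j-}V$ from the $i < j$ case --- is matched to a piece of the right-hand side indexed by $l = i - 1$ or $l = i$, after enlarging the $V$\nbd factor by globularity, with a parity check ensuring the orientations agree at the boundary case $j = i + 1$. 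The main obstacle will be this bookkeeping: four regimes for $(i, k-i)$ depending on how they compare to $(\dmn{U}, \dmn{V})$, corner cases at $i = 0$ (where $\bord{-1}{}U = \emptyset$) and $i = k$, and the parity checks throughout. Conceptually, however, the proof is a direct unfolding of Lemma \ref{lem:gray_molecules} combined with repeated use of globularity and the spherical-boundary identity from Remark \ref{rmk:spherical_k}.
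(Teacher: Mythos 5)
Your proof is correct and takes essentially the same approach as the paper's: expand $\bord{k}{+}(U\gray V)\cap\bord{k}{-}(U\gray V)$ via Lemma \ref{lem:gray_molecules} into a double union of products of intersections, do the case analysis on $i<j$, $i=j$, $i>j$ using globularity and the spherical-boundary identity of Remark \ref{rmk:spherical_k}, and conclude the intersection equals $\bord{k-1}{}(U\gray V)$. The only cosmetic difference is that the paper simplifies each regime directly and shows the $i=j$ pieces are absorbed into the $i<j$ and $i>j$ unions, rather than verifying two inclusions separately; your opening paragraph about boundaries having spherical boundary is also redundant, since Remark \ref{rmk:spherical_k} already packages that reduction.
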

\begin{proof}
Let $n \eqdef \dmn{U}$ and $m \eqdef \dmn{V}$. For $k < n + m$, 
\begin{align*}
	\bord{k}{+}(U \gray V) \cap \bord{k}{-}(U \gray V) & = \bigcup_i \bigg(\bord{i}{+}U \gray \bord{k-i}{(-)^i}V\bigg) \cap \bigcup_j \bigg(\bord{j}{-}U \gray \bord{k-j}{-(-)^j}V\bigg) = \\
	& = \bigcup_{i,j} \bigg(\bord{i}{+}U \cap \bord{j}{-}U\bigg) \gray \bigg(\bord{k-i}{(-)^i}V \cap \bord{k-j}{-(-)^j}V\bigg)
\end{align*}
by elementary properties of unions, intersections, and cartesian products. We analyse separately different parts of this union.

\begin{itemize}
	\item Case $i < j$. Then $\bord{i}{+}U \cap \bord{j}{-}U = \bord{i}{+}U$ and $\bord{k-i}{(-)^i}V \cap \bord{k-j}{-(-)^j}V = \bord{k-j}{-(-)^j}V$, and also $\bord{k-j}{-(-)^j}V \subseteq \bord{k-i-1}{(-)^i}V$ for all $j > i+1$, so
	\begin{equation} \label{eq:union_j_gt_i}
		\bigcup_{i<j} \bigg(\bord{i}{+}U \cap \bord{j}{-}U\bigg) \gray \bigg(\bord{k-i}{(-)^i}V \cap \bord{k-j}{-(-)^j}V\bigg) = \bigcup_i \bord{i}{+}U \gray \bord{k-i-1}{(-)^i}V.
	\end{equation} 
	\item Case $i > j$. By a dual argument,
	\begin{equation} \label{eq:union_i_gt_j}
		\bigcup_{i>j} \bigg(\bord{i}{+}U \cap \bord{j}{-}U\bigg) \gray \bigg(\bord{k-i}{(-)^i}V \cap \bord{k-j}{-(-)^j}V\bigg) = \bigcup_j \bord{j}{-}U \gray \bord{k-j-1}{-(-)^j}V.
	\end{equation} 
	\item Case $i = j$. Then 
	\begin{equation*}
		\bord{i}{+}U \cap \bord{i}{-}U = \begin{cases} \bord{i-1}{} U & \text{if $i < n$}, \\
				U & \text{if $i \geq n$} \end{cases}
	\end{equation*}
	\begin{equation*}
		\bord{k-i}{(-)^i}V \cap \bord{k-i}{-(-)^i}V = \begin{cases} \bord{k-i-1}{} V & \text{if $i > k - m$}, \\
				V & \text{if $i \leq k-m$}, \end{cases}
	\end{equation*}
	because $U$ and $V$ have spherical boundary. Thus the union over $i = j$ is equal to
	\begin{equation*}
		\bigcup_{i \leq k-m} \bord{i-1}{}U \gray V \; \cup \;	\bigcup_{k-m < \\ i < \\ n} \bord{i-1}{}U \gray \bord{k-i-1}{}V \; \cup \; \bigcup_{i\geq n} U \gray \bord{k-i-1}{}V,
	\end{equation*}
	but every term of this union is included in one of (\ref{eq:union_j_gt_i}) or (\ref{eq:union_i_gt_j}).
\end{itemize}
We obtain 
\begin{equation*}
	\bord{k}{+}(U \gray V) \cap \bord{k}{-}(U \gray V) = \bigcup_i \bord{i}{+}U \gray \bord{k-i-1}{(-)^i}V \; \cup \; \bigcup_j \bord{j}{-}U \gray \bord{k-j-1}{-(-)^j}V
\end{equation*}
which by Lemma \ref{lem:gray_molecules} is equal to $\bord{k-1}{+}(U \gray V) \cup \bord{k-1}{-}(U \gray V)$. This proves that $U \gray V$ has spherical boundary.
\end{proof}

\begin{cor}
If $P$ and $Q$ are regular directed complexes, then $P \gray Q$ is a regular directed complex.
\end{cor}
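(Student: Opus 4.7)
The plan is to reduce the claim to the preceding proposition via the observation that principal order ideals in $P \gray Q$ are themselves Gray products of principal order ideals in $P$ and $Q$. Specifically, for every $x \gray y \in P \gray Q$, I would verify that
\begin{equation*}
	\clos\{x \gray y\} = \clos\{x\} \gray \clos\{y\}
\end{equation*}
as oriented graded posets. The underlying posets agree since the order on $P \gray Q$ is the product order. For the orientations, note that inclusions of closed subsets preserve dimension, so the sign $(-)^{\dmn{x'}}$ appearing in the defining formula of the Gray-product orientation is the same whether $x'$ is viewed inside $P$ or inside $\clos\{x\}$; hence the orientation inherited by $\clos\{x\} \times \clos\{y\}$ from $P \gray Q$ coincides with the Gray-product orientation of $\clos\{x\} \gray \clos\{y\}$.

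Next, I would confirm that $P \gray Q$ is a directed complex. Since $P$ and $Q$ are regular, each $\clos\{x\}$ and $\clos\{y\}$ is in particular a molecule, so by Lemma \ref{lem:gray_molecules} the set $\clos\{x \gray y\} = \clos\{x\} \gray \clos\{y\}$ is a molecule of $P \gray Q$, and its $k$\nbd boundaries are given by the explicit formula
\begin{equation*}
	\bord{k}{\alpha}(x \gray y) = \bigcup_{i=0}^{k} \bord{i}{\alpha}x \gray \bord{k-i}{(-)^i\alpha}y,
\end{equation*}
each of which is a molecule. The globularity axiom $\bord{}{\alpha}(\bord{}{\beta}(x \gray y)) = \bord{n-2}{\alpha}(x \gray y)$ then follows by iterating this formula and using that boundaries of molecules in $P$ and $Q$ already satisfy globularity; the cross-terms collapse by the usual telescoping of iterated faces. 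Thus $P \gray Q$ satisfies the axioms of a directed complex.

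Finally, regularity is immediate: by regularity of $P$ and $Q$, the atoms $\clos\{x\}$ and $\clos\{y\}$ have spherical boundary, and so by the preceding proposition their Gray product $\clos\{x\} \gray \clos\{y\} = \clos\{x \gray y\}$ has spherical boundary. Since $x \gray y$ was an arbitrary element, every atom of $P \gray Q$ has spherical boundary, and so $P \gray Q$ is a regular directed complex.

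The main potential subtlety is the orientation-preserving identification $\clos\{x \gray y\} = \clos\{x\} \gray \clos\{y\}$, which requires unwinding the sign convention, and the globularity verification, which is a short but pedantic computation; everything else is a direct application of the preceding proposition.
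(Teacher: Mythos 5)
Your proposal is correct and supplies exactly the argument the paper leaves implicit (the corollary is stated without a proof). The key reduction, namely that $\clos\{x \gray y\} = \clos\{x\} \gray \clos\{y\}$ as oriented graded posets — including the check that the sign $(-)^{\dmn{x'}}$ only depends on the dimension of $x'$, which is preserved under passing to the closed subset — is right, and with Lemma \ref{lem:gray_molecules} and the preceding proposition this does give that every atom of $P \gray Q$ has spherical boundary. One small remark: the verification that $P \gray Q$ is a directed complex (your middle paragraph on globularity) is not really something you need to re-derive by hand; it is already part of the content of Steiner's Theorem 7.4, which Lemma \ref{lem:gray_molecules} cites, and in the paper's setup it would be natural to invoke that directly rather than gesture at a telescoping computation. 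That said, the gesture is not wrong, just redundant.
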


\begin{dfn}
Let $f\colon P \to P'$ and $g\colon Q \to Q'$ be maps of regular directed complexes, and let $f \gray g\colon P \gray Q \to P' \gray Q'$ have the cartesian product of $f$ and $g$ as underlying function. By Lemma \ref{lem:gray_molecules} and the fact that $f, g$ are maps of oriented graded posets,
\begin{align*}
	\bord{k}{\alpha}(f \gray g(x \gray y)) & = \bord{k}{\alpha}(f(x) \gray g(y)) = \bigcup_i \bord{i}{\alpha}f(x) \gray \bord{k-i}{(-)^i\alpha}g(y) = \\
	& = \bigcup_i f(\bord{i}{\alpha}x) \gray g(\bord{k-i}{(-)^i\alpha}y) = \\
	& = f \gray g\bigg(\bigcup_i \bord{i}{\alpha}x \gray \bord{k-i}{(-)^i\alpha}y\bigg) = f \gray g(\bord{k}{\alpha}(x \gray y)).
\end{align*}
Thus $f \gray g$ is a map of oriented graded posets. We deduce that Gray products determine a monoidal structure on $\rdcpx$ such that the forgetful functor to $\pos$ with cartesian products is monoidal.
\end{dfn}

\begin{prop} \label{prop:gray_preserve}
Gray products preserve the initial object and pushouts of inclusions separately in each variable.
\end{prop}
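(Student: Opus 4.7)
The plan is to reduce to a statement about underlying posets by invoking Proposition \ref{prop:rdcpxin_colimits}, which tells us that pushouts of inclusions in $\rdcpx$ are created by the forgetful functor to $\pos$ and preserved by the inclusion $\rdcpxin \incl \rdcpx$.

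First I would dispatch the initial object: the underlying set of $P \gray \emptyset$ is the cartesian product $P \times \emptyset = \emptyset$, so $P \gray \emptyset = \emptyset$ is the initial object, and similarly on the other side.

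For pushouts, fix a regular directed complex $Q$ and consider a pushout of inclusions $P_0 \incl P_1$, $P_0 \incl P_2$ with pushout $P$ in $\rdcpx$. By Proposition \ref{prop:rdcpxin_colimits}, the underlying poset of $P$ is the pushout in $\pos$. Since $\pos$ is cartesian closed (the exponential $R^S$ is the poset of monotone functions under pointwise order), the functor $Q \times (-)$ is a left adjoint and hence preserves all colimits; so the underlying poset of $Q \gray P$ is the pushout of $Q \times P_1 \hookleftarrow Q \times P_0 \hookrightarrow Q \times P_2$ in $\pos$. Noting that each $Q \gray \imath$ is a map of oriented graded posets (shown in the paragraph defining $f \gray g$) and is injective, it is an inclusion, so applying Proposition \ref{prop:rdcpxin_colimits} again will identify $Q \gray P$ with the pushout of $Q \gray P_1 \hookleftarrow Q \gray P_0 \hookrightarrow Q \gray P_2$ in $\rdcpx$ as soon as the orientation on $Q \gray P$ extends those of $Q \gray P_1$ and $Q \gray P_2$.

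Verifying this orientation compatibility is the only genuine step, and I expect the main (and minor) obstacle to be the bookkeeping here. Each covering pair in $Q \gray P$ takes one of the forms $(q, p) \to (q, p')$ with $p'$ covered by $p$ in $P$, or $(q, p) \to (q', p)$ with $q'$ covered by $q$ in $Q$. Since $P_1, P_2 \subseteq P$ are closed subsets whose union is $P$, any such pair lies entirely within $Q \gray P_1$ or within $Q \gray P_2$, and the Gray orientation formulas depend only on data local to these factors (namely $o_Q$, $o_P$ restricted to the relevant piece, and $\dmn{q}$). This shows the orientations match. The case of pushouts in the first variable is handled by the symmetric argument.
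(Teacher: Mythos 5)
Your proof is correct and takes essentially the same route as the paper's one-line argument, which simply cites Proposition \ref{prop:rdcpxin_colimits} (creation of pushouts of inclusions by the forgetful functor to $\pos$) and the corresponding fact for cartesian products of posets; you have usefully unpacked the latter as an instance of $\pos$ being cartesian closed. The only thing I would note is that your final paragraph, checking orientation compatibility by hand, is redundant: once you have observed that each $Q \gray \imath$ is an inclusion of oriented graded posets (hence orientation-preserving, by Lemma \ref{lem:sameinclusions}), the uniqueness built into the creation statement already forces the orientation on $Q \times P$ obtained from the lift to coincide with that of $Q \gray P$, so no further bookkeeping is required.
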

\begin{proof}
By Proposition \ref{prop:rdcpxin_colimits} these colimits are created by the forgetful functor to $\pos$, and the statement is true of cartesian products of posets.
\end{proof}

\begin{dfn}[Directed cylinder] \label{dfn:cylinder}
Let $P$ be a regular directed complex. The \emph{directed cylinder} over $P$ is the Gray product $O^1 \gray P$. 

If $U$ is an $n$\nbd molecule in $P$, then $O^1 \otimes U$ is an $(n+1)$\nbd molecule with $\bord{0}{\alpha}(O^1 \gray U) = \{\undl{0}^\alpha\} \gray \bord{0}{\alpha}U$ and, for $0 < k \leq n$, 
\begin{align*}
	\bord{k}{-}(O^1 \gray U) & = \big(\ldots\big(\{\undl{0}^-\} \gray \bord{k}{-}U \cp{0} O^1 \gray \bord{0}{+}U\big)\cp{1}\ldots\big) \cp{k-1} O^1 \gray \bord{k-1}{+}U, \\
	\bord{k}{+}(O^1 \gray U) & = O^1 \gray \bord{k-1}{-}U \cp{k-1} \big(\ldots \cp{1}\big(O^1 \gray \bord{0}{-}U \cp{0}\{\undl{0}^+\} \gray \bord{k}{+}U\big)\ldots\big).
\end{align*}
If $U$ has a decomposition $U = U_1 \cp{k} U_2$, then $O^1 \gray U$ decomposes as
\begin{equation*}
	(\{\undl{0}^-\} \gray \bord{k+1}{-}U_1 \cup O^1 \gray U_2) \cp{k+1} (O^1 \gray U_1 \cup \{\undl{0}^+\} \gray \bord{k+1}{+}U_2),
\end{equation*}
where the two submolecules decompose as
\begin{align*}
	\big(\ldots\big(\{\undl{0}^-\} \gray \bord{k+1}{-}U_1 \cp{0} O^1 \gray \bord{0}{+}U_2\big)\cp{1} O^1 \gray \bord{1}{+}U_2\ldots\big) \cp{k} O^1 \gray U_2, \\
	O^1 \gray U_1 \cp{k} \big(\ldots O^1 \gray \bord{1}{-}U_1 \cp{1}\big(O^1 \gray \bord{0}{-}U_1 \cp{0}\{\undl{0}^+\} \gray \bord{k+1}{+}U_2\big)\ldots\big).
\end{align*}
\end{dfn}

\begin{dfn}
Let $P$ be a regular directed complex, $V \subseteq P$ a closed subset, and let $\sim_V$ be the equivalence relation on $O^1 \gray P$ defined by
\begin{equation*}
	\undl{0}^- \gray x \sim_V \undl{1} \gray x \sim_V \undl{0}^+ \gray x \text{ for all } x \in V.
\end{equation*}
The quotient $\slice{O^1 \gray P}{\sim_V}$ is graded. It inherits an orientation from $O^1 \gray P$ because $\undl{0}^- \gray y$ covers $\undl{0}^- \gray x$ with orientation $\alpha$ if and only if $\undl{0}^+ \gray y$ covers $\undl{0}^+ \gray x$ with orientation $\alpha$. This makes the quotient map $q\colon O^1 \gray P \surj \slice{O^1 \gray P}{\sim_V}$ a map of oriented graded posets.
\end{dfn}

\begin{lem}
The oriented graded poset $\slice{O^1 \gray P}{\sim_V}$ is a directed complex.
\end{lem}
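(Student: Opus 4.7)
The plan is to exploit the quotient map $q\colon O^1 \gray P \surj Q$, where $Q \eqdef \slice{O^1 \gray P}{\sim_V}$, and lift the known structure of $O^1 \gray P$ as a regular directed complex to $Q$. Each equivalence class $y \in Q$ has a canonical preimage $z$ of minimal dimension, with three types: (a) $y = [\undl{0}^\alpha \gray x] = [\undl{1} \gray x]$ for $x \in V$, of dimension $\dmn{x}$; (b) $y = \undl{0}^\alpha \gray x$ for $x \notin V$, of dimension $\dmn{x}$; (c) $y = \undl{1} \gray x$ for $x \notin V$, of dimension $\dmn{x} + 1$. I would verify conditions (1) and (2) of the definition of directed complex in each case.

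In types (a) and (b), the restriction of $q$ to $\{\undl{0}^\alpha\} \gray \clos\{x\}$ is an isomorphism onto $\clos\{y\}$, because no two distinct elements of $\{\undl{0}^\alpha\} \gray \clos\{x\}$ are related by $\sim_V$ (the relation never identifies $\undl{0}^\alpha \gray u$ with $\undl{0}^\alpha \gray v$ for $u \neq v$). Hence $\clos\{y\}$ inherits the structure of the regular atom $\clos\{x\}$ in $P$, and both boundary axioms follow immediately.

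In type (c), the closure $\clos\{y\}$ is the image of the atom $O^1 \gray \clos\{x\}$. Using the decomposition from Definition \ref{dfn:cylinder},
\begin{equation*}
	\bord{}{\alpha}(O^1 \gray \clos\{x\}) = \{\undl{0}^\alpha\} \gray \clos\{x\} \; \cup \; O^1 \gray \bord{}{-\alpha}x,
\end{equation*}
where the two pieces intersect along $\{\undl{0}^\alpha\} \gray \bord{}{-\alpha}x$. The first piece maps isomorphically into $Q$ by the observation above, producing an isomorphic copy of $\clos\{x\}$. The second piece, viewed in $Q$, is an instance of the same construction applied to the molecule $\bord{}{-\alpha}x \subseteq P$ (with the closed subset $\bord{}{-\alpha}x \cap V$); by induction on $\dmn{x}$, this image is a molecule in $Q$. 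The two pieces paste along their common intersection to exhibit $\bord{}{\alpha}y$ as a molecule. The globularity condition $\bord{}{\alpha}\bord{}{\beta}y = \bord{n-2}{\alpha}y$ transfers from the analogous identity in $O^1 \gray P$ by applying $q$ and using that $q$ commutes with boundaries.

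The main obstacle will be the inductive step in type (c): specifically, checking that the pasting condition $U_1 \cap U_2 = \bord{}{+}U_1 = \bord{}{-}U_2$ for the two pieces of $\bord{}{\alpha}y$ holds directly in $Q$, rather than merely in $O^1 \gray P$. This requires a careful analysis of the $q$-fibres intersecting each piece --- in particular, verifying that identifications coming from $\sim_V$ do not disturb the intersection of the two molecules. The Gray product boundary formulas from Lemma \ref{lem:gray_molecules} together with the hypothesis that $V$ is closed (so that $\sim_V$ restricts cleanly to each $\bord{k}{\alpha}x$) should make this tractable.
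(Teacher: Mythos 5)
Your overall architecture matches the paper's: classify elements of $Q \eqdef \slice{O^1 \gray P}{\sim_V}$ by the canonical preimage, dispose of types (a) and (b) by observing that $q$ is injective on $\{\undl{0}^\alpha\} \gray P$, reduce type (c) to showing that $q$ carries $\bord{k}{\alpha}(O^1 \gray \clos\{x\})$ to a molecule, and handle the globularity axiom by commutation of $q$ with $\bord{}{\alpha}$. That much is sound and follows the paper closely. But there are two gaps in the type (c) step, one of which is not the one you flag.

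First, the obstacle you name is not the right one. You propose to write $\bord{}{\alpha}(O^1 \gray \clos\{x\})$ as a pasting $U_1 \cp{n-1} U_2$ of $U_1 \eqdef \{\undl{0}^\alpha\} \gray \clos\{x\}$ and $U_2 \eqdef O^1 \gray \bord{}{-\alpha}x$, and worry that the condition $U_1 \cap U_2 = \bord{}{+}U_1 = \bord{}{-}U_2$ might fail \emph{in $Q$}. In fact it already fails in $O^1 \gray P$: taking $\alpha = -$ and $k \eqdef \dmn{x}$, Lemma \ref{lem:gray_molecules} gives
\begin{equation*}
	\bord{}{-}U_2 \;=\; \{\undl{0}^-\} \gray \bord{}{+}x \;\cup\; O^1 \gray \bord{k-2}{+}x,
\end{equation*}
which strictly contains $U_1 \cap U_2 = \{\undl{0}^-\} \gray \bord{}{+}x$ as soon as $k \geq 2$. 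So the two-piece set union is correct, but it is not a single direct pasting; one has, rather, $\bord{}{+}U_1$ as a proper submolecule of $\bord{}{-}U_2$, and the union is a \emph{pasting along a submolecule}. To get a chain of genuine $\cp{j}$-pastings one has to refine the two-piece union, which is exactly what the nested decomposition in \S\ref{dfn:cylinder} does:
\begin{equation*}
	\bord{k}{-}(O^1 \gray U) = \big(\ldots\big(\{\undl{0}^-\} \gray \bord{k}{-}U \cp{0} O^1 \gray \bord{0}{+}U\big)\cp{1}\ldots\big) \cp{k-1} O^1 \gray \bord{k-1}{+}U.
\end{equation*}
The paper uses this finer decomposition precisely because each $\cp{i}$ in it is a bona fide direct pasting, and preservation of a direct pasting under $q$ can be checked component-by-component. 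Your coarser decomposition pushes the difficulty into the ``pasting-along-a-submolecule'' machinery, which is more opaque under $q$.

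Second, your induction is incomplete. You need ``$q(O^1 \gray M)$ is a molecule for every molecule $M \subseteq P$,'' and you invoke it for $M = \bord{}{-\alpha}x$, which is in general a molecule but not an atom. Your case analysis on types (a)--(c) only treats atoms; the non-atom case requires a separate step, namely the decomposition of $O^1 \gray (M_1 \cp{k} M_2)$ given in \S\ref{dfn:cylinder}, whose components are again covered by the inductive hypothesis. The paper isolates this auxiliary statement explicitly and handles the atom and non-atom cases separately. Without that, the sentence ``by induction on $\dmn{x}$, this image is a molecule in $Q$'' is appealing to an inductive hypothesis you haven't stated or established.
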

\begin{proof}
Let $x \in \slice{O^1 \gray P}{\sim_V}$. Either $x = q(\undl{0}^\alpha \gray x')$ for some $x'$, in which case $\clos\{x\}$ is isomorphic to $\clos\{x'\} \subseteq P$, or $x = q(\undl{1} \gray x')$ for some $x' \in P \setminus V$. Then $\bord{k}{\alpha}x = q(\bord{k}{\alpha}(\undl{1} \gray x'))$, so it suffices to show that $q$ sends the molecules $\bord{k}{\alpha}(\undl{1} \gray x') = \bord{k}{\alpha}(O^1 \gray \clos\{x'\})$ to molecules. 

By inspection, the map $q$ preserves the decomposition of $\bord{k}{\alpha}(O^1 \gray \clos\{x'\})$ given in \S \ref{dfn:cylinder}, so it suffices that $q$ send its components to molecules. This is clear for the components of the form $\{\undl{0}^\alpha\} \gray \bord{k}{\alpha}x'$, so the problem is reduced to showing that $q(O^1 \gray \bord{i}{-\alpha}x')$ is a molecule for all $i < k$.

This follows from the general statement that $q(O^1 \gray U)$ is a molecule when $U \subseteq P$ is a molecule. This is immediate when $U$ is an atom. When $U$ has a proper decomposition $U_1 \cp{k} U_2$, by inspection $q$ preserves the decomposition of $O^1 \gray (U_1 \cp{k} U_2)$ given in \S \ref{dfn:cylinder}, whose components are all molecules by the inductive hypothesis. This completes the proof.
\end{proof}

\begin{prop} \label{prop:spherical_quotient}
Let $U$ be an $n$\nbd molecule with spherical boundary and let $V \subseteq \bord U$ be a closed subset. Then $\slice{O^1 \gray U}{\sim_V}$ is an $(n+1)$\nbd molecule with spherical boundary.
\end{prop}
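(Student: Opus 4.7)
The plan is to proceed by induction on $\dmn{U}$, combined with a secondary induction on the submolecule structure of $U$, pushing the canonical pasting decomposition of $O^1 \gray U$ from \S \ref{dfn:cylinder} through the quotient map $q$. The preceding lemma already shows that $\slice{O^1 \gray U}{\sim_V}$ is a directed complex, so what remains is to verify that it is an $(n+1)$\nbd molecule and that its boundary is spherical.

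For the base case, when $U$ is an $n$\nbd atom with greatest element $x_U$, the element $x_U$ does not lie in $V \subseteq \bord U$, so $[\undl{1} \gray x_U]$ remains a greatest element of the quotient; this makes $\slice{O^1 \gray U}{\sim_V}$ an atom, hence a molecule. To verify sphericity, I would apply $q$ to the iterated pasting decomposition of $\bord{}{\pm}(O^1 \gray U)$ displayed in \S \ref{dfn:cylinder}, identifying each lateral piece $O^1 \gray \bord{i}{\alpha}U$ with the smaller quotient $\slice{O^1 \gray \bord{i}{\alpha}U}{\sim_{V \cap \bord{i}{\alpha}U}}$. Each $\bord{i}{\alpha}U$ has spherical boundary and dimension strictly less than $n$ (by Remark \ref{rmk:spherical_k}), so by the outer inductive hypothesis each such quotient is an $(i+1)$\nbd molecule with spherical boundary. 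Proposition \ref{prop:spherical_pasting} then iteratively assembles these lateral quotients, together with the two undisturbed copies $\{\undl{0}^\pm\} \gray \bord{}{\pm}U$, into molecules with spherical boundary that present $\bord{}{\mp}\slice{O^1 \gray U}{\sim_V}$.

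For the inductive step, with a proper decomposition $U = U_1 \cp{k} U_2$, set $V_i \eqdef V \cap U_i$. A short check, using that elements of $V$ lying on the interface $U_1 \cap U_2 = \bord{k}{+}U_1$ are not covered in $U_i$ by anything of dimension $\geq n$, confirms that each $V_i$ is a closed subset of $\bord U_i$, so the secondary inductive hypothesis applies to each $\slice{O^1 \gray U_i}{\sim_{V_i}}$. Using the decomposition of $O^1 \gray U$ recalled in \S \ref{dfn:cylinder} and the fact that $\sim_V$ restricts compatibly to the two halves of the cylinder, I would then recombine these quotients along their shared boundary, which itself is a smaller quotient of $O^1 \gray \bord{k}{+}U_1$, invoking Proposition \ref{prop:spherical_pasting} once more to conclude sphericity of $\slice{O^1 \gray U}{\sim_V}$.

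The main obstacle, I expect, is the technical bookkeeping of how $\sim_V$ distributes across the nested pasting decomposition of the cylinder: one must verify that the quotient of a pasting coincides with the pasting of the quotients, and that all boundary matchings required for the $\cp{k}$\nbd compositions remain intact after the identification. Once this compatibility is established, sphericity reduces to repeated applications of Proposition \ref{prop:substitution} and Proposition \ref{prop:spherical_pasting} to the explicit cylinder boundary formulas from \S \ref{dfn:cylinder}.
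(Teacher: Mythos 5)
Your strategy diverges fundamentally from the paper's, which never runs an induction at all: it simply pulls back the boundary formula of Lemma~\ref{lem:gray_molecules} along $q$ and directly computes the intersection $\bord{k}{+}\tilde{U} \cap \bord{k}{-}\tilde{U}$ as a union of four pieces, then shows each piece lands inside $\bord{k-1}{}\tilde{U}$, using $V \subseteq \bord U$ and sphericity of $U$ only at the one spot where the quotient could create an unwanted overlap. It leaves the ``molecule'' half to the preceding lemma.

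Your base case has a genuine gap. You want to identify each lateral piece $q(O^1 \gray \bord{i}{\alpha}U)$ with $\slice{O^1 \gray \bord{i}{\alpha}U}{\sim_{V \cap \bord{i}{\alpha}U}}$ --- that part is fine as posets --- and then invoke the outer inductive hypothesis to conclude it is an $(i+1)$\nbd molecule with spherical boundary. But the hypothesis of the proposition requires the identified closed subset to sit inside the \emph{boundary} of the smaller molecule, i.e.\ $V \cap \bord{i}{\alpha}U \subseteq \bord(\bord{i}{\alpha}U) = \bord{i-1}{}U$, and this fails whenever $V$ contains $i$\nbd dimensional elements of $\bord{i}{\alpha}U$, which is generic since $V$ is only required to live in $\bord U = \bord{n-1}{}U$. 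Worse, the conclusion itself is false in that case: the quotient collapses the top cells of $\bord{i}{\alpha}U$ and drops in dimension. Take $U = O^1$ and $V = \bord U = \{\undl{0}^+, \undl{0}^-\}$ (this is exactly the $\infl{-}$ construction). The lateral piece $O^1 \gray \bord{0}{+}U = O^1 \gray \{\undl{0}^+\}$ gets quotiented by $V \cap \{\undl{0}^+\} = \{\undl{0}^+\}$, collapsing all three levels of the cylinder to a single point: the result is $1$, a $0$\nbd molecule, not a $1$\nbd molecule. So the pasting decomposition of $\bord{}{-}\tilde{U}$ that you want to feed into Proposition~\ref{prop:spherical_pasting} contains degenerate pieces of the wrong dimension, and the iterative assembly you describe cannot proceed as stated. (Your secondary inductive step, by contrast, is sound: when $U = U_1 \cp{k} U_2$ one does have $V \cap U_i \subseteq \bord U_i$, so the hypothesis of the secondary IH is met; the problem is confined to the atom base case.) You would need either a strengthened inductive statement that also covers closed subsets $V$ containing top\nbd dimensional elements and predicts the degenerate outcome, or --- as the paper does --- to abandon the pasting\nbd decomposition route for the sphericity verification and compute the intersection $\bord{k}{+}\tilde{U} \cap \bord{k}{-}\tilde{U}$ directly.
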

\begin{proof}
By Lemma \ref{lem:gray_molecules}, $O^1 \gray U$ is an $(n+1)$\nbd molecule with 
\begin{equation*}
	\bord{k}{\alpha}(O^1 \gray U) = \{\undl{0}^\alpha\} \gray \bord{k}{\alpha}U \cup O^1 \gray \bord{k-1}{-\alpha}U
\end{equation*}
for each $k \leq n$. Let $\tilde{U} \eqdef \slice{O^1 \gray U}{\sim_V}$. We have $\bord{k}{\alpha}\tilde{U} = q(\bord{k}{\alpha}(O^1 \gray U))$, so $\bord{k}{+}\tilde{U} \cap \bord{k}{-}\tilde{U}$ is equal to
\begin{align*}
	& \left(q(\{\undl{0}^+\} \gray \bord{k}{+}U) \cap q(\{\undl{0}^-\} \gray \bord{k}{-}U)\right) \cup \left(q(\{\undl{0}^+\} \gray \bord{k}{+}U) \cap q(O^1 \gray \bord{k-1}{+}U)\right) \cup \\
	& \cup \left(q(O^1 \gray \bord{k-1}{-}U) \cap q(\{\undl{0}^-\} \gray \bord{k}{-}U)\right) \cup \left(q(O^1 \gray \bord{k-1}{-}U) \cap q(O^1 \gray \bord{k-1}{+}U)\right).
\end{align*}
The last three components may be rewritten as
\begin{equation} \label{eq:spherical_quotient}
	q(\{\undl{0}^+\} \gray \bord{k-1}{+}U) \cup q(\{\undl{0}^-\} \gray \bord{k-1}{-}U) \cup q(O^1 \gray \bord{k-2}{} U),
\end{equation}
where in the last term we used the fact that $U$ has spherical boundary. This is equal to $\bord{k-1}{+}\tilde{U} \cup \bord{k-1}{-}\tilde{U}$.

The first component is equal to
\begin{equation*} 
	q\big(O^1 \otimes (V \cap \bord{k}{+}U \cap \bord{k}{-}U)\big). 
\end{equation*}
When $k = n$, this is included in the first two components of (\ref{eq:spherical_quotient}) by definition of $q$ and the fact that $V \subseteq \bord U$. When $k < n$, this is included in the third component of (\ref{eq:spherical_quotient}) because $U$ has spherical boundary. 

In either case, $\bord{k}{+}\tilde{U} \cap \bord{k}{-}\tilde{U} = \bord{k-1}{}\tilde{U}$, that is, $\tilde{U}$ has spherical boundary.
\end{proof}

\begin{cor}
The directed complex $\slice{O^1 \gray P}{\sim_V}$ is regular.
\end{cor}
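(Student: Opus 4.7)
The plan is to unwind regularity to its definition: it suffices to show that every atom $\clos\{x\}$ in $\widetilde{P} \eqdef \slice{O^1 \gray P}{\sim_V}$ has spherical boundary. The case analysis already carried out in the proof of the preceding lemma gives us two shapes that $x$ can take: either $x = q(\undl{0}^\alpha \gray x')$ for some $x' \in P$, or $x = q(\undl{1} \gray x')$ for some $x' \in P \setminus V$. I will treat these two cases separately.

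In the first case, the map $P \to \widetilde{P}$, $x' \mapsto q(\undl{0}^\alpha \gray x')$, is an inclusion of oriented graded posets (since $\{\undl{0}^\alpha\} \gray P \incl O^1 \gray P$ is an inclusion and the quotient identifies no two distinct elements of $\{\undl{0}^\alpha\} \gray P$), and it restricts to an isomorphism $\clos\{x'\} \incliso \clos\{x\}$. Because $P$ is regular, $\clos\{x'\}$ is an atom with spherical boundary, and so is $\clos\{x\}$.

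In the second case, the key observation is that $V \cap \clos\{x'\} \subseteq \bord \clos\{x'\}$: since $V$ is closed and $x' \notin V$, no element of $V \cap \clos\{x'\}$ can be equal to $x'$, and by regularity of $P$ every element of $\clos\{x'\}$ other than $x'$ lies in $\bord \clos\{x'\}$. Then Proposition \ref{prop:spherical_quotient}, applied to the atom $\clos\{x'\}$ (which has spherical boundary by regularity of $P$) and the closed subset $V \cap \clos\{x'\}$ of its boundary, produces a molecule $\slice{O^1 \gray \clos\{x'\}}{\sim_{V \cap \clos\{x'\}}}$ with spherical boundary.

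The remaining step, which is where the bookkeeping needs to be careful, is to identify this quotient with $\clos\{x\} \subseteq \widetilde{P}$. The inclusion $\clos\{x'\} \incl P$ induces, via functoriality of Gray product (Proposition \ref{prop:gray_preserve}) and the observation that $\sim_V$ restricted to $O^1 \gray \clos\{x'\}$ coincides with $\sim_{V \cap \clos\{x'\}}$, a map $\slice{O^1 \gray \clos\{x'\}}{\sim_{V \cap \clos\{x'\}}} \to \widetilde{P}$ that is injective, hence an inclusion. Its image is the closed subset $q(O^1 \gray \clos\{x'\}) = \clos\{x\}$, with greatest element $x$ (well-defined because $x' \notin V$ means $\undl{1} \gray x'$ is not identified with any other element). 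Thus $\clos\{x\}$ is an atom with spherical boundary, completing the proof.
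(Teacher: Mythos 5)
Your proof is correct and takes essentially the same approach as the paper: the same two-case split on atoms of the quotient, the same identification $\clos\{x\} \simeq \slice{O^1 \gray \clos\{x'\}}{\sim_{V \cap \clos\{x'\}}}$, and the same appeal to Proposition \ref{prop:spherical_quotient}. You simply spell out the verification that $V \cap \clos\{x'\} \subseteq \bord\clos\{x'\}$ and the inclusion-of-quotients bookkeeping, which the paper leaves implicit. (One small remark: the fact that every proper element of an atom lies in its boundary is elementary and does not require regularity --- regularity is only needed to know that $\clos\{x'\}$ has spherical boundary.)
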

\begin{proof}
Let $x \in \slice{O^1 \gray P}{\sim_V}$. If $x = q(\undl{0}^\alpha \gray x')$ for some $x' \in P$, then $\clos\{x\}$ is isomorphic to $\clos\{x'\} \in P$, which has spherical boundary. If $x = q(\undl{1} \gray x')$ for some $x' \in P \setminus V$, then $\clos\{x\}$ is isomorphic to $\slice{O^1 \otimes \clos\{x'\}}{\sim_{V'}}$ where $V' \eqdef V \cap \clos\{x'\} \subseteq \bord x'$. This has spherical boundary by Proposition \ref{prop:spherical_quotient}.
\end{proof}

\begin{dfn}[$\infl{-}$ construction] 
Let $U$ be an $n$\nbd molecule with spherical boundary. We define 
\begin{equation*}
	\infl{U} \eqdef \slice{O^1 \gray U}{\sim_{\bord U}},
\end{equation*}
which is an $(n+1)$\nbd molecule with spherical boundary by Proposition \ref{prop:spherical_quotient}. 

If $!\colon O^1 \surj 1$ is the unique map to the terminal object, the natural map $O^1 \gray U \surj U$ obtained by composing $! \gray \idd{U}$ with the unique isomorphism $1 \gray U \incliso U$ descends to the quotient, factoring as
\begin{equation*}
\begin{tikzpicture}[baseline={([yshift=-.5ex]current bounding box.center)}]
	\node (0) at (-1.5,1.25) {$O^1 \gray U$};
	\node (1) at (0,0) {$\infl{U}$};
	\node (2) at (1.5,1.25) {$U$};
	\draw[1csurj] (0) to (2);
	\draw[1csurj] (0) to (1);
	\draw[1csurj] (1) to node[auto,swap,arlabel] {$\tau$} (2);
\end{tikzpicture}
\end{equation*}
for a unique surjective map $\tau$. This has the property that
\begin{equation*}
\begin{tikzpicture}[baseline={([yshift=-.5ex]current bounding box.center)}]
	\node (0) at (-1.5,1.25) {$U$};
	\node (1) at (0,0) {$\infl{U}$};
	\node (2) at (1.5,1.25) {$U$};
	\draw[1cinc] (0) to node[auto,arlabel] {$\idd{U}$} (2);
	\draw[1cinc] (0) to node[auto,swap,arlabel] {$\imath^{\alpha}$} (1);
	\draw[1csurj] (1) to node[auto,swap,arlabel] {$\tau$} (2);
\end{tikzpicture}
\end{equation*}
commutes for all $\alpha \in \{+,-\}$, where $\imath^\alpha$ is the isomorphic inclusion of $U$ into $\bord{}{\alpha}\infl{U}$. 
\end{dfn}

\begin{exm}
If $U$ is an atom, $\infl{U}$ is also an atom and is isomorphic to $U \celto U$.
\end{exm}

\begin{exm}
Let $O^0(U) \eqdef U$ and $O^n(U) \eqdef \infl{O^{n-1}(U)}$ for each $n > 0$. Then $O^n(1)$ is isomorphic to $O^n$. The maps $\imath^\alpha\colon U \incl \infl{U}$ and $\tau\colon \infl{U} \surj U$ specialise to the maps of $\cat{O}$ with the same name.
\end{exm}

%%%%%%%%%%%%%%%%%%%%%%%%%%%%%

\subsection{Suspension, joins, and duals}

\begin{dfn}[Suspension]
Let $P$ be an oriented graded poset. The \emph{suspension} of $P$ is the oriented graded poset $\Sigma P$ whose elements are $\{\Sigma x \mid x \in P \} + \{\bot^-, \bot^+\}$, with the partial order defined by
\begin{enumerate}
	\item $\bot^\alpha < \Sigma x$ for all $x \in P$, and
	\item $\Sigma x \leq \Sigma y$ if and only if $x \leq y$ in $P$,
\end{enumerate}
and the orientation 
\begin{equation*}
	o(\Sigma x \to \bot^\alpha) \eqdef \alpha, \quad o(\Sigma y \to \Sigma x) \eqdef o(y \to x)
\end{equation*}
for all $x, y \in P$ such that $y$ covers $x$.
\end{dfn}

\begin{lem} \label{lem:suspension}
Let $U$ be an $n$\nbd molecule. Then $\Sigma U$ is an $(n+1)$\nbd molecule with $\bord{0}{\alpha}\Sigma U = \{\bot^\alpha\}$ and $\bord{k}{\alpha}\Sigma U = \Sigma \bord{k-1}{\alpha} U$ for all $k > 0$. If $U$ has spherical boundary, so does $\Sigma U$.
\end{lem}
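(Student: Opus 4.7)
The plan is to proceed by induction on proper submolecules of $U$, simultaneously establishing that $\Sigma U$ is an $(n+1)$\nbd molecule and the stated boundary formulas, with the spherical boundary claim handled afterwards. Throughout, I interpret $\Sigma V$ for $V \subseteq U$ a closed subset as a subposet of $\Sigma U$, which contains $\{\bot^+, \bot^-\}$ together with $\{\Sigma x \mid x \in V\}$; this convention is what makes the formulas balance.

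For the base case, suppose $U$ is an atom with greatest element $x$ of dimension $n$. Then $\Sigma U$ has greatest element $\Sigma x$ of dimension $n+1$, so $\Sigma U$ is an atom. By construction, the covering relation on $\Sigma U$ consists of $\Sigma z \to \bot^\alpha$ with orientation $\alpha$ for each $0$\nbd dimensional $z \in U$, together with $\Sigma y \to \Sigma x'$ inheriting the orientation of $y \to x'$ in $U$. Reading off dimensions and orientations, $\sbord{0}{\alpha}\Sigma U = \{\bot^\alpha\}$ and $\sbord{k}{\alpha}\Sigma U = \Sigma \sbord{k-1}{\alpha}U$ for $0 < k \leq n$. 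Taking closures inside $\Sigma U$, where $\bot^+$ and $\bot^-$ lie below every suspended element, and noting that no non-maximal $\Sigma y$ has dimension less than that of $\Sigma x$, yields the announced boundary formulas.

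For the inductive step, suppose $U = U_1 \cp{k} U_2$ with $U_1 \cap U_2 = \bord{k}{+}U_1 = \bord{k}{-}U_2$. Since $\bot^+, \bot^-$ belong to every $\Sigma U_i$, we have $\Sigma U_1 \cap \Sigma U_2 = \Sigma(U_1 \cap U_2)$ as subposets of $\Sigma U$. Applying the inductive hypothesis to $U_1$ and $U_2$ gives $\bord{k+1}{+}\Sigma U_1 = \Sigma \bord{k}{+}U_1$ and $\bord{k+1}{-}\Sigma U_2 = \Sigma \bord{k}{-}U_2$, so $\Sigma U = \Sigma U_1 \cp{k+1} \Sigma U_2$ is an $(n+1)$\nbd molecule. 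The boundary formulas for $\Sigma U$ follow from the corresponding formulas for $\Sigma U_1, \Sigma U_2$ together with the $\omega$\nbd categorical identities for $\bord{}{\alpha}$ and $\cp{k+1}$ supplied by Proposition \ref{prop:partial_omegacat}.

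For the final claim, suppose $U$ has spherical boundary. For $k = 0$ we have $\bord{0}{+}\Sigma U \cap \bord{0}{-}\Sigma U = \{\bot^+\} \cap \{\bot^-\} = \emptyset = \bord{-1}{}\Sigma U$. For $1 \leq k \leq n$, the boundary formula gives
\begin{equation*}
    \bord{k}{+}\Sigma U \cap \bord{k}{-}\Sigma U = \Sigma \bord{k-1}{+}U \cap \Sigma \bord{k-1}{-}U = \Sigma\bigl(\bord{k-1}{+}U \cap \bord{k-1}{-}U\bigr) \cup \{\bot^+,\bot^-\},
\end{equation*}
which by sphericity of $U$ (treating $\bord{-1}{}U = \emptyset$ when $k = 1$) equals $\Sigma \bord{k-2}{}U \cup \{\bot^+, \bot^-\} = \bord{k-1}{}\Sigma U$, as required. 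The only delicate point is the constant presence of the two bottom elements in every $\Sigma V$, but this is exactly what makes each boundary formula close; no significant obstacle is expected.
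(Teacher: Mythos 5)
The paper leaves this proof to the reader, so there is nothing to compare against; your proof is correct and is exactly the argument one would expect the author to have had in mind. Structural induction along the molecule decomposition, with the $\omega$\nbd categorical identities from Proposition \ref{prop:partial_omegacat} closing the inductive step and the sphericity check done by direct computation, is precisely the template used for the analogous preservation lemmas elsewhere in the paper (e.g.\ for Gray products and the quotient of the cylinder). The one stylistic point: once you declare the convention that $\Sigma V$ for $V \subseteq U$ already contains $\{\bot^+,\bot^-\}$, the extra ``$\cup\{\bot^+,\bot^-\}$'' terms in the final display are redundant and make the notation internally inconsistent --- under that convention $\Sigma V_1 \cap \Sigma V_2 = \Sigma(V_1 \cap V_2)$ holds on the nose, so the computation is cleaner if you simply drop them. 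The only other item worth flagging, which you handle implicitly, is confirming that $\Sigma U_1, \Sigma U_2$ are \emph{proper} subsets of $\Sigma U$ (needed for the molecule recursion), which is immediate since adding the same two bottom elements to proper subsets $U_1, U_2 \subsetneq U$ keeps them proper.
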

\begin{proof}
Left to the reader.
\end{proof}

\begin{dfn}
Let $f\colon P \to Q$ be a map of regular directed complexes. There is a function $\Sigma f\colon \Sigma P \to \Sigma Q$ defined by $\bot^\alpha \mapsto \bot^\alpha$ and $\Sigma x \mapsto \Sigma (f(x))$ for all $x \in P$.

By Lemma \ref{lem:suspension}, $\Sigma P$ and $\Sigma Q$ are regular directed complexes. For all $n \in \mathbb{N}$,
\begin{equation*}
	\bord{n}{\alpha} \Sigma f(\bot^\beta) = \bord{n}{\alpha} \bot^\beta = \{\bot^\beta\} = \Sigma f(\bord{n}{\alpha} \bot^\beta), 
\end{equation*}
and for all $x \in P$, if $n > 0$,
\begin{equation*}
	\bord{n}{\alpha} \Sigma f(\Sigma x) = \bord{n}{\alpha} \Sigma (f(x)) = \Sigma \bord{n-1}{\alpha} f(x) = \Sigma (f(\bord{n-1}{\alpha} x)) = \Sigma f(\bord{n}{\alpha} \Sigma x),
\end{equation*}
while $\bord{0}{\alpha} \Sigma f(\Sigma x) = \{\bot^\alpha\} = \Sigma f(\bord{0}{\alpha} \Sigma x)$. 

Thus $\Sigma f$ is a map of regular directed complexes. The assignment $f \mapsto \Sigma f$ respects composition and identities, so it defines an endofunctor $\Sigma$ on $\rdcpx$. 
\end{dfn}

\begin{dfn}[Join] 
If $P$ is an oriented graded poset, extend the orientation of $P$ to $P_\bot$ by $o(x \to \bot) \eqdef +$ for all 0-dimensional $x \in P$.

Let $P, Q$ be oriented graded posets. The \emph{join} $P \join Q$ of $P$ and $Q$ is the unique oriented graded poset such that $(P \join Q)_\bot$ is isomorphic to $P_\bot \gray Q_\bot$. Up to isomorphism, the join defines an associative operation with unit $\emptyset$.

Elements of $P \join Q$ have one of the three forms
\begin{itemize}
	\item $x$ corresponding to $x \gray \bot$ for some $x \in P$,
	\item $y$ corresponding to $\bot \gray y$ for some $y \in Q$, or
	\item $x \join y$ corresponding to $x \gray y$ for some $x \in P$ and $y \in Q$.
\end{itemize}
The first two cases determine inclusions $P, Q \incl P \join Q$. If $U \subseteq P$ and $V \subseteq Q$ are closed subsets, then $U \join V$ is a closed subset of $P \join Q$.
\end{dfn}

\begin{dfn}
There is an injective function (not a map of oriented graded posets) $P_\bot \to \Sigma P$ defined by $\bot \mapsto \bot^+$ and $x \mapsto \Sigma x$ for all $x \in P$. This induces an injective function $j\colon P \join Q \to \Sigma P \gray \Sigma Q$. 
\end{dfn}

\begin{prop}
Let $P, Q$ be directed complexes, $U \subseteq P$ an $n$\nbd molecule and $V \subseteq Q$ an $m$\nbd molecule. Then $U \join V \subseteq P \join Q$ is an $(n+m+1)$\nbd molecule. For all $k \in \mathbb{N}$, $\bord{k}{\alpha}(U \join V)$ is a molecule with
\begin{equation*}
	\bord{k}{-}(U \join V) = 
	\begin{cases} \bord{k}{-}U \cup \displaystyle\bigcup_{i=1}^k \bord{i-1}{-}U \join \bord{k-i}{-(-)^i} V & \text{if $k$ is even}, \\ 
	\displaystyle\bigcup_{i=1}^k \bord{i-1}{-}U \join \bord{k-i}{-(-)^i} V & \text{if $k$ is odd}, \end{cases}
\end{equation*}
\begin{equation*}
	\bord{k}{+}(U \join V) = 
	\begin{cases} \bord{k}{+}V \cup \displaystyle\bigcup_{i=1}^k \bord{i-1}{+}U \join \bord{k-i}{(-)^i} V & \text{if $k$ is even}, \\ 
	\bord{k}{+}U \cup \bord{k}{+}V \cup \displaystyle\bigcup_{i=1}^k \bord{i-1}{+}U \join \bord{k-i}{(-)^i} V & \text{if $k$ is odd}. \end{cases}
\end{equation*}
If $U$ and $V$ have spherical boundary, so does $U \join V$.
\end{prop}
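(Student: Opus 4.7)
The plan is to prove simultaneously, by induction on the submolecular structure of $U$ and $V$, that $U \join V$ is an $(n+m+1)$-molecule, that its iterated boundaries satisfy the stated formulas, and that spherical boundary is preserved. This mirrors the proof of Lemma \ref{lem:gray_molecules} for Gray products together with the subsequent spherical-boundary proposition, but requires extra bookkeeping because of the ``tip'' terms $\bord{k}{\alpha}U$ and $\bord{k}{\alpha}V$ which have no Gray-product analogue: these arise from the inclusions $P, Q \incl P \join Q$ together with the extended convention $o(x \to \bot) \eqdef +$ on $0$-dimensional elements.

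For the base case, if $U, V$ are atoms with greatest elements $x, y$, then under the defining isomorphism $(P \join Q)_\bot \cong P_\bot \gray Q_\bot$ the element $x \join y$ corresponds to $x \gray y$ and is therefore the greatest element of $U \join V$, which is hence an atom of dimension $n + m + 1$; the boundary formulas then follow from Lemma \ref{lem:gray_molecules} applied to $\clos\{x\}_\bot \gray \clos\{y\}_\bot$, specialised to the given orientation convention. For the inductive step, if $U = U_1 \cp{j} U_2$ is a proper decomposition, then $U \join V = (U_1 \join V) \cup (U_2 \join V)$ as subsets of $P \join Q$, with $(U_1 \join V) \cap (U_2 \join V) = (U_1 \cap U_2) \join V = \bord{j}{+}U_1 \join V$. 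This intersection is generally a proper submolecule of $\bord{j+m+1}{+}(U_1 \join V)$ rather than the entire boundary, so the combination is not a simple $\cp{k}$ pasting; rather, it is an instance of the ``pasting along a submolecule'' construction introduced just before Proposition \ref{prop:spherical_pasting}, which already yields a molecule. From this one concludes $U \join V$ is a molecule and propagates the boundary formulas from $U_1 \join V$ and $U_2 \join V$ through the inductive hypothesis; the symmetric case decomposing $V$ is handled dually.

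Finally, assuming $U$ and $V$ have spherical boundary, compute $\bord{k}{+}(U \join V) \cap \bord{k}{-}(U \join V)$ by substituting the boundary formulas and splitting the resulting double union into the cases $i < j$, $i > j$, and $i = j$, exactly as in the Gray-product proof above; the spherical hypotheses on $U$ and $V$ then simplify each intersection of the form $\bord{i}{+}U \cap \bord{j}{-}U$ through Remark \ref{rmk:spherical_k}, and each tip term is absorbed into $\bord{k-1}{}(U \join V)$ by parity considerations. The main obstacle is precisely this parity and sign bookkeeping: the join formulas superimpose the alternating $(-)^i$ inherited from the Gray product with the parity-dependent tip terms, and getting the indices to cancel correctly is tedious. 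A possibly cleaner variant would be to exploit the injection $j \colon P \join Q \to \Sigma P \gray \Sigma Q$ together with Lemma \ref{lem:suspension}, reducing the claim for joins to the (already proven) spherical claim for Gray products of suspensions.
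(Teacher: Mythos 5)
The closing remark identifies the correct route; the main inductive argument, however, has a genuine gap in the inductive step. When $U = U_1 \cp{j} U_2$, you write $U \join V = (U_1 \join V) \cup (U_2 \join V)$ with intersection $(\bord{j}{+}U_1) \join V$ and invoke pasting along a submolecule. This does not apply. First, $U_1$ and $U_2$ need not be equidimensional, so $U_1 \join V$ and $U_2 \join V$ may have different dimensions, whereas pasting along a submolecule is defined only for $n$-molecules with the same $n$. Second, and more fundamentally, that construction requires the \emph{entire} boundary $\bord{}{-\alpha}$ of one piece to coincide with a submolecule of the other piece's $\bord{}{\alpha}$; but the intersection $(\bord{j}{+}U_1) \join V$ is generically a \emph{proper} subset of both $\bord{}{+}(U_1 \join V)$ and $\bord{}{-}(U_2 \join V)$ --- by the very boundary formula you are trying to prove, each of these boundaries also contains higher-level boundaries of $U_i$ and further mixed terms that the intersection lacks. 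For a concrete case, take $U$ the horizontal pasting of two 2-cells at a vertex ($j = 0$) and $V$ a point: here both pieces $U_i \join V$ are 3-molecules, their relevant boundaries are 2-dimensional, but the intersection $(\bord{0}{+}U_1) \join V$ is a single edge. So the union is neither a $\cp{k}$ pasting nor a pasting along a submolecule, and the step does not establish that $U \join V$ is a molecule. (The base case has a related flaw: $\clos\{x\}_\bot$ has an element of negative dimension and is not a molecule in a directed complex, so Lemma \ref{lem:gray_molecules} cannot be applied to $\clos\{x\}_\bot \gray \clos\{y\}_\bot$ as written.)

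The variant you sketch at the end is what the paper does, and it sidesteps both problems at once. It transports everything along the injection $j\colon P \join Q \to \Sigma P \gray \Sigma Q$, appealing to \cite[Proposition 7.8]{steiner1993algebra}: if $W \subseteq \Sigma P \gray \Sigma Q$ is an $(n+1)$-molecule not contained in $\{\bot^-\}\gray Q \cup P \gray \{\bot^-\}$, then $\invrs{j}(W)$ is an $n$-molecule with $\bord{k}{\alpha}\invrs{j}(W) = \invrs{j}(\bord{k+1}{\alpha}W)$. Since $\Sigma U \gray \Sigma V$ is a molecule by Lemma \ref{lem:suspension} and Lemma \ref{lem:gray_molecules}, this immediately yields that $U \join V$ and all its boundaries are molecules. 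The stated formulas are then read off by applying $\invrs{j}$ to the Gray-product boundary formula: the terms involving a factor $\{\bot^\alpha\}$ produce the tip terms $\bord{k}{\alpha}U$ and $\bord{k}{\alpha}V$ and account for the parity conditions (anything involving $\bot^-$ is killed by $\invrs{j}$), while the genuine mixed terms survive unchanged. Spherical boundary of $U \join V$ then follows from that of $\Sigma U \gray \Sigma V$ because $\invrs{j}$ commutes with boundaries and binary intersections. The crucial advantage over the direct induction is that suspensions are genuine molecules, so Lemma \ref{lem:gray_molecules} applies verbatim, and no induction on the molecule decomposition of $U$ or $V$ is needed --- which, as the gap above shows, does not work naively because $\join$ fails to commute with $\cp{k}$.
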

\begin{proof}
In \cite[Proposition 7.8]{steiner1993algebra}, it is proved that if $W \subseteq \Sigma P \gray \Sigma Q$ is an $(n+1)$\nbd molecule not contained in $\{\bot^-\} \gray Q \cup  P \gray \{\bot^-\}$, then $\invrs{j}(W)$ is an $n$\nbd molecule and $\bord{k}{\alpha}\invrs{j}(W) = \invrs{j}(\bord{k+1}{\alpha}W)$. But
\begin{equation*}
	\bord{k}{\alpha}(U \join V) = \bord{k}{\alpha}\invrs{j}(\Sigma U \gray \Sigma V) = \invrs{j}(\bord{k+1}{\alpha}(\Sigma U \gray \Sigma V)),
\end{equation*}
which suffices to prove that $U \join V$ and all its boundaries are molecules. By Lemma \ref{lem:gray_molecules} $\bord{k}{\alpha}(U \join V)$ is then equal to
\begin{equation*}
	\invrs{j}\bigg(\bigcup_{i = 0}^{k+1} \bord{i}{\alpha}\Sigma U \gray \bord{k+1-i}{(-)^i\alpha}\Sigma V\bigg)
\end{equation*}
which we rewrite as
\begin{equation*}
	\invrs{j}\bigg(\{\bot^\alpha\} \gray \Sigma \bord{k}{\alpha}V \cup \bigcup_{i = 1}^{k} \Sigma \bord{i-1}{\alpha} U \gray \Sigma \bord{k-i}{(-)^i\alpha} V \cup \Sigma \bord{k}{\alpha} U \gray \{\bot^{(-)^{k+1}\alpha}\} \bigg).
\end{equation*}
Then
\begin{equation*}
	\invrs{j} \bigg( \bigcup_{i = 1}^{k} \Sigma \bord{i-1}{\alpha} U \gray \Sigma \bord{k-i}{(-)^i\alpha} V \bigg) = \bigcup_{i=1}^k \bord{i-1}{\alpha}U \join \bord{k-i}{(-)^i\alpha} V, 
\end{equation*}
while 
\begin{equation*}
	\invrs{j}(\{\bot^\alpha\} \gray \Sigma \bord{k}{\alpha}V) = \begin{cases} \emptyset & \text{if $\alpha = -$}, \\
	\bord{k}{+}V & \text{if $\alpha = +$}, \end{cases}
\end{equation*}
\begin{equation*}
	\invrs{j}(\Sigma \bord{k}{\alpha} U \gray \{\bot^{(-)^{k+1}\alpha}\}) = \begin{cases} \emptyset & \text{if $\alpha = (-)^k$}, \\
	\bord{k}{\alpha}U & \text{if $\alpha = (-)^{k+1}$}. \end{cases}
\end{equation*}
Finally, if $U$ and $V$ have spherical boundary, so does $\Sigma U \gray \Sigma V$. Then $U \join V$ has spherical boundary by the compatibility of $\invrs{j}$ with boundaries and intersections.
\end{proof}

\begin{cor}
If $P$ and $Q$ are regular directed complexes, then $P \join Q$ is a regular directed complex.
\end{cor}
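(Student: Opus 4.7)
The plan is to verify directly that $P \join Q$ satisfies both requirements for being a regular directed complex: the two directed-complex axioms, and the condition that $\clos\{z\}$ has spherical boundary for every $z$. Both reduce by case analysis to the previous proposition together with regularity of $P$ and $Q$.

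First I would classify the elements of $P \join Q$. By construction, any $z \in P \join Q$ has one of three forms: either $z$ corresponds to an element $x \gray \bot \in P_\bot \gray Q_\bot$ with $x \in P$, or to $\bot \gray y$ with $y \in Q$, or to $x \gray y$ with $x \in P$ and $y \in Q$. In the first case, $\clos\{z\}$ in $P \join Q$ is isomorphic to $\clos\{x\}$ in $P$ via the inclusion $P \incl P \join Q$, and similarly in the second case. Regularity of $P$ and $Q$ therefore handles these two cases immediately: $\clos\{z\}$ is an atom with spherical boundary, and the globularity axiom for $\bord{}{\alpha}z$ is inherited from $P$ or $Q$.

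The interesting case is $z = x \join y$. Here $\clos\{z\}$ is the closed subset of $P \join Q$ corresponding, under the isomorphism $(P \join Q)_\bot \cong P_\bot \gray Q_\bot$, to $\clos\{x\}_\bot \gray \clos\{y\}_\bot$, which is to say $\clos\{z\} = \clos\{x\} \join \clos\{y\}$. Since $P$ and $Q$ are regular, $\clos\{x\}$ and $\clos\{y\}$ are atoms with spherical boundary. Applying the preceding proposition with $U = \clos\{x\}$ and $V = \clos\{y\}$ gives that $\clos\{z\}$ is a molecule with spherical boundary, which is all we need on the regularity side.

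It remains to verify the directed-complex axioms. Axiom (1), that $\bord{}{\alpha}z$ is a molecule, follows in all three cases from the previous proposition. For axiom (2), $\bord{}{\alpha}(\bord{}{\beta}z) = \bord{n-2}{\alpha}z$ when $n = \dmn{z} > 1$, I would expand both sides using the explicit formulas given in the preceding proposition for the boundaries of a join, together with the analogous formulas in $P$ and $Q$ (which hold since those are directed complexes). I expect the main bookkeeping obstacle to be the parity-dependent splitting in the formulas for $\bord{k}{-}$ versus $\bord{k}{+}$; however, this is the same parity pattern that appears in Steiner's treatment and in the proof of the preceding proposition via the injection $j\colon P \join Q \to \Sigma P \gray \Sigma Q$, so I would in fact streamline the verification by transporting the axiom across $j$: the compatibility of $\invrs{j}$ with boundaries reduces axiom (2) for $P \join Q$ to the corresponding identity for $\Sigma P \gray \Sigma Q$, and the latter is a regular directed complex by the corollary on Gray products, since $\Sigma$ preserves regularity by Lemma~\ref{lem:suspension}.
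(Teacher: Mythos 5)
The paper gives no explicit proof of this corollary, treating it as immediate from the preceding proposition, and your proposal reconstructs the intended argument correctly. The key observations are all right: the atoms of $P \join Q$ split into three types, with $\clos\{x \join y\} = \clos\{x\} \join \clos\{y\}$ following from the identity $\clos\{x \gray y\} = \clos\{x\} \gray \clos\{y\}$ in the ambient Gray product $P_\bot \gray Q_\bot$; the first two types inherit regularity directly; and the third type is handled by applying the preceding proposition to the atoms $\clos\{x\}, \clos\{y\}$, which have spherical boundary by regularity of $P$ and $Q$. Your treatment of the globularity axiom by transporting along $j$ to $\Sigma P \gray \Sigma Q$ is exactly the mechanism used in the proposition's proof, and the chain ``$\Sigma$ preserves regularity $\Rightarrow$ $\Sigma P \gray \Sigma Q$ regular by the Gray-product corollary $\Rightarrow$ pull the axioms back along $j$'' is sound (the regularity of $\Sigma P$ for $P$ regular is itself a small unstated consequence of Lemma \ref{lem:suspension}, since the atoms of $\Sigma P$ are $\{\bot^\pm\}$ and suspensions of atoms of $P$). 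This is more detail than the paper records but no different in substance.
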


\begin{dfn}
Let $f\colon P \to P'$ and $g\colon Q \to Q'$ be maps of regular directed complexes. Then $\Sigma f \gray \Sigma g$ sends the image of $j\colon P \join Q \to \Sigma P \gray \Sigma Q$ to the image of $j\colon P' \join Q' \to \Sigma P' \gray \Sigma Q'$. Because $j$ is injective, it has a partial inverse defined on its image, and it makes sense to define
\begin{equation*}
	f \join g (z) \eqdef \invrs{j}(\Sigma f \gray \Sigma g(j(z)))
\end{equation*}
for each $z \in P \join Q$. Then
\begin{align*}
	f \join g (\bord{k}{\alpha}z) & = \invrs{j}((\Sigma f \gray \Sigma g)\clos j(\bord{k}{\alpha}z)) = \invrs{j}((\Sigma f \gray \Sigma g)\bord{k+1}{\alpha}j(z)) = \\
	 & = \invrs{j}(\bord{k+1}{\alpha}(\Sigma f \gray \Sigma g)j(z)) = \bord{k}{\alpha}(f\join g)(z)
\end{align*}
so $f \join g$ is a map of oriented graded posets. The assignment $(f, g) \mapsto f \join g$ is functorial because $(f, g) \mapsto \Sigma f \gray \Sigma g$ is. We deduce that joins determine a second monoidal structure on $\rdcpx$. 
\end{dfn}

\begin{prop}
Joins preserve pushouts of inclusions separately in each variable.
\end{prop}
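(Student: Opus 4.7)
The plan is to follow the blueprint of Proposition \ref{prop:gray_preserve} and reduce to a statement about the underlying posets via Proposition \ref{prop:rdcpxin_colimits}. Fixing a regular directed complex $Q$, I want to show that the functor $- \join Q\colon \rdcpxin \to \rdcpxin$ sends pushout squares to pushout squares, and symmetrically for $P \join -$. Since pushouts of inclusions are created by the forgetful functor $\rdcpxin \to \pos$, it suffices to verify the claim at the poset level; the orientations then lift uniquely by Lemma \ref{lem:sameinclusions} and the universal property of the pushout, so there is nothing extra to check on orientations.

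The key observation is that the underlying poset of $P \join Q$ decomposes, as a set, as $P \sqcup Q \sqcup (P \times Q)$, with the order given by the orders on $P$ and $Q$ on the first two summands, the product order on the third, and the relations $x < x \join y$ and $y < x \join y$ (together with their derived consequences) connecting them. For fixed $Q$, the underlying-poset functor $P \mapsto P \join Q$ therefore decomposes as the disjoint union of three $\pos$-valued functors of $P$: the identity, the constant functor at $Q$, and Cartesian product with $Q$.

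Each of these preserves pushouts of inclusions: the identity trivially; Cartesian product because $- \times Q$ preserves pushouts of closed embeddings of posets, which reduces to the same statement in $\set$ where the functor is a left adjoint; and the constant functor because applied to any pushout square it yields the trivial square $Q = Q = Q = Q$, which is vacuously a pushout. Disjoint unions of pushouts are pushouts, so $- \join Q$ preserves pushouts of inclusions at the underlying poset level. The argument for $P \join -$ is entirely symmetric.

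I expect no serious obstacle: this is a structural preservation result that follows almost immediately from the decomposition of the join as disjoint union plus Cartesian product. The only verification that requires any care is confirming that the three components of the order on $P \join Q$ interact only in the prescribed way (so that the decomposition is functorial in $P$), which is a direct reading of the definition of join via $(P \join Q)_\bot \cong P_\bot \gray Q_\bot$ together with the definition of the Gray product. Alternatively, one could bypass the decomposition entirely by observing that augmenting with a fresh bottom element commutes with pushouts of inclusions and then invoking Proposition \ref{prop:gray_preserve} through the $(-)_\bot$-correspondence; but the direct poset-level decomposition seems cleanest.
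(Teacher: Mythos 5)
The set-level decomposition $P \join Q \cong P \sqcup Q \sqcup (P \times Q)$ is correct, but the crucial inference that follows is not: as you yourself note, the order on $P \join Q$ has cross-relations such as $x < x \join y$ and $y < x \join y$, so $P \join Q$ is \emph{not} the coproduct in $\pos$ of $P$, $Q$, and $P \times Q$ --- a coproduct of posets would make the three blocks mutually incomparable. Consequently the functor $P \mapsto P \join Q$ does not decompose as a disjoint union of three $\pos$-valued functors, and the step ``disjoint unions of pushouts are pushouts, so $- \join Q$ preserves pushouts'' does not go through at the poset level. What your decomposition actually proves is that the \emph{underlying set} of $(B \cup_A C) \join Q$ is the pushout of the underlying sets; to conclude at the poset level you still need to check that the order on $(B \cup_A C) \join Q$ is the union of the orders on $B \join Q$ and $C \join Q$. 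This is true --- the order on $P \join Q$ is inherited pointwise from $P_\bot \gray Q_\bot$, so any relation $z_1 \leq z_2$ with $z_2 \in B \join Q$ forces $z_1 \in B \join Q$ by closedness of $B$ in $D$, and similarly for $C$ --- but it is exactly the step your proof is missing, and it is precisely where the interaction between the three blocks matters.

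Your alternative sketch at the end is essentially the paper's proof: the paper observes that $\Sigma$ preserves pushouts of inclusions, invokes Proposition \ref{prop:gray_preserve} for Gray products, and concludes by the characterisation of the join via $(P \join Q)_\bot \cong P_\bot \gray Q_\bot$ (equivalently the embedding $j\colon P \join Q \to \Sigma P \gray \Sigma Q$). That route avoids the decomposition issue entirely and is cleaner; if you want to keep your direct argument, supply the poset-level verification above rather than appealing to a coproduct decomposition that doesn't exist.
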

\begin{proof}
The endofunctor $\Sigma$ preserves pushout diagrams of inclusions. The statement then follows from Proposition \ref{prop:gray_preserve} by the definition of joins.
\end{proof}

\begin{rmk}
On the other hand, $\Sigma$ does not preserve the initial object and neither does the join operation.
\end{rmk}

\begin{dfn}[Duals] Let $P$ be an oriented graded poset and $J \subseteq \mathbb{N} \setminus \{0\}$. The \emph{$J$\nbd dual} $\oppn{J}{P}$ of $P$ is the oriented graded poset with the same underlying poset as $P$ and the orientation $o'$ defined by
\begin{equation*}
o'(y \to x) := \begin{cases}
		-o(y \to x) & \text{if $\dmn{y} \in J$}, \\
		o(y \to x) & \text{if $\dmn{y} \not\in J$},
	\end{cases}
\end{equation*}
for all $x, y \in P$ such that $y$ covers $x$. 

We write $\opp{P}$, $\coo{P}$, and $\oppall{P}$ in the cases $J = \{2n-1\}_{n>0}$, $J = \{2n\}_{n>0}$, and $J = \mathbb{N} \setminus \{0\}$, respectively. For all $n > 0$ we write $\oppn{n}{P}$ for $\oppn{\{n\}}{P}$.

If $P$ is a regular directed complex, so is $\oppn{J}{P}$, and if $f\colon P \to Q$ is a map of regular directed complexes, so is $\oppn{J}{f}\colon \oppn{J}{P} \to \oppn{J}{Q}$ with the same underlying function. The proof is by induction on molecules and left to the reader. Thus $\oppn{J}{}$ determines an endofunctor on $\rdcpx$.
\end{dfn}

\begin{prop}
Let $P, Q$ be regular directed complexes. Then,
\begin{enumerate}[label=(\alph*)]
	\item $x \gray y \mapsto y \gray x$ is an isomorphism between $\opp{(P \gray Q)}$ and $\opp{Q} \gray \opp{P}$ and between $\coo{(P \gray Q)}$ and $\coo{Q} \gray \coo{P}$, and
	\item $x \join y \mapsto y \join x$ is an isomorphism between $\opp{(P \join Q)}$ and $\opp{Q} \join \opp{P}$.
\end{enumerate}
Consequently $x \gray y \mapsto x \gray y$ is an isomorphism between $\oppall{(P \gray Q)}$ and $\oppall{P} \gray \oppall{Q}$. These isomorphisms are natural for maps of regular directed complexes.
\end{prop}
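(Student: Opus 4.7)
In each case the underlying assignment is a bijection of sets (a swap of coordinates or the identity) that is evidently order-preserving, since the partial orders on $P \gray Q$ and $P \join Q$ are symmetric in the two factors and $J$-duals leave the underlying poset unchanged. Dimensions are thus preserved, and the entire content lies in checking orientation compatibility on covers.

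For part (a), the plan is to analyse the two families of covers in a Gray product: those of the form $x \gray y \to x' \gray y$ (with $x$ covering $x'$ in $P$) and those of the form $x \gray y \to x \gray y'$. For the first family, the image in $\opp{Q} \gray \opp{P}$ is a cover in the \emph{second} coordinate, so its orientation picks up a factor $(-1)^{\dmn{y}}$ from the Gray product formula together with the factor $(-1)^{\dmn{x}}$ coming from $o_{\opp{P}}(x \to x')$, giving $(-1)^{\dmn{x}+\dmn{y}} o_P(x \to x')$; the same cover in $\opp{(P \gray Q)}$ receives precisely $(-1)^{\dmn{x} + \dmn{y}} o_P(x \to x')$ by definition of $\opp{}$, so the signs agree. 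The second family is symmetric. The $\coo{}$ variant runs identically; the only caveat is that $\coo{}$ flips only at \emph{strictly positive} even dimensions, but every cover in a Gray product involves an element of strictly positive total dimension, so this causes no issue.

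For part (b), I would exploit the identification $(P \join Q)_\bot \cong P_\bot \gray Q_\bot$, which transports the orientation of $P \join Q$ from a Gray product equipped with the extension $o(x \to \bot) := +$ for $0$-dimensional $x$. Covers in $P \join Q$ fall into a handful of flavours --- internal to $P$ or to $Q$, mixed covers in the first or second factor of the join, and two kinds of boundary cover $x \join y \to y$ or $x \join y \to x$ where one argument is $0$-dimensional --- and for each I would rerun the sign analysis of (a), keeping in mind the shift $\dmn{x}_{P_\bot} = \dmn{x}_P + 1$. The hard part will be the boundary covers: the fixed convention $o(x \to \bot) = +$ is untouched by $\opp{}$, so one has to verify by direct parity computation that the shifted dimension conspires correctly with the $(-)^{\dmn{y}}$ from the Gray product formula to produce matching signs in $\opp{(P \join Q)}$ and $\opp{Q} \join \opp{P}$. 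This same dimension shift is what breaks the analogous statement for $\coo{}$, which explains its absence from (b).

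For part (c), I would derive the statement from the two halves of (a) via
\begin{equation*}
\oppall{(P \gray Q)} = \opp{\coo{(P \gray Q)}} \incliso \opp{(\coo{Q} \gray \coo{P})} \incliso \opp{\coo{P}} \gray \opp{\coo{Q}} = \oppall{P} \gray \oppall{Q},
\end{equation*}
where each intermediate isomorphism is a coordinate swap, so their composite is the identity on underlying sets. Naturality in all three parts is immediate: $\opp{}$, $\coo{}$, $\oppall{}$ and the Gray and join bifunctors all act on maps by the obvious formulas on underlying functions, and coordinate swap commutes strictly with $f \gray g$ and $f \join g$.
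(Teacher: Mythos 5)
Your proof is correct, and it follows the direct sign-verification approach that the paper relies on: the paper gives no argument of its own here, deferring to Proposition~15 of \cite{hadzihasanovic2018combinatorial}, whose proof is precisely a case analysis of covers and parities of the kind you carry out. Your sketch of part~(b) correctly identifies all six flavours of cover in a join, the dimension shift in $P_\bot$ that has to be tracked, and the reason the $\coo{}$ variant fails there while $\opp{}$ survives.
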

\begin{proof}
Identical to the proof of \cite[Proposition 15]{hadzihasanovic2018combinatorial}.
\end{proof}

\begin{dfn}[Reverse map]
Let $U, V$ be atoms with $\dmn{U} > \dmn{V}$ and let $p\colon U \surj V$ be a surjective map. Let $n \eqdef \dmn{U}$. The \emph{reverse} of $p$ is the map $\rev{p}\colon \oppn{n}{U} \surj V$ defined by
\begin{enumerate}
	\item $\restr{\rev{p}}{\bord{}{\alpha}\oppn{n}{U}} \eqdef \restr{p}{\bord{}{-\alpha}U}$ and
	\item $\rev{p}$ sends the greatest element of $\oppn{n}{U}$ to the greatest element of $V$.
\end{enumerate}
This is well-defined because $p(\bord{}{+}U) = p(\bord{}{-}U) = V$. 

More in general, suppose that $U$ and $V$ are molecules. For all $x \in U$ with $\dmn{x} = n$, we have $\dmn{p(x)} < n$, so $\rev{\restr{p}{\clos\{x\}}}\colon \oppn{n}{\clos\{x\}} \surj \clos\{p(x)\}$ is defined. The reverse of $p$ is then the map $\rev{p}\colon \oppn{n}{U} \surj V$ defined by
\begin{enumerate}
	\item $\restr{\rev{p}}{\bord{}{\alpha}\oppn{n}{U}} \eqdef \restr{p}{\bord{}{-\alpha}U}$ and
	\item $\restr{\rev{p}}{\oppn{n}{\clos\{x\}}} \eqdef \rev{\restr{p}{\clos\{x\}}}$ for all $x \in U$ with $\dmn{x} = n$.
\end{enumerate}
Note that $\rev{\rev{p}} = p$.
\end{dfn}

\section{Classes of molecules} \label{sec:classes}

\subsection{Algebraic classes}

\begin{dfn}[Class of molecules] 
A \emph{class of molecules} $\cls{C}$ is a class of objects of $\rmolin$, closed under isomorphism, such that
\begin{enumerate}
	\item if $U$ is in $\cls{C}$, then $\bord{}{+} U$ and $\bord{}{-} U$ are in $\cls{C}$, and
	\item if $U$ is in $\cls{C}$ and $x \in U$, then $\clos\{x\}$ is in $\cls{C}$.
\end{enumerate}
\end{dfn}

\begin{exm}
The following are classes of molecules:
\begin{itemize}
	\item the class $\cls{R}$ of all (regular) molecules;
	\item the class $\cls{S}$ of molecules with spherical boundary; 
	\item the class $\cls{K}$ of constructible molecules \cite{hadzihasanovic2018combinatorial}.
\end{itemize}
\end{exm}

\begin{exm}
Given a regular directed complex $P$, let $\hasseo{P}$ be the directed graph obtained from $\hasse{P}$ by reversing all the edges labelled $-$. We say that $P$ is \emph{totally loop-free} \cite[Definition 2.14]{steiner1993algebra} if $\hasseo{P}$ is acyclic as a directed graph. Totally loop-free molecules form a class $\cls{LF}$: if $U$ is totally loop-free, so is any closed subset of $U$.
\end{exm}

\begin{exm}
Let $\cls{A}$ be a class of atoms with the property that, if $U \in \cls{A}$ and $x \in U$, then $\clos\{x\} \in \cls{A}$. There is a class of molecules $\cls{C}_\cls{A}$ defined by the property that $U \in \cls{C}_\cls{A}$ if and only if $\clos\{x\} \in \cls{A}$ for all $x \in U$. 

Notice that $\cls{R} = \cls{C}_\cls{A}$ for the class of atoms with spherical boundary.
\end{exm}

\begin{exm}
The class $\cls{O}$ of all globes is a class of molecules. Due to the requirement that a class be closed under boundary operators, this is the largest class of molecules whose members are all atoms.
\end{exm}

\begin{dfn}[$\cls{C}$-Directed complex]
Let $\cls{C}$ be a class of molecules. A \emph{$\cls{C}$\nbd directed complex} is a regular directed complex $P$ with the property that $\clos\{x\}$ is in $\cls{C}$ for all $x \in P$.

We write $\dcpx$ and $\dcpxin$ for the full subcategories of $\rdcpx$ and $\rdcpxin$, respectively, on the $\cls{C}$\nbd directed complexes.
\end{dfn}

\begin{dfn}[$\cls{C}$-Functors and subdivisions]
Let $P$, $Q$ be two $\cls{C}$\nbd directed complexes. The closed subsets of $P$ form a bounded lattice $\closub{P}$. A \emph{$\cls{C}$\nbd functor} $f\colon P \cfun Q$ is a function $f\colon \closub{P} \to \closub{Q}$ such that
\begin{enumerate}
	\item $f$ preserves all unions and binary intersections,
	\item $\bord{n}{\alpha}f(\clos\{x\}) = f(\bord{n}{\alpha}x)$, and
	\item $f(\clos\{x\})$ is a $\cls{C}$\nbd molecule
\end{enumerate}
for all $x \in P$, $n \in \mathbb{N}$, and $\alpha \in \{+,-\}$. 

A \emph{subdivision} is a $\cls{C}$\nbd functor $f\colon P \cfun Q$ such that $f(P) = Q$, that is, a $\cls{C}$\nbd functor that preserves all intersections.
\end{dfn}

\begin{rmk}
A $\cls{C}$\nbd functor is completely determined by what it does on atoms: 
\begin{equation*}
	f(U) = f(\bigcup_{x \in U} \clos\{x\}) = \bigcup_{x \in U} f(\clos\{x\}).
\end{equation*}
\end{rmk}

\begin{dfn} \label{dfn:inclusion_cfunctor}
Every inclusion $\imath\colon P \incl Q$ of $\cls{C}$\nbd directed complexes induces a $\cls{C}$\nbd functor defined on closed subsets by $U \mapsto \imath(U)$. This is well-defined because $\imath$ is closed, and it preserves unions and binary intersections as the direct image of an injective map. The other two conditions follow from the definition of map of $\cls{C}$\nbd directed complexes. This $\cls{C}$\nbd functor is a subdivision if and only if $\imath$ is an isomorphism.
\end{dfn}

\begin{exm} 
Let $U$ be a $\cls{C}$\nbd molecule and $x \in U$ with $n \eqdef \dmn{x} = \dmn{U}$. Suppose $V$ is a $\cls{C}$\nbd molecule with $\bord{}{\alpha} V$ isomorphic to $\bord{}{\alpha} x$ for all $\alpha \in \{+,-\}$. Then $U[V/\clos\{x\}]$ is defined, and there is a subdivision $U \cfun U[V/\clos\{x\}]$ defined by 
\begin{equation*}
	\clos\{x\} \mapsto V, \quad \quad \clos\{y\} \mapsto \clos\{y\} \text{ if } y \neq x. 
\end{equation*}
As a special case, if $U$ has spherical boundary and $\compos{U}$ is a $\cls{C}$\nbd molecule, there is a unique subdivision $\compos{U} \cfun U$.
\end{exm}

\begin{exm} \label{exm:unique_subdivision}
Let $U$ be a $\cls{C}$\nbd molecule with spherical boundary, $n \eqdef \dmn{U}$. If the $n$\nbd globe $O^n$ is a $\cls{C}$\nbd molecule, there is a unique subdivision $O^n \cfun U$, defined by
\begin{equation*}
	\clos\{\undl{n}\} \mapsto U, \quad \quad \clos\{\undl{k}^\alpha\} \mapsto \bord{k}{\alpha}U
\end{equation*}
for each $k < n$ and $\alpha \in \{+,-\}$.
\end{exm}

\begin{prop} \label{prop:cfun_preserve_molecules}
Let $f\colon P \cfun Q$ be a $\cls{C}$\nbd functor and $U \subseteq P$ a molecule. Then
\begin{enumerate}[label=(\alph*)]
	\item $f(U)$ is a molecule with $\dmn{U} = \dmn{f(U)}$,
	\item $\bord{}{\alpha}f(U) = f(\bord{}{\alpha}U)$, and
	\item if $U$ decomposes as $U_1 \cp{k} U_2$, then $f(U)$ decomposes as $f(U_1) \cp{k} f(U_2)$.
\end{enumerate}
\end{prop}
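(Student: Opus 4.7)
The plan is to prove all three claims simultaneously by induction on the submolecule structure of $U$, along the lines of Lemma \ref{lem:molecule_codim}, after strengthening (b) to the assertion that $f(\bord{k}{\alpha}U) = \bord{k}{\alpha}f(U)$ for every $k \in \mathbb{N}$ and $\alpha \in \{+,-\}$. This strengthening is what is needed to close the induction at the pasting step, since the matching condition for a $\cp{k}$\nbd decomposition is phrased in terms of $\bord{k}{\alpha}$ rather than just $\bord{}{\alpha}$.

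In the base case, $U = \clos\{x\}$ is an atom of dimension $n$. Clauses (3) and (2) of the definition of $\cls{C}$\nbd functor deliver directly that $f(U)$ is a $\cls{C}$\nbd molecule and that $f(\bord{k}{\alpha}U) = \bord{k}{\alpha}f(U)$ for all $k, \alpha$, so only $\dmn{f(U)} = n$ remains to be shown. One direction, $\dmn{f(U)} \leq n$, follows from $\bord{n}{\alpha}f(U) = f(\bord{n}{\alpha}x) = f(U)$ and Lemma \ref{lem:minimaldim}. For the reverse, I would suppose for contradiction that $m \eqdef \dmn{f(U)} < n$; then Remark \ref{rmk:spherical_k} applied to the regular atom $\clos\{x\}$ gives $\bord{m}{+}x \cap \bord{m}{-}x = \bord{m-1}{}x$, and pushing this through $f$ using preservation of binary intersections and unions, together with Lemma \ref{lem:higherthandim}, yields
\begin{equation*}
	f(U) = f(\bord{m}{+}x) \cap f(\bord{m}{-}x) = f(\bord{m-1}{}x) = \bord{m-1}{+}f(U) \cup \bord{m-1}{-}f(U),
\end{equation*}
whose right-hand side is at most $(m-1)$\nbd dimensional by Lemma \ref{lem:molecule_right_dimension} (or is empty when $m = 0$, forcing the impossible $f(U) = \emptyset$), contradicting $\dmn{f(U)} = m$.

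For the inductive step, I would write $U = U_1 \cp{k} U_2$ with $U_i$ proper submolecules, $k < n \eqdef \dmn{U}$, and $U_1 \cap U_2 = \bord{k}{+}U_1 = \bord{k}{-}U_2$. By the inductive hypothesis each $f(U_i)$ is an $n$\nbd molecule satisfying $f(\bord{k'}{\alpha}U_i) = \bord{k'}{\alpha}f(U_i)$ for every $k', \alpha$. Preservation of unions and binary intersections then gives
\begin{equation*}
	f(U) = f(U_1) \cup f(U_2), \qquad f(U_1) \cap f(U_2) = f(\bord{k}{+}U_1) = \bord{k}{+}f(U_1) = \bord{k}{-}f(U_2),
\end{equation*}
so $f(U) = f(U_1) \cp{k} f(U_2)$ is a valid pasting decomposition, yielding (a), (c), and $\dmn{f(U)} = n$. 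The strengthened (b) for $U$ follows by casing on $k'$ relative to $k$ and $n$: for $k' \geq n$ both sides equal $f(U)$; for $k < k' < n$ one uses the distributivity $\bord{k'}{\alpha}(A \cp{k} B) = \bord{k'}{\alpha}A \cp{k} \bord{k'}{\alpha}B$ from Proposition \ref{prop:partial_omegacat} with the IH and preservation of unions; for $k' = k$ the axioms $\bord{k}{-}(A \cp{k} B) = \bord{k}{-}A$ and $\bord{k}{+}(A \cp{k} B) = \bord{k}{+}B$ reduce the claim to the IH; and for $k' < k$ globularity reduces it to a deeper boundary of $U_1$ (or $U_2$), which is again handled by the IH.

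The main obstacle is the dimension-preservation step in the base case: sphericality of the atom's boundary and $f$'s preservation of binary intersections must conspire to rule out dimensional collapse of $f(\clos\{x\})$, because nothing in the definition of $\cls{C}$\nbd functor alone prevents an atom from being mapped to a lower-dimensional molecule. Once that obstacle is secured, the inductive step is essentially a translation of standard $\omega$\nbd categorical boundary identities through a union- and intersection-preserving function.
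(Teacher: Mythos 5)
Your proof is correct and follows essentially the same outline as the paper's: a simultaneous induction on dimension and submolecule structure, with the base case being atoms and the inductive step being the $\cp{k}$ pasting. Two small points of difference are worth noting. First, in the atom base case you rule out dimensional collapse by applying Lemma \ref{lem:molecule_right_dimension} to the target molecule $f(\clos\{x\})$, whereas the paper instead invokes the inductive hypothesis on the $(n-1)$\nbd molecules $\bord{}{\alpha}\clos\{x\}$ to get the same contradiction; both are valid, and yours has the mild advantage of making the atom case a genuine base case that does not depend on the IH. Second, you explicitly strengthen (b) to $f(\bord{k}{\alpha}U) = \bord{k}{\alpha}f(U)$ for all $k$, whereas the paper uses this tacitly when writing $f(\bord{k}{+}U_1) = \bord{k}{+}f(U_1)$ in the pasting step; spelling it out, with the four-way case analysis on $k'$ versus $k$ and $n$, makes the recursion through (b) transparent and is a reasonable thing to do. One minor slip: in the inductive step you assert that ``each $f(U_i)$ is an $n$\nbd molecule,'' but a factor $U_i$ in a $\cp{k}$\nbd decomposition may well have dimension strictly less than $n$ (e.g.\ a whisker); you only get that $f(U_i)$ is a molecule of dimension $\dmn{U_i}$. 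This does not affect the argument, since the pasting $f(U_1) \cp{k} f(U_2)$ still has dimension $\max(\dmn{U_1}, \dmn{U_2}) = n$, but the phrasing should be corrected.
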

\begin{proof}
By induction on $n \eqdef \dmn{U}$ and proper submolecules. First, notice that $\bord{n}{\alpha}f(\clos\{x\}) = f(\bord{n}{\alpha}x) = f(\clos\{x\})$ for all $x \in P$ with $\dmn{x} = n$, so by Lemma \ref{lem:minimaldim} the dimension of $f(\clos\{x\})$ is at most $n$. If $n = 0$, since $f(\clos\{x\})$ is a molecule, it must be a 0-molecule. 

Let $n > 0$. If $U = \clos\{x\}$, then by definition $f(U)$ is a molecule with $\bord{n-1}{\alpha}f(U) = f(\bord{}{\alpha}U)$. Suppose that $\dmn{f(U)} < n$. By the inductive hypothesis, $f(\bord{}{+}U)$ and $f(\bord{}{-}U)$ are $(n-1)$\nbd molecules, and since
\begin{equation*}
	f(U) = \bord{n-1}{\alpha}f(U) = f(\bord{}{\alpha}U)
\end{equation*}
they are equal to each other and to $f(U)$. Then 
\begin{equation*}
	f(U) = f(\bord{}{+}U) \cap f(\bord{}{-}U) = f(\bord{}{+}U \cap \bord{}{-}U) = f(\bord{n-2}{}U)
\end{equation*}
because $x$ has spherical boundary, and by the inductive hypothesis the last term is $(n-2)$\nbd dimensional: a contradiction. So $\dmn{f(U)} = n$.

Finally, suppose that $U$ has a proper decomposition $U_1 \cp{k} U_2$. By the inductive hypothesis, $f(U_1)$ and $f(U_2)$ are molecules of the same dimension as $U_1$ and $U_2$, and
\begin{equation*}
	f(U_1) \cap f(U_2) = f(U_1 \cap U_2) = f(\bord{k}{+}U_1) = f(\bord{k}{-}U_2)
\end{equation*}
with the last two terms equal to $\bord{k}{+}f(U_1)$ and $\bord{k}{-}f(U_2)$, respectively. Thus $f(U_1) \cp{k} f(U_2)$ is defined and equal to $f(U_1 \cp{k} U_2)$. 
\end{proof}

\begin{cor} \label{cor:cfunctor_functor}
A $\cls{C}$\nbd functor $f\colon P \cfun Q$ induces a functor of partial $\omega$\nbd categories $\mol{}{P} \to \mol{}{Q}$, defined on molecules by $U \mapsto f(U)$.
\end{cor}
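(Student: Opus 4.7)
The plan is to show that $U \mapsto f(U)$ defines a well-typed map of reflexive $\omega$-graphs which also respects the partial composition, so that it assembles into a functor of partial $\omega$-categories in the sense of the definition preceding Proposition \ref{prop:partial_omegacat}. Essentially all of the work has already been done by Proposition \ref{prop:cfun_preserve_molecules}; the corollary is a bookkeeping exercise translating its three items into the axioms for a functor.

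First, I would check that the assignment is well-typed: if $U \in \mol{n}{P}$, so $\dmn{U} \leq n$, then by Proposition \ref{prop:cfun_preserve_molecules}(a), $f(U)$ is a molecule of dimension $\dmn{U} \leq n$, hence an element of $\mol{n}{Q}$. Next, compatibility with the face operations follows from Proposition \ref{prop:cfun_preserve_molecules}(b): for an $n$-cell $U$, one has $\bord{}{\alpha}f(U) = \bord{n-1}{\alpha}f(U) = f(\bord{n-1}{\alpha}U) = f(\bord{}{\alpha}U)$, where in the case $\dmn{U} < n$ both sides collapse to $f(U)$ by Lemma \ref{lem:higherthandim} together with the dimension-preservation in part (a). Compatibility with the degeneracies $\eps{}$ is trivial since in $\mol{}{P}$ and $\mol{}{Q}$ the degeneracy is the identity on the underlying set of molecules: $f(\eps{}U) = f(U) = \eps{}f(U)$.

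Finally, for composition, suppose $U \cp{k} V$ is defined in $\mol{}{P}$, that is, $U \cap V = \bord{k}{+}U = \bord{k}{-}V$, and $U \cp{k} V = U \cup V$ is an $n$-molecule decomposing as $U \cp{k} V$ in the sense of Definition \ref{dfn:molecule_pasting}. By Proposition \ref{prop:cfun_preserve_molecules}(c), $f(U \cp{k} V)$ decomposes as $f(U) \cp{k} f(V)$, so in particular $f(U) \cp{k} f(V)$ is defined in $\mol{}{Q}$ and equals $f(U \cp{k} V)$. This verifies the one nontrivial axiom for a functor of partial $\omega$-categories, and together with the previous items completes the proof. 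No obstacles are anticipated — Proposition \ref{prop:cfun_preserve_molecules} does all the heavy lifting, and the only subtlety is to remember that partial $\omega$-category cells of dimension $n$ include all lower-dimensional molecules via $\eps{}$, which is handled uniformly by the dimension-preservation clause.
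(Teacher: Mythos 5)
Your proof is correct and takes the same route the paper intends: the paper's own proof is simply ``Immediate,'' deferring the entire verification to Proposition \ref{prop:cfun_preserve_molecules}, and you have spelled out exactly that verification (well-typedness, face and degeneracy compatibility, preservation of $\cp{k}$) in the expected way.
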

\begin{proof}
Immediate.
\end{proof}

\begin{cor} \label{cor:functor_preserve_spherical}
Let $f\colon P \cfun Q$ be a $\cls{C}$\nbd functor and $U \subseteq P$ a molecule. If $U$ has spherical boundary, so does $f(U)$.
\end{cor}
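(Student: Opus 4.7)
The plan is to induct on $n \eqdef \dmn{U}$. The base case $n = 0$ is immediate from the definition of spherical boundary, so assume $n > 0$ and that the statement holds for molecules of dimension less than $n$.

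First I would invoke Proposition \ref{prop:cfun_preserve_molecules}: $f(U)$ is an $n$\nbd molecule, $\bord{}{\alpha} f(U) = f(\bord{}{\alpha}U)$, and the restriction of $f$ to the closed subset $\bord{}{\alpha}U$ yields $f(\bord{}{\alpha}U)$ as a molecule of dimension $n-1$. Because $U$ has spherical boundary, $\bord{}{\alpha}U$ is an $(n-1)$\nbd molecule with spherical boundary, so by the inductive hypothesis $\bord{}{\alpha}f(U) = f(\bord{}{\alpha}U)$ has spherical boundary, giving the first condition in the definition.

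For the second condition, I would use preservation of binary intersections by the $\cls{C}$\nbd functor $f$ together with the spherical boundary of $U$. Specifically,
\begin{align*}
	\bord{}{+}f(U) \cap \bord{}{-}f(U) & = f(\bord{}{+}U) \cap f(\bord{}{-}U) = f(\bord{}{+}U \cap \bord{}{-}U) \\
	& = f(\bord(\bord{}{\alpha}U)) = \bord f(\bord{}{\alpha}U) = \bord (\bord{}{\alpha}f(U))
\end{align*}
for both $\alpha \in \{+,-\}$, where the third equality uses the sphericity assumption on $U$ and the penultimate equality uses Proposition \ref{prop:cfun_preserve_molecules}(b) applied to the molecule $\bord{}{\alpha}U$. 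This establishes exactly the compatibility needed for $f(U)$ to have spherical boundary.

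There is no serious obstacle: the whole argument is a direct combination of the preservation of binary intersections built into the definition of $\cls{C}$\nbd functor with the boundary-preservation property proved in Proposition \ref{prop:cfun_preserve_molecules}. The only mild subtlety is making sure, at each step, to apply these facts to a molecule (not an arbitrary closed subset), which is why the proof is organised around the inductive hypothesis on $\bord{}{\alpha}U$ rather than a direct verification of the unravelled criterion in Remark \ref{rmk:spherical_k}.
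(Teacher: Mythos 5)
Your proof is correct and fills in exactly the one-line sketch the paper gives ("follows from preservation of molecules, boundaries, and binary intersections"): the induction structure mirrors the inductive definition of spherical boundary, the first condition is handled by Proposition~\ref{prop:cfun_preserve_molecules} plus the inductive hypothesis, and the second by the chain of equalities using preservation of binary intersections. This is essentially the same argument the paper intends, just spelled out.
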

\begin{proof}
Follows from preservation of molecules, boundaries, and binary intersections.
\end{proof}

\begin{cor}
The following coincide:
\begin{enumerate}
	\item $\cls{C}$\nbd directed complexes and $(\cls{C} \cap \cls{S})$\nbd directed complexes, and
	\item $\cls{C}$\nbd functors and $(\cls{C} \cap \cls{S})$\nbd functors.
\end{enumerate}
\end{cor}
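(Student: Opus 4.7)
The plan is to use regularity to show that the spherical-boundary condition is automatically satisfied, making the intersection with $\cls{S}$ redundant. First I would unfold both definitions: a $\cls{C}$\nbd directed complex is a regular directed complex $P$ such that $\clos\{x\} \in \cls{C}$ for every $x \in P$, and regularity means precisely that each such atom has spherical boundary, i.e.\ lies in $\cls{S}$. Hence $\clos\{x\} \in \cls{C}$ if and only if $\clos\{x\} \in \cls{C} \cap \cls{S}$, which immediately gives the coincidence in~(1).

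For (2), I would compare the defining conditions of a $\cls{C}$\nbd functor and a $(\cls{C} \cap \cls{S})$\nbd functor between the same underlying $\cls{C}$\nbd directed complexes $P$ and $Q$: the only place where the class intervenes is the requirement that $f(\clos\{x\})$ be a molecule in the class, for each $x \in P$. Since $P$ is regular, $\clos\{x\}$ has spherical boundary, and by Corollary \ref{cor:functor_preserve_spherical} the molecule $f(\clos\{x\})$ then also has spherical boundary, i.e.\ lies in $\cls{S}$. Therefore the conditions ``$f(\clos\{x\})$ is a $\cls{C}$\nbd molecule'' and ``$f(\clos\{x\})$ is a $(\cls{C} \cap \cls{S})$\nbd molecule'' are equivalent, and the two notions of functor coincide.

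There is no real obstacle here; the statement is essentially a tautology once one notes that regularity bakes sphericity into every atom, and Corollary \ref{cor:functor_preserve_spherical} propagates sphericity through $\cls{C}$\nbd functors. The only thing to be a bit careful about is that both clauses~(1) and~(2) should be read as equalities of (sub)categories with the same objects and morphisms, not merely as equivalences; but this follows because the characterisations agree on the nose, not just up to isomorphism.
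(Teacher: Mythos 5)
Your proof is correct and follows exactly the paper's own argument: regularity forces every atom into $\cls{S}$, which handles~(1), and Corollary \ref{cor:functor_preserve_spherical} ensures $\cls{C}$\nbd functors send atoms to molecules with spherical boundary, which handles~(2). The paper states this in one sentence; you have simply unfolded the same reasoning.
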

\begin{proof}
Regular atoms and atoms with spherical boundary coincide, so a regular directed complex is always a $\cls{S}$\nbd directed complex, and by Corollary \ref{cor:functor_preserve_spherical} every $\cls{C}$\nbd functor sends atoms to molecules with spherical boundary.
\end{proof}

\begin{lem} \label{lem:functor_preserve_dimension}
Let $f\colon P \cfun Q$ be a $\cls{C}$\nbd functor and $U \subseteq P$ a closed subset. Then $\dmn{U} = \dmn{f(U)}$.
\end{lem}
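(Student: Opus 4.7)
The plan is to reduce the statement for general closed subsets to the case of atoms, which is already covered by Proposition \ref{prop:cfun_preserve_molecules}(a).

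First I would observe that every closed subset $U$ can be written as the union $U = \bigcup_{x \in U} \clos\{x\}$ of the closures of its elements, and that the dimension of $U$ is attained as $\dmn{U} = \max_{x \in U} \dmn{\clos\{x\}}$ (or $-1$ if $U = \emptyset$, which is trivially handled since $f$ preserves the empty union). Since a $\cls{C}$\nbd functor preserves arbitrary unions by definition, we have $f(U) = \bigcup_{x \in U} f(\clos\{x\})$. Because each $\clos\{x\}$ is an atom, and atoms are molecules, Proposition \ref{prop:cfun_preserve_molecules}(a) gives $\dmn{\clos\{x\}} = \dmn{f(\clos\{x\})}$ for every $x \in U$.

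Then I would combine these facts: every element of $f(U)$ lies in some $f(\clos\{x\})$, so its dimension is bounded by $\dmn{f(\clos\{x\})} = \dmn{\clos\{x\}}$, yielding
\begin{equation*}
	\dmn{f(U)} = \max_{x \in U} \dmn{f(\clos\{x\})} = \max_{x \in U} \dmn{\clos\{x\}} = \dmn{U}.
\end{equation*}
There is no real obstacle here; the only point to be careful about is that the maximum on the left genuinely equals the pointwise maximum of the $\dmn{f(\clos\{x\})}$, which is immediate from $f(U) = \bigcup_x f(\clos\{x\})$ and the fact that dimension of a union is the maximum of the dimensions of the components.
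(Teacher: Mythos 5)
Your proof is correct and follows essentially the same argument as the paper: decompose $U$ into the atoms $\clos\{x\}$, use preservation of unions to get $f(U) = \bigcup_{x \in U} f(\clos\{x\})$, and apply Proposition~\ref{prop:cfun_preserve_molecules}(a) to conclude that dimensions are preserved atom-by-atom, hence on the max. You merely spell out the chain of equalities (and the empty case) in slightly more detail than the paper does.
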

\begin{proof}
We have that $\dmn{f(U)}$ is equal to
\begin{equation*}
	\mathrm{max}\{\dmn{y} \mid y \in \bigcup_{x \in U} f(\clos\{x\})\} = \mathrm{max}\{\dmn{f(\clos\{x\})} \mid x \in U\},
\end{equation*}
and the latter is equal to $\dmn{U}$ by Proposition \ref{prop:cfun_preserve_molecules}.
\end{proof}

\begin{prop}
Let $f\colon P \cfun Q$ be a $\cls{C}$\nbd functor. Then $f$ is injective as a function $\closub{P} \to \closub{Q}$.
\end{prop}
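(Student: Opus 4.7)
The plan is to reduce the statement to the following claim: if $V \subsetneq U$ is a strict inclusion of closed subsets of $P$, then $f(V) \neq f(U)$. Indeed, suppose $U, U' \in \closub{P}$ are distinct with $f(U) = f(U')$; then by preservation of binary intersections, $f(U \cap U') = f(U) \cap f(U') = f(U)$, and $U \cap U'$ is strictly contained in at least one of $U$, $U'$, so the reduction suffices.

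For the reduction, I would argue by contradiction: assume $V \subsetneq U$ with $f(V) = f(U)$, pick $x \in U \setminus V$ of minimal dimension, and set $k \eqdef \dmn{x}$. Since $U$ is closed, every $y < x$ lies in $U$; since $P$ is graded, each such $y$ has dimension strictly less than $k$; and by minimality, each such $y$ lies in $V$. Hence $\clos\{x\} \cap V \subseteq \clos\{x\} \setminus \{x\}$, and since $x$ is the unique $k$-dimensional element of the atom $\clos\{x\}$, this intersection has dimension at most $k-1$.

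To close the argument, I would use preservation of binary intersections to write $f(\clos\{x\}) \cap f(V) = f(\clos\{x\} \cap V)$, while the inclusion $\clos\{x\} \subseteq U$ together with the hypothesis $f(U) = f(V)$ gives $f(\clos\{x\}) \subseteq f(V)$, hence $f(\clos\{x\}) \cap f(V) = f(\clos\{x\})$. Combining the two, $f(\clos\{x\}) = f(\clos\{x\} \cap V)$; but by Lemma \ref{lem:functor_preserve_dimension}, the left-hand side has dimension $k$ whereas the right-hand side has dimension at most $k-1$, a contradiction.

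The only slightly delicate point is the choice of $x$ of minimal dimension, which is what ensures that all proper faces of $\clos\{x\}$ fall into $V$ and thus strictly lowers the dimension on intersection. There is no real obstacle beyond this: one does not need any detailed description of $\clos\{x\} \cap V$ (such as identifying it with a boundary $\bord{k-1}{}x$), nor any structural information beyond the dimensional bound and the two preservation properties built into the definition of a $\cls{C}$\nbd functor.
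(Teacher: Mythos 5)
Your proof is correct and follows essentially the same argument as the paper: reduce to a strict inclusion via preservation of binary intersections, pick an element $x$ in the difference, and derive a contradiction from $f(\clos\{x\} \cap V) = f(\clos\{x\})$ together with the dimension-preserving Lemma \ref{lem:functor_preserve_dimension}. The only cosmetic difference is that the paper selects a maximal element of the larger set while you select one of minimal dimension; neither auxiliary constraint is actually needed, since any $x$ in the difference already guarantees $\clos\{x\} \cap V$ omits the unique top-dimensional element of the atom $\clos\{x\}$.
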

\begin{proof}
Suppose that $f(U) = f(V)$ for a pair of closed subsets $U, V \subseteq P$. We may assume that $U \subseteq V$; otherwise, $f(U \cap V) = f(U) \cap f(V) = f(U)$ and we can replace $U$ with $U \cap V$. 

Now, $V$ is equal to the closure of its set of maximal elements. Suppose that there is $x \in V$ which is maximal and not in $U$, and let $n \eqdef \dmn{x}$. Then $\clos\{x\} \cap U$ is closed and must have dimension $k < n$, so $f(\clos\{x\} \cap U)$ has dimension $k < n$ by Lemma \ref{lem:functor_preserve_dimension}. But 
\begin{equation*}
	f(\clos\{x\} \cap U) = f(\clos\{x\}) \cap f(U) = f(\clos\{x\}) \cap f(V) = f(\clos\{x\})
\end{equation*}
because $\clos\{x\} \subseteq V$, and $\dmn{f(\clos\{x\})} = \dmn{x} = n$, a contradiction.
\end{proof}

\begin{rmk}
Let $f\colon P \cfun Q$ and $g\colon Q \cfun R$ be two $\cls{C}$\nbd functors. The composite $f;g\colon \closub{P} \to \closub{R}$ may not determine a $\cls{C}$\nbd functor: while Proposition \ref{prop:cfun_preserve_molecules} guarantees that $g(f(\clos\{x\}))$ is a molecule for all $x \in P$, there is no guarantee that it is a $\cls{C}$\nbd molecule. The following is a sufficient condition for $\cls{C}$\nbd functors to compose.
\end{rmk}

\begin{dfn}[Algebraic class of molecules]
A class of molecules $\cls{C}$ is \emph{algebraic} if, for all $\cls{C}$\nbd functors $f\colon P \cfun Q$ and $\cls{C}$\nbd molecules $U$ in $P$, $f(U)$ is a $\cls{C}$\nbd molecule in $Q$.
\end{dfn}

\begin{exm}
The class $\cls{S}$ of molecules with spherical boundary is algebraic by Corollary \ref{cor:functor_preserve_spherical}.
\end{exm}

\begin{exm}
The class $\cls{K}$ of constructible molecules is algebraic. It suffices to observe that if a $\cls{K}$\nbd functor sends constructible atoms to constructible molecules, then it preserves the constructible submolecule relation, whose definition involves only binary unions, binary intersections, and boundary operators.
\end{exm}

\begin{exm}
By \cite[Theorem 2.18]{steiner1993algebra}, if the atoms of a directed complex $P$ are totally loop-free, then all molecules are totally loop-free. It follows that the class $\cls{LF}$ of totally loop-free molecules is algebraic. 
\end{exm}

\begin{exm}
If $\cls{A}$ is a class of atoms, then every molecule in a $\cls{C}_\cls{A}$\nbd directed complex is a $\cls{C}_\cls{A}$\nbd molecule, so $\cls{C}_\cls{A}$ is algebraic.
\end{exm}

\begin{dfn}
Let $\cls{C}$ be an algebraic class of molecules. Two $\cls{C}$\nbd functors $f\colon P \cfun Q$ and $g\colon Q \cfun R$ compose to a $\cls{C}$\nbd functor $f;g\colon P \cfun R$. This composition is associative and unital.

If $\cls{C}$ is algebraic, we let $\dcpxfun$ denote the category of $\cls{C}$\nbd molecules and $\cls{C}$\nbd functors. By \S\ref{dfn:inclusion_cfunctor} we identify $\dcpxin$ with a subcategory of $\dcpxfun$.
\end{dfn}

\begin{prop} 
Let $\cls{C}$ be an algebraic class of molecules. Every $\cls{C}$\nbd functor $f\colon P \cfun Q$ of $\cls{C}$\nbd molecules factors as a subdivision $P \cfun \widehat{P}$ followed by an inclusion $\widehat{P} \incl Q$. This factorisation is unique up to isomorphism.
\end{prop}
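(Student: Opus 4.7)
The plan would be to take $\widehat{P} \eqdef f(P) \subseteq Q$ as a closed subset with its inherited oriented graded poset structure. By Proposition \ref{prop:cfun_preserve_molecules}, $f(P)$ is a molecule in $Q$, and since $\cls{C}$ is algebraic, it is in fact a $\cls{C}$\nbd molecule; as every principal downset in $\widehat{P}$ is already a principal downset in $Q$, and hence a $\cls{C}$\nbd molecule, $\widehat{P}$ is a $\cls{C}$\nbd directed complex. Let $\imath\colon \widehat{P} \incl Q$ be the inclusion, which induces a $\cls{C}$\nbd functor by \S\ref{dfn:inclusion_cfunctor}.

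Next I would define $\hat{f}\colon P \cfun \widehat{P}$ by $\hat{f}(U) \eqdef f(U)$, well-defined because $f(U) \subseteq f(P) = \widehat{P}$. Preservation of unions and binary intersections, and the fact that $\hat{f}$ sends atoms to $\cls{C}$\nbd molecules, are inherited directly from $f$; Lemma \ref{lem:sameinclusions} ensures that boundary operators computed inside $\widehat{P}$ agree with those inherited from $Q$, so the boundary condition transfers as well. Since $\hat{f}(P) = f(P) = \widehat{P}$, the functor $\hat{f}$ is a subdivision, and the equation $\hat{f}; \imath = f$ holds by construction.

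For uniqueness, given another factorization $P \stackrel{g}{\cfun} P' \stackrel{\jmath}{\incl} Q$, algebraicity together with the fact that $g$ is a subdivision imply that $P' = g(P)$ is a $\cls{C}$\nbd molecule, and $\jmath(P') = \jmath(g(P)) = f(P) = \widehat{P}$. Then $\jmath$ is a closed embedding of $P'$ onto $\widehat{P}$, yielding an isomorphism $\phi\colon P' \incliso \widehat{P}$ with $\phi; \imath = \jmath$; its uniqueness follows from Proposition \ref{prop:molecule_noauto}, since any competitor would differ by an automorphism of $\widehat{P}$, necessarily the identity. Finally $g; \phi = \hat{f}$, because both sides are $\cls{C}$\nbd functors $P \cfun \widehat{P}$ with the same atom-wise action induced from $f$ via $\jmath$. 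I expect no real obstacle here: the argument is a direct construction of the image factorization, and the only delicate point, the compatibility of boundaries between $\widehat{P}$ and its ambient $Q$, is handled cleanly by Lemma \ref{lem:sameinclusions}.
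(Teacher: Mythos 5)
Your proof is correct and takes essentially the same route as the paper's: both form the image $\widehat P \eqdef f(P)$ and factor $f$ through it, then identify any competing factorisation with this one because both have image $f(P)$ in $Q$. You supply more detail than the paper — explicitly verifying that $\hat f$ is a $\cls{C}$\nbd functor via Lemma~\ref{lem:sameinclusions}, and invoking Proposition~\ref{prop:molecule_noauto} for uniqueness of the comparison isomorphism — but these are elaborations of the same argument, not a different one.
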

\begin{proof}
The image of $f\colon \closub{P} \to \closub{Q}$ is a subset of $\closub{f(P)}$: factorising $f$ through $\closub{f(P)}$ produces a subdivision $f\colon P \cfun f(P)$, which followed by the subset inclusion of $f(P)$ into $Q$ produces one factorisation. 

Let $\widehat{f}\colon P \cfun \widehat{P}$, $\imath\colon \widehat{P} \incl Q$ be another such factorisation. We have $\widehat{f}(P) = \widehat{P}$ because $\widehat{f}$ is a subdivision, hence $\imath(\widehat{P}) = \imath \widehat{f}(P) = f(P)$. This determines an isomorphism $\widehat{P} \incliso f(P)$ which makes both triangles commute. 
\end{proof}

\begin{prop} \label{prop:fun_preserve_colimits}
The inclusion $\dcpxin \incl \dcpxfun$ preserves pushouts and the initial object.
\end{prop}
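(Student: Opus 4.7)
The initial object case is essentially free: the empty oriented graded poset $\emptyset$ is in $\dcpxin$ (vacuously every atom belongs to $\cls{C}$), and for any $\cls{C}$\nbd directed complex $P$ there is a unique $\cls{C}$\nbd functor $\emptyset \cfun P$, since $\closub{\emptyset} = \{\emptyset\}$ and a $\cls{C}$\nbd functor must preserve empty unions. So $\emptyset$ is initial both in $\dcpxin$ and in $\dcpxfun$.

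For pushouts, suppose we have a span $P \xleftarrow{f} R \xrightarrow{g} Q$ in $\dcpxin$ and form the pushout $P \cup_R Q$ in $\dcpxin$, which by Proposition \ref{prop:rdcpxin_colimits} is created by the forgetful functor to $\pos$. Given $\cls{C}$\nbd functors $h\colon P \cfun S$ and $k\colon Q \cfun S$ with $f;h = g;k$, the strategy is to define a $\cls{C}$\nbd functor $\ell\colon P \cup_R Q \cfun S$ by its action on atoms: for each atom $\clos\{[x]\}$ of the pushout, pick a representative $x$ in $P$ or $Q$ and set $\ell(\clos\{[x]\}) \eqdef h(\clos\{x\})$ or $k(\clos\{x\})$ accordingly. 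Well-definedness follows because the injections $P, Q \incl P \cup_R Q$ are closed embeddings of posets whose intersection is the image of $R$: if $[x]$ has representatives in both, they lift to a common element of $R$, and the compatibility $f;h = g;k$ forces the two candidate values to agree. Extend $\ell$ to arbitrary closed subsets by $\ell(U) \eqdef \bigcup_{x \in U} \ell(\clos\{[x]\})$; this makes $\ell$ preserve arbitrary unions by construction, and the boundary condition $\bord{n}{\alpha}\ell(\clos\{[x]\}) = \ell(\bord{n}{\alpha}[x])$ as well as the image-in-$\cls{C}$ condition are inherited pointwise from $h$ and $k$.

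The step requiring actual care is preservation of binary intersections, i.e.\ $\ell(U \cap V) = \ell(U) \cap \ell(V)$. The inclusion $\subseteq$ is immediate; for $\supseteq$, given $z \in \ell(\clos\{[x]\}) \cap \ell(\clos\{[y]\})$ I would reduce to the case of two atoms and split into cases according to whether $[x]$ and $[y]$ both have representatives in $P$, both in $Q$, or one in each. In the uniform cases the intersection-preservation of $h$ (respectively $k$) directly applies. In the mixed case, the crucial poset-theoretic fact is that in the pushout of closed embeddings, the intersection $\clos\{[x]\} \cap \clos\{[y]\}$ lies entirely in the image of $R$ --- elements below both $[x]$ and $[y]$ must, by the pushout description, lift to $R$. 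On this intersection, $h$ and $k$ agree via the compatibility condition, so either description gives the same value, and the union formula recovers $z$ from an element of $U \cap V$.

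The main obstacle is this mixed-case intersection check and the underlying assertion about pushouts in $\pos$ of closed embeddings --- namely that $P \cap Q$ inside $P \cup_R Q$ is exactly $R$, and that lower sets with respect to the pushout order are controlled by lower sets in $P$ and $Q$. Uniqueness of $\ell$ is then automatic since any candidate is forced on atoms by $f';\ell = h$ and $g';\ell = k$, and a $\cls{C}$\nbd functor is determined by its action on atoms.
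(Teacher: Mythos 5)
The initial-object half and the construction of $\ell$ on atoms, its well-definedness, and its preservation of unions, boundaries, and the $\cls{C}$-condition are all fine (and the paper's own proof is simply the word ``Straightforward.'', so there is nothing to compare against). The gap is exactly the step you flag, and the closing hand-wave ``the union formula recovers $z$ from an element of $U \cap V$'' cannot be made precise. In the mixed case, with $U = \clos\{[x]\}$, $V = \clos\{[y]\}$ and $W \eqdef U \cap V$ contained in the image of $R$, unwinding your definitions and using that $h$, $k$ preserve binary intersections and $f;h = g;k$ gives
\[
\ell(W) \;=\; h(\clos\{x\}) \cap k(\clos\{y\}) \cap h(f(R)),
\]
whereas $\ell(U) \cap \ell(V) = h(\clos\{x\}) \cap k(\clos\{y\})$. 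The two differ by the extra intersection with $h(f(R))$, and nothing in the cocone hypotheses forces $h(\clos\{x\}) \cap k(\clos\{y\}) \subseteq h(f(R))$: the compatibility $f;h = g;k$ only constrains $h$ and $k$ on the image of $R$, not on where they send the rest of $P$ and $Q$ inside $S$.

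The failure is real, not merely a missing lemma. Take $R = \emptyset$, $P = Q = S = 1$, $h = k = \idd{1}$. The $\pos$-pushout is $1 + 1$ with two disjoint singleton atoms $\{\ast_1\}$, $\{\ast_2\}$, and any candidate $\ell$ must send both to $\{\ast\}$; but then $\ell(\{\ast_1\}) \cap \ell(\{\ast_2\}) = \{\ast\}$ while $\ell(\{\ast_1\} \cap \{\ast_2\}) = \ell(\emptyset) = \emptyset$, so $\ell$ fails binary-intersection preservation and the cocone $(1, \idd{1}, \idd{1})$ does not factor through $1+1$ in $\dcpxfun$. Thus the pushout cocone of inclusions is not a pushout cocone in $\dcpxfun$ for this span. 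To push your argument through you would need the extra hypothesis $h(P) \cap k(Q) = h(f(R))$ on cocones, which is not automatic; the proposition as literally stated needs a qualification along these lines, and your proposal reproduces exactly the unproved step that would require it.
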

\begin{proof}
Straightforward.
\end{proof}

\begin{rmk}
Both $\dcpx$ and $\dcpxfun$ are extensions of $\dcpxin$ in which the inclusions form the right class of an orthogonal factorisation system: the left class is surjective maps in $\dcpx$ and subdivisions in $\dcpxfun$. 
\end{rmk}

%%%%%%%%%%%%%%%%%%%%%%%%%%%%%

\subsection{Convenient classes}

\begin{dfn}[Convenient class of molecules] Let $\cls{C} \subseteq \cls{S}$ be an algebraic class of molecules with spherical boundary. We say that $\cls{C}$ is a \emph{convenient class of molecules} if it satisfies the following axioms:
\begin{enumerate}
	\item $\cls{C}$ contains 1; \label{ax:nontrivial}
	\item if $U \in \cls{C}$ and $J \subseteq \mathbb{N} \setminus \{0\}$, then $\oppn{J}{U} \in \cls{C}$; \label{ax:dual}
	\item if $U, V \in \cls{C}$ and $U \celto V$ is defined, then $U \celto V \in \cls{C}$; \label{ax:shift}
	\item if $U_1, U_2 \in \cls{C}$ and the pasting $U_1 \cup U_2$ along $V \submol \bord{}{\alpha}U_2$ is defined, then $U_1 \cup U_2 \in \cls{C}$; \label{ax:submolecule}
	\item if $U \in \cls{C}$ and $V \subseteq \bord U$ is a closed subset, then $\slice{O^1 \gray U}{\sim_V} \in \cls{C}$; \label{ax:inflate}
	\item if $U, V \in \cls{C}$, then $U \gray V \in \cls{C}$ and $U \join V \in \cls{C}$. \label{ax:grayjoin}
\end{enumerate}
\end{dfn}

\begin{exm}
By the results of the previous sections, the class $\cls{S}$ of molecules with spherical boundary is convenient.

There ought to be also a minimal convenient class $\cls{C}_0$ obtained as the closure of $\{1\}$ under all the listed operations, together with $\cls{C}_0$\nbd subdivisions and the extraction of boundaries and atoms. At present, we do not have other examples, nor molecules that separate the two classes, so it cannot be excluded that $\cls{S}$ is equal to $\cls{C}_0$.
\end{exm}

\begin{exm}
The class $\cls{LF}$ of totally loop-free molecules does not satisfy axiom \ref{ax:shift} nor axiom \ref{ax:submolecule}. The original definition of diagrammatic sets in \cite{kapranov1991infty} was given relative to a class of totally loop-free shapes which is plausibly equivalent to $\cls{LF}$, but the authors seemed to assume that both \ref{ax:shift} and \ref{ax:submolecule} hold. This was noted as a flaw by Simon Henry who has given counterexamples to both axioms in \cite[Discussion A.2]{henry2019non}. 
\end{exm}

\begin{exm} \label{exm:class_constructible}
The class $\cls{K}$ of constructible molecules does not satisfy axiom \ref{ax:inflate}. Consider the 3-atom $U$ associated to the pasting diagram
\begin{equation*}
\begin{tikzpicture}[baseline={([yshift=-.5ex]current bounding box.center)}]
\begin{scope}
	\node[0c] (0) at (-1.5,0) {};
	\node[0c] (1) at (1.5,0) {};
	\node[0c] (a) at (-.625,-.85) {};
	\node[0c] (m) at (.375,0) [label=above:$x$] {};
	\draw[1c, out=75,in=105] (0) to (1);
	\draw[1c, out=-75,in=160] (0) to (a);
	\draw[1c, out=-10, in=-105] (a) to (1);
	\draw[1c] (a) to (m);
	\draw[1c] (m) to (1);
	\draw[2c] (-.6, -.6) to (-.6,.6);
	\draw[2c] (.5, -.9) to (.5,0);
	\draw[3c1] (2,0) to (3.25,0);
	\draw[3c2] (2,0) to (3.25,0);
	\draw[3c3] (2,0) to (3.25,0);
\end{scope}
\begin{scope}[shift={(5.25,0)}]
	\node[0c] (0) at (-1.5,0) {};
	\node[0c] (1) at (1.5,0) {};
	\node[0c] (a) at (-.625,-.85) {};
	\draw[1c, out=75,in=105] (0) to (1);
	\draw[1c, out=-75,in=160] (0) to (a);
	\draw[1c, out=-10, in=-105] (a) to (1);
	\draw[2c] (0, -.8) to (0,.8);
\end{scope}
\end{tikzpicture}

\end{equation*}
and let $V \eqdef \{x\}$. By Proposition \ref{prop:spherical_quotient}, $\tilde{U} \eqdef \slice{O^1 \gray U}{\sim_V}$ is a 4-atom with spherical boundary, but $\bord{}{+}\tilde{U}$ has the form
\begin{equation*}
\begin{tikzpicture}[baseline={([yshift=-.5ex]current bounding box.center)},scale=.75]
\begin{scope}
	\path[fill, color=gray!20] (-1.5,0) to [out=-75,in=160] (-.625,-.85) to [out=15,in=-105] (.375,.5) to [out=-60,in=180] (1.5,0) to (1.75,1) to [out=105,in=75,looseness=1.1] (-1.25,1) to (-1.5,0);
	\node[0c] (0) at (-1.5,0) {};
	\node[0c] (1) at (1.5,0) {};
	\node[0c] (a) at (-.625,-.85) {};
	\node[0c] (m) at (.375,.5) {};
	\node[0c] (0b) at (-1.25,1) {};
	\node[0c] (1b) at (1.75,1) {};
	\draw[1c, out=75,in=105] (0) to (1);
	\draw[1c, out=75,in=105] (0b) to (1b);
	\draw[1c, out=-75,in=160] (0) to (a);
	\draw[1c, out=-10, in=-105] (a) to (1);
	\draw[1c] (0) to (0b);
	\draw[1c] (1) to (1b);
	\draw[1c,out=15, in=-105] (a) to (m);
	\draw[1c,out=-60,in=180] (m) to (1);
	\draw[2c] (-.6, -.6) to (-.6,.6);
	\draw[2c] (.6, -.9) to (.6,.1);
	\draw[2c] (.1, .9) to (.1,1.9);
	\draw[3c1] (1.65,.5) to (3.1,.5);
	\draw[3c2] (1.65,.5) to (3.1,.5);
	\draw[3c3] (1.65,.5) to (3.1,.5);
\end{scope}
\begin{scope}[shift={(4.5,0)}]
	\path[fill, color=gray!20] (-.625,-.85) to (-.375,.15) to [out=75,in=165,looseness=1.4] (.375,.5) to [out=45,in=165,looseness=1.1] (1.75,1) to (1.5,0) to [out=-105,in=-10,looseness=1.1] (-.625,-.85);
	\node[0c] (0) at (-1.5,0) {};
	\node[0c] (1) at (1.5,0) {};
	\node[0c] (a) at (-.625,-.85) {};
	\node[0c] (m) at (.375,.5) {};
	\node[0c] (0b) at (-1.25,1) {};
	\node[0c] (1b) at (1.75,1) {};
	\node[0c] (ab) at (-.375,.15) {};
	\draw[1c, out=75,in=105] (0b) to (1b);
	\draw[1c, out=-75,in=160] (0) to (a);
	\draw[1c, out=-75,in=150] (0b) to (ab);
	\draw[1c, out=-10, in=-105] (a) to (1);
	\draw[1c, out=75, in=165] (ab) to (m);
	\draw[1c, out=45, in=165] (m) to (1b);
	\draw[1c] (0) to (0b);
	\draw[1c] (1) to (1b);
	\draw[1c] (a) to (ab);
	\draw[1c,out=15, in=-105] (a) to (m);
	\draw[1c,out=-60,in=180] (m) to (1);
	\draw[2c] (-1, -.6) to (-1,.5);
	\draw[2c] (-.1, -.5) to (-.1,.4);
	\draw[2c] (1.1, 0) to (1.1,1);
	\draw[2c] (.1, .7) to (.1,1.9);
	\draw[2c] (.6, -.9) to (.6,.1);
	\draw[3c1] (1.65,.5) to (3.1,.5);
	\draw[3c2] (1.65,.5) to (3.1,.5);
	\draw[3c3] (1.65,.5) to (3.1,.5);
\end{scope}
\begin{scope}[shift={(9,0)}]
	\path[fill, color=gray!20] (-1.25,1) to [out=-75,in=160] (-.375,.15) to [out=-10,in=-105,looseness=1.1] (1.75,1) to [out=105,in=75,looseness=1.1] (-1.25,1);
	\node[0c] (0) at (-1.5,0) {};
	\node[0c] (1) at (1.5,0) {};
	\node[0c] (a) at (-.625,-.85) {};
	\node[0c] (m) at (.375,.5) {};
	\node[0c] (0b) at (-1.25,1) {};
	\node[0c] (1b) at (1.75,1) {};
	\node[0c] (ab) at (-.375,.15) {};
	\draw[1c, out=75,in=105] (0b) to (1b);
	\draw[1c, out=-75,in=160] (0) to (a);
	\draw[1c, out=-75,in=160] (0b) to (ab);
	\draw[1c, out=-10, in=-105] (a) to (1);
	\draw[1c, out=-10, in=-105] (ab) to (1b);
	\draw[1c, out=75, in=165] (ab) to (m);
	\draw[1c, out=45, in=165] (m) to (1b);
	\draw[1c] (0) to (0b);
	\draw[1c] (1) to (1b);
	\draw[1c] (a) to (ab);
	\draw[2c] (-1, -.6) to (-1,.5);
	\draw[2c] (.4, -.9) to (.4,.1);
	\draw[2c] (.8, 0) to (.8,1);
	\draw[2c] (.1, .7) to (.1,1.9);
	\draw[3c1] (1.65,.5) to (3.1,.5);
	\draw[3c2] (1.65,.5) to (3.1,.5);
	\draw[3c3] (1.65,.5) to (3.1,.5);
\end{scope}
\begin{scope}[shift={(13.5,0)}]
	\node[0c] (0) at (-1.5,0) {};
	\node[0c] (1) at (1.5,0) {};
	\node[0c] (a) at (-.625,-.85) {};
	\node[0c] (0b) at (-1.25,1) {};
	\node[0c] (1b) at (1.75,1) {};
	\node[0c] (ab) at (-.375,.15) {};
	\draw[1c, out=75,in=105] (0b) to (1b);
	\draw[1c, out=-75,in=160] (0) to (a);
	\draw[1c, out=-75,in=160] (0b) to (ab);
	\draw[1c, out=-10, in=-105] (a) to (1);
	\draw[1c, out=-10, in=-105] (ab) to (1b);
	\draw[1c] (0) to (0b);
	\draw[1c] (1) to (1b);
	\draw[1c] (a) to (ab);
	\draw[2c] (-1, -.6) to (-1,.5);
	\draw[2c] (.4, -.9) to (.4,.1);
	\draw[2c] (.25, .2) to (.25,1.8);
	\node at (1.85,-.5) {,};
\end{scope}
\end{tikzpicture}

\end{equation*}
where the shaded area in each diagram indicates the input boundary of the following 3-atom. This is not a constructible molecule because no pair of 3-atoms in it forms a constructible molecule: the first two have non-constructible intersection, while the union of the following two is not a molecule. 

The class $\cls{K}$ satisfies all other axioms with the possible exception of \ref{ax:submolecule}, because for $U_1 \cup U_2$ to be constructible $V$ needs to be a \emph{constructible} submolecule of $\bord{}{\alpha}U_2$, and we do not know whether every submolecule that is constructible is also a constructible submolecule. 

We believe that, if we modify the definition of constructible molecule by systematically replacing the relation ``is a constructible submolecule'' with ``is a submolecule and is constructible'', we should obtain a class $\cls{K}'$ which is closed under all axioms except \ref{ax:inflate}.
\end{exm}

\begin{rmk}
Axiom \ref{ax:nontrivial} is clearly independent from the rest, since it rules out the empty class of molecules. By Example \ref{exm:class_constructible} we strongly suspect that \ref{ax:inflate} is also independent.  We do not know if other axioms are derivable from the rest.
\end{rmk}

\begin{dfn}[Cubes and simplices] \label{dfn:cubes_simplices}
The \emph{$n$\nbd cube} $\square^n$ is defined by
\begin{equation*}
	\square^0 \eqdef 1, \quad \quad \square^n \eqdef O^1 \gray \square^{n-1} \text{ if $n > 0$}.
\end{equation*}
The \emph{$n$\nbd simplex} $\Delta^n$ is defined by
\begin{equation*}
	\Delta^0 \eqdef 1, \quad \quad \Delta^n \eqdef 1 \join \Delta^{n-1} \text{ if $n > 0$}.
\end{equation*}
\end{dfn} 

\begin{lem} \label{lem:all_shapes}
Let $\cls{C}$ be a convenient class of molecules. Then $\cls{C}$ contains all the globes, the cubes, and the simplices.
\end{lem}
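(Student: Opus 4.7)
The plan is a straightforward triple induction, each case invoking exactly one of the closure axioms plus axiom \ref{ax:nontrivial}. The base case in all three inductions is $1 \in \cls{C}$, which is axiom \ref{ax:nontrivial}; this also handles $O^0 = \square^0 = \Delta^0 = 1$.

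For the globes, I would induct on $n$ using the identification (stated after the $\infl{-}$ construction) that $O^n \cong O^n(1) = \infl{O^{n-1}(1)} \cong \infl{O^{n-1}}$. Unfolding, $\infl{O^{n-1}} = \slice{O^1 \gray O^{n-1}}{\sim_{\bord O^{n-1}}}$, where $\bord O^{n-1}$ is trivially a closed subset of itself. Assuming $O^{n-1} \in \cls{C}$ by the inductive hypothesis, axiom \ref{ax:inflate} applied with $U = O^{n-1}$ and $V = \bord O^{n-1}$ gives $O^n \in \cls{C}$.

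For the cubes, I would induct on $n$ using $\square^n = O^1 \gray \square^{n-1}$. By the previous paragraph $O^1 \in \cls{C}$, and by inductive hypothesis $\square^{n-1} \in \cls{C}$; then axiom \ref{ax:grayjoin} (Gray product half) yields $\square^n \in \cls{C}$. For the simplices, I would induct identically on $n$ using $\Delta^n = 1 \join \Delta^{n-1}$, combining $1 \in \cls{C}$ (axiom \ref{ax:nontrivial}) with the inductive hypothesis via the join half of axiom \ref{ax:grayjoin}.

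There is no real obstacle: once the globe case is dispatched via axiom \ref{ax:inflate}, the other two are immediate from the monoidal closure axiom \ref{ax:grayjoin}. The only point worth spelling out is that $\cls{C}$ being closed under isomorphism is needed to go from the concrete definitions $\infl{O^{n-1}}$, $O^1 \gray \square^{n-1}$, $1 \join \Delta^{n-1}$ (which a priori produce objects defined up to unique isomorphism) to the globes, cubes, and simplices as originally defined.
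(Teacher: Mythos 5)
Your proof is correct and takes essentially the same route as the paper: base case $1 = O^0 = \square^0 = \Delta^0$ via axiom \ref{ax:nontrivial}, globes by induction via the $\infl{-}$ construction and axiom \ref{ax:inflate} (the paper notes axiom \ref{ax:shift} would work equally well via $O^{n-1} \celto O^{n-1}$), and cubes and simplices by the recursions $\square^n = O^1 \gray \square^{n-1}$ and $\Delta^n = 1 \join \Delta^{n-1}$ together with axiom \ref{ax:grayjoin}. The remark about closure under isomorphism is sensible bookkeeping but is already baked into the definition of a class of molecules.
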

\begin{proof}
By axiom \ref{ax:nontrivial}, $\cls{C}$ contains $1 = O^0 = \Delta^0 = \square^0$. Then either by axiom \ref{ax:shift} or by axiom \ref{ax:inflate} the class $\cls{C}$ contains all the globes, and by axiom \ref{ax:grayjoin} it contains all the cubes and the simplices.
\end{proof}

\begin{lem} \label{lem:substitution_convenient}
Let $\cls{C}$ be a convenient class of molecules and $U, V, W \in \cls{C}$ be $n$\nbd molecules such that $U[W/V]$ is defined. Then $U[W/V] \in \cls{C}$.
\end{lem}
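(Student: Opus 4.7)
The plan is to follow the incremental construction of $U[W/V]$ from the proof of Proposition \ref{prop:substitution}, adapted so that each intermediate molecule lies in $\cls{C}$. Specifically, I would prove by induction on $V'$, ranging over $V \submol V' \submol U$ with $V' \in \cls{C}$, that $V'[W/V] \in \cls{C}$. The base case $V' = V$ reduces to the hypothesis $W \in \cls{C}$, and in the inductive step I would exploit a decomposition $V' = V'_1 \cp{k} V'_2$ with $V \submol V'_1$, so that $V'[W/V] = V'_1[W/V] \cp{k} V'_2$; the goal is to close the induction via the inductive hypothesis and axiom \ref{ax:submolecule}.

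The principal technical obstacle is that arbitrary $\submol$-pieces of a $\cls{C}$-molecule need not themselves lie in $\cls{C}$: for instance, a 2-molecule with spherical boundary may have $\submol$-decompositions $V'_1 \cp{k} V'_2$ in which one piece (say, a 2-atom together with a dangling 1-cell absorbed as ``padding'') lacks spherical boundary. To bypass this, I would refine the inductive step so as to attach only a single $n$-atom of $U$ at a time. Concretely, using Lemma \ref{lem:spherical_moves} to enumerate the $n$-atoms of $U \setminus V$, I would attach them sequentially to $W$, each by a pasting along a submolecule of its boundary. Each atom $U_i$ lies in $\cls{C}$ by the class condition on $U$, while the current intermediate molecule lies in $\cls{C}$ by the inductive hypothesis, so each single-atom attachment is a direct instance of axiom \ref{ax:submolecule}. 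The lower-dimensional padding cells that would appear in a coarser $V_i$-by-$V_i$ construction are absorbed automatically: they belong to $\bord{}{\alpha}U_i$ outside the identification submolecule, and so are adjoined to the current molecule as part of the attached atom's boundary.

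The main remaining work is combinatorial bookkeeping: one must verify that at each stage the next $n$-atom of $U \setminus V$ can be attached via a well-defined pasting along a submolecule of its boundary, that the sequence of attachments terminates in $U[W/V]$, and that this successive construction faithfully reproduces the pushout defining the substitution. Granted these verifications, the conclusion $U[W/V] \in \cls{C}$ follows by iterated application of axiom \ref{ax:submolecule}.
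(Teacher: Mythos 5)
The paper's proof takes a completely different and much shorter route, which you may want to internalise: instead of reconstructing $U[W/V]$ in dimension $n$, it shifts up one dimension. By axiom \ref{ax:shift} the $(n+1)$-atoms $U \celto U$ and $V \celto W$ are in $\cls{C}$; since $\bord{}{-}(V \celto W) \cong V$ is a submolecule of $\bord{}{+}(U \celto U) = U$, the \emph{single} pasting $\tilde U$ of these two atoms along this submolecule is in $\cls{C}$ by axiom \ref{ax:submolecule}; and by Proposition \ref{prop:spherical_pasting} the output boundary $\bord{}{+}\tilde U$ is isomorphic to $U[W/V]$, which is therefore in $\cls{C}$ because a class of molecules is closed under boundaries. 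Three lines, one application of each of axiom \ref{ax:shift}, axiom \ref{ax:submolecule}, and Proposition \ref{prop:spherical_pasting}. You never invoke axiom \ref{ax:shift} at all, which is a strong hint that you are fighting the combinatorics harder than necessary.

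Your refined approach is not merely unfinished bookkeeping; I think there is a genuine gap. For a pasting $M \cup U_i$ of an $n$-atom $U_i$ onto the current intermediate $M$ to be an instance of axiom \ref{ax:submolecule}, the \emph{entire} boundary $\bord{}{-\alpha}U_i$ must be isomorphic to a submolecule of $\bord{}{\alpha}M$. When $U_i$ meets $M$ along something of dimension $< n-1$ — which happens whenever the decomposition of $U$ involves $\cp{k}$ for $k < n-1$ — this condition fails, so the ``lower-dimensional padding cells'' are \emph{not} absorbed automatically as part of the attached atom's boundary: they live in the boundary of the molecule $V_i$ from Lemma \ref{lem:spherical_moves}, not in $\bord U_i$, and that $V_i$ need not be in $\cls{C}$, which is exactly the obstacle you started from. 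Moreover, $V$ need not correspond to a block of the Lemma \ref{lem:spherical_moves} decomposition of $U$ (for instance, $V$ may be a single atom $U_j$ strictly contained in some $V_j$), so it is not clear there is any ordering of the $n$-atoms of $U$ not in $V$ for which every step is a valid pasting along a submolecule and the final result is $U[W/V]$. Whether the atom-by-atom construction can be salvaged I genuinely do not know, but it would require a substantial new argument about decompositions of molecules, whereas the dimension shift makes the whole issue disappear.
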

\begin{proof}
By axiom \ref{ax:shift} the $(n+1)$\nbd molecules $U \celto U$ and $V \celto W$ are both in $\cls{C}$. Then $\bord{}{-}(V \celto W)$ is isomorphic to $V \submol U$, which is isomorphic to a submolecule of $\bord{}{+}(U \celto U)$: the pasting $\tilde{U}$ of $U \celto U$ and $V \celto W$ along this submolecule is defined and belongs to $\cls{C}$ by axiom \ref{ax:submolecule}. Then $\bord{}{+}\tilde{U} \in \cls{C}$ is isomorphic to $U[W/V]$ by Proposition \ref{prop:spherical_pasting}.
\end{proof}

\begin{lem}
Let $\cls{C}$ be a convenient class of molecules and $U \in \cls{C}$. Then $\compos{U} \in \cls{C}$. 
\end{lem}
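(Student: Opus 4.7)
The plan is to apply axiom \ref{ax:shift} of a convenient class essentially directly. By definition, $\compos{U} = \bord{}{-}U \celto \bord{}{+}U$, so the goal reduces to checking: (i) that both $\bord{}{-}U$ and $\bord{}{+}U$ belong to $\cls{C}$, and (ii) that the $\celto$ construction is in fact defined on this pair.

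For (i), I invoke condition (1) in the definition of a class of molecules, which states precisely that $\cls{C}$ is closed under taking $\bord{}{-}$ and $\bord{}{+}$ of its members.

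For (ii), recall that $V \celto W$ is defined on $n$\nbd molecules with spherical boundary satisfying that $\bord{}{\alpha}V$ is isomorphic to $\bord{}{\alpha}W$ for each $\alpha \in \{+,-\}$. Since $\cls{C} \subseteq \cls{S}$, $U$ has spherical boundary; writing $n \eqdef \dmn{U}$ and assuming $n > 0$, the definition gives that both $\bord{}{-}U$ and $\bord{}{+}U$ are $(n-1)$\nbd molecules with spherical boundary such that $\bord(\bord{}{-}U) = \bord(\bord{}{+}U)$. In particular $\bord{}{\alpha}(\bord{}{-}U) = \bord{}{\alpha}(\bord{}{+}U)$ for each $\alpha$, so Lemma \ref{lem:boundary_isomorphism} supplies the required boundary isomorphism and $\bord{}{-}U \celto \bord{}{+}U$ is well-defined. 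Axiom \ref{ax:shift} then places $\compos{U}$ in $\cls{C}$.

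There is essentially no substantive obstacle: the statement is a clean corollary of axiom \ref{ax:shift} together with the sphericity built into the hypothesis $\cls{C} \subseteq \cls{S}$. The only minor technicality is the edge case $n = 0$, where the input and output boundaries degenerate; this should be dispatched either by restricting the claim to $n > 0$ (the case of interest for $\compos{-}$) or by observing separately that $\compos{U}$ is then isomorphic to $U$ itself and hence already in $\cls{C}$.
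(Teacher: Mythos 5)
Your proof is correct and follows essentially the same route as the paper's, which simply notes that $\bord{}{\alpha}U \in \cls{C}$ and then invokes axiom \ref{ax:shift}. You supply the verifications (closure of $\cls{C}$ under boundaries, well-definedness of $\celto$ via sphericity and Lemma \ref{lem:boundary_isomorphism}, and the degenerate $n = 0$ case) that the paper leaves implicit.
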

\begin{proof}
We have $\bord{}{\alpha}U \in \cls{C}$, therefore $\compos{U} = (\bord{}{-}U \celto \bord{}{+}U) \in \cls{C}$ by axiom \ref{ax:shift}.
\end{proof}

\begin{prop}
Let $\cls{C}$ be a convenient class of molecules and let $P, Q$ be $\cls{C}$\nbd directed complexes. Then $P \gray Q$ and $P \join Q$ are $\cls{C}$\nbd directed complexes.
\end{prop}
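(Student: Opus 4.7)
The plan is to check the two defining properties of a $\cls{C}$\nbd directed complex separately: regularity of $P \gray Q$ and $P \join Q$, and the requirement that every atom belong to $\cls{C}$. Regularity is already in hand: the corollaries after Proposition \ref{prop:spherical_quotient} and after the join boundary proposition state precisely that Gray products and joins of regular directed complexes are regular, so that half requires no work.

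For the Gray product, I would argue as follows. Let $z = x \gray y$ be an element of $P \gray Q$, with $x \in P$ and $y \in Q$. Since the underlying poset of $P \gray Q$ is the cartesian product of the underlying posets, the principal downset of $z$ is the cartesian product of the principal downsets of $x$ and $y$, and by inspection of the orientation in Definition of the Gray product this identification is an isomorphism of oriented graded posets $\clos\{z\} \cong \clos\{x\} \gray \clos\{y\}$. Because $P$ and $Q$ are $\cls{C}$\nbd directed complexes, both $\clos\{x\}$ and $\clos\{y\}$ lie in $\cls{C}$, so axiom \ref{ax:grayjoin} of convenient classes gives $\clos\{z\} \in \cls{C}$.

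For the join, the elements of $P \join Q$ come in three kinds: the inclusions of elements $x \in P$, the inclusions of elements $y \in Q$, and new elements of the form $x \join y$. In the first two cases the closure inside $P \join Q$ agrees with the closure inside $P$ or $Q$, so membership in $\cls{C}$ is immediate from the assumption on $P$ and $Q$. In the third case, looking at the identification $(P \join Q)_\bot \cong P_\bot \gray Q_\bot$ and tracking the order relation, the closure of $x \join y$ inside $P \join Q$ is isomorphic to $\clos\{x\} \join \clos\{y\}$; then axiom \ref{ax:grayjoin} again concludes.

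There is no real obstacle — the only thing to be careful about is the verification that $\clos\{x \gray y\} \cong \clos\{x\} \gray \clos\{y\}$ and $\clos\{x \join y\} \cong \clos\{x\} \join \clos\{y\}$ as \emph{oriented} graded posets, not merely as posets; but this is an immediate unpacking of the definitions of Gray product and join together with the fact that both operations are compatible with the inclusions of closed subsets (as used, for instance, at the end of the definition of the join).
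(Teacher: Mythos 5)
Your proof is correct and is essentially the paper's argument spelled out: the paper simply says the result is ``immediate from axiom \ref{ax:grayjoin}'', and your verification that $\clos\{x \gray y\} \cong \clos\{x\} \gray \clos\{y\}$ and $\clos\{x \join y\} \cong \clos\{x\} \join \clos\{y\}$ as oriented graded posets (plus the already-proved regularity corollaries) is precisely the content of that ``immediate''.
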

\begin{proof}
Immediate from axiom \ref{ax:grayjoin}.
\end{proof}

\begin{cor}
The monoidal structures given by Gray products and joins on $\rdcpx$ restrict to monoidal structures on $\dcpx$.
\end{cor}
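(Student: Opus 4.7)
The plan is to observe that this corollary reduces almost entirely to the preceding proposition, with only a check on the monoidal units and the inheritance of coherence data remaining. Since $\dcpx$ is defined as a full subcategory of $\rdcpx$, it is enough to show that the object class of $\dcpx$ is closed under $\gray$ and $\join$ and that the units of the two monoidal structures belong to $\dcpx$; the tensor on morphisms, the associator, the unitors, and the coherence axioms will then restrict to $\dcpx$ automatically.

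Closure on objects is precisely the content of the preceding proposition, so the only new thing to verify is that the units are in $\dcpx$. The unit of $\gray$ is the terminal directed complex $1$, whose single atom is itself $1 \in \cls{C}$ by axiom \ref{ax:nontrivial}; hence $1$ is a $\cls{C}$\nbd directed complex. The unit of $\join$ is the empty directed complex $\emptyset$, which is vacuously a $\cls{C}$\nbd directed complex because it has no elements whose closures could fail to be in $\cls{C}$.

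Given this, for any $P, Q, R \in \dcpx$ both $(P \gray Q) \gray R$ and $P \gray (Q \gray R)$ again lie in $\dcpx$ by iteration of the preceding proposition, and the associator isomorphism between them in $\rdcpx$ is a morphism of $\dcpx$ by fullness of the inclusion. The same reasoning applies to the left and right unitors $1 \gray P \iso P \iso P \gray 1$, and to the analogous data for $\join$ with unit $\emptyset$. The pentagon and triangle coherence identities hold in $\dcpx$ simply because they already hold in $\rdcpx$ and all the relevant morphisms agree with their counterparts there. I do not foresee any genuine obstacle: the substantive work was done in Section \ref{sec:operations} and in the preceding proposition, and what remains is bookkeeping about full subcategories of monoidal categories.
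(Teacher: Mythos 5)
Your proof is correct and takes the same route the paper implicitly intends: the paper states this corollary with no proof at all, treating it as immediate from the preceding proposition, and your write-up simply supplies the bookkeeping details — closure on objects from the proposition, the two units in $\dcpx$ via axiom \ref{ax:nontrivial} for $1$ and vacuity for $\emptyset$, and inheritance of all morphism-level data and coherence by fullness of the subcategory inclusion.
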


\begin{dfn}
Let $\cls{C}$ be a convenient class of molecules, and let $f\colon P \cfun P'$ and $g\colon Q \cfun Q'$ be $\cls{C}$\nbd functors. 

Let $f \gray g\colon P \gray P' \cfun Q \gray Q'$ be defined by $\clos\{x \gray y\} \mapsto f(\clos\{x\}) \gray g(\clos\{y\})$ on the atoms of $P \gray P'$. It follows from Lemma \ref{lem:gray_molecules}, axiom \ref{ax:grayjoin}, and the properties of $f$ and $g$ that $f \gray g$ is a $\cls{C}$\nbd functor.

Similarly, letting $f \join g\colon P \join P' \cfun Q \join Q'$ be equal to $f$ on $P$, to $g$ on $P'$, and send $\clos\{x \join y\}$ to $f(\clos\{x\}) \join g(\clos\{y\})$ for all $x \in P$ and $y \in Q$ defines a $\cls{C}$\nbd functor of $\cls{C}$\nbd directed complexes. 

We deduce that both Gray products and joins determine monoidal structures on $\dcpxfun$. The monoidal structures on $\dcpx$ and $\dcpxfun$ coincide on their common subcategory $\dcpxin$.
\end{dfn}

\section{Diagrammatic sets} \label{sec:diagrammatic}

\begin{center}
\setlength{\fboxsep}{.6em}
\colorbox{gray!20}{We fix once and for all a convenient class of molecules $\cls{C}$.}
\end{center}

\begin{dfn}
We write $\atom$ for a skeleton of the full subcategory of $\dcpx$ on the atoms of every dimension.
\end{dfn}

\begin{dfn}[Diagrammatic set]
A \emph{diagrammatic set} is a presheaf on $\atom$. Diagrammatic sets and their morphisms form a category $\dgmset$.
\end{dfn}

\begin{dfn}
We identify $\atom$ with a full subcategory $\atom \incl \dgmset$ via the Yoneda embedding. With this identification, we use morphisms in $\dgmset$ as our notation of choice for both elements and structural operations of a diagrammatic set $X$:
\begin{itemize}
	\item $x \in X(U)$ becomes $x\colon U \to X$, and
	\item for each map $f\colon V \to U$ in $\atom$, $X(f)(x) \in X(V)$ becomes $f;x\colon V \to X$.
\end{itemize}
\end{dfn}

\begin{dfn}
The embedding $\atom \incl \dgmset$ extends to an embedding $\dcpx \incl \dgmset$ as follows.

Given a set $\Gamma = \{F_i\colon J_i \to \cat{C}\}_{i \in I}$ of colimit diagrams in a category $\cat{C}$, a presheaf $X$ on $\cls{C}$ is \emph{$\Gamma$-continuous} if $X(F_i(-))$ is a limit diagram in $\set$ for all $i \in I$. For all small sets $\Gamma$ the full subcategory $\psh{\Gamma}{\cat{C}}$ on $\Gamma$\nbd continuous presheaves is a reflective subcategory of $\psh{}{\cat{C}}$, and the Yoneda embedding factors through $\psh{\Gamma}{\cat{C}}$ \cite{freyd1972categories}.

Let $\Gamma$ be the set of colimit diagrams in $\dcpx$ comprising the initial object diagram and all pushout diagrams of inclusions. Up to isomorphism there are countably many $\cls{C}$\nbd directed complexes and each is the target of finitely many inclusions, so we can safely assume that $\Gamma$ is small. 

There are no non-trivial $\Gamma$-colimits in $\atom$: every presheaf on $\atom$ is trivially $\Gamma$-continuous. Thus we have a restriction functor $\psh{\Gamma}{\dcpx} \to \dgmset$, which is in fact an equivalence of categories; we leave the proof as an exercise. Through this equivalence, the Yoneda embedding $\dcpx \incl \psh{\Gamma}{\dcpx}$ becomes an embedding $\dcpx \incl \dgmset$.
\end{dfn}

\begin{dfn}[Dual diagrammatic set] Let $X$ be a diagrammatic set and $J \subseteq \mathbb{N} \setminus \{0\}$. The \emph{$J$-dual} of $X$ is the diagrammatic set $\oppn{J}{X} \eqdef X(\oppn{J}{-})$. 

For each $J$, this defines an endofunctor $\oppn{J}{}$ on $\dgmset$. In particular we write $\oppall{X}$, $\opp{X}$, and $\coo{X}$ for $X(\oppall{-})$, $X(\opp{-})$, and $X(\coo{-})$, respectively.
\end{dfn}

\begin{dfn}
By axiom \ref{ax:grayjoin}, Gray products and joins restrict to monoidal structures on $\dcpx$. The extension of these monoidal structures from $\dcpx$ to $\dgmset$ can then be developed using Day's theory \cite{day1970closed} in the precise same way as their extension from constructible directed complexes to constructible polygraphs in \cite[Section 4]{hadzihasanovic2018combinatorial}. We bundle the relevant facts together into a single statement.
\end{dfn}

\begin{prop} The following hold.
\begin{enumerate}[label=(\alph*)]
	\item The monoidal structure $(-\gray-, 1)$ on $\dcpx$ extends to a monoidal biclosed structure on $\dgmset$.
	\item The monoidal structure $(-\join-, \emptyset)$ on $\dcpx$ extends to a monoidal structure on $\dgmset$. There are inclusions $X \incl X \join Y$ and $Y \incl X \join Y$ natural in the diagrammatic sets $X, Y$. 
	\item This monoidal structure is locally biclosed, in the sense that, for all $X$, there are right adjoints to the functors $\dgmset \to \slice{X}{\dgmset}$ defined by
\begin{equation*}
	f\colon Y \to Z \;\; \mapsto
\begin{tikzpicture}[baseline={([yshift=-.5ex]current bounding box.center)}]
	\node (0) at (-1.5,-1.25) {$X \join Y$};
	\node (1) at (0,0) {$X$};
	\node (2) at (1.5,-1.25) {$X \join Z$,};
	\draw[1cincl] (1) to (0);
	\draw[1cinc] (1) to (2);
	\draw[1c] (0) to node[auto,swap,arlabel] {$\idd{X} \join f$} (2);
\end{tikzpicture} 
\end{equation*}
\begin{equation*}
	f\colon Y \to Z \;\; \mapsto
\begin{tikzpicture}[baseline={([yshift=-.5ex]current bounding box.center)}]
	\node (0) at (-1.5,-1.25) {$Y \join X$};
	\node (1) at (0,0) {$X$};
	\node (2) at (1.5,-1.25) {$Z \join X$.};
	\draw[1cincl] (1) to (0);
	\draw[1cinc] (1) to (2);
	\draw[1c] (0) to node[auto,swap,arlabel] {$f \join \idd{X}$} (2);
\end{tikzpicture}
\end{equation*}
	\item Gray products preserve colimits, while joins preserve connected colimits separately in each variable.
	\item There are isomorphisms of diagrammatic sets $\opp{(X \gray Y)} \simeq \opp{Y} \gray \opp{X}$, $\coo{(X \gray Y)} \simeq \coo{Y} \gray \coo{X}$, $\oppall{(X \gray Y)} \simeq \oppall{X} \gray \oppall{Y}$, $\opp{(X \join Y)} \simeq \opp{Y} \join \opp{X}$, natural in $X$ and $Y$. 
\end{enumerate}
\end{prop}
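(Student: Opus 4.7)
The plan is to follow the blueprint of \cite[Section 4]{hadzihasanovic2018combinatorial}, invoking Day's convolution theorem from \cite{day1970closed}. First I would observe that, by axiom \ref{ax:grayjoin}, both the Gray product and the join send pairs of atoms to $\cls{C}$\nbd directed complexes, hence can be viewed as functors $\atom \times \atom \to \dgmset$ via the embedding $\dcpx \incl \dgmset$. Day convolution then yields unique extensions to monoidal structures on the full presheaf category $\psh{}{\atom}$, with units given by the original units $1$ and $\emptyset$. Using the fact that Gray products and joins preserve the pushouts of inclusions that define $\Gamma$ (Proposition \ref{prop:gray_preserve} and its join analogue), one checks that these extensions restrict to the reflective subcategory $\dgmset \simeq \psh{\Gamma}{\dcpx}$. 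Biclosedness for the Gray product, item (a), is then automatic from Day's theorem, since the extension preserves all colimits in each variable.

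The colimit preservation statement (d) is likewise essentially automatic: the Gray product extension is cocontinuous in each variable by Day's construction, while joins, whose unit $\emptyset$ is the initial object of $\dcpx$, give rise to functors $- \join X$ that do not preserve the initial presheaf (indeed $\emptyset \join X \simeq X$) but preserve all connected colimits, mirroring the behaviour at the level of $\dcpx$. The natural inclusions $X \incl X \join Y$ and $Y \incl X \join Y$ of (b) are obtained by extending via Day convolution the evident atom-level inclusions $U \incl U \join V$, $V \incl U \join V$.

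Part (c) is where the main subtlety lies. Although $- \join X$ is not cocontinuous on $\dgmset$ and therefore has no right adjoint as a functor into $\dgmset$, it corestricts to a functor $\dgmset \to \slice{X}{\dgmset}$ along the natural inclusion $X \to - \join X$, and this corestriction is cocontinuous: the failure to preserve the initial presheaf is absorbed by the slice, whose initial object is $\idd{X}\colon X \to X$. Between locally presentable categories this cocontinuous functor admits a right adjoint by the adjoint functor theorem, establishing the local biclosedness; the same argument, mutatis mutandis, applies to $X \join -$.

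Finally, (e) reduces to the fact that each $\oppn{J}{}$ is an involutive self-equivalence of $\atom$ and the isomorphisms of the last proposition of \S 2.3, established at the level of atoms, lift uniquely through Day's formula to isomorphisms of the extended monoidal structures, with naturality a formal consequence. The main technical obstacle is the bookkeeping for (c), in particular verifying that the right adjoints so produced genuinely respect $\Gamma$\nbd continuity; but since the same construction is carried out in detail for constructible polygraphs in the cited prior work, the argument transfers with only notational changes.
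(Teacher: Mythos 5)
Your broad strategy — mirror \cite[Section 4]{hadzihasanovic2018combinatorial}, extend via Day's theory, check restriction to the reflective subcategory, and handle local biclosedness in the slice — is the same as the paper's (the paper in fact gives no proof beyond precisely this reference). Your treatment of the Gray product, of (a), and of the slice argument for (c) are all correct; in particular the observation that colimits in $\slice{X}{\dgmset}$ are computed as connected colimits in $\dgmset$ (the cone point being $X$), so that connected-colimit preservation of $- \join X$ suffices for cocontinuity into the slice, is exactly the right idea.

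However, there is a genuine internal inconsistency in how you set up the join. You claim that ``Day convolution yields unique extensions to monoidal structures on $\psh{}{\atom}$, with units given by $1$ and $\emptyset$,'' and then one paragraph later you correctly observe that $- \join X$ does \emph{not} preserve the initial presheaf (since $\emptyset \join X \simeq X$). These two assertions cannot both hold: Day convolution is, by construction, a left Kan extension / coend and is therefore cocontinuous separately in each variable. A cocontinuous monoidal structure whose unit is the initial object forces $\emptyset \join X \simeq \emptyset$ for all $X$, which would make the category trivial. The obstruction is concrete: $\emptyset$ is not an object of $\atom$, so there is no representable presheaf that can play the role of the Day unit, and the induced ``promonoidal'' data on $\atom$ fails the unit axioms. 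Your construction works for the Gray product precisely because $1 \in \atom$; it breaks for the join.

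The fix, which is what the cited work has to carry out, is one of the following (and is the analogue of passing to \emph{augmented} simplicial sets when constructing the simplicial join): either work with the subcategory $\atom \cup \{\emptyset\}$ of $\dcpx$ so that the join unit is representable and the appropriate $\Gamma$\nbd continuous presheaves (forcing $F(\emptyset) = *$) recover $\dgmset$ up to equivalence, or define $X \join Y$ directly by a formula that builds in the ``$P \subseteq P \join Q$, $Q \subseteq P \join Q$'' part alongside the bilinear part, so that only connected colimits are preserved. Either way, it is not a naive Day convolution on $\psh{}{\atom}$, and your item (b) inclusions and item (d) connected-colimit preservation for the join need to be derived from whichever of these constructions you adopt rather than ``extended via Day convolution.'' Once the join is constructed correctly as a connectedly cocontinuous bifunctor, the rest of your argument (the slice trick for (c), the transport of the duality isomorphisms for (e)) goes through unchanged.
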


\begin{dfn}[Diagrams and cells] Let $X$ be a diagrammatic set and $U$ a molecule in $\dcpx$. A \emph{diagram of shape $U$ in $X$} is a morphism $x\colon U \to X$. It is \emph{composable} if $U \in \cls{C}$ and a \emph{cell} if $U$ is an atom. 

For all $n \in \mathbb{N}$, we say that $x$ is \emph{$n$\nbd diagram} or an \emph{$n$\nbd cell} when $\dmn{U} = n$.

Given an inclusion $\imath\colon V \incl U$ of molecules, we write $\restr{x}{V}$ for the diagram $\imath;x\colon V \to X$. If $V \submol U$, we write $\restr{x}{V} \submol x$, and call $\restr{x}{V}$ a \emph{subdiagram} of $x$. If $U$ decomposes as $U_1 \cp{k} U_2$, we write $x = x_1 \cp{k} x_2$ for $x_i \eqdef \restr{x}{U_i}$, $i \in \{1,2\}$. 

If $x\colon U \to X$ is a diagram in $X$ and $f\colon X \to Y$ a morphism of diagrammatic sets, we write $f(x)$ for the diagram $x;f\colon U \to Y$. 
\end{dfn}

\begin{dfn}
For simplicity, we assume to have fixed a skeleton of $\dcpx$, so we can compare diagrams for equality even when their shapes are, a priori, only uniquely isomorphic. 
\end{dfn}

\begin{dfn}[Boundaries of diagrams] Let $X$ be a diagrammatic set, $x\colon U \to X$ a diagram, and let $\imath_k^\alpha\colon \bord{k}{\alpha}U \incl U$ be the inclusions of the $k$\nbd boundaries of $U$. The \emph{input $k$\nbd boundary} of $x$ is the diagram $\bord{k}{-}x \eqdef \imath_k^-;x$ and the \emph{output $k$\nbd boundary} of $x$ is the diagram $\bord{k}{+}x \eqdef \imath_k^+;x$. We may omit the index $k$ when $k = \dmn{U} -1$. 

We write $x\colon y^- \celto y^+$ to express that $\bord{k}{\alpha}x = y^\alpha$ for each $\alpha \in \{+,-\}$, and say that $x$ is of \emph{type} $y^- \celto y^+$. We say that two diagrams $x_1, x_2$ are \emph{parallel} if they have the same type.
\end{dfn}

\begin{dfn}[Substitution of diagrams]
Let $U, V, W$ be molecules such that $U[W/V]$ is defined, and let $x\colon U \to X$ and $z\colon W \to X$ be diagrams such that $z$ is parallel to $y \eqdef \restr{x}{V}$. The \emph{substitution of $z$ for $y \submol x$} is the diagram $x[z/y]\colon U[W/V] \to X$ which is equal to $z$ on $W$ and to $x$ on $U \setminus (V \setminus \bord V)$.
\end{dfn}

\begin{dfn}[Pasting of diagrams]
Let $U_1, U_2$ be molecules such that the pasting $U_1 \cup U_2$ of $U_1$ and $U_2$ along $V \submol \bord{}{\alpha}U_2$ is defined. This is a pushout of inclusions $V \incl U_1$ and $V	\incl U_2$. 

If $x_1\colon U_1 \to X$ and $x_2\colon U_2 \to X$ are diagrams with the property that $y \eqdef \restr{x_1}{V} = \restr{x_2}{V}$, there is a unique diagram $x_1 \cup x_2\colon U_1 \cup U_2 \to X$ such that $\restr{x_1 \cup x_2}{U_i} = x_i$ for each $i \in \{1,2\}$. We call it the \emph{pasting of $x_1$ and $x_2$ along $y \submol \bord{}{\alpha}x_2$}.
\end{dfn}

\begin{dfn}[Attaching cells]
Let $y^-\colon U \to X$ and $y^+\colon V \to X$ be parallel composable diagrams. There is a unique morphism $[y^-, y^+]\colon \bord (U \celto V) \to X$ restricting to $y^\alpha$ on $\bord{}{\alpha}(U \celto V)$. 

If $\{(y^-_i\colon U_i \to X, y^+_i\colon V_i \to X)\}_{i \in I}$ is a family of pairwise parallel composable diagrams, we let $X' \eqdef X \cup \{x_i\colon y^-_i \celto y^+_i \}$ be the pushout
\begin{equation*}
\begin{tikzpicture}[baseline={([yshift=-.5ex]current bounding box.center)}]
	\node (0) at (-1,2) {$\coprod_{i \in I} \bord (U_i \celto V_i)$};
	\node (1) at (2.5,0) {$X'$};
	\node (2) at (-1,0) {$X$};
	\node (3) at (2.5,2) {$\coprod_{i \in I} U_i \celto V_i$};
	\draw[1cinc] (0) to (3);
	\draw[1c] (0) to node[auto,swap,arlabel] {$([y_i^-,y_i^+])_{i \in I}$} (2);
	\draw[1cinc] (2) to (1);
	\draw[1c] (3) to node[auto,arlabel] {$(x_i)_{i \in I}$} (1);
	\draw[edge] (1.6,0.2) to (1.6,0.7) to (2.3,0.7);
\end{tikzpicture}
\end{equation*}
in $\dgmset$. We say that $X'$ is the result of \emph{attaching} the cells $\{x_i\colon y_i^- \celto y_i^+\}_{i \in I}$ to $X$.
\end{dfn}

\begin{dfn}[Degenerate diagram]
Let $x\colon U \to X$ be a diagram. We say that $x$ is \emph{degenerate} if there exist a surjective map of molecules $p\colon U \surj V$ in $\dcpx$ and a diagram $y\colon V \to X$ such that $\dmn{V} < \dmn{U}$ and $x = p;y$. 

We write $\degg{X}$ for the set of degenerate composable diagrams of $X$.
\end{dfn}

\begin{dfn} Let $U$ and $V \submol \bord{}{-}U$ be molecules with spherical boundary such that $\dmn{U} = \dmn{V} + 1$. Let $W \eqdef \bord U \setminus (V \setminus \bord V)$. Then 
\begin{equation*}
	\lunit{V}{U}{+} \eqdef \slice{O^1 \gray U}{\sim_W} \,
\end{equation*}
is a molecule with spherical boundary and there are isomorphisms
\begin{equation*}
	\bord{}{-}\lunit{V}{U}{+} \incliso U, \quad \quad \bord{}{+}\lunit{V}{U}{+} \incliso \infl{V} \cup U.
\end{equation*}	
The natural map $O^1 \gray U \surj U$ factors as
\begin{equation*}
\begin{tikzpicture}[baseline={([yshift=-.5ex]current bounding box.center)}]
	\node (0) at (-1.5,1.25) {$O^1 \gray U$};
	\node (1) at (0,0) {$\lunit{V}{U}{+}$};
	\node (2) at (1.5,1.25) {$U$};
	\draw[1csurj] (0) to (2);
	\draw[1csurj] (0) to (1);
	\draw[1csurj] (1) to node[auto,swap,arlabel] {$\lmap{V}{U}{+}$} (2);
\end{tikzpicture}
\end{equation*}
for a unique surjective map $\lmap{V}{U}{+}$, which is a retraction onto $U$ in the sense that 
\begin{equation*}
\begin{tikzpicture}[baseline={([yshift=-.5ex]current bounding box.center)}]
	\node (0) at (-1.5,1.25) {$U$};
	\node (1) at (0,0) {$\lunit{V}{U}{+}$};
	\node (2) at (1.5,1.25) {$U$};
	\draw[1c] (0) to node[auto,arlabel] {$\idd{U}$} (2);
	\draw[1cinc] (0) to node[auto,swap,arlabel] {$\imath$} (1);
	\draw[1csurj] (1) to node[auto,swap,arlabel] {$\lmap{V}{U}{+}$} (2);
\end{tikzpicture}
\end{equation*}
commutes when $\imath$ is the inclusion of $U$ into either boundary of $\lunit{V}{U}{+}$.

Dually if $V \submol \bord{}{+}U$, we write
\begin{equation*}
	\runit{V}{U}{-} \eqdef \slice{O^1 \gray U}{\sim_W}.
\end{equation*}
This has isomorphisms
\begin{equation*}
	\bord{}{-}\runit{V}{U}{-} \incliso U \cup \infl{V}, \quad \quad \bord{}{+}\runit{V}{U}{-} \incliso U,
\end{equation*}	
and factorising $O^1 \gray U \surj U$ we obtain a retraction $\rmap{V}{U}{-}\colon \runit{V}{U}{-} \surj U$ onto $U$. If $n \eqdef \dmn{U} + 1$, we let 
\begin{equation*}
	\lunit{V}{U}{-} \eqdef \oppn{n}{\lunit{V}{U}{+}}, \quad \quad \runit{V}{U}{+} \eqdef \oppn{n}{\runit{V}{U}{-}},
\end{equation*}
and
\begin{equation*}
	\lmap{V}{U}{-} \eqdef \rev{\lmap{V}{U}{+}}, \quad \quad \rmap{V}{U}{+} \eqdef \rev{\rmap{V}{U}{-}}.
\end{equation*}
By axioms \ref{ax:dual} and \ref{ax:inflate}, if $U, V \in \cls{C}$, then $\lunit{V}{U}{\alpha}, \runit{V}{U}{\alpha} \in \cls{C}$.
\end{dfn}

\begin{dfn}[Units]
Let $x\colon U \to X$ be a composable diagram. The \emph{unit} on $x$ is the composable diagram $\eps{}{x} \eqdef \tau;x\colon \infl{U} \to X$. 
\end{dfn}

\begin{dfn}[Unitors] Let $x\colon U \to X$ be a composable $n$\nbd diagram and $y \submol \bord{}{-}x$ a composable $(n-1)$\nbd diagram of shape $V \submol \bord{}{-}U$. The \emph{left unitors of $x$ at $y \submol \bord{}{-}x$} are the degenerate composable $(n+1)$\nbd diagrams
\begin{equation*}
	\lunitor{y}{x}{\alpha} \eqdef \lmap{V}{U}{\alpha}; x\colon \lunit{V}{U}{\alpha} \to X
\end{equation*}
for each $\alpha \in \{+,-\}$. We have
\begin{equation*}
	\lunitor{y}{x}{+}\colon x \celto (\eps{}y \cup x), \quad \quad \lunitor{y}{x}{-}\colon (\eps{}y \cup x) \celto x.
\end{equation*}	
Dually if $y \submol \bord{}{+}x$, the \emph{right unitors of $x$ at $y \submol \bord{}{+}x$} are the degenerate diagrams
\begin{equation*}
	\runitor{y}{x}{\alpha} \eqdef \rmap{V}{U}{\alpha}; x\colon \runit{V}{U}{\alpha} \to X,
\end{equation*}
where 
\begin{equation*}
	\runitor{y}{x}{+}\colon x \celto (x \cup \eps{}y), \quad \quad \runitor{y}{x}{-}\colon (x \cup \eps{}y) \celto x.
\end{equation*}
\end{dfn}

\begin{rmk} \label{rmk:unit_biased}
The definitions are somewhat biased towards the \emph{left} cylinder $O^1 \gray U$. We could as well use quotients of the right cylinder $U \gray O^1$. These belong to $\cls{C}$ by axiom \ref{ax:dual} and the isomorphism between
\begin{equation*}
	\slice{U \gray O^1}{\sim_V} \quad \text{and} \quad  \opp{(\slice{(O^1 \gray \opp{U})}{\sim_{\opp{V}}})}.
\end{equation*}
\end{rmk}

\section{Equivalences} \label{sec:equivalences}

\subsection{Definitions}

\begin{dfn}[Dividend] 
Let $x\colon U \to X$ be a composable $n$\nbd diagram in a diagrammatic set. A \emph{dividend for $x$} is a composable $n$\nbd diagram $x_0\colon U_0 \to X$ together with an inclusion $\imath\colon \bord{}{\alpha}U \incl \bord{}{\alpha}U_0$ such that $\imath(\bord{}{\alpha}U) \submol \bord{}{\alpha}U_0$ and
\begin{equation*}
	\restr{x}{\bord{}{\alpha} U} = \restr{x_0}{\bord{}{\alpha}U}.
\end{equation*}
\end{dfn}

\begin{dfn}[Lax and colax division]
Let $x\colon U \to X$ be a composable $n$\nbd diagram and $(x_0\colon U_0 \to X, \imath\colon \bord{}{\alpha}U \incl \bord{}{\alpha}U_0)$ a dividend for $x$. A \emph{lax division of $(x_0, \imath)$ by $x$} is a triple of
\begin{enumerate}
	\item a composable $n$\nbd diagram $x_\alpha\colon U_\alpha \to X$, 
	\item a subdiagram $y \submol \bord{}{\alpha}x_\alpha$ such that 
	\begin{enumerate}
		\item the pasting $x \cup x_\alpha$ along this subdiagram is defined and parallel to $x_0$,
		\item $\imath$ is equal to the inclusion $\bord{}{\alpha}U \incl \bord{}{\alpha}(U \cup U_\alpha)$,
	\end{enumerate}
	\item a composable $(n+1)$\nbd diagram $h\colon x \cup x_\alpha \celto x_0$.
\end{enumerate}
A \emph{colax division of $(x_0, \imath)$ by $x$} is defined in the same way, except we require a composable $(n+1)$\nbd diagram $h'\colon x_0 \celto x \cup x_\alpha$. 
\end{dfn}

\begin{dfn}
We will sometimes write ``$x_0$ is a dividend for $x$'' and ``$h$ is a (co)lax division of $x_0$ by $x$'', leaving the additional data implicit.
\end{dfn}

\begin{comm}
A dividend $(x_0, \imath)$ for $x$ corresponds ideally to an equation of the form $x \cup x_\alpha = x_0$ to be solved in the indeterminate $x_\alpha$. A (co)lax division of $x_0$ by $x$ is then a (co)lax solution to this equation, where the equality is replaced by a composable diagram in the next higher dimension. 

This is akin to the solution of a lifting problem, except its shape is not fixed. It may be presented as the solution of a lifting problem not in $X$, but in the $\cls{C}$\nbd polygraph $\mmonad X$, as defined in Section \ref{sec:omegacats}.
\end{comm}

\begin{dfn}[Equivalence]
Let $x$ be a composable diagram in a diagrammatic set $X$. Coinductively, we say that $x$ is an \emph{equivalence} if, for all dividends $(x_0, \imath)$ for $x$, there exists a lax division $(x_\alpha, y \submol \bord{}{\alpha}x_\alpha, h\colon x \cup x_\alpha \celto x_0)$ of $x$ by $(x_0, \imath)$ such that $h$ is an equivalence.

We write $\equi{X}$ for the set of equivalences of $X$.
\end{dfn}

\begin{rmk}
The definition may seem biased towards lax division over colax division. We will prove that, in fact, if $(x_0, \imath)$ is a dividend for an equivalence $x$, then both a lax division and a colax division of $x_0$ by $x$ exist and are equivalences.
\end{rmk}

\begin{comm}
Because coinductive definitions and proofs are not very common outside of theoretical computer science, we take some time to explain the definition of equivalence and the corresponding proof method.

Let $X$ be a diagrammatic set and let $\cdgm{X}$ be the set of composable diagrams in $X$. For each composable diagram $x$, let $\dvd{x}$ be the set of its dividends, and for each dividend $(x_0, \imath)$ for $x$, let $\divis{x_0, \imath}{x}$ be the set of lax divisions of $(x_0,\imath)$ by $x$.

For each subset $A \subseteq \cdgm{X}$, let
\begin{align*}
	\equifun{}{A} \eqdef \; & \{ x \in \cdgm{X} \mid \text{for all $(x_0,\imath) \in \dvd{x}$, there exists} \\
	& \text{$(x_\alpha, y \submol \bord{}{\alpha}x_\alpha, h) \in \divis{x_0,\imath}{x}$ such that $h \in A$} \}.
\end{align*}
That is, the set $\equifun{}{A}$ is the set of those diagrams $x$ such that any equation $x \cup x_\alpha = x_0$ has a lax solution exhibited by a diagram in $A$.

If $A \subseteq B$, then $B$ is a larger space of potential solutions, so $\equifun{}{A} \subseteq \equifun{}{B}$. It follows that $A \mapsto \equifun{}{A}$ is an order-preserving endomorphism of the powerset $\powset{\cdgm{X}}$ seen as a poset. Any such endomorphism has a greatest fixed point, constructed as the limit of
\begin{equation*}
	\ldots \subseteq \equifun{n}{\cdgm{X}} \subseteq \ldots \subseteq \equifun{}{\cdgm{X}} \subseteq \cdgm{X}.
\end{equation*}
This greatest fixed point is $\equi{X}$.

This definition provides the following proof method: given $A \subseteq \cdgm{X}$, if $A \subseteq \equifun{}{A}$, then $A \subseteq \equi{X}$. 

Because of the grading of $\cdgm{X}$ given by the dimension of diagrams, such proofs may look like ``inductive proofs without the base case''. Indeed, if $A = \bigcup_{n\in\mathbb{N}} A_n$, in order to prove that $A \subseteq \equifun{}{A}$ we need to show that, for all $n \in \mathbb{N}$, equations involving $x \in A_n$ have (co)lax solutions in $A_{n+1}$. 

Such a proof may be misconstrued as follows. Let $P(n)$ be the statement that $A_n \subseteq (\equi{X})_n$; then $A \subseteq \equi{X}$ is equivalent to $\forall n \, P(n)$. Suppose we have proved that equations for $x \in A_n$ have lax solutions in $A_{n+1}$. Assuming that $P(n+1)$ holds, these solutions are in $(\equi{X})_{n+1}$, so $x \in (\equi{X})_{n}$ and $P(n)$ holds. 

It is tempting to see the implication of $P(n)$ from $P(n+1)$ as an ``inductive step'' in the proof of $\forall n \, P(n)$, but this does \emph{not} correspond to a valid proof principle: think of the case of a uniformly false $P(n)$. What validates the proof is not the fact that the solutions are ``equivalences by assumption'', but the fact that they belong to $A$. 
\end{comm}

%%%%%%%%%%%%%%%%%%%%%%%%%%%%%

\subsection{Closure properties}

\begin{dfn}
Let $A \subseteq \cdgm{X}$ and let $x$ be a composable diagram in $X$. We let $x \in \twothree{}{A}$ if either $x \in A$, or there exist composable diagrams $x_+, x_-, x_0$ and $y \submol \bord{}{\alpha}x_\alpha$ for some $\alpha \in \{+,-\}$ such that 
	\begin{enumerate}
		\item the pasting of $x_+$ and $x_-$ along $y \submol \bord{}{\alpha}x_\alpha$ is defined and
		\item $x_+ \cup x_-$ is parallel to $x_0$,
	\end{enumerate}
	together with $h\colon x_+ \cup x_- \celto x_0$ or $h\colon x_0 \celto x_+ \cup x_-$, where either
	\begin{itemize}
		\item $x = x_0$ and $h, x_+, x_- \in A$, or
		\item $x = x_\alpha$ and $h, x_0, x_{-\alpha} \in A$.
	\end{itemize}
By construction $A \subseteq \twothree{}{A}$. We define $\twothree{\infty}{A} \eqdef \bigcup_{n \in \mathbb{N}} \twothree{n}{A}$. 

Our goal in this section is to prove that 
\begin{equation*}
	\equi{X} = \twothree{\infty}{\equi{X} \cup \degg{X}},
\end{equation*}
that is, equivalences contain all degenerate composable diagrams, and are closed under ``division'' and ``composition'' witnessed by equivalences in the next dimension.
\end{dfn}

\begin{lem} \label{lem:diagram_reverse}
Let $A \subseteq \cdgm{X}$ and let $h\colon x \celto y$ be a diagram in $A \cap \equifun{}{A}$. Then there exists a composable diagram $h^*\colon y \celto x$ in $\twothree{2}{A}$.
\end{lem}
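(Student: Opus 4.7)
The plan is to apply the equivalence property $h \in \equifun{}{A}$ twice: first with $h$ itself as the dividend to produce an auxiliary ``loop'' $h_L\colon x \celto x$, and then with $h_L$ as the dividend to produce $h^*$. The critical observation is that whenever the inclusion $\imath$ in a dividend is the identity, condition (b) of the lax division forces the matching subdiagram $y \submol \bord{}{\alpha}x_\alpha$ to be the entire $\alpha$-boundary of $x_\alpha$: unpacking the substitution, the canonical inclusion $\bord{}{\alpha}U \incl \bord{}{\alpha}U_\alpha[\bord{}{\alpha}U/Y]$ coincides with the identity only when $Y = \bord{}{\alpha}U_\alpha$.

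First I would take the dividend $(h, \idd{\bord{}{+}U})$ for $h$ with $\alpha = +$; this is valid since $\bord{}{+}h = \bord{}{+}h$. The resulting lax division yields a composable $(n+1)$-diagram $h_L$ whose $\bord{}{+}h_L$ is forced to equal $x$, and then parallelism of $h \cup h_L$ with $h$ forces $\bord{}{-}h_L = x$ as well, so $h_L\colon x \celto x$, together with a filler $H_L\colon h \cup h_L \celto h$ in $A$. The triangle with $x_+ \eqdef h_L$, $x_- \eqdef h$, $x_0 \eqdef h$, filler $H_L$, and $\alpha = +$ (so $x_\alpha = x_+ = h_L$) then witnesses $h_L \in \twothree{}{A}$ via case (ii), since all remaining sides and the filler are equal to $h$ or $H_L$, which lie in $A$.

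Next I would take the dividend $(h_L, \idd{\bord{}{-}U})$ for $h$ with $\alpha = -$; this is valid because $\bord{}{-}h_L = x = \bord{}{-}h$. The same analysis forces the matching subdiagram to exhaust $\bord{}{-}h^*$, yielding the desired $h^*\colon y \celto x$ together with a filler $H\colon h \cup h^* \celto h_L$ in $A$. The triangle with $x_+ \eqdef h$, $x_- \eqdef h^*$, $x_0 \eqdef h_L$, filler $H$, and $\alpha = -$ (so $x_\alpha = x_- = h^*$) places $h^* \in \twothree{}{A \cup \{h_L\}}$ via case (ii). Since $A \cup \{h_L\} \subseteq \twothree{}{A}$, we conclude $h^* \in \twothree{}{\twothree{}{A}} = \twothree{2}{A}$.

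The main obstacle is the careful verification of condition (b) in both lax divisions: this is precisely what eliminates the apparent ``laxity'' and pins down the shapes of $h_L$ and $h^*$ exactly, so that $h^*$ really is a diagram of type $y \celto x$ rather than a larger composite involving stray units. Once that is in place, the remainder is a routine two-step application of two-out-of-three, and the key design choice is targeting the second lax division at $h_L$ rather than at the unit $\eps{}x$, which is what allows the argument to succeed for arbitrary $A$ without requiring $\eps{}x \in A$.
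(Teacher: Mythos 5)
Your argument is correct and is essentially the paper's proof with the orientation dualised: the paper first takes the dividend $(h, \idd{\bord{}{-}U})$ at $\alpha = -$, producing a loop $e\colon y \celto y$, and then divides $e$ by $h$ at $\alpha = +$; you first take $(h, \idd{\bord{}{+}U})$ at $\alpha = +$, producing $h_L\colon x \celto x$, and then divide $h_L$ by $h$ at $\alpha = -$. The key observation you emphasise --- that the identity inclusion in the dividend forces the matching submolecule to exhaust the relevant boundary, pinning down the quotient shape exactly --- is the same observation the paper uses implicitly, and both routes reach $h^* \in \twothree{2}{A}$ by the same two applications of the two-out-of-three step.
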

\begin{proof}
Suppose $h$ has shape $U$. Then $h$ together with $\idd{\bord{}{-}U}$ is a dividend for $h$. By assumption, there is a lax division $k\colon h \cup e \celto h$ of $h$ by itself, where $e$ has type $y \celto y$ and $k \in A$. Since $h, k \in A$, it follows that $e \in \twothree{}{A}$. 

Now, $e$ has some shape $U_-$ with $\bord{}{+}U_-$ isomorphic to $\bord{}{+}U$. Then $e$ together with this isomorphic inclusion is a dividend for $h$, so there is a lax division $k'\colon h^* \cup h \celto e$ of $e$ by $h$, where $h^*$ has type $y \celto x$ and $k' \in A$. Since $k', h \in A$ and $e \in \twothree{}{A}$, it follows that $h^* \in \twothree{2}{A}$. 
\end{proof}

\begin{prop} \label{prop:equivalence_reverse}
Let $h\colon x \celto y$ be an equivalence in $X$. Then there exists a composable diagram $h^*\colon y \celto x$ in $\twothree{2}{\equi{X}}$.
\end{prop}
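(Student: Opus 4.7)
The plan is to deduce this proposition directly from Lemma \ref{lem:diagram_reverse} by taking $A \eqdef \equi{X}$. The point is that, by construction, $\equi{X}$ is the greatest fixed point of the order-preserving endomorphism $A \mapsto \equifun{}{A}$ of $\powset{\cdgm{X}}$ discussed in the Comment preceding this subsection. In particular, $\equi{X} = \equifun{}{\equi{X}}$, so
\begin{equation*}
    \equi{X} \;=\; \equi{X} \cap \equifun{}{\equi{X}}.
\end{equation*}
Thus the hypothesis $h \in A \cap \equifun{}{A}$ of Lemma \ref{lem:diagram_reverse} is automatically satisfied whenever $h \in \equi{X}$.

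Concretely, I would argue as follows. Assume $h\colon x \celto y$ is an equivalence. Since $\equi{X} \subseteq \equifun{}{\equi{X}}$, the diagram $h$ lies in $\equi{X} \cap \equifun{}{\equi{X}}$. Applying Lemma \ref{lem:diagram_reverse} with $A \eqdef \equi{X}$ yields a composable diagram $h^*\colon y \celto x$ in $\twothree{2}{\equi{X}}$, as required.

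There is essentially no obstacle: the proof is a one-line specialisation of the previous lemma. The only thing worth flagging explicitly is the fixed-point identity $\equi{X} = \equifun{}{\equi{X}}$, which is what licences the use of the lemma with $A = \equi{X}$. This is exactly the pattern anticipated in the Comment: a fact that would not be provable by a naive dimension-induction (since equivalences are defined coinductively, not inductively), but which follows effortlessly from the coinductive definition once we recognise $\equi{X}$ as a fixed point rather than merely a post-fixed point of $\equifun{}{}$.
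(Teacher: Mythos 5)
Your proof is correct and is exactly the paper's argument: the paper's proof reads, in full, ``Follows from Lemma \ref{lem:diagram_reverse} with $A \eqdef \equi{X}$.'' You have simply made explicit the fixed-point identity $\equi{X} = \equifun{}{\equi{X}}$ that licences the application, which is a reasonable thing to spell out but not a different approach.
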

\begin{proof}
Follows from Lemma \ref{lem:diagram_reverse} with $A \eqdef \equi{X}$.
\end{proof}

\begin{lem} \label{lem:pasting_twothree}
Let $A \subseteq \cdgm{X}$ and let $x_+, x_- \in A$ such that the pasting $x_+ \cup x_-$ along $y \submol \bord{}{\alpha}x_\alpha$ is defined. Then $x_+ \cup x_- \in \twothree{}{A \cup \degg{X}}$.
\end{lem}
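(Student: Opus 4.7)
The plan is to exhibit $x_+ \cup x_-$ as a ``trivial'' composition of itself, using its own unit as the witnessing cell. Specifically, let $U$ denote the shape of $x_+ \cup x_-$ (which lies in $\cls{C}$ because pastings along submolecules preserve $\cls{C}$ by axiom \ref{ax:submolecule}), and set $x_0 \eqdef x_+ \cup x_-$ together with $h \eqdef \eps{}(x_+ \cup x_-) = \tau;(x_+\cup x_-)\colon \infl{U} \to X$. This $h$ is a composable $(n+1)$\nbd diagram because $\infl{U} \in \cls{C}$ by axiom \ref{ax:inflate}, and from the retraction property of $\tau\colon \infl{U} \surj U$ one reads off that its type is $(x_+ \cup x_-) \celto (x_+ \cup x_-)$.

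The verification then amounts to matching the data against the definition of $\twothree{}{A\cup\degg{X}}$ under the first clause ``$x = x_0$''. The pasting of $x_+$ and $x_-$ along $y \submol \bord{}{\alpha}x_\alpha$ is defined by hypothesis; $x_+ \cup x_-$ is trivially parallel to $x_0 = x_+ \cup x_-$; and the cell $h\colon x_+\cup x_- \celto x_0$ is provided by the unit. Since $h$ is manifestly degenerate (being of the form $p;y$ with $p = \tau$ a strictly dimension-decreasing surjection) we have $h \in \degg{X}$, while $x_+,x_- \in A$ by assumption; hence $h, x_+, x_- \in A \cup \degg{X}$, as required.

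There is essentially no obstacle: the lemma is a direct unpacking of the definition, with the whole content lying in the observation that the unit construction supplies a degenerate cell of any composable diagram to itself. The only side condition to verify is that the shape of $\eps{}(x_+\cup x_-)$ belongs to $\cls{C}$, which is immediate from axiom \ref{ax:inflate}.
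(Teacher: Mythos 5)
Your argument is correct and is essentially the paper's own proof: both exhibit $x_+ \cup x_-$ as $x_0$ in the definition of $\twothree{}{-}$, witnessed by the unit $\eps{}(x_+ \cup x_-) \in \degg{X}$. The extra remarks you add about shapes lying in $\cls{C}$ via axioms \ref{ax:submolecule} and \ref{ax:inflate} are harmless bookkeeping already implicit in the definitions.
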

\begin{proof}
Let $x \eqdef x_+ \cup x_-$ and write the type of the unit $\eps{}{x}$ as $x_+ \cup x_- \celto x$. Then $x_+, x_- \in A$ and $\eps{}{x} \in \degg{X}$, so $x \in \twothree{}{A \cup \degg{X}}$.
\end{proof}

\begin{dfn}[Unbiased set of solutions] 
Let $A \subseteq \cdgm{X}$ and $x \in \equifun{}{A}$. We say that $A$ is \emph{unbiased for $x$} if, for all dividends $(x_0, \imath)$ for $x$, there exists a pair $(x_\alpha, y \submol \bord{}{\alpha}x_\alpha, h)$ and $(x_\alpha, y \submol \bord{}{\alpha}x_\alpha, h')$ of a lax and a colax division of $(x_0, \imath)$ by $x$ with $h, h' \in A$.

Given a set $B \subseteq \equifun{}{A}$, we say that $A$ is \emph{unbiased for $B$} if $A$ is unbiased for each $x \in B$.
\end{dfn}

\begin{comm}
If we interpret $x_\alpha$ as ``the result of dividing $x_0$ by $x$'', unbiasedness of $A$ means that the same result is exhibited both by a lax division and by a colax division in $A$.
\end{comm}

\begin{lem} \label{lem:equi_unbiased}
Let $X$ be a diagrammatic set. Then
\begin{enumerate}[label=(\alph*)]
	\item $\equi{X} \subseteq \equifun{}{\twothree{2}{\equi{X}}}$ and
	\item $\twothree{2}{\equi{X}}$ is unbiased for $\equi{X}$.
\end{enumerate}
\end{lem}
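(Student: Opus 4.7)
My plan is to observe that part (a) is essentially immediate from the definitions, while part (b) reduces to an application of Proposition \ref{prop:equivalence_reverse} to the lax division provided by the coinductive definition of equivalence.

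For (a), I would note that by construction of $\twothree{}{-}$ we have $B \subseteq \twothree{}{B} \subseteq \twothree{2}{B}$ for every $B \subseteq \cdgm{X}$, and that the operator $\equifun{}{-}$ is monotone with respect to set inclusion (a larger pool of potential solutions only adds to the set of diagrams that admit a solution in the pool). Applying this to $B \eqdef \equi{X}$, and using that $\equi{X}$ is a fixed point of $\equifun{}{-}$, one gets
\begin{equation*}
	\equi{X} = \equifun{}{\equi{X}} \subseteq \equifun{}{\twothree{2}{\equi{X}}}.
\end{equation*}

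For (b), the task is to show that for every $x \in \equi{X}$ and every dividend $(x_0, \imath)$ for $x$, there exists a single choice of $(x_\alpha, y \submol \bord{}{\alpha}x_\alpha)$ that is witnessed both by a lax division $h \in \twothree{2}{\equi{X}}$ and by a colax division $h' \in \twothree{2}{\equi{X}}$. The coinductive definition of equivalence directly provides a lax division $(x_\alpha, y \submol \bord{}{\alpha}x_\alpha, h\colon x \cup x_\alpha \celto x_0)$ with $h \in \equi{X} \subseteq \twothree{2}{\equi{X}}$. Applying Proposition \ref{prop:equivalence_reverse} to the equivalence $h$ produces a diagram $h^*\colon x_0 \celto x \cup x_\alpha$ in $\twothree{2}{\equi{X}}$; since $h^*$ has the type required of a colax division and uses the same data $(x_\alpha, y)$, the pair $(h, h^*)$ exhibits the unbiasedness of $\twothree{2}{\equi{X}}$ at $x$.

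I expect no serious obstacle: all the work is packed into Proposition \ref{prop:equivalence_reverse}, whose proof in turn relied on Lemma \ref{lem:diagram_reverse} via two successive divisions. The only thing to be careful about is to verify that the $h^*$ produced from the lax witness really has the correct boundary to serve as a colax division with the \emph{same} $(x_\alpha, y)$ --- this is automatic because reversal swaps input and output boundaries while preserving the underlying shape, so $h^*\colon x_0 \celto x \cup x_\alpha$ fits the definition of colax division of $(x_0, \imath)$ by $x$ unchanged.
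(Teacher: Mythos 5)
Your proof is correct and matches the paper's own argument essentially step for step: part (a) follows from the fixed-point equation and monotonicity of $\equifun{}{-}$, and part (b) takes the lax division $h$ supplied by the coinductive definition and applies Proposition \ref{prop:equivalence_reverse} to obtain the colax witness $h^*$ for the same data $(x_\alpha, y)$. The extra remark about $h^*$ having the right boundary is a sound check, but nothing in it departs from the paper's reasoning.
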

\begin{proof}
We have $\equi{X} = \equifun{}{\equi{X}} \subseteq \equifun{}{\twothree{2}{\equi{X}}}$. If $x$ is an equivalence and $x_0$ a dividend for $x$, there is a lax division $h\colon x \cup x_\alpha \celto x_0$ of $x_0$ by $x$ where $h$ is itself an equivalence. By Proposition \ref{prop:equivalence_reverse}, there is a diagram $h^*\colon x_0 \celto x \cup x_\alpha$ in $\twothree{2}{\equi{X}}$: this is a colax division of $x_0$ by $x$. This proves that $\twothree{2}{\equi{X}}$ is unbiased for $\equi{X}$.
\end{proof}

\begin{lem} \label{lem:cell_to_diagram}
Let $A \subseteq \cdgm{X}$ and let $x$ be a composable $n$\nbd diagram. Suppose that $x_i \in \equifun{}{A}$ for all $n$\nbd cells $x_1, \ldots, x_m \submol x$. Then
\begin{enumerate}[label=(\alph*)]
	\item $x \in \equifun{}{\twothree{m-1}{A \cup \degg{X}}}$ and
	\item if $A$ is unbiased for each $x_i$, then $\twothree{m-1}{A \cup \degg{X}}$ is unbiased for $x$.
\end{enumerate}
\end{lem}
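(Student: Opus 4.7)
The plan is to proceed by well-founded induction on the number of atoms of the shape $U$ of $x$. In the base case, $U$ is an atom, so $m = 1$ and $x = x_1 \in \equifun{}{A}$. Since $A \cup \degg{X} \subseteq \twothree{0}{A \cup \degg{X}}$, we have $\equifun{}{A} \subseteq \equifun{}{\twothree{0}{A \cup \degg{X}}}$, and the unbiased clause is immediate from the hypothesis on $x_1$.

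For the inductive step, invoke Lemma \ref{lem:composition_form} to decompose $x = v_1 \cp{k} \ldots \cp{k} v_p$ with $p > 1$ and each $v_i$ strictly smaller than $x$. If $k < n - 1$, only one $v_j$ contains $n$\nbd cells (all $m$ of them); the inductive hypothesis applied to $v_j$, combined with the observation that the surrounding lower-dimensional pieces do not affect $(n+1)$\nbd dimensional divisions, yields the conclusion. If $k = n - 1$, then $p = m$ and each $v_i$ is an $n$\nbd molecule with a unique $n$\nbd cell; regroup as $x = u_1 \cp{n-1} u_2$ with $m_1 + m_2 = m$ and $m_1, m_2 \geq 1$, so that by the inductive hypothesis $u_i \in \equifun{}{\twothree{m_i - 1}{A \cup \degg{X}}}$, with the corresponding unbiased analogue.

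Given a dividend $(x_0, \imath)$ for $x$ at $\alpha = -$ (the case $\alpha = +$ is dual), first restrict $(x_0, \imath)$ to a dividend for $u_1$ via the equality $\bord{}{-}u_1 = \bord{}{-}x$; the inductive hypothesis produces a lax division $(z, h_1\colon u_1 \cup z \celto x_0)$ with $h_1 \in \twothree{m_1 - 1}{A \cup \degg{X}}$. The pasting $u_1 \cup z$ exhibits $\bord{}{+}u_1 \simeq \bord{}{-}u_2$ as a submolecule of $\bord{}{-}z$, giving a dividend for $u_2$ from which the inductive hypothesis yields $(x_-, h_2\colon u_2 \cup x_- \celto z)$ with $h_2 \in \twothree{m_2 - 1}{A \cup \degg{X}}$. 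The two assemble into $h \eqdef (\eps{} u_1 \cp{n-1} h_2) \cp{n} h_1 \colon x \cup x_- \celto x_0$, and iterated application of Lemma \ref{lem:pasting_twothree}, combined with a careful analysis of how the degenerate whiskering interacts with the $2$-out-of-$3$ closure, places $h$ in $\twothree{m - 1}{A \cup \degg{X}}$. The unbiased clause (b) follows by running the same argument with colax divisions at each stage, which exist and belong to the same closure by the inductive unbiasedness hypothesis.

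The main obstacle is the precise bookkeeping of closure levels in the assembly step --- especially in edge cases such as $m_1 = 1$, where na\"ive iteration of Lemma \ref{lem:pasting_twothree} would give $\twothree{m}$ rather than $\twothree{m - 1}$. Resolving this requires either a refined combinatorial accounting that exploits the degeneracy of $\eps{} u_1$ (so that whiskering by a unit does not genuinely increase the closure level), or a more judicious choice of decomposition in such edge cases.
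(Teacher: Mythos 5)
Your overall strategy is sound and, in fact, close to the paper's: decompose the shape at the top codimension (the paper uses Lemma~\ref{lem:spherical_moves} rather than Lemma~\ref{lem:composition_form}, since it needs the boundary–substitution relation $\bord{}{\alpha}V_i \cong \bord{}{-\alpha}V_i[\bord{}{\alpha}U_i/\bord{}{-\alpha}U_i]$ to propagate the dividends correctly), peel off one $n$\nbd cell at a time, restrict the dividend to each $x_i$, obtain lax divisions $h_i \in A$, and paste. The problem is in your assembly step, and the concern you flag at the end is the symptom of a real error, but your diagnosis and proposed fixes are off.

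The formula $h \eqdef (\eps{} u_1 \cp{n-1} h_2) \cp{n} h_1$ is not well formed in general. For $\eps{}u_1 \cp{n-1} h_2$ to be defined you need $\bord{n-1}{+}(\eps{}u_1) = \bord{n-1}{-}(h_2)$, that is $\bord{}{+}u_1 = \bord{}{-}(u_2 \cup x_-)$. The left side equals $\bord{}{-}u_2$, but by Proposition~\ref{prop:spherical_pasting}(b) the right side is $\bord{}{-}x_-[\bord{}{-}u_2/\bord{}{+}u_2]$, which only coincides with $\bord{}{-}u_2$ when $u_2 \cup x_-$ degenerates to $u_2 \cp{n-1} x_-$. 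Even granting a repair, the two-step whisker-then-compose costs two applications of Lemma~\ref{lem:pasting_twothree} where the paper spends one, and that is exactly why your count overshoots by one. Exploiting the degeneracy of $\eps{}u_1$ cannot rescue this: Lemma~\ref{lem:pasting_twothree} already absorbs degenerate pieces into $A \cup \degg{X}$, so whiskering by a unit still advances the closure level by one. Choosing a more balanced split does not help either --- for $m = 2$ you are forced into $m_1 = m_2 = 1$, which lands in $\twothree{2}$, not $\twothree{1}$.

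The fix is to drop the whiskering entirely and assemble by a \emph{single} pasting along a submolecule, as the paper does: paste $h_2$ into $h_1$ along $z \submol \bord{}{-}h_1$ (Definition of pasting along a submolecule, Section~\ref{sec:operations}). This directly yields a diagram of type $u_1 \cup (u_2 \cup x_-) \celto x_0$, which is the desired lax division, and by Lemma~\ref{lem:pasting_twothree} it sits at level $\max(\text{level of }h_1, \text{level of }h_2) + 1 \leq m - 1$, hitting the bound for any split with $m_1, m_2 \geq 1$. With this change your binary recursion goes through and is even more slack than the paper's linear one. You should also make explicit the construction of the intermediate dividends: the paper tracks that $\bord{}{-}W_i$ is isomorphic to $\bord{}{-}W_{i-1}[\bord{}{+}U_i/\bord{}{-}U_i]$, which is what guarantees that the restricted inclusion $\imath_i$ actually exhibits $\bord{}{-}U_i$ as a submolecule of $\bord{}{-}W_{i-1}$; this is the content that Lemma~\ref{lem:spherical_moves} supplies over Lemma~\ref{lem:composition_form}.
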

\begin{proof}
Let $U$ be the shape of $x$. Write $U = V_1 \cp{n-1} \ldots \cp{n-1} V_m$ where each $V_i$ contains a single $n$\nbd atom $U_i$ as in Lemma \ref{lem:spherical_moves}, and let $x_i \eqdef \restr{x}{U_i}$. Note that $U = \bigcup_{i=1}^m U_i$ because $U$ is pure. 

Let $(y_0\colon W_0 \to X, \imath\colon \bord{}{\alpha}U \incl \bord{}{\alpha}W_0)$ be a dividend for $x$, and without loss of generality suppose $\alpha = -$. For $1 \leq i \leq m$, we recursively construct
\begin{enumerate} 
	\item dividends $(y_{i-1}\colon W_{i-1} \to X, \imath_i\colon \bord{}{-}U_i \incl \bord{}{-}W_{i-1})$ for $x_i$, 
	\item inclusions of submolecules $\bord{}{-}V_i \incl \bord{}{-}W_{i-1}$, and
	\item lax divisions $h_i\colon x_i \cup y_i \celto y_{i-1}$ of $y_{i-1}$ by $x_i$ with $h_i \in A$
\end{enumerate}
such that $\bord{}{-}W_i$ is isomorphic to $\bord{}{-}W_{i-1}[\bord{}{+}U_i/\bord{}{-}U_i]$.

We have $\bord{}{-}V_1 = \bord{}{-}U$, and let the inclusion $\bord{}{-}V_1 \incl \bord{}{-}W_0$ be $\imath$. If $i > 1$, from the inclusion $\bord{}{-}V_{i-1} \incl \bord{}{-}W_{i-2}$ we obtain an inclusion of submolecules $\bord{}{-}V_{i-1}[\bord{}{+}U_{i-1}/\bord{}{-}U_{i-1}] \incl \bord{}{-}W_{i-1}$. By Lemma \ref{lem:spherical_moves}, its source is isomorphic to $\bord{}{+}V_{i-1}$ which is equal to $\bord{}{-}V_i$. Composing with this isomorphism, we obtain an inclusion $\bord{}{-}V_i \incl \bord{}{-}W_{i-1}$. 

For each $i$, we let $\imath_i$ be its restriction to $\bord{}{-}U_i \submol \bord{}{-}V_i$. This makes $(y_{i-1}, \imath_i)$ a dividend for $x_i$. By assumption, there exists a lax division $h_i\colon x_i \cup y_i \celto y_{i-1}$ of $y_{i-1}$ by $x_i$ with $h_i \in A$. 

If $W_i$ is the shape of $y_i$, $\bord{}{-}W_{i-1}$ is isomorphic to $\bord{}{-}(U_i \cup W_i)$ which by Proposition \ref{prop:spherical_pasting} is isomorphic to $\bord{}{-}W_i[\bord{}{-}U_i/\bord{}{+}U_i]$. Substituting backwards, $\bord{}{-}W_i$ is isomorphic to $\bord{}{-}W_{i-1}[\bord{}{+}U_i/\bord{}{-}U_i]$. This completes the recursion.

Next, for $1 \leq i \leq m$ we construct composable diagrams
\begin{equation*}
	\tilde{h}_i\colon x_1 \cup (x_2 \cup \ldots (x_i \cup y_i)\ldots) \celto y_0
\end{equation*}
such that $y_i \submol \bord{}{-}\tilde{h}_i$ and $\tilde{h}_i \in \twothree{i-1}{A \cup \degg{X}}$. We let $\tilde{h}_1 \eqdef h_1$ which by assumption is in $A$. 

If $i > 1$, the pasting of $h_i$ and $\tilde{h}_{i-1}$ along $y_{i-1} \submol \bord{}{-}\tilde{h}_{i-1}$ is defined, and we let $\tilde{h}_i \eqdef h_i \cup \tilde{h}_{i-1}$. Then $y_i \submol \bord{}{-}h_i \submol \bord{}{-}\tilde{h}_i$. Moreover, $h_i \in A$ and $\tilde{h}_i \in \twothree{i-1}{A \cup \degg{X}}$, so by Lemma \ref{lem:pasting_twothree} $\tilde{h}_i \in \twothree{i}{A \cup \degg{X}}$. 

To conclude, the type of $\tilde{h}_m$ may be rewritten as $x \cup y_m \celto y_0$, so $\tilde{h}_m$ is a lax division of $y_0$ by $x$ and $\tilde{h}_m \in \twothree{m-1}{A \cup \degg{X}}$. 

If $A$ is unbiased for the $x_i$, there is a dual construction of a colax division of $y_0$ by $x$: we construct step by step colax divisions $h'_i\colon y_{i-1} \celto x_i \cup y_i$ of $y_{i-1}$ by $x_i$, then paste them together to construct a colax division $\tilde{h}'_m\colon y_0 \celto x \cup y_m$ in $\twothree{m-1}{A \cup \degg{X}}$. This proves that $\twothree{m-1}{A \cup \degg{X}}$ is unbiased for $x$.
\end{proof}

\begin{lem} \label{lem:degg_unbiased}
Let $X$ be a diagrammatic set. Then
\begin{enumerate}[label=(\alph*)]
	\item $\degg{X} \subseteq \equifun{}{\twothree{\infty}{\degg{X}}}$ and
	\item $\twothree{\infty}{\degg{X}}$ is unbiased for $\degg{X}$.
\end{enumerate}
\end{lem}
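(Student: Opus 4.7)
The plan is to reduce both (a) and (b) to the case where $x$ is a degenerate $n$\nbd cell, that is, the shape $U$ is an atom. Indeed, if $x = p;y$ with $p\colon U \surj V$ and $\dmn{V} < n$, then for each $n$\nbd atom $U_i \submol U$ the restriction $\restr{p}{U_i}\colon U_i \to V$ has image of dimension at most $\dmn{V} < n = \dmn{U_i}$, and factors as a surjective map of molecules $U_i \surj W_i$ with $\dmn{W_i} < n$ followed by an inclusion $W_i \incl V$; hence $\restr{x}{U_i}$ is itself a degenerate $n$\nbd cell. Granted the lemma for cells, one application of Lemma \ref{lem:cell_to_diagram} with $A \eqdef \twothree{\infty}{\degg{X}}$ yields both $x \in \equifun{}{\twothree{m-1}{\twothree{\infty}{\degg{X}} \cup \degg{X}}} = \equifun{}{\twothree{\infty}{\degg{X}}}$ and unbiasedness of $\twothree{\infty}{\degg{X}}$ for $x$, using $\degg{X} \subseteq \twothree{\infty}{\degg{X}}$ and the idempotency $\twothree{k}{\twothree{\infty}{A}} = \twothree{\infty}{A}$ of the closure operation.

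For a degenerate cell, I first handle the model case of a unit $x = \eps{}z$ on a composable $(n-1)$\nbd diagram $z$ of shape $U_z$, so that $U \simeq \infl{U_z}$ and $\bord{}{\alpha}x = z$ for both $\alpha$. Given a dividend $(x_0, \imath\colon \bord{}{-}U \incl \bord{}{-}U_0)$ with $\alpha = -$, set $x_- \eqdef x_0$ and $y' \eqdef \imath(U_z) \submol \bord{}{-}x_0$: the pasting $x \cup x_-$ along $y' \submol \bord{}{-}x_-$ is defined, since $\bord{}{+}x = z = \restr{x_0}{y'}$ by the dividend condition, and it is parallel to $x_0$ by Proposition \ref{prop:spherical_pasting}. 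The left unitor $\lunitor{y'}{x_0}{-}\colon x \cup x_0 \celto x_0$ provides a lax division and $\lunitor{y'}{x_0}{+}\colon x_0 \celto x \cup x_0$ provides a colax one, both lying in $\degg{X}$; so $\degg{X}$ itself is already unbiased for units. The case $\alpha = +$ is dual via right unitors.

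For a general degenerate cell $x = p;y$, the difficulty is that the two boundaries $\bord{}{-}U$ and $\bord{}{+}U$ of a general atom $U$ need not be isomorphic, so the naive choice $x_\alpha \eqdef x_0$ does not admit a well-defined pasting with $x$. The plan is to take $x_\alpha$ to be a substitution on $x_0$ that introduces a submolecule of shape $\bord{}{-\alpha}U$ into $\bord{}{\alpha}x_\alpha$, along which $x$ can be attached, and to construct the witness $h$ as an analogue of the quotient-of-cylinder construction $\lunit{V}{U}{\alpha}$ tailored to the factorization of $p$ through its image. All intermediate shapes lie in $\cls{C}$ by the convenience axioms \ref{ax:shift}, \ref{ax:submolecule}, \ref{ax:inflate} together with Lemma \ref{lem:substitution_convenient}; the resulting $h$ is degenerate by construction, hence in $\degg{X} \subseteq \twothree{\infty}{\degg{X}}$. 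I expect this combinatorial construction --- which must simultaneously handle substitution, inflation, and compatibility with an arbitrary dimension-decreasing surjection --- to be the principal obstacle of the proof.
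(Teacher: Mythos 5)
Your reduction from degenerate diagrams to degenerate cells via Lemma \ref{lem:cell_to_diagram} is the same as the paper's, and your unit case is correct (it is the special case $p = \tau$, $\tilde{x} = z$). But the heart of the lemma is the general degenerate cell, and there you stop at an unrealised plan; moreover the plan you sketch is not the one that works.

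The ingredient you are missing is that $x_\alpha$ is not obtained from $x_0$ by substitution but by \emph{pasting an orientation-reversed copy of $x$ onto it}. Write $x = p;\tilde{x}$ with $p\colon U \surj V$ and form $x^* \eqdef \rev{p};\tilde{x}$ of shape $\oppn{n}{U}$, so that $\bord{}{+}x^* = \bord{}{-}x = y$ and $\bord{}{-}x^* = \bord{}{+}x$. Then $x_- \eqdef x^* \cup x_0$ (pasted along $y \submol \bord{}{-}x_0$) exposes $\bord{}{+}x$ as a submolecule of its input boundary, resolving precisely the asymmetry of $\bord{}{-}U$ versus $\bord{}{+}U$ that you flag as the obstacle. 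The witness is built on the atom $W \eqdef \infl{\bord{}{-}U} \celto (U \cp{n-1} \oppn{n}{U})$, which admits a surjective map $q\colon W \surj V$ that is $\tau;\restr{p}{\bord{}{-}U}$ on $\infl{\bord{}{-}U}$, $p$ on $U$, $\rev{p}$ on $\oppn{n}{U}$, and sends top to top; then $q;\tilde{x}$ is a degenerate $(n+1)$-cell of type $\eps{}y \celto x \cp{n-1} x^*$. Pasting it with $\lunitor{y}{x_0}{+}\colon x_0 \celto \eps{}y \cup x_0$ along $\eps{}y$ gives the colax division $x_0 \celto x \cup (x^* \cup x_0)$, in $\twothree{}{\degg{X}}$ by Lemma \ref{lem:pasting_twothree}; the lax one comes dually from $\rev{q}$ and $\lunitor{y}{x_0}{-}$. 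Your "cylinder quotient tailored to the factorisation of $p$" has no analogue of $\oppn{n}{U}$, $\rev{p}$, or the shape $W$, and without these the construction does not close.
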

\begin{proof}
Let $x$ be a degenerate $n$\nbd cell of shape $U$ in $X$ and let $(x_0\colon U_0 \to X,$ $\imath\colon \bord{}{\alpha}U \incl \bord{}{\alpha}U_0)$ be a dividend for $x$. Without loss of generality suppose $\alpha = -$. We then have $\bord{}{-}x = y$ for some $y \submol \bord{}{-}x_0$. 

By definition, there exist a surjective map of atoms $p\colon U \surj V$ and a cell $\tilde{x}\colon V \to X$ such that $\dmn{V} < n$ and $x = p;\tilde{x}$. Then $p$ has a reverse map $\rev{p}\colon \oppn{n}{U} \surj V$. We let $x^* \eqdef \rev{p};\tilde{x}$, a cell of shape $\oppn{n}{U}$. 

Consider the atom
\begin{equation*}
	W \eqdef \infl{\bord{}{-}U} \celto (U \cp{n-1} \oppn{n}{U});
\end{equation*}
this is defined and by axioms \ref{ax:dual}, \ref{ax:shift}, \ref{ax:submolecule}, and \ref{ax:inflate} it belongs to $\cls{C}$. There is a surjective map $q\colon W \surj V$ defined by
\begin{enumerate}
	\item $\restr{q}{\infl{\bord{}{-}U}} \eqdef \tau;\restr{p}{\bord{}{-}U}$,
	\item $\restr{q}{U} \eqdef p$,
	\item $\restr{q}{\oppn{n}{U}} \eqdef \rev{p}$, and
	\item $q$ sends the greatest element of $W$ to the greatest element of $V$.
\end{enumerate}
Then $q; \tilde{x}$ is a degenerate $(n+1)$\nbd cell of type $\eps{}{y} \celto x\cp{n-1}x^*$. 

Moreover, we have a left unitor $\lunitor{y}{x_0}{+}$ of $x_0$ at $y \submol \bord{}{-}x_0$ which is a degenerate $(n+1)$\nbd diagram of type $x_0 \celto \eps{}{y} \cup x_0$. The pasting $h' \eqdef \lunitor{y}{x_0}{+} \cup q;\tilde{x}$ at $\eps{}{y} \submol \bord{}{+}\lunitor{y}{x_0}{+}$ is defined and its type can be written as
\begin{equation*}
	x_0 \celto x \cup (x^* \cup x_0),
\end{equation*}
so $h'$ is a colax division of $x_0$ by $x$. By Lemma \ref{lem:pasting_twothree}, $h' \in \twothree{}{\degg{X}}$. 

Dually, the pasting of $\rev{q}$ and the left unitor $\lunitor{y}{x_0}{-}$ is defined, has type $x \cup (x^* \cup x_0) \celto x_0$, and belongs to $\twothree{}{\degg{X}}$. The case $\alpha = +$ is dual, using right unitors. This proves that $x \in \equifun{}{\twothree{}{\degg{X}}}$ and that $\twothree{}{\degg{X}}$ is unbiased for $x$. 

Now, suppose $x$ is a degenerate composable $n$\nbd diagram. Then all $n$\nbd cells $x_1, \ldots, x_m \submol x$ are degenerate: by the first part of the proof and by Lemma \ref{lem:cell_to_diagram}, $x \in \equifun{}{\twothree{m}{\degg{X}}}$ and $\twothree{m}{\degg{X}}$ is unbiased for $x$. This completes the proof.
\end{proof}

\begin{lem} \label{lem:twothree_inductive}
Let $A, B \subseteq \cdgm{X}$ and suppose that
\begin{enumerate}
	\item $A \subseteq B \cap \equifun{}{B}$ and
	\item $B$ is unbiased for $A$.
\end{enumerate}
Then $\twothree{}{A} \subseteq \equifun{}{\twothree{7}{B \cup \degg{X}}}$ and $\twothree{7}{B \cup \degg{X}}$ is unbiased for $\twothree{}{A}$.
\end{lem}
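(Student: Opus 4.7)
The plan is to prove both clauses simultaneously by constructing, for every $x \in \twothree{}{A}$ and every dividend $(y_0, \imath)$ for $x$, both a lax and a colax division with witnesses in $\twothree{7}{B \cup \degg{X}}$. Unfolding the definition of $\twothree{}{A}$ yields a case split: either (i) $x \in A$, or (ii) there exist $x_+, x_-, x_0, h \in A$ with $h$ of type either $x_+ \cup x_- \celto x_0$ or $x_0 \celto x_+ \cup x_-$, and $x$ equals one of $x_0$ or $x_\alpha$. Since $h \in A \subseteq B \cap \equifun{}{B}$, Lemma \ref{lem:diagram_reverse} provides a reverse $h^* \in \twothree{2}{B}$, so at the cost of two 2-out-of-3 layers I may always assume $h\colon x_+ \cup x_- \celto x_0$, halving the case analysis.

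Case (i) is immediate: $x \in A \subseteq \equifun{}{B}$ supplies a lax division in $B$, and unbiasedness of $B$ for $A$ supplies the matching colax division, both lying in $B \subseteq \twothree{7}{B \cup \degg{X}}$. In case (ii) with $x = x_0$, the key observation is that $(y_0, \imath)$ is equally a dividend for $x_+ \cup x_-$ by parallelism, after composing $\imath$ with the isomorphism identifying $\bord{}{\alpha}x_0$ with $\bord{}{\alpha}(x_+ \cup x_-)$. I divide $y_0$ by $x_+ \cup x_-$ by mimicking the iterative argument from the proof of Lemma \ref{lem:cell_to_diagram}, dividing first by one piece then by the other, using $x_+, x_- \in \equifun{}{B}$ with $B$ unbiased for both; this yields a lax division $k\colon (x_+ \cup x_-) \cup z \celto y_0$ in $\twothree{1}{B \cup \degg{X}}$ together with a colax counterpart. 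Then $h^*\colon x_0 \celto x_+ \cup x_-$, extended via a suitable pasting with a unit $\eps{}{z}$, gives a diagram of type $x_0 \cup z \celto (x_+ \cup x_-) \cup z$; pasting this with $k$ along $(x_+ \cup x_-) \cup z$ produces the desired lax division $x_0 \cup z \celto y_0$ at a total depth of about $4$ in the hierarchy.

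Case (ii) with $x = x_\alpha$, say $x = x_+$, is more intricate and is where the bound of $7$ becomes relevant. Given a dividend $(y_0, \imath)$ for $x_+$, I plan to enlarge it to a dividend for $x_+ \cup x_-$ by pasting with $x_-$ along the matching boundary, transport through $h$ to a dividend for $x_0$, apply $x_0 \in \equifun{}{B}$ to obtain a lax division, and finally recover a lax division of $(y_0, \imath)$ by $x_+$ by peeling off the extra $x_-$ factor using $x_- \in \equifun{}{B}$ and reincorporating $h^*$. Each transport through $h$ or through $x_-$ introduces a bounded number of 2-out-of-3 layers (reversals, unit-pastings, and cuts), and the total stays within $\twothree{7}{B \cup \degg{X}}$. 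The main obstacle will be the careful bookkeeping of shapes: at each step one must verify that the boundary-matching inclusion $\imath$ propagates correctly through the pastings and substitutions, that the unitor constructions of Section \ref{sec:diagrammatic} produce diagrams of exactly the types needed, and that the final pasted diagram has type $x_+ \cup z \celto y_0$ on the nose. Unbiasedness for $\twothree{}{A}$ is then obtained by running the entire argument twice — once producing a lax witness and once a colax one — both available at each step from the assumption that $B$ is unbiased for $A$.
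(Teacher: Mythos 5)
Your high-level plan matches the paper's: case-split on the $\twothree{}{A}$ decomposition, divide the dividend by the constituent pieces one at a time, then paste the partial divisions together. Case (i) is correct, and your treatment of $x = x_0$ is essentially the paper's, modulo a spurious mention of ``a suitable pasting with a unit $\eps{}{z}$'' --- you can simply paste $h^*$ with the assembled colax division along $x_+ \cup x_- \submol \bord{}{\alpha}k$, no unit required. The significant gap is in the case $x = x_\alpha$, and it is precisely the case that fixes the bound of $7$.

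The difficulty you gloss over as ``peeling off the extra $x_-$ factor'' is that after enlarging $(y_0, \imath)$ to a dividend for $x_0$ and applying $x_0 \in \equifun{}{B}$, the resulting lax division $\ell$ lands not in $y_0$ but in $x_- \cup y_0$; pasting with $h$ then leaves you with a diagram of type $((\cdots) \cup x_+) \cup y_1 \celto x_- \cup y_0$ in which the extraneous piece appears \emph{both} in the source and in the target, and there is no single lifting step that removes it. The paper's mechanism is: set $z \eqdef \bord{}{+}x_+$ and observe $z \submol \bord{}{-}y_0$ via $\imath$; introduce $\eps{}{z}$ through the left unitor $\lunitor{z}{y_0}{-}\colon \eps{}{z} \cup y_0 \celto y_0$; recognise $(\eps{}{z}, \bord{}{+}\infl{\bord{}{+}U_+} \incliso \bord{}{+}U_+)$ as a dividend for $x_+$ on the output side, and divide to manufacture a weak inverse $x_+^*$ together with both a lax witness $k\colon x_+^* \cup x_+ \celto \eps{}{z}$ and a colax witness $k'$; then carry out five pastings in a fixed order ($k \cup \lunitor{z}{y_0}{-}$, then $\ell$, then $h$, then $k'$, then $\lunitor{z}{x}{+}$) so that $x_+^* \cup x_+$ first absorbs the $x_+$ in the target and then collapses back to $\eps{}{z}$, which the final unitor absorbs. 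Your proposal does gesture at ``unit-pastings'' and ``$x_- \in \equifun{}{B}$'', but unless you name the specific dividend $\eps{}{z}$ and the two unitors, you cannot check that each intermediate paste has the stated type nor that the final one has type $x \cup y_1 \celto y_0$ exactly, and the bound $7$ is then an unsubstantiated guess. In other words, what you mark as ``bookkeeping'' is the actual content of the lemma; without it the argument is a restatement of the goal.

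A smaller remark: your upfront normalisation of $h$'s direction spends a priori two levels via Lemma \ref{lem:diagram_reverse}, whereas the paper gets the same effect for free by invoking duality \emph{after} fixing a direction in each case. Your version still fits inside $\twothree{7}{B \cup \degg{X}}$ if you budget carefully, but it is one more thing to track against a bound you have not derived. Finally, the paper separates $x = x_\alpha$ into two sub-cases depending on which boundary of $x$ the extra piece is pasted along; one of them ($z \submol \bord{}{+}x$) is much easier because $\bord{}{-}x = \bord{}{-}x_0$, so $(y_0, \imath)$ is already a dividend for $x_0$ and no peeling is needed at all. Your uniform treatment is not wrong, but you should at least note that one sub-case is trivial.
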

\begin{proof}
Let $x \in \twothree{}{A}$ be a composable diagram of shape $U$ and suppose $(y_0\colon W_0 \to X, \imath\colon \bord{}{\alpha}U \incl \bord{}{\alpha}W_0)$ is a dividend for $x$. Without loss of generality, suppose $\alpha = -$. We proceed by case distinction.
\begin{itemize}
	\item If $x \in A$, there is nothing to prove.
	
	\item \emph{There is a composable diagram $h\colon x_+ \cup x_- \celto x$ with $h, x_+, x_- \in A$.} Let $U_\alpha$ be the shape of $x_\alpha$. We have $\bord{}{-}U_+ \submol \bord{}{-}U$, so if we let $\imath_+ \eqdef \restr{\imath}{\bord{}{-}U_+}$, the pair $(y_0, \imath_+)$ is a dividend for $x_+$. \\
	By assumption, there is a colax division $k_+\colon y_0 \celto x_+ \cup y_+$ of $y_0$ by $x_+$ with $k_+ \in B$. If $W_+$ is the shape of $y_+$, by Proposition \ref{prop:spherical_pasting} $\bord{}{-}W_+$ is isomorphic to $\bord{}{-}W_0[\bord{}{+}U_+/\bord{}{-}U_+]$, which contains $\bord{}{-}U[\bord{}{+}U_+/\bord{}{-}U_+]$ as a submolecule. This in turn contains $\bord{}{-}U_-$ as a submolecule. Through these identifications, we obtain an inclusion $\imath_-\colon \bord{}{-}U_- \incl \bord{}{-}W_+$ which makes $(y_+, \imath_-)$ a dividend for $x_-$. \\
	Then there is a colax division $k_-\colon y_+ \celto x_- \cup y_-$ of $y_+$ by $x_-$ with $k_- \in B$. The pasting $k_+ \cup k_-$ along $y_+ \submol \bord{}{+}k_+$ is defined, has type $y_0 \celto (x_+ \cup x_-) \cup y_-$, and by Lemma \ref{lem:pasting_twothree} it belongs to $\twothree{}{B \cup \degg{X}}$. Then the pasting 
	\begin{equation*}
		(k_+ \cup k_-) \cup h\colon y_0 \celto x \cup y_-
	\end{equation*}
	along $(x_+ \cup x_-) \submol \bord{}{+}(k_+ \cup k_-)$ is also defined and produces a colax division of $(y_0, \imath)$ by $x$, which by Lemma \ref{lem:pasting_twothree} belongs to $\twothree{2}{B \cup \degg{X}}$. \\
	Now by Lemma \ref{lem:diagram_reverse} applied to $h \in B \cap \equifun{}{B}$ there is a composable diagram $h^*\colon x \celto x_+ \cup x_-$ in $\twothree{2}{B}$. Moreover, because $B$ is unbiased for $A$, we have lax divisions 
	\begin{equation*}
		k'_+\colon x_+ \cup y_+ \celto y_0, \quad \quad k'_-\colon x_- \cup y_- \celto y_+
	\end{equation*}
	both in $B$. With two pastings, we construct a composable diagram 
	\begin{equation*}
		h^* \cup (k'_- \cup k'_+)\colon x \cup y_- \celto y_0
	\end{equation*}
	which is a lax division of $(y_0, \imath)$ by $x$ and belongs to $\twothree{4}{B \cup \degg{X}}$. This proves that $x \in \equifun{}{\twothree{4}{B \cup \degg{X}}}$ and that $\twothree{4}{B \cup \degg{X}}$ is unbiased for $x$. \\
	The case where we start from $h\colon x \celto x_+ \cup x_-$ is dual.
	
	\item \emph{There is a composable diagram $h\colon x_+ \cup x \celto x_0$ with $h, x_+, x_0 \in A$ and $z \eqdef \bord{}{+}x_+ \submol \bord{}{-}x$.} Then $z \submol \bord{}{-}y_0$ through $\imath$, and we have a left unitor
	\begin{equation*}
		\lunitor{z}{y_0}{-}\colon \eps{}{z} \cup y_0 \celto y_0.
	\end{equation*}
	Let $U_+$ be the shape of $x_+$. Then $(\eps{}{z}, \bord{}{+}\infl{\bord{}{+}U_+} \incliso \bord{}{+}U_+)$ is a dividend for $x_+$, so by assumption there exist a lax division $k\colon x_+^* \cup x_+ \celto \eps{}{z}$ and a colax division $k'\colon \eps{}{z} \celto x_+^* \cup x_+$ of $\eps{}{z}$ by $x_+$ with $k, k' \in B$. Then the pasting
	\begin{equation*}
		\tilde{k}_1 \eqdef k \cup \lunitor{z}{y_0}{-}\colon x_+^* \cup (x_+ \cup y_0) \celto y_0
	\end{equation*}
	along $\eps{}{z} \submol \bord{}{-}\lunitor{z}{y_0}{-}$ is defined and belongs to $\twothree{}{B \cup \degg{X}}$ by Lemma \ref{lem:pasting_twothree}. \\
	Let $U_0$ be the shape of $x_0$. The shape of $x_+ \cup y_0$ is $U_+ \cup W_0$ and $\bord{}{-}(U_+ \cup W_0)$ is isomorphic to $\bord{}{-}W_0[\bord{}{-}U_+/\bord{}{+}U_+]$. Through $\imath$, this contains $\bord{}{-}U[\bord{}{-}U_+/\bord{}{+}U_+]$ as a submolecule, which by Proposition \ref{prop:spherical_pasting} is isomorphic to $\bord{}{-}U_0$. Thus we have an inclusion $\imath_0\colon \bord{}{-}U_0 \incl \bord{}{-}(U_+ \cup W_0)$ which makes $(x_+ \cup y_0, \imath_0)$ a dividend for $x_0$. \\
	 By assumption there is a lax division $\ell\colon x_0 \cup y_1 \celto x_+ \cup y_0$ of $x_+ \cup y_0$ by $x_0$ with $\ell \in B$. We construct the pasting
	 \begin{equation*}
	 	\tilde{k}_2 \eqdef \ell \cup \tilde{k}_1\colon x^*_+ \cup (x_0 \cup y_1) \celto y_0
	 \end{equation*}
	 along $x_+ \cup y_0 \submol \bord{}{-}\tilde{k}_1$, then the pasting
	 \begin{equation*}
	 	\tilde{k}_3 \eqdef h \cup \tilde{k}_2\colon ((x^*_+ \cup x_+) \cup x) \cup y_1 \celto y_0
	 \end{equation*}
	 along $x_0 \submol \bord{}{-}\tilde{k}_2$, then the pasting
	 \begin{equation*}
	 	\tilde{k}_4 \eqdef k' \cup \tilde{k}_3\colon (\eps{}{z} \cup x) \cup y_1 \celto y_0
	 \end{equation*}
	 along $x^*_+ \cup x_+ \submol \bord{}{-}\tilde{k}_3$, and finally the pasting
	 \begin{equation*}
	 	\tilde{k}_5 \eqdef \lunitor{z}{x}{+} \cup \tilde{k}_4\colon x \cup y_1 \celto y_0
	 \end{equation*}
	 along $\eps{}{z} \cup x \submol \bord{}{-}\tilde{k}_4$. Then $\tilde{k}_5$ is a lax division of $(y_0, \imath)$ by $x$ which by Lemma \ref{lem:pasting_twothree} belongs to $\twothree{5}{B \cup \degg{X}}$. \\
	 Using Lemma \ref{lem:diagram_reverse} to reverse $h$, unbiasedness of $B$ to reverse $\ell$, and $\rev{-}$ to reverse the unitors, we can dually construct a composable diagram of type $y_0 \celto x \cup y_1$ in $\twothree{7}{B \cup \degg{X}}$. This proves that $x \in \equifun{}{\twothree{7}{B \cup \degg{X}}}$ and that $\twothree{7}{B \cup \degg{X}}$ is unbiased for $x$. \\
	 The case where we start from $h\colon x_0 \celto x_+ \cup x$ is dual.
	 
	 \item \emph{There is a composable diagram $h\colon x \cup x_- \celto x_0$ with $h, x_-, x_0 \in A$ and $z \eqdef \bord{}{-}x_- \submol \bord{}{+}x$.} Let $U_0$ be the shape of $x_0$. Then $\bord{}{-}U = \bord{}{-}U_0$, and $(y_0, \imath)$ is also a dividend for $x_0$. \\
	 By assumption there exist a lax division $k\colon x_0 \cup y \celto y_0$ and a colax division $k'\colon y_0 \celto x_0 \cup y$ of $y_0$ by $x_0$ with $k, k' \in B$. Then the pasting 
	 \begin{equation*}
	 	h \cup k\colon x \cup (x_- \cup y) \celto y_0
	 \end{equation*}
	 along $x_0 \submol \bord{}{-}k$ is defined, is a lax division of $(y_0, \imath)$ by $x$, and belongs to $\twothree{}{B \cup \degg{X}}$. \\
	 By Lemma \ref{lem:diagram_reverse} applied to $h \in B \cap \equifun{}{B}$, we also have a composable diagram $h^*\colon x_0 \celto x \cup x_-$ in $\twothree{2}{B}$, and the pasting
	 \begin{equation*}
	 	k' \cup h^*\colon y_0 \celto x \cup (x_- \cup y)
	 \end{equation*}
	 along $x_0 \submol \bord{}{+}k'$ is a colax division of $(y_0, \imath)$ by $x$ in $\twothree{3}{B \cup \degg{X}}$. This proves that $x \in \equifun{}{\twothree{3}{B \cup \degg{X}}}$ and that $\twothree{3}{B \cup \degg{X}}$ is unbiased for $x$. \\
	The case where we start from $h\colon x_0 \celto x \cup x_-$ is dual. 
\end{itemize}
This completes the case distinction and the proof.
\end{proof}

\begin{thm} \label{thm:twothree}
$\equi{X} = \twothree{\infty}{\equi{X} \cup \degg{X}}$.
\end{thm}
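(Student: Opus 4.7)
The plan is to prove the two inclusions separately. The inclusion $\equi{X} \subseteq \twothree{\infty}{\equi{X} \cup \degg{X}}$ is immediate from the definition of $\twothree{\infty}{-}$. For the reverse inclusion, I set $A \eqdef \twothree{\infty}{\equi{X} \cup \degg{X}}$; since $\equi{X}$ is the greatest fixed point of the order-preserving endomorphism $\equifun{}{-}$, it suffices to show $A \subseteq \equifun{}{A}$, which will then give $A \subseteq \equi{X}$.

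Two structural observations about $A$ will be used throughout. First, $A$ is closed under the operation $\twothree{}{-}$: any element of $\twothree{}{A}$ is either already in $A$ or is witnessed by finitely many elements of $A$, each lying in some $\twothree{n_i}{\equi{X} \cup \degg{X}}$, so taking the maximum of the $n_i$ places the witnessed element inside $A$. In particular $\twothree{7}{A \cup \degg{X}} \subseteq A$, since $\degg{X} \subseteq A$. Second, being unbiased for a diagram is preserved under passing to a superset, because the defining witnesses remain witnesses and $\equifun{}{-}$ is monotone.

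I will then prove by induction on $n$ that $\twothree{n}{\equi{X} \cup \degg{X}} \subseteq \equifun{}{A}$ and that $A$ is unbiased for $\twothree{n}{\equi{X} \cup \degg{X}}$. The base case combines Lemma \ref{lem:equi_unbiased}, which handles $\equi{X}$ through $\twothree{2}{\equi{X}} \subseteq A$, with Lemma \ref{lem:degg_unbiased}, which handles $\degg{X}$ through $\twothree{\infty}{\degg{X}} \subseteq A$; the superset principle for unbiasedness then lifts both statements to $A$. The inductive step applies Lemma \ref{lem:twothree_inductive} with $B \eqdef A$: the inductive hypothesis gives precisely the premise $\twothree{n}{\equi{X} \cup \degg{X}} \subseteq A \cap \equifun{}{A}$ together with the required unbiasedness, and the closure of $A$ under $\twothree{}{-}$ absorbs $\twothree{7}{A \cup \degg{X}}$ back into $A$. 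Taking the union over $n$ yields $A \subseteq \equifun{}{A}$.

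I do not anticipate any serious obstacle, since the hard technical work has already been done by the three preceding lemmas. The key insight is that one can bootstrap Lemma \ref{lem:twothree_inductive} by choosing $B$ equal to $A$ itself, using the closure of $A$ under $\twothree{}{-}$ to turn each application of the lemma into a net gain rather than a loss of control.
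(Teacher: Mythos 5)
Your proposal is correct and follows essentially the same route as the paper: the obvious inclusion, then an induction on $n$ establishing $\twothree{n}{\equi{X} \cup \degg{X}} \subseteq \equifun{}{\twothree{\infty}{\equi{X} \cup \degg{X}}}$ together with unbiasedness of $\twothree{\infty}{\equi{X} \cup \degg{X}}$ for $\twothree{n}{\equi{X} \cup \degg{X}}$, with base case via Lemmas \ref{lem:equi_unbiased} and \ref{lem:degg_unbiased} and inductive step via Lemma \ref{lem:twothree_inductive} applied with $B = \twothree{\infty}{\equi{X} \cup \degg{X}}$, followed by the coinduction principle. You simply spell out two pieces of bookkeeping the paper leaves implicit: the monotonicity (superset) principle for unbiasedness and the closure of $\twothree{\infty}{\equi{X} \cup \degg{X}}$ under $\twothree{}{-}$, both of which are correct.
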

\begin{proof}
The inclusion $\equi{X} \subseteq \twothree{\infty}{\equi{X} \cup \degg{X}}$ is obvious. Conversely, we prove by induction that, for all $n \in \mathbb{N}$,
\begin{enumerate}
	\item $\twothree{n}{\equi{X} \cup \degg{X}} \subseteq \equifun{}{\twothree{\infty}{\equi{X} \cup \degg{X}}}$ and
	\item $\twothree{\infty}{\equi{X} \cup \degg{X}}$ is unbiased for $\twothree{n}{\equi{X} \cup \degg{X}}$.
\end{enumerate}
The base case is given by the combination of Lemma \ref{lem:equi_unbiased} and Lemma \ref{lem:degg_unbiased}. The inductive step is given by Lemma \ref{lem:twothree_inductive} with $A \eqdef \twothree{n}{\equi{X} \cup \degg{X}}$ and $B \eqdef \twothree{\infty}{\equi{X} \cup \degg{X}}$.

This proves that $\twothree{\infty}{\equi{X} \cup \degg{X}} \subseteq \equifun{}{\twothree{\infty}{\equi{X} \cup \degg{X}}}$. By coinduction, $\twothree{\infty}{\equi{X} \cup \degg{X}} \subseteq \equi{X}$.
\end{proof}

\begin{cor} \label{cor:twothree}
The following facts hold in every diagrammatic set.
\begin{enumerate}[label=(\alph*)]
	\item Every degenerate composable diagram is an equivalence.
	\item If $h\colon x \celto y$ is an equivalence, then there is an equivalence $h^*\colon y \celto x$.
	\item If $x_1$ and $x_2$ are equivalences and the pasting $x_1 \cup x_2$ along $y \submol \bord{}{\alpha}x_2$ is defined, then $x_1 \cup x_2$ is an equivalence.
	\item If $x$ is a composable $n$\nbd diagram and each $n$\nbd cell in $x$ is an equivalence, then $x$ is an equivalence.
\end{enumerate}
\end{cor}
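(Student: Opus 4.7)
The plan is to deduce each of (a)--(d) directly from Theorem \ref{thm:twothree}, that is, the identity $\equi{X} = \twothree{\infty}{\equi{X} \cup \degg{X}}$, together with the greatest-fixed-point characterisation $\equi{X} = \equifun{}{\equi{X}}$. Three of the four statements become essentially one-line arguments; only (d) requires reusing the main work of Lemma \ref{lem:cell_to_diagram}.

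For (a), the inclusion $\degg{X} \subseteq \equi{X} \cup \degg{X}$ feeds directly into $\twothree{\infty}{\equi{X} \cup \degg{X}} = \equi{X}$. For (b), Proposition \ref{prop:equivalence_reverse} already produces $h^*\colon y \celto x$ with $h^* \in \twothree{2}{\equi{X}}$, which by monotonicity of $\twothree{}{-}$ lies a fortiori in $\twothree{\infty}{\equi{X} \cup \degg{X}}$, so Theorem \ref{thm:twothree} applies. For (c), I take $x_+ \eqdef x_1$, $x_- \eqdef x_2$, $x_0 \eqdef x_1 \cup x_2$, and witness $x_0 \in \twothree{}{\equi{X} \cup \degg{X}}$ via the unit $h \eqdef \eps{}{x_0}\colon x_+ \cup x_- \celto x_0$, a degenerate $(n+1)$-diagram; one further appeal to Theorem \ref{thm:twothree} concludes. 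Note that (c) uses (a) only implicitly, through the inclusion $\degg{X} \subseteq \twothree{\infty}{\equi{X} \cup \degg{X}}$, so the order of proof is consistent.

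Part (d) is the only case requiring genuine content, but Lemma \ref{lem:cell_to_diagram} is already tailored for it. Setting $A \eqdef \equi{X}$, the hypothesis that each $n$-cell $x_i \submol x$ is an equivalence translates, via $\equi{X} = \equifun{}{\equi{X}}$, into the premise $x_i \in \equifun{}{A}$ of the lemma. The lemma then yields $x \in \equifun{}{\twothree{m-1}{A \cup \degg{X}}}$, and Theorem \ref{thm:twothree} bounds $\twothree{m-1}{A \cup \degg{X}}$ above by $\equi{X}$. By monotonicity of the operator $\equifun{}{-}$, this places $x$ inside $\equifun{}{\equi{X}} = \equi{X}$. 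The one subtle point worth flagging is that we need \emph{only} the first conclusion of Lemma \ref{lem:cell_to_diagram}, not its unbiasedness clause, because (d) asks for membership in $\equi{X}$ rather than in a subset for which $\equi{X}$ is unbiased; the unbiasedness machinery was consumed already inside the proof of Theorem \ref{thm:twothree}.
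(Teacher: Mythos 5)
Your proof is correct and follows essentially the same route as the paper's, which derives (a) directly from Theorem \ref{thm:twothree} and (b)--(d) from it in combination with Proposition \ref{prop:equivalence_reverse}, Lemma \ref{lem:pasting_twothree}, and Lemma \ref{lem:cell_to_diagram} respectively; your part (c) simply re-derives Lemma \ref{lem:pasting_twothree} inline rather than citing it, and your observation that only the first conclusion of Lemma \ref{lem:cell_to_diagram} is needed for (d) is accurate.
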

\begin{proof}
The first point follows immediately from Theorem \ref{thm:twothree}, and the other three in combination with Proposition \ref{prop:equivalence_reverse}, Lemma \ref{lem:pasting_twothree}, and Lemma \ref{lem:cell_to_diagram}, respectively.
\end{proof}

\begin{dfn}[Equivalent diagrams]
Let $x, y$ be parallel composable diagrams in a diagrammatic set. We write $x \simeq y$, and say that \emph{$x$ is equivalent to $y$}, if there exists an equivalence $h\colon x \celto y$.
\end{dfn}

\begin{prop} \label{prop:equiv_relation}
The relation $\simeq$ is an equivalence relation.
\end{prop}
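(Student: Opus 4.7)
The plan is to verify the three conditions of an equivalence relation by direct appeal to the closure properties of $\equi{X}$ collected in Corollary \ref{cor:twothree}, together with the existence of units and the pasting operation on composable diagrams. Essentially, all the combinatorial work has been absorbed into Corollary \ref{cor:twothree} and the axioms of a convenient class, so the proposition reduces to a one-line invocation for each of the three properties.

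For reflexivity, given a composable diagram $x\colon U \to X$, I would exhibit the unit $\eps{}x \eqdef \tau;x\colon \infl{U} \to X$. This is composable because $\infl{U}\in\cls{C}$ by axiom \ref{ax:inflate}, degenerate by construction (since $\tau$ is a surjective map of molecules lowering dimension), and of type $x \celto x$ because $\imath^\alpha;\tau = \idd{U}$ makes $\bord{}{\alpha}(\tau;x) = x$. Corollary \ref{cor:twothree}(a) then tells us $\eps{}x$ is an equivalence, so $x\simeq x$. Symmetry is equally quick: given an equivalence $h\colon x\celto y$ witnessing $x\simeq y$, Corollary \ref{cor:twothree}(b) yields an equivalence $h^*\colon y\celto x$, hence $y\simeq x$.

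For transitivity, suppose $h\colon x\celto y$ and $k\colon y\celto z$ are equivalences. Since $\bord{}{+}h = y = \bord{}{-}k$ holds as an equality of full boundaries, the pasting $h\cup k$ of $h$ and $k$ along $y \submol \bord{}{-}k$ is defined. Axiom \ref{ax:submolecule} ensures the resulting shape lies in $\cls{C}$, so $h\cup k$ is composable, and by construction it has type $x \celto z$. Corollary \ref{cor:twothree}(c) then guarantees that $h\cup k$ is an equivalence, so $x \simeq z$.

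The only thing one has to be mildly careful about — the ``obstacle'', such as it is — is that all intermediate shapes remain in the convenient class $\cls{C}$, which is handled uniformly by axioms \ref{ax:inflate} and \ref{ax:submolecule}. No coinductive argument is needed at this stage, because Corollary \ref{cor:twothree} already packages the delicate reasoning.
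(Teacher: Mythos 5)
Your proof is correct and follows essentially the same route as the paper's: reflexivity via the degenerate unit $\eps{}{x}$, symmetry via the reversal $h^*$, and transitivity via pasting along the shared boundary, all invoking Corollary \ref{cor:twothree}. The only cosmetic difference is that the paper writes the transitivity pasting as $h \cp{n} k$ rather than $h \cup k$ along $y \submol \bord{}{-}k$, but these describe the same full-boundary pasting.
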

\begin{proof}
Let $x, y, z$ be parallel composable $n$\nbd diagrams. The unit $\eps{}{x}\colon x \celto x$ is a degenerate composable diagram, so by Corollary \ref{cor:twothree} it is an equivalence, exhibiting $x \simeq x$. This proves that $\simeq$ is reflexive.

Suppose $x \simeq y$, exhibited by an equivalence $h\colon x \celto y$. By Corollary \ref{cor:twothree} there is an equivalence $h^*\colon y \celto x$, exhibiting $y \simeq x$. This proves that $\simeq$ is symmetric.

Finally, if $x \simeq y$ and $y \simeq z$, exhibited by equivalences $h\colon x \celto y$ and $k\colon y \celto z$, we have a composable diagram $h \cp{n} k\colon x \celto z$ which by Corollary \ref{cor:twothree} is an equivalence, exhibiting $x \simeq z$. This proves that $\simeq$ is transitive.
\end{proof}

\begin{prop} \label{prop:pasting_equivalence}
Let $x, x', y, y'$ be composable diagrams such that $x \simeq x'$, $y \simeq y'$, and $y \submol x$. Then $x' \simeq x[y'/y]$.
\end{prop}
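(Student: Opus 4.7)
The plan is to use the fact that $\simeq$ is an equivalence relation (Proposition \ref{prop:equiv_relation}): from $x \simeq x'$ I obtain $x' \simeq x$ by symmetry, so it suffices to establish the ``congruence'' claim $x \simeq x[y'/y]$ and then compose the two equivalences. Thus I would reduce the proposition to the statement that replacing a subdiagram by an equivalent one produces an equivalent diagram.

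To prove $x \simeq x[y'/y]$, let $k\colon y \celto y'$ be an equivalence witnessing $y \simeq y'$. I would construct an equivalence $h\colon x \celto x[y'/y]$ by induction on the generation of the submolecule relation $y \submol x$. In the base case $y = x$ the substitution yields $x[y'/y] = y'$, and the equivalence $k$ itself is the required $h$. In the inductive step, the generating clause of $\submol$ provides a decomposition $x = x_1 \cp{\ell} x_2$ with $y \submol x_1$ or $y \submol x_2$; without loss of generality take the former. By Proposition \ref{prop:substitution}(b), $x_1$ and $x_1[y'/y]$ share the same boundary, so the pasting at the interface with $x_2$ persists after substitution, giving $x[y'/y] = x_1[y'/y] \cp{\ell} x_2$. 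The inductive hypothesis supplies an equivalence $h_1\colon x_1 \celto x_1[y'/y]$, and I would take
\[
    h \eqdef h_1 \cp{\ell} \eps{}{x_2}\colon x \celto x[y'/y],
\]
which is well-defined by globularity, since $\bord{\ell}{+}h_1 = \bord{\ell}{+}x_1 = \bord{\ell}{-}x_2 = \bord{\ell}{-}\eps{}{x_2}$.

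Equivalence of $h$ will follow from Corollary \ref{cor:twothree}: the unit $\eps{}{x_2}$ is degenerate and hence an equivalence by part (a), while $h_1$ is an equivalence by the inductive hypothesis, and equivalences are closed under pasting by part (c). The main obstacle is a presentation mismatch: Corollary \ref{cor:twothree}(c) is phrased for pasting two $n$\nbd molecules along a codimension-one submolecule, whereas $h_1 \cp{\ell} \eps{}{x_2}$ is a pasting of two $(n+1)$\nbd dimensional diagrams at an arbitrary index $\ell$, possibly strictly less than $\dmn{x}$. I expect to resolve this by decomposing the lower-index pasting via the interchange law of the partial $\omega$\nbd category of molecules (Proposition \ref{prop:partial_omegacat}) as an iterated composite of codimension-one pastings of $h_1$, $\eps{}{x_2}$, and auxiliary units, each factor being an equivalence so that Corollary \ref{cor:twothree}(c) applies at every step; alternatively, a direct appeal to Corollary \ref{cor:twothree}(d) should also suffice, provided $h$ can be arranged so that every top-dimensional cell is either a unit or sits inside $k$. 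Either way I expect the remaining work to be a routine unwinding of the combinatorics already established in Section~\ref{sec:operations}.
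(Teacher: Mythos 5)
Your overall logical structure (symmetry plus a ``congruence'' claim) is sound, but the argument you give for the congruence claim has a genuine gap exactly where you flag it, and neither of the two patches you suggest closes it.

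The problem with $h \eqdef h_1 \cp{\ell} \eps{}{x_2}$ is that for $\ell < n$ this is a low-index pasting of two $(n+1)$\nbd dimensional diagrams, and the closure results you want to invoke are all in terms of \emph{pasting along a codimension-one submolecule}. Corollary~\ref{cor:twothree}(c) applies only when the interface is a submolecule of an $n$\nbd dimensional boundary of one of the two $(n+1)$\nbd diagrams; the interface $\bord{\ell}{+}h_1$ is $\ell$\nbd dimensional with $\ell < n$, so the corollary does not apply directly. Your first patch, decomposing via interchange/Lemma~\ref{lem:spherical_moves} into $\cp{n}$\nbd factors each containing a single $(n+1)$\nbd cell, does not help: the factors containing top cells of $h_1$ are not known to be equivalences, because equivalences are not closed under passage to top cells (a composite of non-equivalences can be an equivalence, and even the cells of $k$ need not be equivalences). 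Your second patch, Corollary~\ref{cor:twothree}(d), fails for the same reason: it requires every top cell to be an equivalence, and the top cells of $k$ (hence of $h_1$) give no such guarantee. What \emph{would} work is to replace your $h$ with the pasting of $h_1$ and $\eps{}{x[y'/y]}$ along $x_1[y'/y] \submol \bord{}{-}\eps{}{x[y'/y]}$, a legitimate codimension-one pasting whose input boundary is $x$ and output boundary is $x[y'/y]$, so Corollary~\ref{cor:twothree}(c) applies — but once you write this down you notice the induction is superfluous: you can skip it entirely and paste $\eps{}{x}$ with $k$ along $y \submol \bord{}{+}\eps{}{x}$ in one step.

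The paper's proof goes one step further still and does not even pass through the symmetry reduction. It takes the equivalences $h\colon x' \celto x$ and $k\colon y \celto y'$ directly, observes that $y \submol x = \bord{}{+}h$, and pastes $h$ and $k$ along $y \submol \bord{}{+}h$. By Proposition~\ref{prop:spherical_pasting}(b) this pasting has type $x' \celto x[y'/y]$, and by Corollary~\ref{cor:twothree}(c) it is an equivalence. No induction, no case analysis, no low-index pasting.
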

\begin{proof}
Let $h\colon x' \celto x$ and $k\colon y \celto y'$ be equivalences exhibiting $x' \simeq x$ and $y \simeq y'$. The pasting $h \cup k$ along $y \submol \bord{}{+}h$ is defined, has type $x' \celto x[y'/y]$, and is an equivalence by Corollary \ref{cor:twothree}.
\end{proof}

\begin{prop} \label{prop:higher_equivalence}
Let $x, y$ be parallel composable diagrams in a diagrammatic set. If $x \simeq y$ and $x$ is an equivalence, then $y$ is an equivalence.
\end{prop}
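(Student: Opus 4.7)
The plan is to use the coinductive characterisation of equivalences directly: it suffices to show that $y$ belongs to $\equifun{}{\equi{X}}$, since $\equi{X} = \equifun{}{\equi{X}}$ as the greatest fixed point. So given an arbitrary dividend for $y$, I must exhibit a lax division whose witnessing $(n+1)$-diagram is an equivalence. The key resources I have are (i) $x$ being an equivalence gives me lax divisions against any dividend for $x$, (ii) $x$ and $y$ are parallel so every dividend for $y$ is automatically a dividend for $x$, and (iii) the equivalence $h\colon x \celto y$ can be reversed to an equivalence $h^*\colon y \celto x$ by Corollary \ref{cor:twothree}(b).

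Concretely, suppose $(y_0\colon W_0 \to X,\, \imath\colon \bord{}{\alpha}U \incl \bord{}{\alpha}W_0)$ is a dividend for $y$, where $U$ is the common shape of $x$ and $y$. Since $\bord{}{\alpha}x = \bord{}{\alpha}y$, the same pair is a dividend for $x$. Using that $x$ is an equivalence, I obtain a lax division
\begin{equation*}
	(x_\alpha\colon U_\alpha \to X,\; z \submol \bord{}{\alpha}x_\alpha,\; k\colon x \cup x_\alpha \celto y_0)
\end{equation*}
with $k \in \equi{X}$. I claim that keeping $x_\alpha$ and $z$ and replacing the witnessing $(n+1)$-diagram by the pasting $h^* \cup k$ of $h^*\colon y \celto x$ with $k$ along $x \submol \bord{}{-}k = x \cup x_\alpha$ gives the required lax division of $(y_0, \imath)$ by $y$.

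To verify this, I would check that the pasting $y \cup x_\alpha$ along $z \submol \bord{}{\alpha}x_\alpha$ is defined and parallel to $y_0$: this follows because $\bord{}{-\alpha}y = \bord{}{-\alpha}x$ matches $z$, and by Proposition \ref{prop:spherical_pasting} the $\alpha$-boundary of $y \cup x_\alpha$ is $\bord{}{\alpha}x_\alpha[\bord{}{\alpha}y/z] = \bord{}{\alpha}x_\alpha[\bord{}{\alpha}x/z] = \bord{}{\alpha}(x \cup x_\alpha)$, which is parallel to $\bord{}{\alpha}y_0$ via $\imath$, while the $-\alpha$-boundary is $\bord{}{-\alpha}y = \bord{}{-\alpha}x$. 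Then the pasting $h^* \cup k$ of $(n+1)$-diagrams is well-defined, has type $y \cup x_\alpha \celto y_0$ (by the same substitution calculation with $y$ in place of $x$), and is an equivalence by Corollary \ref{cor:twothree}(c), since both $h^*$ and $k$ are equivalences.

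I do not expect any major obstacle: the whole argument reduces to tracking boundaries through a single pasting, and every closure property needed is already packaged in Corollary \ref{cor:twothree}. The only mildly delicate point is matching conventions for the $\alpha = +$ and $\alpha = -$ cases of the dividend, which is handled uniformly by the observation that $x$ and $y$ have the same boundary in both orientations.
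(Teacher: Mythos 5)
Your proof is correct and takes essentially the same route as the paper: transfer the dividend from $y$ to $x$ via parallelism, obtain an equivalence lax division $k\colon x \cup x_\alpha \celto y_0$, then paste an equivalence $y \celto x$ onto the front and invoke Corollary \ref{cor:twothree}(c). The paper states the boundary verification more tersely (it just asserts the pasting is a lax division of $y_0$ by $y$), but the substance is identical.
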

\begin{proof}
Suppose $x$ is an equivalence and let $(y_0, \imath)$ be a dividend for $y$. Because $x$ and $y$ are parallel, this is also a dividend for $x$. Then there is a lax division $h\colon x \cup x' \celto y_0$ of $y_0$ by $x$ which is an equivalence. Let $k\colon y \celto x$ be an equivalence exhibiting $y \simeq x$. The pasting $k \cup h$ along $x \submol \bord{}{-}h$ is defined, is a lax division of $y_0$ by $y$, and is an equivalence by Corollary \ref{cor:twothree}. 
\end{proof}

\begin{prop} \label{prop:unitors}
Let $x$ be a composable $n$\nbd diagram of type $y^- \celto y^+$. Then $x \simeq \eps{}{y^-} \cp{n-1} x$ and $x \simeq x \cp{n-1} \eps{}{y^+}$.
\end{prop}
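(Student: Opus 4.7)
The plan is to obtain both equivalences directly from the unitor construction, by specialising the submolecule parameter $y \submol \bord{}{\pm}x$ to the entire boundary. Observe that when $V \eqdef \bord{}{-}U$ is taken to be the whole input boundary of the shape $U$ of $x$, the pasting $\infl{V} \cup U$ agrees, up to the unique isomorphism, with the composition $\infl{V} \cp{n-1} U$, since it is being performed along all of $\bord{}{+}\infl{V} = V = \bord{}{-}U$. Consequently the diagram $\eps{}{y^-} \cup x$ appearing in the type of the left unitor is exactly $\eps{}{y^-} \cp{n-1} x$, and dually $x \cup \eps{}{y^+} = x \cp{n-1} \eps{}{y^+}$.

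The main step is then to invoke the left and right unitors. Specifically, I would consider the composable $(n+1)$-diagrams
\begin{equation*}
	\lunitor{y^-}{x}{+}\colon x \celto \eps{}{y^-} \cp{n-1} x, \qquad \runitor{y^+}{x}{+}\colon x \celto x \cp{n-1} \eps{}{y^+},
\end{equation*}
both of which are degenerate by construction since they factor through $\lmap{V}{U}{+}$ and $\rmap{V}{U}{+}$, respectively, followed by $x$.

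Finally, Corollary \ref{cor:twothree}(a) tells us that every degenerate composable diagram is an equivalence, so $\lunitor{y^-}{x}{+}$ and $\runitor{y^+}{x}{+}$ are themselves equivalences. By definition of $\simeq$, this witnesses $x \simeq \eps{}{y^-} \cp{n-1} x$ and $x \simeq x \cp{n-1} \eps{}{y^+}$. There is no real obstacle here; the only thing to be careful about is matching shapes, i.e.\ checking that the pasting along the entire boundary coincides with $\cp{n-1}$, which is immediate from the identification of $\bord{}{+}\infl{V}$ with $V$ and from Definition \ref{dfn:molecule_pasting}.
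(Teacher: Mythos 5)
Your proof is correct and takes the same approach as the paper, which simply states that the two equivalences are exhibited by the left and right unitors; you spell out the specialisation of the submolecule parameter to the whole boundary and the identification of the boundary pasting with $\cp{n-1}$ composition, which the paper leaves implicit.
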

\begin{proof}
The two equivalences are exhibited by the left unitors $\lunitor{y}{x}{\alpha}$ and the right unitors $\runitor{y}{x}{\alpha}$, respectively.
\end{proof}

\begin{prop} \label{prop:what_unit_does}
Let $e\colon x \celto x$ be a composable $n$\nbd diagram. The following are equivalent:
\begin{enumerate}
	\item $e \simeq \eps{}{x}$;
	\item $y \simeq e \cp{n-1} y$ and $z \simeq z \cp{n-1} e$ for all composable $n$\nbd diagrams $y\colon x \celto x'$ and $z\colon x' \celto x$.
\end{enumerate}
\end{prop}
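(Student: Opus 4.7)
The plan is a short argument: both directions follow by manipulating the unit $\eps{}{x}$ using only the closure properties of $\simeq$ from Proposition \ref{prop:equiv_relation}, the substitution principle from Proposition \ref{prop:pasting_equivalence}, and the unit laws from Proposition \ref{prop:unitors}. There is no real obstacle; the main thing to watch is that all diagrams have the right dimension so that the cited propositions apply.

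For (1) $\Rightarrow$ (2), assume $e \simeq \eps{}{x}$ and let $y\colon x \celto x'$ be a composable $n$-diagram. The pasting $e \cp{n-1} y$ is defined and contains $e$ as a submolecule, so applying Proposition \ref{prop:pasting_equivalence} with $e \cp{n-1} y \simeq e \cp{n-1} y$ and $e \simeq \eps{}{x}$ gives
\begin{equation*}
    e \cp{n-1} y \;\simeq\; (e \cp{n-1} y)[\eps{}{x}/e] \;=\; \eps{}{x} \cp{n-1} y.
\end{equation*}
By Proposition \ref{prop:unitors} applied to $y$ at its input boundary $x$, we have $y \simeq \eps{}{x} \cp{n-1} y$. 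Transitivity and symmetry of $\simeq$ (Proposition \ref{prop:equiv_relation}) then yield $y \simeq e \cp{n-1} y$. The argument for $z\colon x' \celto x$ is strictly dual, using the right unitor $z \simeq z \cp{n-1} \eps{}{x}$.

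For (2) $\Rightarrow$ (1), instantiate the hypothesis at the composable $n$-diagram $y \eqdef \eps{}{x}\colon x \celto x$. This produces an equivalence
\begin{equation*}
    \eps{}{x} \;\simeq\; e \cp{n-1} \eps{}{x}.
\end{equation*}
On the other hand, Proposition \ref{prop:unitors} applied to $e$ at its output boundary $x$ gives $e \simeq e \cp{n-1} \eps{}{x}$. Symmetry and transitivity of $\simeq$ then deliver $e \simeq \eps{}{x}$, which is the desired conclusion.
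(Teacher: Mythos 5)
Your argument is correct and follows the same route as the paper's proof: (1)~$\Rightarrow$~(2) by combining Proposition \ref{prop:pasting_equivalence} with Proposition \ref{prop:unitors}, and (2)~$\Rightarrow$~(1) by instantiating at $\eps{}{x}$ and using the right unitor $e \simeq e \cp{n-1} \eps{}{x}$. The paper simply states the first direction without spelling out the substitution, but the content is identical.
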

\begin{proof}
Suppose $e \simeq \eps{}{x}$. The second statement follows immediately from Proposition \ref{prop:unitors} combined with Proposition \ref{prop:pasting_equivalence}. Conversely, by Proposition \ref{prop:unitors} we have $e \simeq e \cp{n-1} \eps{}{x} \simeq \eps{}{x}$.
\end{proof}

\begin{comm}
Proposition \ref{prop:what_unit_does} implies that the choice of units and unitors based on left rather than right cylinders, as discussed in Remark \ref{rmk:unit_biased}, is inessential: units constructed with right cylinders satisfy the second condition, so they are equivalent to units constructed with left cylinders.
\end{comm}

%%%%%%%%%%%%%%%%%%%%%%%%%%%%%

\subsection{Weak invertibility}

The following definition is based on \cite{cheng2007omega} and \cite[Section 4.2]{lafont2010folk}.

\begin{dfn}[Weakly invertible diagram]
Let $e\colon x \celto y$ be a composable $n$\nbd diagram in a diagrammatic set $X$. Coinductively, we say that $e$ is \emph{weakly invertible} if there exist a composable diagram $e^*\colon y \celto x$ and weakly invertible diagrams $h\colon e \cp{n-1} e^* \celto \eps{}{x}$ and $h'\colon e^* \cp{n-1} e \celto \eps{}{y}$. In this case, $e^*$ is a \emph{weak inverse} of $e$.

We write $\winv{X}$ for the set of weakly invertible diagrams of $X$. This admits the following proof principle. For each subset $A \subseteq \cdgm{X}$, let
\begin{align*}
	\winfun{A} \eqdef \; & \{ e\colon x \celto y \in \cdgm{X} \mid \text{there exist $e^*\colon y \celto x$ and} \\
	& \text{$h\colon e \cp{n-1} e^* \celto \eps{}{x}, h'\colon e^* \cp{n-1} e \celto \eps{}{y}$ with $h, h' \in A$} \}.
\end{align*}
If $A \subseteq \winfun{A}$, then $A \subseteq \winv{X}$. 
\end{dfn}

\begin{rmk}
Unlike the notion of equivalence, the notion of weakly invertible diagram is clearly self-dual: if $e$ is weakly invertible, then any weak inverse $e^*$ is also weakly invertible.
\end{rmk}

\begin{lem} \label{lem:equivalence_is_invertible}
Let $e$ be an equivalence. Then $e$ is weakly invertible.
\end{lem}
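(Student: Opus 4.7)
The plan is to apply the coinductive proof principle for weak invertibility to the set $\equi{X}$: it suffices to show that $\equi{X} \subseteq \winfun{\equi{X}}$, that is, that every equivalence admits a weak inverse together with two unit witnesses that are themselves equivalences. Fix therefore an equivalence $e\colon x \celto y$ of shape $U$ and dimension $n$.

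The first step is to apply the equivalence condition of $e$ to the two ``trivial'' dividends. The pair $(\eps{}{x}, \imath_-)$, where $\imath_-\colon \bord{}{-}U \incliso \bord{}{-}\infl{\bord{}{-}U}$ is the canonical isomorphism, is a dividend for $e$ with $\alpha = -$; by assumption it admits a lax division consisting of a composable $n$-diagram $e^*$, a subdiagram of $\bord{}{-}e^*$ isomorphic to $y$ along which the pasting $e \cup e^*$ is defined, and an equivalence $h\colon e \cup e^* \celto \eps{}{x}$. Since $e \cup e^*$ must be parallel to $\eps{}{x}$, the boundary formula of Proposition \ref{prop:spherical_pasting} forces this subdiagram to be the whole of $\bord{}{-}e^*$ and the output of $e^*$ to be $x$; so $e^*\colon y \celto x$ and $e \cup e^* = e \cp{n-1} e^*$. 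Dually, the dividend $(\eps{}{y}, \imath_+)$ with $\alpha = +$ produces a diagram $\tilde{e}\colon y \celto x$ together with an equivalence $h'\colon \tilde{e} \cp{n-1} e \celto \eps{}{y}$.

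The second step is to show $e^* \simeq \tilde{e}$ via the standard zigzag
\[
\tilde{e} \simeq \tilde{e} \cp{n-1} \eps{}{x} \simeq \tilde{e} \cp{n-1} (e \cp{n-1} e^*) = (\tilde{e} \cp{n-1} e) \cp{n-1} e^* \simeq \eps{}{y} \cp{n-1} e^* \simeq e^*,
\]
obtained by combining the unit laws (Proposition \ref{prop:unitors}), the substitution property of $\simeq$ (Proposition \ref{prop:pasting_equivalence}) applied with $h$ and $h'$, the strict associativity of pasting of molecules (Proposition \ref{prop:partial_omegacat}), and transitivity (Proposition \ref{prop:equiv_relation}). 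One more application of Proposition \ref{prop:pasting_equivalence} to $e^* \submol e^* \cp{n-1} e$ yields $e^* \cp{n-1} e \simeq \tilde{e} \cp{n-1} e \simeq \eps{}{y}$, hence there exists an equivalence $h''\colon e^* \cp{n-1} e \celto \eps{}{y}$. Then $(e^*, h, h'')$ witnesses $e \in \winfun{\equi{X}}$, and by coinduction $\equi{X} \subseteq \winv{X}$.

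The most delicate point is the shape bookkeeping in the first step, which pins down both $e^*$ and $\tilde{e}$ as diagrams of type $y \celto x$ and, crucially, ensures that $e \cup e^*$ and $\tilde{e} \cup e$ are compositions along the full intermediate boundary, so that the strict associativity of Proposition \ref{prop:partial_omegacat} applies in the zigzag. Everything else is a direct chase through the properties of $\simeq$ already established in this section. All shapes that appear ($\infl{\bord{}{\alpha}U}$, $U \cp{n-1} U^*$, and so on) belong to $\cls{C}$ by the closure axioms of a convenient class, so composability is automatic.
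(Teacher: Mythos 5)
Your proof is correct and follows essentially the same route as the paper's: apply the equivalence property of $e$ to the two trivial dividends $(\eps{}{x},\alpha=-)$ and $(\eps{}{y},\alpha=+)$ to get lax divisions $h\colon e\cp{n-1}e^*\celto\eps{}{x}$ and $h'\colon\tilde{e}\cp{n-1}e\celto\eps{}{y}$, run the standard Eckmann--Hilton-style zigzag to show $e^*\simeq\tilde{e}$, and transport $h'$ across this equivalence to get the second unit witness on the correct side, then conclude by coinduction. The only cosmetic difference is that the paper builds the final witness $e^*\cp{n-1}e\celto\eps{}{y}$ by one explicit pasting $k'\cup k$, whereas you derive it through Proposition \ref{prop:pasting_equivalence}; these are the same computation, and your extra shape-bookkeeping for pinning $e^*$ down to type $y\celto x$ is a (welcome) elaboration of a point the paper leaves implicit.
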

\begin{proof}
Suppose $e$ has shape $U$ and type $x \celto y$. Then $(\eps{}{x}, \bord{}{-}\infl{\bord{}{-}U} \incliso \bord{}{-}U)$ and $(\eps{}{y}, \bord{}{+}\infl{\bord{}{+}U} \incliso \bord{}{+}U)$ are both dividends for $e$, so there exist lax divisions $h\colon e \cp{n-1} e^* \celto \eps{}{x}$ and $k\colon e' \cp{n-1} e \celto \eps{}{y}$ such that $h, k$ are equivalences. Moreover,
\begin{equation*}
	e' \simeq e' \cp{n-1} \eps{}{x} \simeq e' \cp{n-1} e \cp{n-1} e^* \simeq \eps{}{y} \cp{n-1} e^* \simeq e^*
\end{equation*}
by Proposition \ref{prop:pasting_equivalence} and Proposition \ref{prop:unitors}. If $k'$ is an equivalence exhibiting $e^* \simeq e'$, pasting $k'$ and $k$ along $e' \submol \bord{}{-}k$ produces an equivalence 
\begin{equation*}
	h' \eqdef k' \cup k\colon  e^* \cp{n-1} e \celto \eps{}{y}. 
\end{equation*}
This proves that $\equi{X} \subseteq \winfun{\equi{X}}$, so by coinduction $\equi{X} \subseteq \winv{X}$. 
\end{proof}

\begin{dfn}
Let $A \subseteq \cdgm{X}$ and let $x$ be a composable diagram in $X$. We let $x \in \pasting{}{A}$ if either $x \in A$, or $x$ is the pasting of two diagrams $x_1, x_2 \in A$ along a subdiagram $y \submol \bord{}{\alpha}x_2$. We let $\pasting{\infty}{A} \eqdef \bigcup_{n \in \mathbb{N}} \pasting{n}{A}$.
\end{dfn}

\begin{lem} \label{lem:pasting_invertible}
Let $A \subseteq \cdgm{X}$ and suppose that $A \subseteq \winfun{\pasting{\infty}{A}}$. Then $\pasting{}{A} \subseteq \winfun{\pasting{\infty}{A \cup \degg{X}}}$.
\end{lem}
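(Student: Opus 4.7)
I would split along the definition of $\pasting{}{A}$. If $x \in A$, then by hypothesis $x \in \winfun{\pasting{\infty}{A}}$; since $\pasting{\infty}{A} \subseteq \pasting{\infty}{A \cup \degg{X}}$ and $B \mapsto \winfun{B}$ is obviously order-preserving in $B$, we immediately obtain $x \in \winfun{\pasting{\infty}{A \cup \degg{X}}}$.

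The substantive case is when $x$ is a pasting $x_1 \cup x_2$ of $n$-diagrams $x_1, x_2 \in A$ along some $y \submol \bord{}{\alpha}x_2$; without loss of generality take $\alpha = -$, so $y \simeq \bord{}{+}x_1$. By the hypothesis, each $x_i$ admits a weak inverse $x_i^*$ and witnesses $h_i\colon x_i \cp{n-1} x_i^* \celto \eps{}{\bord{}{-}x_i}$, $h_i'\colon x_i^* \cp{n-1} x_i \celto \eps{}{\bord{}{+}x_i}$ in $\pasting{\infty}{A}$. My candidate weak inverse is $x^* \eqdef x_2^* \cup x_1^*$, obtained by pasting $x_1^*$ along $y \submol \bord{}{+}x_2^* = \bord{}{-}x_2$; this is well-defined, and by Proposition \ref{prop:spherical_pasting} it has type $\bord{}{+}x \celto \bord{}{-}x$, as required for a weak inverse.

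The plan to construct $h\colon x \cp{n-1} x^* \celto \eps{}{\bord{}{-}x}$ imitates the $\omega$-categorical identity $(a \cp{n-1} b)^* = b^* \cp{n-1} a^*$. First, $x \cp{n-1} x^*$ decomposes as the pasting of $x_2 \cp{n-1} x_2^*$ with $x_1$ (at input, along $y$) and with $x_1^*$ (at output, along $y$). I then paste together at dimension $n$ the following three $(n+1)$-diagrams: (i) the pasting of the unit cell $\eps{}{x_1}$, the diagram $h_2$, and the unit cell $\eps{}{x_1^*}$ along their common $y$-boundaries, which goes from $x \cp{n-1} x^*$ to $x_1 \cup \eps{}{\bord{}{-}x_2} \cup x_1^*$; (ii) a composition of degenerate left and right unitors collapsing the intervening $\eps{}{\bord{}{-}x_2}$ to end at $x_1 \cp{n-1} x_1^*$; and (iii) the cell $h_1$, ending at $\eps{}{\bord{}{-}x_1} = \eps{}{\bord{}{-}x}$. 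The construction of $h'\colon x^* \cp{n-1} x \celto \eps{}{\bord{}{+}x}$ is entirely dual, built from $h_1', h_2'$ and the analogous unitors. Each constituent of $h$ and $h'$ belongs either to $\pasting{\infty}{A}$ (the $h_i$, $h_i'$) or to $\degg{X}$ (the unit cells and unitors), so the overall pastings lie in $\pasting{\infty}{A \cup \degg{X}}$, as required.

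\textbf{Main obstacle.} The conceptual content is straightforward, but the bookkeeping around submolecule pasting is where the work lies. I would need to verify the decomposition of $x \cp{n-1} x^*$ invoked in step (i), and to check that in each pasting stage the shapes match: specifically, that the "$y$-slots" in $\bord{}{-}(x_2 \cp{n-1} x_2^*)$ and $\bord{}{+}(x_2 \cp{n-1} x_2^*)$ are available to receive $x_1$ and $x_1^*$, that the boundary of the combined cell in (i) agrees with the domain of the unitor composite in (ii), and that the latter's codomain agrees with the domain of $h_1$. These are routine applications of Proposition \ref{prop:spherical_pasting}, Lemma \ref{lem:spherical_moves}, and the definition of unitors in terms of $\infl{-}$-constructions, but they must be executed with care.
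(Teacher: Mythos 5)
Your candidate weak inverse $x^* = x_2^* \cup x_1^*$ and the plan to build $h, h'$ out of $h_1, h_2$, unitors, and degenerate cells match the paper's proof at the level of strategy. But the construction of $h$ you sketch has a genuine type error, not merely bookkeeping to execute with care. It is already visible at step (iii): unless $y = \bord{}{-}x_2$, Proposition \ref{prop:spherical_pasting} gives $\bord{}{-}x = \bord{}{-}x_2[\bord{}{-}x_1/y]$, strictly containing $\bord{}{-}x_1$, so $\eps{}{\bord{}{-}x_1} \ne \eps{}{\bord{}{-}x}$. This signals a globularity problem already in step (ii): an $(n+1)$-diagram with input boundary $x \cp{n-1} x^*$, whose $(n-1)$-boundary is $\bord{}{-}x$, cannot have output boundary $x_1 \cp{n-1} x_1^*$, whose $(n-1)$-boundary is $\bord{}{-}x_1$. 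Step (i) is also not a well-posed pasting: pastings of $(n+1)$-diagrams are along $n$-dimensional submolecules of their boundaries, yet $\bord{}{\alpha}\eps{}{x_1} = x_1$ is neither a submolecule of $\bord{}{\alpha}h_2$ nor vice versa, and ``common $y$-boundaries'' are only $(n-1)$-dimensional. Whiskering $h_2$ with $\eps{}{x_1}$ and $\eps{}{x_1^*}$ is the right intuition, but it has to be realised by pasting $h_2$ into a larger ambient $(n+1)$-diagram rather than by attaching units to $h_2$ directly.

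The paper's way out is a buffer. The first step is not a pasting involving $h_2$ but the left unitor $\lunitor{\bord{}{-}x}{x \cp{n-1} x^*}{+}\colon (x \cp{n-1} x^*) \celto \eps{}{\bord{}{-}x} \cp{n-1} (x \cp{n-1} x^*)$, prepending a full copy of $\eps{}{\bord{}{-}x}$. Only then is $h_2$ pasted into the output boundary along $x_2 \cp{n-1} x_2^*$, and the resulting $\eps{}{\bord{}{-}x_2}$ is collapsed by a right unitor --- crucially, from the \emph{end} of $\eps{}{\bord{}{-}x} \cup x_1$, where it is $\runitor{\bord{}{-}x_2}{\eps{}{\bord{}{-}x} \cup x_1}{-}$ with $\bord{}{-}x_2 = \bord{}{+}(\eps{}{\bord{}{-}x} \cup x_1)$, not from the middle of $x_1 \cup \eps{}{\bord{}{-}x_2} \cup x_1^*$, where the only available unitors collapse a unit on a submolecule of $y = \bord{}{+}x_1$, never on the larger $\bord{}{-}x_2$. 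Then $h_1$ and one more right unitor finish, leaving $\eps{}{\bord{}{-}x}$. The buffer is what keeps the $(n-1)$-boundary of the accumulating diagram constant, and without it the sequence you describe cannot be carried out.
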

\begin{proof}
Let $e_1, e_2 \in A$ be such that the pasting $e_1 \cup e_2$ along $y_1 \submol \bord{}{\alpha}e_2$ is defined. Without loss of generality, suppose $\alpha = -$ and let $x_i \celto y_i$ be the type of $e_i$. Then the type of $e \eqdef e_1 \cup e_2$ is $x \celto y$ for $x \eqdef x_2[x_1/y_1]$ and $y \eqdef y_2$.

By assumption, if $e_1$ and $e_2$ are $n$\nbd diagrams, there are diagrams 
\begin{equation*}
	e^*_i\colon y_i \celto x_i, \quad h_i\colon e_i \cp{n-1}e^*_i \celto \eps{}{x_i}, \quad h'_i\colon e^*_i \cp{n-1}e_i \celto \eps{}{y_i}
\end{equation*}
such that $h_i, h'_i \in \pasting{\infty}{A}$. The pasting $e^* \eqdef e_2^* \cup e_1^*$ along $y_1 \submol \bord{}{+}e_2$ is defined, and so are 
\begin{equation*}
	e \cp{n-1} e^*\colon x \celto x, \quad \quad e^* \cp{n-1} e\colon y \celto y.
\end{equation*}
Let $k_1$ be the left unitor $\lunitor{x}{e \cp{n-1} e^*}{+} \colon e \cp{n-1} e^* \celto \eps{}{x} \cp{n-1} (e \cp{n-1} e^*)$. We construct the pasting
\begin{equation*}
	k_2 \eqdef k_1 \cup h_2\colon e \cp{n-1} e^* \celto ((\eps{}{x} \cup e_1) \cup \eps{}{x_2}) \cup e^*_1
\end{equation*}
along $e_2 \cp{n-1} e_2^* \submol \bord{}{+}k_1$, then the pasting
\begin{equation*}
	k_3 \eqdef k_2 \cup \runitor{x_2}{\eps{}{x} \cup e_1}{-}\colon e \cp{n-1} e^* \celto \eps{}{x} \cup (e_1 \cp{n-1} e_1^*)
\end{equation*}
along $(\eps{}{x} \cup e_1) \cup \eps{}{x_2} \submol \bord{}{+}k_2$, then the pasting
\begin{equation*}
	k_4 \eqdef k_3 \cup h_1\colon e \cp{n-1} e^* \celto \eps{}{x} \cup \eps{}{x_1}
\end{equation*}
along $e_1 \cp{n-1} e_1^* \submol \bord{}{+}k_3$, and finally the pasting
\begin{equation*}
	k \eqdef k_4 \cp{n} \runitor{x_1}{\eps{}{x}}{-}\colon e \cp{n-1} e^* \celto \eps{}{x}.
\end{equation*}
In the other direction, let $k'_1$ be the unitor $\lunitor{y_1}{e_2}{-}$. We construct the pasting
\begin{equation*}
	k'_2 \eqdef k'_1 \cup h'_2\colon e^*_2 \cup (\eps{}{y_1} \cup e_2) \celto \eps{}{y}
\end{equation*}
along $e_2 \submol \bord{}{-}k'_1$, then the pasting 
\begin{equation*}
	k' \eqdef h'_1 \cup k'_2\colon e^* \cp{n-1} e \celto \eps{}{y}
\end{equation*}
along $\eps{}{y_1} \submol \bord{}{-}k'_2$. 

Both $k$ and $k'$ are constructed by iterated pasting of diagrams in $\pasting{\infty}{A}$ and diagrams in $\degg{X}$, so $k, k' \in \pasting{\infty}{A \cup \degg{X}}$. 
\end{proof}

\begin{prop} \label{prop:pasting_invertible}
$\winv{X} = \pasting{\infty}{\winv{X}}$.
\end{prop}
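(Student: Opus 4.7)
The plan is to reduce the proof to the coinductive principle: it suffices to exhibit a set $A$ with $\pasting{\infty}{\winv{X}} \subseteq A \subseteq \winfun{A}$. The natural candidate is $A \eqdef \pasting{\infty}{\winv{X}}$ itself, which reduces the task to showing the self-inclusion $\pasting{\infty}{\winv{X}} \subseteq \winfun{\pasting{\infty}{\winv{X}}}$. The inclusion $\winv{X} \subseteq \pasting{\infty}{\winv{X}}$ is immediate since $\pasting{0}{\winv{X}} = \winv{X}$, so only the reverse inclusion requires work.

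The preliminary observation is that $\degg{X} \subseteq \winv{X}$: by Corollary \ref{cor:twothree} every degenerate composable diagram is an equivalence, and by Lemma \ref{lem:equivalence_is_invertible} every equivalence is weakly invertible. This turns the $\degg{X}$ summand appearing in the conclusion of Lemma \ref{lem:pasting_invertible} into something absorbable by $\pasting{\infty}{\winv{X}}$.

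The core argument is by induction on $n \in \mathbb{N}$ that $\pasting{n}{\winv{X}} \subseteq \winfun{\pasting{\infty}{\winv{X}}}$. For $n=0$, this is just $\winv{X} = \winfun{\winv{X}} \subseteq \winfun{\pasting{\infty}{\winv{X}}}$, using monotonicity of $\winfun$ (which is clear from its definition: enlarging the set of allowed witnesses $h,h'$ enlarges the set of diagrams admitting such witnesses). For the inductive step, set $A_n \eqdef \pasting{n}{\winv{X}}$. The inductive hypothesis gives $A_n \subseteq \winfun{\pasting{\infty}{\winv{X}}} \subseteq \winfun{\pasting{\infty}{A_n}}$ again by monotonicity, so the hypothesis of Lemma \ref{lem:pasting_invertible} is satisfied for $A_n$. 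Its conclusion yields $\pasting{}{A_n} \subseteq \winfun{\pasting{\infty}{A_n \cup \degg{X}}}$. Since $\degg{X} \subseteq \winv{X} \subseteq A_n$, we have $A_n \cup \degg{X} = A_n$ and $\pasting{\infty}{A_n} = \pasting{\infty}{\winv{X}}$, so $\pasting{n+1}{\winv{X}} = \pasting{}{A_n} \subseteq \winfun{\pasting{\infty}{\winv{X}}}$, completing the induction.

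Taking the union over $n$ gives $\pasting{\infty}{\winv{X}} \subseteq \winfun{\pasting{\infty}{\winv{X}}}$, so by coinduction $\pasting{\infty}{\winv{X}} \subseteq \winv{X}$, finishing the proof. The only conceptual input beyond Lemma \ref{lem:pasting_invertible} is the closure of $\winv{X}$ under degeneracy, and the main ``work'' — constructing explicit weak inverses and invertibility witnesses for a pasting from those of its components — has already been discharged in that lemma. Hence there is no serious obstacle here: the proposition is essentially a bootstrap that converts the one-step closure of Lemma \ref{lem:pasting_invertible} into closure under iterated pasting.
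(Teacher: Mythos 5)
Your proof is correct and takes essentially the same route as the paper's: the paper also proves $\pasting{n}{\winv{X}} \subseteq \winfun{\pasting{\infty}{\winv{X}}}$ by induction on $n$, using the base case $\winv{X} = \winfun{\winv{X}}$, the inductive step via Lemma \ref{lem:pasting_invertible} together with $\degg{X} \subseteq \equi{X} \subseteq \winv{X}$ (from Lemma \ref{lem:equivalence_is_invertible}), and then concludes by coinduction. You simply spell out the details the paper leaves implicit — in particular the identity $\pasting{\infty}{\pasting{n}{\winv{X}}} = \pasting{\infty}{\winv{X}}$, which licenses feeding $A_n = \pasting{n}{\winv{X}}$ into Lemma \ref{lem:pasting_invertible}, and the absorption $A_n \cup \degg{X} = A_n$.
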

\begin{proof}
We prove by induction that $\pasting{n}{\winv{X}} \subseteq \winfun{\pasting{\infty}{\winv{X}}}$ for all $n \geq 0$. The base case is $\winv{X} = \winfun{\winv{X}} \subseteq \winfun{\pasting{\infty}{\winv{X}}}$. For the inductive step, we use Lemma \ref{lem:pasting_invertible} together with the fact that $\degg{X} \subseteq \equi{X} \subseteq \winv{X}$ by Lemma \ref{lem:equivalence_is_invertible}. 

It follows that $\pasting{\infty}{\winv{X}} \subseteq \winfun{\pasting{\infty}{\winv{X}}}$, and we conclude by coinduction.
\end{proof}

\begin{thm} \label{thm:equivalence_equals_invertible}
A composable diagram is an equivalence if and only if it is weakly invertible.
\end{thm}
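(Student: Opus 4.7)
The forward direction $\equi{X} \subseteq \winv{X}$ is Lemma \ref{lem:equivalence_is_invertible}. For the converse, I plan to prove $\winv{X} \subseteq \equifun{}{\winv{X}}$, from which $\winv{X} \subseteq \equi{X}$ follows by coinduction, since $\equi{X}$ is the greatest fixed point of $\equifun{}{-}$.

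Fix a weakly invertible $n$\nbd diagram $e\colon x \celto y$ with a chosen weak inverse $e^*\colon y \celto x$ and witnesses $h\colon e \cp{n-1} e^* \celto \eps{}x$ and $h'\colon e^* \cp{n-1} e \celto \eps{}y$, both in $\winv{X}$. Given a dividend $(z_0, \imath)$ for $e$, I will handle the case $\alpha = -$; the case $\alpha = +$ is entirely dual, substituting $h'$ for $h$, right unitors for left unitors, and $e_+ \eqdef z_0 \cup e^*$ for $e_- \eqdef e^* \cup z_0$. Following the algebraic reasoning familiar from the group case --- where to divide $z_0$ by $e$ one multiplies by $e^*$ --- I take $e_- \eqdef e^* \cup z_0$, pasted along $x = \bord{}{+}e^* \submol \bord{}{-}z_0$ via $\imath$. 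Then $e \cup e_- = (e \cp{n-1} e^*) \cup z_0$ is parallel to $z_0$, and it suffices to exhibit a lax division $k\colon e \cup e_- \celto z_0$ belonging to $\winv{X}$.

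The diagram $k$ will be built from $h$ together with units and unitors. Write $X$ for the shape of $x$ and $W_0$ for the shape of $z_0$, and set $d \eqdef \eps{}x \cup z_0$, an $n$\nbd diagram of shape $\infl{X} \cup W_0$. The $n$\nbd diagram $\bord{}{+} h = \eps{}x$ has shape $\infl{X}$, which is a submolecule of $\bord{}{-}\eps{}d = d$. Pasting $h$ into $\eps{}d$ along this submolecule --- a legitimate operation in $\dgmset$ by axioms \ref{ax:inflate} and \ref{ax:submolecule} --- yields, by Proposition \ref{prop:spherical_pasting}(b),
\[ \tilde h\colon e \cup e_- \celto d, \]
where the source boundary is identified via $(\infl{X} \cup W_0)[U_h / \infl{X}] \cong U_h \cup W_0$, with $U_h$ the shape of $e \cp{n-1} e^*$. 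Post-composing at level $n$ with the left unitor $\lunitor{x}{z_0}{-}\colon d \celto z_0$ produces
\[ k \eqdef \tilde h \cp{n} \lunitor{x}{z_0}{-}\colon e \cup e_- \celto z_0. \]

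Finally, $k \in \winv{X}$: the diagram $h$ is weakly invertible by hypothesis, while $\eps{}d$ and $\lunitor{x}{z_0}{-}$ are degenerate, hence equivalences by Corollary \ref{cor:twothree}, hence weakly invertible by Lemma \ref{lem:equivalence_is_invertible}. Since $\winv{X} = \pasting{\infty}{\winv{X}}$ by Proposition \ref{prop:pasting_invertible}, the iterated pasting $k$ lies in $\winv{X}$. This establishes $\winv{X} \subseteq \equifun{}{\winv{X}}$, and the proof concludes by coinduction. The main obstacle is the shape-theoretic bookkeeping needed to verify the pasting producing $\tilde h$; the manipulations are of the same flavour as those carried out in the proof of Lemma \ref{lem:pasting_invertible}, and present no essential difficulty.
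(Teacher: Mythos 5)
Your argument is correct and essentially the same as the paper's: you build a lax division from the weak invertibility witness $h$ together with a left unitor of $z_0$ at $x \submol \bord{}{-}z_0$, then invoke closure of $\winv{X}$ under pasting (Proposition \ref{prop:pasting_invertible}) and conclude by coinduction. The only difference is cosmetic --- the paper pastes $h$ directly with $\lunitor{x}{z_0}{-}$ along $\eps{}{x} \submol \bord{}{-}\lunitor{x}{z_0}{-}$ in a single step, whereas you route through the intermediate unit $\eps{}d$ and an extra $\cp{n}$-composition, which produces a larger-shaped but equally valid lax division of the same type.
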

\begin{proof}
We have already proved $\equi{X} \subseteq \winv{X}$. Let $e\colon x \celto y$ be a weakly invertible $n$\nbd diagram of shape $U$ and let $(x_0\colon U_0 \to X, \imath\colon \bord{}{\alpha}U \incl \bord{}{\alpha}U_0)$ be a dividend for $e$. Without loss of generality suppose $\alpha = -$. 

There is a weakly invertible diagram $h\colon e \cp{n-1} e^* \celto \eps{}{x}$. The pasting
\begin{equation*}
	h \cup \lunitor{x}{x_0}{-}\colon e \cup (e^* \cup x_0) \celto x_0
\end{equation*}
along $\eps{}{x} \submol \bord{}{-}\lunitor{x}{x_0}{-}$ is defined and is a lax division of $(x_0, \imath)$ by $e$. Because it is the pasting of two weakly invertible diagrams, by Proposition \ref{prop:pasting_invertible} it is weakly invertible. 

This proves that $\winv{X} \subseteq \equifun{}{\winv{X}}$, and by coinduction $\winv{X} \subseteq \equi{X}$. 
\end{proof}

\begin{prop} \label{prop:morphism_preserve_equivalence}
Let $f\colon X \to Y$ be a morphism of diagrammatic sets. Then $f$ sends equivalences to equivalences.
\end{prop}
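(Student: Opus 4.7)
My plan is to reduce the statement to Theorem \ref{thm:equivalence_equals_invertible}, which characterises equivalences as weakly invertible diagrams. Weak invertibility is defined by a strictly pointwise coinductive condition, and the only ingredients that appear in it are pasting, units, and boundaries of diagrams — all of which are preserved by morphisms of diagrammatic sets because they are determined by the structure of $\atom$-presheaves.

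First I would observe that, for any morphism $f\colon X \to Y$ and any diagram $x\colon U \to X$, one has $f(x) = x;f\colon U \to Y$; since pasting of diagrams along a common sub-diagram is a pushout in $\dgmset$, naturality gives $f(x_1 \cup x_2) = f(x_1) \cup f(x_2)$ whenever the pasting is defined. Likewise, the unit $\eps{}{x} = \tau;x$ satisfies $f(\eps{}{x}) = \tau;x;f = \eps{}{f(x)}$, and boundaries are preserved by the same naturality. In particular, if $e\colon x \celto y$ is an $n$-diagram in $X$ and $e^* \colon y \celto x$, $h\colon e \cp{n-1} e^* \celto \eps{}{x}$, $h'\colon e^* \cp{n-1} e \celto \eps{}{y}$ are diagrams in $X$, then their images under $f$ have the corresponding types as witnesses in $Y$.

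Next I would set $A \eqdef \{ f(e) \mid e \in \winv{X} \} \subseteq \cdgm{Y}$ and prove $A \subseteq \winfun{A}$ directly from the definition of $\winv{X}$. Given $f(e) \in A$ with $e \in \winv{X}$ of type $x \celto y$, pick $e^*$, $h$, $h'$ witnessing $e \in \winv{X}$, with $h, h' \in \winv{X}$; then $f(e^*)$, $f(h)$, $f(h')$ witness $f(e) \in \winfun{A}$ by the preservation facts above, and $f(h), f(h') \in A$ by construction. By the coinductive principle for $\winv{Y}$, we conclude $A \subseteq \winv{Y}$, hence $f$ sends weakly invertible diagrams to weakly invertible diagrams.

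Finally, by Theorem \ref{thm:equivalence_equals_invertible} applied in both $X$ and $Y$, this transfers to: $f$ sends equivalences to equivalences. I do not expect a genuine obstacle here; the only thing to be careful about is to route the argument through weak invertibility rather than directly through the definition of equivalence, since the latter involves dividends and lax divisions whose shapes are not fixed in advance, making a direct coinductive argument more cumbersome (though still workable — one would show that $f$ sends $\equifun{}{B}$ into $\equifun{}{f(B)}$ for any $B \subseteq \cdgm{X}$, using the same preservation of pastings and boundaries).
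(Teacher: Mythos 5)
Your proposal is correct and follows essentially the same route as the paper's proof: reduce to weak invertibility via Theorem \ref{thm:equivalence_equals_invertible}, set $A \eqdef f(\winv{X})$, verify $A \subseteq \winfun{A}$ using preservation of pasting, units, and boundaries, and conclude by coinduction. Your remark explaining why one routes through $\winv{}$ rather than the dividend-based definition of $\equi{}$ is a sound observation but not part of the paper's own (terser) proof.
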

\begin{proof}
By Theorem \ref{thm:equivalence_equals_invertible}, it suffices to show that $f$ sends weakly invertible cells to weakly invertible cells. Suppose $e\colon x \celto y$ is weakly invertible in $X$. There is a weak inverse $e^*\colon y \celto x$ and weakly invertible diagrams $h\colon e \cp{n-1} e^* \celto \eps{}{x}$ and $h'\colon e^* \cp{n-1} e \celto \eps{}{y}$. Every morphism of diagrammatic sets is compatible with units and pasting, so through $f$ we obtain composable diagrams
\begin{equation*}
	f(h)\colon f(e) \cp{n-1} f(e^*) \celto \eps{}{f(x)}, \quad \quad f(h')\colon f(e^*) \cp{n-1} f(e) \celto \eps{}{f(y)}
\end{equation*}
in $Y$. This proves that $f(\winv{X}) \subseteq \winfun{f(\winv{X})}$, and by coinduction we conclude that $f(\winv{X}) \subseteq \winv{Y}$. 
\end{proof}

\begin{cor} \label{cor:morphism_preserve_equivalence}
Let $f\colon X \to Y$ be a morphism of diagrammatic sets. If $x \simeq y$ in $X$, then $f(x) \simeq f(y)$ in $Y$.
\end{cor}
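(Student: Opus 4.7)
The plan is to unfold the definition of $\simeq$ and directly apply Proposition \ref{prop:morphism_preserve_equivalence}. By definition, $x \simeq y$ in $X$ means there exists an equivalence $h\colon x \celto y$ in $X$, that is, a composable diagram of the appropriate type which belongs to $\equi{X}$.

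First, I would observe that since $f\colon X \to Y$ is a morphism of diagrammatic sets, it is compatible with the boundary operations, so $f(h)$ is a composable diagram in $Y$ of type $f(x) \celto f(y)$: indeed $\bord{}{\alpha}f(h) = f(\bord{}{\alpha}h) = f(x)$ for $\alpha = -$ and $f(y)$ for $\alpha = +$, since morphisms of presheaves commute with the action of the inclusions $\bord{}{\alpha}U \incl U$ of the shape of $h$.

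Next, I would apply Proposition \ref{prop:morphism_preserve_equivalence} to conclude that $f(h) \in \equi{Y}$, since $h \in \equi{X}$. Having a diagram $f(h)\colon f(x) \celto f(y)$ which is an equivalence in $Y$ is exactly the witness needed for $f(x) \simeq f(y)$.

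There is no real obstacle here; the corollary is a one-line unpacking of the definition once Proposition \ref{prop:morphism_preserve_equivalence} is in hand. The substantive work sits in the proposition itself, which relies on the characterisation of equivalences as weakly invertible diagrams (Theorem \ref{thm:equivalence_equals_invertible}) and the fact that morphisms of diagrammatic sets respect units and pasting. The corollary then amounts to the remark that $\simeq$ is a relation defined by the existence of a witness in $\equi{X}$, and $f$ transports such witnesses to $\equi{Y}$.
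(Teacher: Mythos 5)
Your proof is correct and takes the same route as the paper: unfold $x \simeq y$ to an equivalence $h\colon x \celto y$, note that $f(h)\colon f(x) \celto f(y)$ by functoriality, and apply Proposition \ref{prop:morphism_preserve_equivalence} to get $f(h) \in \equi{Y}$. You have merely expanded what the paper states in one line.
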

\begin{proof}
If $h\colon x \celto y$ exhibits the equivalence $x \simeq y$, then $f(h)\colon f(x) \celto f(y)$ exhibits $f(x) \simeq f(y)$. 
\end{proof}

\section{Diagrammatic $\omega$-categories} \label{sec:omegacats}

\subsection{Weak composites}

\begin{dfn}[Weak composite] 
Let $x$ be a composable diagram of shape $U$. A \emph{weak composite} of $x$ is a cell $\compos{x}$ of shape $\compos{U}$ such that $x \simeq \compos{x}$.

A diagrammatic set $X$ \emph{has weak composites} if every composable diagram in $X$ has a weak composite.
\end{dfn}

\begin{comm}
A diagrammatic set with weak composites is our candidate for a weak model of $\omega$\nbd categories. 

A good adjective for a diagrammatic set with weak composites would be \emph{representable}, modelled on Claudio Hermida's representable multicategories \cite{hermida2000representable}, the idea being that composable diagrams are ``represented'' by individual cells. We choose not to use the term to avoid confusion with diagrammatic sets that are representable as presheaves. 
\end{comm}

\begin{prop} \label{prop:basic_weak_composite}
Let $f\colon X \to Y$ be a morphism of diagrammatic sets and $x$ a composable diagram in $X$. For each weak composite $\compos{x}$ of $x$,
\begin{enumerate}[label=(\alph*)]
	\item if $\compos{x}'$ is another weak composite of $x$, then $\compos{x} \simeq \compos{x}'$,
	\item if $x$ is an equivalence, then $\compos{x}$ is an equivalence,
	\item $f(\compos{x})$ is a weak composite of $f(x)$.
\end{enumerate}
\end{prop}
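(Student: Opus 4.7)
The proposition is essentially a direct consequence of the equivalence-theoretic infrastructure developed in Section \ref{sec:equivalences}. All three parts follow by invoking the right lemma with minimal further work, so the plan is to dispatch each clause separately and to verify that shapes match in the functorial clause.

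For (a), by definition of weak composite both $\compos{x}$ and $\compos{x}'$ are parallel to $x$ (they share its shape $\compos{U}$) and satisfy $x \simeq \compos{x}$ and $x \simeq \compos{x}'$. Invoking Proposition \ref{prop:equiv_relation} --- symmetry of $\simeq$ applied to $x \simeq \compos{x}$, then transitivity composed with $x \simeq \compos{x}'$ --- yields $\compos{x} \simeq \compos{x}'$.

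For (b), from $x \simeq \compos{x}$ and symmetry we get $\compos{x} \simeq x$. Since $x$ is assumed to be an equivalence and $\compos{x}$ is parallel to $x$, Proposition \ref{prop:higher_equivalence} applies and delivers that $\compos{x}$ is an equivalence.

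For (c), I first observe that morphisms of diagrammatic sets preserve the shape of diagrams: if $x\colon U \to X$ has shape $U$, then $f(x) = x;f$ has shape $U$, and similarly $f(\compos{x})\colon \compos{U} \to Y$ has shape $\compos{U}$, so $f(\compos{x})$ has exactly the shape required to be a candidate weak composite of $f(x)$. It remains to verify $f(x) \simeq f(\compos{x})$, which is immediate from the equivalence $x \simeq \compos{x}$ together with Corollary \ref{cor:morphism_preserve_equivalence}. There is no genuine obstacle here; the only step that requires any attention is making explicit that $f$ acts on diagrams by postcomposition and hence preserves shapes, so $f(\compos{x})$ sits in the correct hom-set to be called a weak composite of $f(x)$.
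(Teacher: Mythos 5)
Your proof is correct and uses exactly the same three ingredients the paper cites (Proposition \ref{prop:equiv_relation}, Proposition \ref{prop:higher_equivalence}, Corollary \ref{cor:morphism_preserve_equivalence}), applied to the corresponding parts; the paper simply states these as an immediate consequence while you spell out the short chains of reasoning. The extra remark in (c) about $f$ preserving shapes by postcomposition is a fair bit of bookkeeping that the paper leaves implicit.
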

\begin{proof}
Immediate consequences of Proposition \ref{prop:equiv_relation}, Proposition \ref{prop:higher_equivalence}, and Corollary \ref{cor:morphism_preserve_equivalence}, respectively.
\end{proof}

\begin{dfn}[Localisation] 
Let $X$ be a diagrammatic set and $A \subseteq \cdgm{X}$. We define $X\{\invrs{A}\}$ to be the diagrammatic set obtained from $X$ in the following two steps:
\begin{enumerate}
	\item attach an $n$\nbd cell $x^*\colon y^+ \celto y^-$ for each $n$\nbd diagram $x\colon y^- \celto y^+$ in $A$, then
	\item attach $(n+1)$\nbd cells $h(x)\colon x \cp{n-1} x^* \celto \eps{}{y^-}$ and $h'(x)\colon x^* \cp{n-1} x \celto \eps{}{y^+}$ for each $n$\nbd diagram $x\colon y^- \celto y^+$ in $A$.
\end{enumerate}
Let $X_0 \eqdef X$ and $A_0 \eqdef A$. For all $n > 0$, let
\begin{equation*}
	X_n \eqdef X_{n-1}\{\invrs{A_{n-1}}\}, \quad \quad A_n \eqdef \{h(x), h'(x) \mid x \in A_{n-1}\}.
\end{equation*}
We have a sequence $\{X_{n-1} \incl X_n\}_{n \in \mathbb{N}}$ of inclusions of diagrammatic sets. The \emph{localisation of $X$ at $A$} is the colimit $X[\invrs{A}]$ of this sequence. 

By construction, for all $n$ the images of the diagrams of $A_n$ through the inclusions $X_n \incl X[\invrs{A}]$ are weakly invertible. In particular, all diagrams in $A$ become equivalences in $X[\invrs{A}]$. 
\end{dfn}

\begin{prop} \label{prop:localisation_universal}
Let $f\colon X \to Y$ be a morphism of diagrammatic sets and $A \subseteq \cdgm{X}$. Suppose that
\begin{enumerate}
	\item $f(x)$ is an equivalence in $Y$ for all $x \in A$ and
	\item $Y$ has weak composites.
\end{enumerate}
Then there exists a morphism $\tilde{f}\colon X[\invrs{A}] \to Y$ such that the triangle
\begin{equation} \label{eq:localisation}
\begin{tikzpicture}[baseline={([yshift=-.5ex]current bounding box.center)}]
	\node (0) at (-1.5,1.25) {$X$};
	\node (1) at (1.5,1.25) {$Y$};
	\node (2) at (0,0) {$X[\invrs{A}]$};
	\draw[1c] (0) to node[auto,arlabel] {$f$} (1);
	\draw[1cinc] (0) to node[auto,swap,arlabel] {$\imath$} (2);
	\draw[1c] (2) to node[auto,swap,arlabel] {$\tilde{f}$} (1);
\end{tikzpicture}
\end{equation}
commutes.
\end{prop}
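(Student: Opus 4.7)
The plan is to construct $\tilde{f}$ as the colimit of a sequence of morphisms $\tilde{f}_n\colon X_n \to Y$ with $\tilde{f}_0 \eqdef f$, each compatible with the inclusion $X_{n-1} \incl X_n$, by induction on $n$ maintaining the invariant that $\tilde{f}_n$ sends every diagram in $A_n$ to an equivalence in $Y$. The base case is given by the hypothesis. Taking the colimit then produces $\tilde{f}\colon X[\invrs{A}] \to Y$, and $\tilde{f}; \imath = f$ by construction.

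For the inductive step, recall that $X_{n+1} = X_n\{\invrs{A_n}\}$ is obtained by attaching, for each diagram $x\colon y^- \celto y^+$ in $A_n$, first a cell $x^*$ of type $y^+ \celto y^-$, then cells $h(x)\colon x \cp{n-1} x^* \celto \eps{}{y^-}$ and $h'(x)\colon x^* \cp{n-1} x \celto \eps{}{y^+}$. By the universal property of these pushouts, it suffices to produce, for each $x \in A_n$, images $\tilde{f}_{n+1}(x^*), \tilde{f}_{n+1}(h(x)), \tilde{f}_{n+1}(h'(x))$ in $Y$ with boundaries matching $\tilde{f}_n$'s action on the attaching data.

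Fix $x \in A_n$ and set $e \eqdef \tilde{f}_n(x)$, an equivalence in $Y$ by the inductive invariant. By Theorem \ref{thm:equivalence_equals_invertible}, $e$ is weakly invertible, and the proof of Lemma \ref{lem:equivalence_is_invertible} shows the witness data can be taken to be equivalences: there exist a composable diagram $e^\dagger\colon \tilde{f}_n(y^+) \celto \tilde{f}_n(y^-)$ and equivalences $H\colon e \cp{n-1} e^\dagger \celto \eps{}{\tilde{f}_n(y^-)}$ and $H'\colon e^\dagger \cp{n-1} e \celto \eps{}{\tilde{f}_n(y^+)}$. Using that $Y$ has weak composites, I let $\tilde{f}_{n+1}(x^*) \eqdef \compos{e^\dagger}$, a cell with the required boundaries $\tilde{f}_n(y^+), \tilde{f}_n(y^-)$ by definition of $\compos{-}$. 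Since $\tilde{f}_{n+1}(x^*) \simeq e^\dagger$, Proposition \ref{prop:pasting_equivalence} and transitivity yield
\begin{equation*}
	e \cp{n-1} \tilde{f}_{n+1}(x^*) \simeq e \cp{n-1} e^\dagger \simeq \eps{}{\tilde{f}_n(y^-)},
\end{equation*}
exhibited by some composable diagram $K$; I set $\tilde{f}_{n+1}(h(x)) \eqdef \compos{K}$, which by construction is a cell of the correct type, and is an equivalence by Proposition \ref{prop:basic_weak_composite}. Symmetrically, I obtain $\tilde{f}_{n+1}(h'(x))$ from $H'$. This simultaneously defines $\tilde{f}_{n+1}$ and verifies the inductive invariant for $A_{n+1}$.

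The main obstacle is essentially bookkeeping rather than conceptual: one must carefully check that the weak composites chosen at each stage have shapes matching the atoms attached in the definition of $X_{n+1}$, and that the equivalences invoked live in the correct dimension with boundary data compatible with the pushout data. Once Theorem \ref{thm:equivalence_equals_invertible} is available to supply weak inverses and Proposition \ref{prop:basic_weak_composite} to transport equivalence through $\compos{-}$, the induction runs by repeated application of Proposition \ref{prop:pasting_equivalence} and Proposition \ref{prop:equiv_relation}.
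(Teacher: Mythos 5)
Your proof is correct and follows essentially the same strategy as the paper's: extend $f$ stage by stage through the tower $X_n$ while maintaining that $A_n$ lands in equivalences, using weak invertibility to produce the candidate images of $x^*$, then taking weak composites of the suitably modified witness equivalences to get the images of $h(x), h'(x)$. The only cosmetic difference is that you cite Theorem \ref{thm:equivalence_equals_invertible} and Lemma \ref{lem:equivalence_is_invertible} to justify that the invertibility witnesses are equivalences, whereas the paper asserts this directly; both amount to the same observation.
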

\begin{proof}
By definition $X[\invrs{A}]$ is the colimit of the sequence defined by the recursion $X_n \eqdef X_{n-1}\{\invrs{A}_{n-1}\}$, $A_n \eqdef \{h(x), h'(x) \mid x \in A_{n-1}\}$ and starting with 
\begin{equation*}
	X_0 \eqdef X, \quad \quad A_0 \eqdef A. 
\end{equation*}
We define $\tilde{f}$ by successive extensions from $X_{n-1}$ to $X_n$ with the property that $\restr{\tilde{f}}{X_n}(x)$ is an equivalence in $Y$ for all $x \in A_n$. 

We let $\restr{\tilde{f}}{X_0} \eqdef f$. By assumption, $\restr{\tilde{f}}{X_0}(x) = f(x)$ is an equivalence in $Y$ for all $x \in A_0$. If $n > 0$, assume that $g \eqdef \restr{\tilde{f}}{X_{n-1}}$ has been defined and that it sends all $x \in A_{n-1}$ to equivalences. Let $x \in A_{n-1}$ be an $m$\nbd diagram. Then $g(x)\colon y^- \celto y^+$ has a weak inverse $g(x)^*\colon y^+ \celto y^-$ and there are equivalences $k\colon g(x)\cp{m-1}g(x)^* \celto \eps{}{y^-}$ and $k'\colon g(x)^*\cp{m-1}g(x) \celto \eps{}{y^+}$. 

Take a weak composite $\compos{g(x)^*}$ and an equivalence $e\colon \compos{g(x)^*} \celto g(x)^*$. The pastings $e \cup k$ and $e \cup k'$ along $g(x)^* \submol \bord{}{-}k, \bord{}{-}k'$ are still equivalences, exhibiting $\compos{g(x)^*}$ as a weak inverse of $g(x)$. Finally, we take weak composites $\compos{e \cup k}$ and $\compos{e \cup k'}$. Letting
\begin{equation*}
	x^* \mapsto \compos{g(x)^*}, \quad \quad h(x) \mapsto \compos{e \cup k}, \quad \quad h'(x) \mapsto \compos{e \cup k'}
\end{equation*}
for each $x \in A_{n-1}$ determines an extension of $\restr{\tilde{f}}{X_{n-1}}$ to $X_n$, which by construction sends cells in $A_n$ to equivalences in $Y$. 

Passing to the colimit of the $\restr{\tilde{f}}{X_n}$, we obtain a morphism $\tilde{f}$ which extends $f$, so that the triangle (\ref{eq:localisation}) commutes. 
\end{proof}

\begin{dfn}
Let $U, V \in \cls{C}$ be molecules such that $U \celto V$ is defined. We let $U \simeq V$ be the localisation of $U \celto V$ at $\{\idd{U \celto V}\}$. 
\end{dfn}

\begin{lem} \label{lem:walking_equivalence}
Let $\imath$ be the inclusion of $U \celto V$ into its localisation $U \simeq V$ and let $\tilde{x}\colon (U \simeq V) \to X$ be a morphism of diagrammatic sets. Then $\imath;\tilde{x}$ is an equivalence in $X$.
\end{lem}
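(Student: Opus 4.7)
The proof is essentially a one-step unwinding of the definition of localisation combined with functoriality of equivalences.

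First I would identify the cell $\imath$ correctly. Under the Yoneda embedding $\atom \incl \dgmset$ (extended to $\dcpx \incl \dgmset$), the representable diagrammatic set $U \celto V$ has a canonical ``tautological'' cell $\idd{U \celto V}\colon (U \celto V) \to (U \celto V)$ of shape $U \celto V$. The inclusion $\imath\colon (U \celto V) \incl (U \simeq V)$ coincides, as a morphism of diagrammatic sets, with the image of this tautological cell under the inclusion into the localisation; that is, $\imath = \imath(\idd{U \celto V})$ viewed as a cell of shape $U \celto V$ in $U \simeq V$.

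Next I would invoke the defining property of the localisation. By construction, $U \simeq V$ is $(U \celto V)[\invrs{A}]$ with $A = \{\idd{U \celto V}\}$, and as noted immediately after the definition of localisation, every diagram in $A$ is mapped to a weakly invertible diagram by the canonical inclusion $(U \celto V) \incl (U \simeq V)$; by Theorem \ref{thm:equivalence_equals_invertible}, this means $\imath$ is an equivalence in $U \simeq V$.

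Finally, Proposition \ref{prop:morphism_preserve_equivalence} states that any morphism of diagrammatic sets sends equivalences to equivalences. Applying this to $\tilde{x}\colon (U \simeq V) \to X$, the diagram $\tilde{x}(\imath) = \imath;\tilde{x}$ is an equivalence in $X$, which is exactly the claim. There is no real obstacle here: the only subtlety worth spelling out is the identification of $\imath$ (which a priori names an inclusion of presheaves) with a cell in $U \simeq V$ of shape $U \celto V$, so that the phrase ``$\imath$ is an equivalence'' makes literal sense in the terminology of Section \ref{sec:equivalences}.
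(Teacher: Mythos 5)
Your argument is correct and matches the paper's proof exactly: the paper also observes that $\imath$, viewed as the tautological cell of shape $U \celto V$, becomes an equivalence in the localisation by construction (the remark after the definition of localisation already records that diagrams in $A$ become equivalences in $X[\invrs{A}]$), and then concludes via Proposition \ref{prop:morphism_preserve_equivalence}. Your unwinding through weak invertibility and Theorem \ref{thm:equivalence_equals_invertible} merely spells out what the paper leaves implicit in the phrase ``by construction''.
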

\begin{proof}
By construction the cell $\imath\colon U \celto V \incl U \simeq V$ is an equivalence in $U \simeq V$, so by Proposition \ref{prop:morphism_preserve_equivalence} $\imath;\tilde{x} = \tilde{x}(\imath)$ is an equivalence in $X$.
\end{proof}

\begin{prop} \label{prop:composites_fibrant}
Let $X$ be a diagrammatic set. The following are equivalent:
\begin{enumerate}
	\item $X$ has weak composites;
	\item for all composable diagrams $x$ of shape $U$ in $X$, there exists an equivalence $c(x)$ of shape $U \celto \compos{U}$ such that the triangle
\begin{equation} \label{eq:composition_filler}
\begin{tikzpicture}[baseline={([yshift=-.5ex]current bounding box.center)}]
	\node (0) at (0,1.5) {$U$};
	\node (1) at (2.5,1.5) {$X$};
	\node (2) at (0,0) {$U \celto \compos{U}$};
	\draw[1c] (0) to node[auto,arlabel] {$x$} (1);
	\draw[1cincl] (0) to node[auto,swap,arlabel] {$\imath^-$} (2);
	\draw[1c] (2) to node[auto,swap,arlabel] {$c(x)$} (1);
\end{tikzpicture}
\end{equation}
commutes;
	\item for all composable diagrams $x$ of shape $U$ in $X$, there exists a morphism $\tilde{c}(x)\colon (U \simeq \compos{U}) \to X$ such that the triangle 
\begin{equation} \label{eq:equivalence_filler}
\begin{tikzpicture}[baseline={([yshift=-.5ex]current bounding box.center)}]
	\node (0) at (0,1.5) {$U$};
	\node (1) at (2.5,1.5) {$X$};
	\node (2) at (0,0) {$U \simeq \compos{U}$};
	\draw[1c] (0) to node[auto,arlabel] {$x$} (1);
	\draw[1cincl] (0) to node[auto,swap,arlabel] {$\imath^-$} (2);
	\draw[1c] (2) to node[auto,swap,arlabel] {$\tilde{c}(x)$} (1);
\end{tikzpicture}
\end{equation}
commutes.
\end{enumerate}
\end{prop}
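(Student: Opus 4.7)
The strategy is to reduce the three-way equivalence to two pairs: first prove $(1) \Leftrightarrow (2)$ by straightforward unpacking of the definitions of weak composite and of equivalence, then prove $(2) \Leftrightarrow (3)$, where $(3) \Rightarrow (2)$ is immediate from Lemma \ref{lem:walking_equivalence} but $(2) \Rightarrow (3)$ invokes the universal property of localisation (Proposition \ref{prop:localisation_universal}), which itself requires $X$ to have weak composites and therefore presupposes the equivalence of (1) and (2).

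For $(1) \Rightarrow (2)$: given composable $x\colon U \to X$, there exists a weak composite $\compos{x}\colon \compos{U} \to X$ and an equivalence of type $x \celto \compos{x}$ exhibiting $x \simeq \compos{x}$; this equivalence has shape $U \celto \compos{U}$ and restricts to $x$ along $\imath^-$, so it serves as $c(x)$. Conversely, for $(2) \Rightarrow (1)$, set $\compos{x} \eqdef \imath^+;c(x)$; then by hypothesis $c(x)\colon x \celto \compos{x}$ is an equivalence, so $x \simeq \compos{x}$ and $\compos{x}$ is a weak composite. For $(3) \Rightarrow (2)$: restricting $\tilde{c}(x)$ along the canonical inclusion $\imath\colon U \celto \compos{U} \incl U \simeq \compos{U}$ yields $c(x) \eqdef \imath;\tilde{c}(x)$, which is an equivalence by Lemma \ref{lem:walking_equivalence}, and the triangle (\ref{eq:composition_filler}) commutes since $\imath^-\colon U \incl U \simeq \compos{U}$ factors through $\imath$.

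The main work lies in $(2) \Rightarrow (3)$. Having already verified $(2) \Rightarrow (1)$, we know $X$ has weak composites, so Proposition \ref{prop:localisation_universal} applies to $c(x)\colon U \celto \compos{U} \to X$ with $A \eqdef \{\idd{U \celto \compos{U}}\}$, whose single element maps to $c(x)$ itself, an equivalence by assumption. This produces the required extension $\tilde{c}(x)\colon U \simeq \compos{U} \to X$ along $U \celto \compos{U} \incl U \simeq \compos{U}$, and precomposing its defining triangle with $\imath^-$ gives (\ref{eq:equivalence_filler}). The only real subtlety is ensuring the hypotheses of Proposition \ref{prop:localisation_universal} are in place at the moment it is invoked; this is precisely what forces the proof to be arranged as $(1) \Leftrightarrow (2)$ before $(2) \Rightarrow (3)$, rather than a single cyclic chain.
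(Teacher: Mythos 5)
Your proof is essentially the paper's cyclic chain $(1)\Rightarrow(2)\Rightarrow(3)\Rightarrow(1)$ reorganised into $(1)\Leftrightarrow(2)$ followed by $(2)\Leftrightarrow(3)$, and your explicit observation that $(2)\Rightarrow(1)$ must already be in hand before applying Proposition~\ref{prop:localisation_universal} in the step $(2)\Rightarrow(3)$ is a genuine improvement: the paper invokes that proposition immediately after assuming only $(2)$, leaving the required weak-composites hypothesis on the target implicit.

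However, your $(1)\Rightarrow(2)$ has a gap. The definition of $x\simeq\compos{x}$ only furnishes an equivalence $e\colon x\celto\compos{x}$, which is a composable \emph{diagram} whose shape is some molecule $W$ with $\bord{}{-}W = U$ and $\bord{}{+}W = \compos{U}$; nothing forces $W$ to be the atom $U\celto\compos{U}$, so you cannot simply take $c(x)\eqdef e$. The fix, as in the paper, is to pass to a weak composite $\compos{e}$ of $e$: its shape is $\compos{W} = \bord{}{-}W\celto\bord{}{+}W = U\celto\compos{U}$, it is parallel to $e$ and therefore still restricts to $x$ along $\imath^-$, and by Proposition~\ref{prop:basic_weak_composite}(b) it remains an equivalence. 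Setting $c(x)\eqdef\compos{e}$ closes the gap.
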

\begin{proof}
Let $x\colon U \to X$ be a composable diagram. If $X$ has weak composites, there is a cell $\compos{x}$ and an equivalence $e\colon x \celto \compos{x}$. In turn, $e$ has a weak composite $\compos{e}\colon x \celto \compos{x}$, which by Proposition \ref{prop:basic_weak_composite} is still an equivalence and has shape $U \celto \compos{U}$. Letting $c(x) \eqdef \compos{e}$ makes (\ref{eq:composition_filler}) commute. 

Now suppose that there exists an equivalence $c(x)$ that makes (\ref{eq:composition_filler}) commute. Applying Proposition \ref{prop:localisation_universal} with $f \eqdef c(x)$ and $A \eqdef \{\idd{U \celto \compos{U}}\}$, we find an extension $\tilde{c}(x)$ of $c(x)$ to $U \simeq \compos{U}$, which makes (\ref{eq:equivalence_filler}) commute.

Finally, suppose that $\tilde{c}(x)$ exists such that (\ref{eq:equivalence_filler}) commutes. By Lemma \ref{lem:walking_equivalence} its restriction to $U \celto \compos{U}$ is an equivalence, exhibiting its output boundary as a weak composite of $x$.
\end{proof}

\begin{rmk} \label{rmk:model_structure}
Proposition \ref{prop:composites_fibrant} displays the class of diagrammatic sets with weak composites as characterised by a right lifting property with respect to the set of morphisms $J \eqdef \{U \incl (U \simeq \compos{U})\}_{U \in \cls{C}}$. 

This suggests that diagrammatic sets with weak composites are the fibrant objects of a cofibrantly generated model structure on $\dgmset$, whose set of generating acyclic cofibrations is $J$. A natural candidate for a set of generating cofibrations is $I \eqdef \{\bord U \incl U\}$ with $U$ ranging over the atoms in $\cls{C}$. 

We leave the development of such a model structure to future work.
\end{rmk}

\begin{cor} \label{cor:product_closure}
Let $\{X_i\}_{i \in I}$ be a family of diagrammatic sets with weak composites. Then its coproduct $\coprod_{i \in I} X_i$ and its product $\prod_{i \in I} X_i$ have weak composites.
\end{cor}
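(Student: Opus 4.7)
The plan is to invoke Proposition~\ref{prop:composites_fibrant}, which reduces the question to showing that every composable diagram $x\colon U \to Y$ in $Y = \prod_i X_i$ or $Y = \coprod_i X_i$ extends along $\imath^-\colon U \incl (U \simeq \compos{U})$. Both cases exploit the fact that $\dgmset$ is a presheaf category, so products and coproducts are computed pointwise on atoms.

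The product case is straightforward: the universal property gives a natural bijection between morphisms $U \to \prod_i X_i$ and families $(x_i\colon U \to X_i)_{i \in I}$. Each $x_i$ admits a lift $\tilde{c}(x_i)\colon (U \simeq \compos{U}) \to X_i$ by Proposition~\ref{prop:composites_fibrant}, and these assemble via the universal property of the product into the required extension $\tilde{c}(x)\colon (U \simeq \compos{U}) \to \prod_i X_i$ of $x$.

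For the coproduct the key step is to show that every $x\colon U \to \coprod_i X_i$ factors through a single summand $\iota_i\colon X_i \incl \coprod_i X_i$. Since $U$ is the colimit in $\dgmset$ of a connected diagram of its atoms, and each atom is representable (so $\homset{\dgmset}(V, -)$ preserves coproducts), the classical fact that limits over a connected diagram in $\set$ commute with coproducts yields a natural isomorphism $\homset{\dgmset}(U, \coprod_i X_i) \cong \coprod_i \homset{\dgmset}(U, X_i)$. Connectedness of the atom diagram reduces to connectedness of $U$ as a poset, which follows by induction on its definition as a molecule: atoms are connected since they have a greatest element, and in any pasting $U_1 \cup U_2$ the shared boundary $\bord{k}{+}U_1 = \bord{k}{-}U_2$ is non-empty. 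Once the factorisation $x = x';\iota_i$ is obtained, a lift $\tilde{c}(x')\colon (U \simeq \compos{U}) \to X_i$ exists because $X_i$ has weak composites, and post-composing with $\iota_i$ provides the required extension. The only non-cosmetic work is the connectedness argument; everything else is formal, and the fact that lifts assemble for products and exist after factorisation for coproducts is routine.
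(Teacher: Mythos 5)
Your proof is correct and takes the paper's intended route through Proposition~\ref{prop:composites_fibrant}: the paper's own proof is the one-line deferral to ``general properties of classes of objects defined by a right lifting property.'' What you have done is spell out what that deferral conceals: the product half really is a formal property of RLP classes, but the coproduct half is not automatic — it requires that each domain $U$ of a generating map in $J$ be a connected object, so that any $U \to \coprod_i X_i$ factors through a single summand — and your induction on the molecule structure of $U$ supplies exactly that. So the approaches agree; you have just made explicit the connectedness point the paper leaves tacit.
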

\begin{proof}
Follows from general properties of classes of objects defined by a right lifting property.
\end{proof}

\begin{prop}
Let $\{X_i\}_{i \in I}$ be a family of diagrammatic sets and let $X \eqdef \prod_{i \in I} X_i$ be its product. For each composable diagram $x$ in $X$,
\begin{enumerate}[label=(\alph*)]
	\item if $x$ is an equivalence, then each projection $\pi_i(x)$ is an equivalence, and
	\item if the $X_i$ have weak composites, the converse also holds.
\end{enumerate}
\end{prop}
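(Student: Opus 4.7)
Part (a) is an immediate application of Proposition \ref{prop:morphism_preserve_equivalence}: each projection $\pi_i\colon X \to X_i$ is a morphism of diagrammatic sets, hence sends equivalences to equivalences.

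For part (b), the plan is to switch to weak invertibility via Theorem \ref{thm:equivalence_equals_invertible} and argue by coinduction. Let $A$ be the set of composable diagrams $x$ in $X$ such that $\pi_i(x)$ is an equivalence in $X_i$ for every $i \in I$; it suffices to show $A \subseteq \winfun{A}$, since by coinduction this forces $A \subseteq \winv{X} = \equi{X}$. Fix $x\colon U \to X$ in $A$ and set $n \eqdef \dmn{U}$; the task is to exhibit a weak inverse $x^*$ of $x$ together with coherence witnesses $h\colon x \cp{n-1} x^* \celto \eps{}{\bord{}{-}x}$ and $h'\colon x^* \cp{n-1} x \celto \eps{}{\bord{}{+}x}$ whose projections are equivalences in each $X_i$ (so that $h, h' \in A$).

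The hard part is that weak invertibility in each $X_i$ produces witnesses of \emph{some} shape, and independently-chosen witnesses in different factors cannot in general be assembled via the universal property of $X = \prod_i X_i$. I would overcome this by using the weak composites in each $X_i$ to standardise all shapes to fixed molecules in $\cls{C}$ depending only on $U$. Specifically, set $U^* \eqdef \bord{}{+}U \celto \bord{}{-}U$, $H \eqdef (U \cp{n-1} U^*) \celto \infl{\bord{}{-}U}$, and $H' \eqdef (U^* \cp{n-1} U) \celto \infl{\bord{}{+}U}$; these are atoms in $\cls{C}$ by combining axioms \ref{ax:shift}, \ref{ax:submolecule}, and \ref{ax:inflate}. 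In each factor, start from a weak-invertibility witness triple $(e_i^*, h_i, h_i')$ for $\pi_i(x)$ and replace $e_i^*$ with a weak composite $\tilde{e}_i^*$ of shape $U^*$; this is still an equivalence and is equivalent to $e_i^*$ by Proposition \ref{prop:basic_weak_composite}. Pasting $h_i, h_i'$ with equivalences relating $e_i^*$ and $\tilde{e}_i^*$ (using Propositions \ref{prop:pasting_equivalence} and \ref{prop:higher_equivalence}) and taking weak composites of the results yields equivalences $\tilde{h}_i\colon H \to X_i$ and $\tilde{h}_i'\colon H' \to X_i$ witnessing $\tilde{e}_i^*$ as a weak inverse of $\pi_i(x)$ with the required standard shapes.

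Since $U^*, H, H'$ are independent of $i$, the families $(\tilde{e}_i^*)_{i \in I}$, $(\tilde{h}_i)_{i \in I}$, $(\tilde{h}_i')_{i \in I}$ assemble by the universal property of the product into morphisms $x^*\colon U^* \to X$, $h\colon H \to X$, $h'\colon H' \to X$ whose types exhibit $x^*$ as a weak inverse of $x$ in $X$; as each $\pi_i(h) = \tilde{h}_i$ and $\pi_i(h') = \tilde{h}_i'$ is an equivalence in $X_i$, we have $h, h' \in A$ and hence $x \in \winfun{A}$, completing the coinductive step and thus the proof.
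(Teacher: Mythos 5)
Part (a) matches the paper's proof exactly. For part (b), your argument is correct but takes a genuinely different route. The paper works directly with the coinductive definition of equivalence: given a dividend for $x$, it projects to each factor, obtains a lax division $h_i$ there, and takes weak composites so that the standardised $h_i$ assemble via the product's universal property into a lax division of the dividend by $x$. You instead pass to weak invertibility via Theorem \ref{thm:equivalence_equals_invertible} and show $A \subseteq \winfun{A}$, standardising the full weak-inverse triple $(e_i^*, h_i, h_i')$ in each factor to the fixed shapes $U^*$, $H$, $H'$ determined by $U$ alone. Both proofs hinge on the same device---the hypothesis that the $X_i$ have weak composites is used precisely to replace differently-shaped witnesses in each factor by diagrams of a common shape, so that they can be collated by the universal property of the product. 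Your route is slightly longer but more explicit about the shape-matching: a weak-inverse triple is a finite, self-contained structure whose target shapes are fixed in advance by $U$, whereas a lax division also carries the ``quotient'' $x_\alpha$ whose shape the paper does not explicitly normalise before taking weak composites of the $h_i$. So yours is a sound proof that arrives at the result by a somewhat different but equally valid road, at the cost of first invoking the equivalence-equals-weak-invertibility theorem.
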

\begin{proof}
If $x$ is an equivalence in $X$, by Proposition \ref{prop:morphism_preserve_equivalence} each projection $\pi_i(x)$ is an equivalence in $X_i$. 

Conversely, let $A \subseteq \cdgm{X}$ be the set of all composable diagrams whose projections are all equivalences, and let $x \in A$. If $(x_0, \imath)$ is a dividend for $x$, then $(\pi_i(x_0), \imath)$ is a dividend for $\pi_i(x)$ for each $i \in I$. By assumption, there is a lax division $h_i$ of $\pi_i(x_0)$ by $\pi_i(x)$ which is an equivalence in $X_i$. 

If we replace $h_i$ with a weak composite $\compos{h_i}$, then all the $\compos{h_i}$ have the same shape $U$, so they induce a cell $(\compos{h_i})_{i\in I}\colon U \to X$ which is a lax division of $x_0$ by $x$. By construction $(\compos{h_i})_{i\in I} \in A$. This proves that $A \subseteq \equifun{}{A}$ and by coinduction $A \subseteq \equi{X}$. 
\end{proof}

%%%%%%%%%%%%%%%%%%%%%%%%%%%%%

\subsection{Inflate and merge}

\begin{dfn}
We write $\atomin$ for the subcategory of atoms and inclusions in $\atom$.
\end{dfn}

\begin{dfn}[Combinatorial $\cls{C}$\nbd polygraph] 
A \emph{combinatorial $\cls{C}$\nbd polygraph} is a presheaf on $\atomin$. Combinatorial $\cls{C}$\nbd polygraphs and their morphisms form a category $\cpol$. 

When there is no ambiguity, we will omit the adjective ``combinatorial'' and speak simply of $\cls{C}$\nbd polygraphs.
\end{dfn}

\begin{dfn}
We let $\Gamma$ be the set of colimit diagrams in $\dcpxin$ comprising the initial object diagram and all pushout diagrams of inclusions, preserved both by the inclusion $\dcpxin \incl \dcpx$ and by the inclusion $\dcpxin \incl \dcpxfun$.
\end{dfn}

\begin{dfn}
The embedding $\atomin \incl \cpol$ extends to an embedding $\dcpxin \incl \cpol$ through an equivalence between $\cpol$ and $\psh{\Gamma}{\dcpxin}$ as in the case of diagrammatic sets.

Every diagrammatic set has an underlying $\cls{C}$\nbd polygraph, via the restriction functor $\dgmset \to \cpol$. We carry over from diagrammatic sets to $\cls{C}$\nbd polygraphs all the terminology that does not involve surjective maps in $\atom$. Thus we may speak of diagrams, composable diagrams, cells in a $\cls{C}$\nbd polygraph, their boundaries, substitution, pasting, and attaching of cells. We may \emph{not} speak of degenerate diagrams, units, or unitors in a $\cls{C}$\nbd polygraph. 
\end{dfn}

\begin{dfn}
If $\monad{}$ is a monad, we let $\algmon{\monad{}}$ denote its Eilenberg-Moore category.
\end{dfn}

\begin{dfn}[Inflate monad]
Let $X$ be a $\cls{C}$\nbd polygraph. We define a $\cls{C}$\nbd polygraph $\imonad X$ as follows.
\begin{enumerate}
	\item Cells $U \to \imonad X$ are pairs $(p\colon U \surj V, x\colon V \to X)$ of a surjective map in $\atom$ and a cell in $X$.
	\item If $(p\colon U \surj V, x\colon V \to X)$ is a cell in $\imonad X$ and $\imath\colon U' \incl U$ an inclusion of atoms, let $p';\imath'$ be the unique factorisation of $\imath;p$ as a surjective map $p'\colon U' \surj V'$ followed by an inclusion $\imath'\colon V' \incl V$ in $\atom$. We define $\imath;(p,x) \eqdef (p', \imath';x)$ as a cell of $\imonad X$.
\end{enumerate}
The factorisation is strictly unique because $\atomin$ is skeletal and $V'$ is an atom, so $V'$ has no non-trivial automorphisms.

If $f\colon X \to Y$ is a morphism of $\cls{C}$\nbd polygraphs, we let $\imonad f\colon \imonad X \to \imonad Y$ be defined by $(p,x) \mapsto (p,f(x))$. This determines an endofunctor $\imonad$ on $\cpol$. 

We define natural transformations $\mu^\imonad\colon \imonad \imonad \to \imonad$ and $\eta^\imonad\colon \bigid{} \to \imonad$. For each $\cls{C}$\nbd polygraph $X$,
\begin{itemize}
	\item $\mu^\imonad_X$ sends a cell $(p_1, (p_2, x))$ in $\imonad \imonad X$ to the cell $(p_1;p_2, x)$ in $\imonad X$, and
	\item $\eta^\imonad_X$ sends a cell $x$ of shape $U$ in $X$ to the cell $(\idd{U}, x)$ in $\imonad X$.
\end{itemize}
Then $(\imonad, \mu^\imonad, \eta^\imonad)$ is a monad on $\cpol$. 
\end{dfn}

\begin{prop} \label{prop:imonad_monadic}
The forgetful functor $\dgmset \to \cpol$ is monadic. The induced monad is naturally isomorphic to $(\imonad, \mu^\imonad, \eta^\imonad)$. 
\end{prop}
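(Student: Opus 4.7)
The plan is to realize the forgetful functor $U\colon \dgmset \to \cpol$ as the restriction functor $i^*\colon \psh{}{\atom} \to \psh{}{\atomin}$ along the inclusion $i\colon \atomin \incl \atom$, prove its monadicity abstractly, and then identify the induced monad with $\imonad$ explicitly.

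For monadicity, I would invoke Beck's theorem. Since $i$ is the identity on objects, $i^*$ has both a left adjoint $i_!$ (left Kan extension) and a right adjoint $i_*$ (right Kan extension), so it preserves all colimits. It is also conservative: a natural transformation between presheaves on $\atom$ is an iso iff it is a pointwise bijection, which can be detected after restricting along $i$ because $i$ hits every object. A functor with a left adjoint that is conservative and preserves reflexive coequalizers is monadic, so $U$ is monadic.

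To identify the induced monad $i^* i_!$ with $\imonad$, I would apply the coend formula
\[
	(i^* i_! X)(U) = (i_! X)(U) = \int^{V \in \atomin} \atom(U, V) \times X(V)
\]
for each $\cls{C}$\nbd polygraph $X$ and atom $U$. The decisive observation is that every morphism $U \to V$ in $\atom$ factors uniquely as a surjective map $p\colon U \surj W$ followed by an inclusion $W \incl V$, and that $W$ has no non-trivial automorphisms by Proposition \ref{prop:molecule_noauto}. Using the coend relations to absorb the inclusion into the $X$\nbd factor, every equivalence class has a canonical representative $(p, x)$ with $p\colon U \surj W$ surjective and $x \in X(W)$; this produces a bijection
\[
	(i^* i_! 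X)(U) \;\cong\; \coprod_{p\colon U \surj W} X(W) \;=\; (\imonad X)(U)
\]
natural in $U$. Under this bijection, the unit and multiplication of the Kan-extension monad correspond directly to $\eta^\imonad$ and $\mu^\imonad$ as defined above.

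The only delicate point is bookkeeping: one needs to check naturality of the bijection, tracing how the prescription $\imath;(p,x) = (p',\imath';x)$ in the definition of $\imonad$ matches the action of $\atom$-morphisms on the coend, and similarly that the multiplication $(p_1, (p_2, x)) \mapsto (p_1;p_2, x)$ is induced by composition of surjections under the adjunction. These verifications are routine once the surjection-inclusion factorisation of $\atom$ is in hand.
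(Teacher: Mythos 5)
Your proof is correct, but it takes a genuinely different route from the paper's. The paper constructs explicit mutually inverse functors between $\dgmset$ and $\algmon{\imonad}$ over $\cpol$: in one direction, a diagrammatic set $X$ gets the $\imonad$-algebra $\imonad X \to X$, $(p,x) \mapsto p;x$; in the other, an algebra $\alpha\colon \imonad X \to X$ furnishes $X$ with an action of maps in $\atom$ by declaring $f;x \eqdef \alpha(p, \imath;x)$ where $f = p;\imath$ is the surjection-inclusion factorisation. This gets monadicity and the identification of the monad in a single stroke, without ever invoking a monadicity theorem, at the cost of having to check (left to the reader) that the two constructions are mutually inverse. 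Your approach decouples the two halves: you first deduce monadicity abstractly via Beck's theorem, using that $i\colon \atomin \incl \atom$ is an identity-on-objects embedding so that $i^*$ is conservative and has both Kan extensions as adjoints (hence preserves all colimits, in particular reflexive coequalisers); you then compute $i^*i_!$ via the coend formula and identify it with $\imonad$ by showing that the orthogonal (surjection, inclusion) factorisation system on $\atom$, together with the rigidity of atoms (no non-trivial automorphisms in a skeleton), gives each coend class a canonical $(p, x)$ representative. Your version buys modularity — monadicity holds before one knows what the monad is, which can be cleaner to formalise — while the paper's avoids the need to verify that the factorisation system of $\atom$ yields honest normal forms in the coend (the zig-zag uniqueness argument you wave at). One small imprecision: the existence of both $i_!$ and $i_*$ follows already from $\set$ being bicomplete and the categories small; the identity-on-objects hypothesis is what you actually need for conservativity, not for the adjoints.
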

\begin{proof}
Let $X$ be a diagrammatic set. We give its underlying $\cls{C}$\nbd polygraph an $\imonad$\nbd algebra structure $\imonad X \to X$ defined by $(p,x) \mapsto p;x$, the latter computed in $\dgmset$. If $f\colon X \to Y$ is a morphism of diagrammatic sets, its underlying morphism in $\cpol$ is compatible with this algebra structure.

Conversely, let $\alpha\colon \imonad X \to X$ be an $\imonad$\nbd algebra. We give the $\cls{C}$\nbd polygraph $X$ the structure of a diagrammatic set. If $x\colon V \to X$ is a cell of $X$ and $f\colon U \to V$ a map in $\atom$, let $f = p;i$ be the unique factorisation of $f$ as a surjective map followed by an inclusion. Then define $f;x \eqdef \alpha(p, \imath;x)$. Compatibility of a morphism of $\cls{C}$\nbd polygraphs with $\imonad$\nbd algebra structures amounts to compatibility with this action of maps in $\atom$. 

This defines opposite functors between $\dgmset$ and $\algmon{\imonad}$ over the respective forgetful functors to $\cpol$. We leave it to the reader to check that these functors are each other's weak inverse.
\end{proof}

\begin{dfn}[Non-unital $\cls{C}$\nbd $\omega$\nbd category]
A \emph{non-unital $\cls{C}$\nbd $\omega$\nbd category} is a $\Gamma$\nbd continuous presheaf on $\dcpxfun$. Non-unital $\cls{C}$\nbd $\omega$\nbd categories and their morphisms form a category $\nucat$.
\end{dfn}

\begin{dfn}
We identify $\dcpxfun$ with a subcategory of $\nucat$ via the Yoneda embedding. 

There is a restriction functor $\nucat \to \psh{\Gamma}{\dcpxin}$, which via the equivalence between $\cpol$ and $\psh{\Gamma}{\dcpxin}$ becomes a forgetful functor $\nucat \to \cpol$.
\end{dfn}

\begin{dfn}[Merge monad]
Let $X$ be a $\cls{C}$\nbd polygraph. We define a $\cls{C}$\nbd polygraph $\mmonad{X}$ as follows.
\begin{enumerate}
	\item Cells $U \to \mmonad X$ are pairs $(s\colon U \cfun V, x\colon V \to X)$ of a subdivision in $\dcpxfun$ and a composable diagram in $X$.
	\item If $(s\colon U \cfun V, x\colon V \to X)$ is a cell in $\mmonad X$ and $\imath\colon U' \incl U$ an inclusion of atoms, let $s';\imath'$ be the unique factorisation of $\imath;s$ as a subdivision $s'\colon U' \cfun V'$ followed by an inclusion $\imath'\colon V' \incl V$ in $\dcpxfun$. We define $\imath;(s,x) \eqdef (s', \imath';x)$ as a cell of $\mmonad X$.
\end{enumerate}
The factorisation is strictly unique because $\dcpxfun$ is skeletal and $V'$ is a molecule, so $V'$ has no non-trivial automorphisms.

Let $x$ be a composable diagram of shape $U$ in $\mmonad X$ and let $\{U_i\}_{i \in I}$ be the atoms of $U$. For each $i \in I$, the restriction $\restr{x}{U_i}$ is uniquely of the form $(s_i\colon U_i \cfun V_i, y_i\colon V_i \to X)$ for some subdivision $s_i$ and composable diagram $y_i$ in $X$. By construction, for each inclusion $\imath_{ij}\colon U_i \incl U_j$ we have an inclusion $\imath'_{ij}\colon V_i \incl V_j$ with $\imath_{ij};s_j = s_i;\imath'_{ij}$ and $y_i = \imath'_{ij};y_j$. 

Let $V$ be the colimit of the diagram of inclusions $V_i \incl V_j$ with $i, j \in I$. Then the $s_i$ can be uniquely collated to a subdivision $s\colon U \cfun V$ and the $y_i$ to a morphism $y\colon V \to X$. Because $U \in \cls{C}$ and $\cls{C}$ is algebraic, $V \in \cls{C}$, so $y$ is a composable diagram in $X$. 

It follows that not only cells, but also composable diagrams in $\mmonad X$ can be uniquely represented as pairs $(s\colon U \cfun V, y\colon V \to X)$ of a subdivision and a composable diagram in $X$.

If $f\colon X \to Y$ is a morphism of $\cls{C}$\nbd polygraphs, we let $\mmonad f\colon \mmonad X \to \mmonad Y$ be defined by $(s,x) \mapsto (s,f(x))$. This determines an endofunctor $\mmonad$ on $\cpol$. 

We define natural transformations $\mu^\mmonad\colon \mmonad \mmonad \to \mmonad$ and $\eta^\mmonad\colon \bigid{} \to \mmonad$. For each $\cls{C}$\nbd polygraph $X$,
\begin{itemize}
	\item $\mu^\mmonad_X$ sends a cell $(s_1, (s_2, x))$ in $\mmonad \mmonad X$ to the cell $(s_1;s_2, x)$ in $\mmonad X$, and
	\item $\eta^\mmonad_X$ sends a cell $x$ of shape $U$ in $X$ to the cell $(\idd{U}, x)$ in $\mmonad X$.
\end{itemize}
Then $(\mmonad, \mu^\mmonad, \eta^\mmonad)$ is a monad on $\cpol$. 
\end{dfn}

\begin{prop}
Let $X$ be a $\cls{C}$\nbd polygraph. There is a bijective correspondence between
\begin{enumerate}
	\item composable $n$\nbd diagrams in $X$, and
	\item cells of shape $O^n$ in $\mmonad X$.
\end{enumerate}
\end{prop}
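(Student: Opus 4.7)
The plan is to construct the bijection explicitly using the uniqueness of subdivisions out of globes, as established in Example \ref{exm:unique_subdivision}.

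First, I would recall that by Lemma \ref{lem:all_shapes} the convenient class $\cls{C}$ contains every globe $O^n$, so $O^n$ is a $\cls{C}$-molecule and the statement of Example \ref{exm:unique_subdivision} applies: for any $\cls{C}$-molecule $V$ with spherical boundary of dimension $n$, there is a \emph{unique} subdivision $s_V\colon O^n \cfun V$. Note also that every composable $n$-diagram in $X$ has shape an $n$-dimensional $\cls{C}$-molecule, which belongs to $\cls{S}$ and thus has spherical boundary.

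Next, I would define the forward map. Given a composable $n$-diagram $x\colon V \to X$, let $V$ be its shape; since $\cls{C} \subseteq \cls{S}$ and $\dmn V = n$, the unique subdivision $s_V\colon O^n \cfun V$ is defined, and $(s_V, x)$ is by definition a cell of shape $O^n$ in $\mmonad X$.

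For the inverse, given a cell $(s\colon O^n \cfun V, x\colon V \to X)$ of shape $O^n$ in $\mmonad X$, I would first observe that by Lemma \ref{lem:functor_preserve_dimension} the subdivision $s$ satisfies $\dmn V = \dmn{s(O^n)} = \dmn{O^n} = n$, and $V \in \cls{C}$ by definition of a cell in $\mmonad X$, so $x$ is a composable $n$-diagram. The inverse map sends $(s, x) \mapsto x$.

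To check these are mutually inverse: composing in one direction, $x \mapsto (s_V, x) \mapsto x$ is obviously the identity. In the other direction, $(s, x) \mapsto x \mapsto (s_V, x)$, and $s_V = s$ by the uniqueness part of Example \ref{exm:unique_subdivision}. There is no real obstacle here — the entire content is extracting the right uniqueness statement for subdivisions out of $O^n$ and matching it against the definition of cells of $\mmonad X$.
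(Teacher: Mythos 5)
Your proof is correct and takes the same approach as the paper's: both exploit the uniqueness of the subdivision $O^n \cfun V$ from Example~\ref{exm:unique_subdivision}. You merely spell out a few verifications the paper leaves implicit (that $\cls{C}\subseteq\cls{S}$ so the example applies, and the dimension check via Lemma~\ref{lem:functor_preserve_dimension}).
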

\begin{proof}
Let $x$ be a composable $n$\nbd diagram of shape $U$ in $X$. By Example \ref{exm:unique_subdivision}, there is a unique subdivision $s\colon O^n \cfun U$. Then $(s,x)$ is a cell of shape $O^n$ in $\mmonad X$. Conversely, each cell of shape $O^n$ in $\mmonad X$ is uniquely of the form $(s,x)$ for some composable $n$\nbd diagram $x$ in $X$.
\end{proof}

\begin{prop}
The forgetful functor $\nucat \to \cpol$ is monadic. The induced monad is naturally isomorphic to $(\mmonad, \mu^\mmonad, \eta^\mmonad)$.
\end{prop}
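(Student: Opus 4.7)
The plan is to mimic the proof of Proposition \ref{prop:imonad_monadic} by constructing inverse functors between $\nucat$ and $\algmon{\mmonad}$ over their respective forgetful functors to $\cpol$.

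First I would observe that every non-unital $\cls{C}$\nbd $\omega$\nbd category $X$ restricts along $\atomin \incl \dcpxfun$ to a $\cls{C}$\nbd polygraph, and define an $\mmonad$\nbd algebra structure $\alpha_X\colon \mmonad X \to X$ on this restriction by $(s, x) \mapsto s;x$, where $s;x$ denotes the presheaf action of the subdivision $s\colon U \cfun V$ on the composable diagram $x\colon V \to X$. The $\mmonad$\nbd algebra axioms follow from functoriality of $X$ on $\dcpxfun$: the unit axiom from preservation of identities, the multiplication axiom from the fact that composition of subdivisions is well-defined, by algebraicity of $\cls{C}$, and preserved by $X$. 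Morphisms in $\nucat$ restrict to morphisms of the underlying $\cls{C}$\nbd polygraphs that commute with $\alpha_X$, so this assignment is functorial.

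Going the other way, given an $\mmonad$\nbd algebra $(X, \alpha)$, I would extend the underlying $\cls{C}$\nbd polygraph (equivalently, a $\Gamma$\nbd continuous presheaf on $\dcpxin$) to a presheaf on $\dcpxfun$. For each $\cls{C}$\nbd functor $f\colon U \cfun V$ and diagram $x\colon V \to X$, I use the unique subdivision-inclusion factorisation $f = s;\imath$ to define $f;x \eqdef \alpha(s, \imath;x)$. The unit $\idd{V}$ factors trivially and acts as the identity by the unit axiom of $\alpha$. The nontrivial check is functoriality on composites: given $f\colon U \cfun V$ and $g\colon V \cfun W$ with $f = s;\imath$, $g = s';\imath'$, and intermediate factorisation $\imath;s' = \tilde s;\tilde \imath$, we have
\begin{equation*}
	f;g = s;(\imath;s');\imath' = (s;\tilde s);(\tilde \imath;\imath'),
\end{equation*}
which is itself the subdivision-inclusion factorisation of $f;g$ by uniqueness. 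Then $(f;g);x = \alpha(s;\tilde s,\, \tilde \imath;\imath';x)$, while
\begin{equation*}
	f;(g;x) = \alpha(s, \imath;\alpha(s', \imath';x)) = \alpha(s, \alpha(\tilde s, \tilde \imath;\imath';x)) = \alpha(s;\tilde s, \tilde \imath;\imath';x),
\end{equation*}
where the middle equality uses that $\alpha$ is a morphism of $\cls{C}$\nbd polygraphs, combined with the identity $\imath;(s',y) = (\tilde s, \tilde \imath;y)$ coming from the definition of $\mmonad$ on inclusions, and the last equality uses the associativity of $\alpha$. $\Gamma$\nbd continuity of the extended presheaf transfers from the underlying $\cls{C}$\nbd polygraph via Proposition \ref{prop:fun_preserve_colimits}.

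Finally I would verify that these two constructions are mutually inverse --- on one side, restricting the extended presheaf to subdivisions and applying the definition recovers $\alpha$; on the other side, re-extending a $\Gamma$\nbd continuous presheaf $X$ via its associated algebra recovers the original action, by uniqueness of the subdivision-inclusion factorisation --- and that both are natural with respect to morphisms. As in the $\imonad$ case, I would leave the detailed verification of the equivalence as an exercise. The hard part is the combinatorial check of the composition law, where one must track the interaction of the orthogonal factorisation system on $\dcpxfun$ with the monad structure of $\mmonad$; once the bookkeeping is set up correctly, the argument is essentially forced and runs in parallel with Proposition \ref{prop:imonad_monadic}.
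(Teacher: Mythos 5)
Your proposal is correct and follows the same strategy as the paper's proof: define the $\mmonad$\nbd algebra structure on the underlying $\cls{C}$\nbd polygraph of a non-unital $\cls{C}$\nbd $\omega$\nbd category via the presheaf action of subdivisions, and conversely extend an $\mmonad$\nbd algebra to an action of all $\cls{C}$\nbd functors using the subdivision--inclusion orthogonal factorisation system. The only presentational difference is that the paper explicitly reduces the construction of the $\dcpxfun$\nbd action to atoms (using that every $\cls{C}$\nbd directed complex is built from its atoms by colimits in $\Gamma$), whereas you state the definition for general $\cls{C}$\nbd functors but then carry out the composition check --- which the paper leaves to the reader --- in essentially the atom-level form; both come to the same thing.
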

\begin{proof}
Let $X$ be a non-unital $\cls{C}$\nbd $\omega$\nbd category. Then $(s, x) \mapsto s;x$, the latter computed in $\nucat$, gives its underlying $\cls{C}$\nbd polygraph an $\mmonad$\nbd algebra structure. If $f\colon X \to Y$ is a morphism in $\nucat$, its underlying morphism in $\cpol$ is compatible with this structure. 

Conversely, let $\alpha\colon \mmonad X \to X$ be an $\mmonad$\nbd algebra. We give the $\cls{C}$\nbd polygraph $X$ the structure of a non-unital $\cls{C}$\nbd $\omega$\nbd category. First of all, we identify $X$ with a $\Gamma$\nbd continuous presheaf on $\dcpxin$ by letting $X(P)$ be the set of morphisms $P \to X$ in $\cpol$ for each $\cls{C}$\nbd directed complex $P$.

Let $f\colon P \cfun Q$ be a $\cls{C}$\nbd functor and $x\colon Q \to X$ a morphism in $\cpol$. Because $P$ is constructed from its atoms by colimits in $\Gamma$, it suffices to define $\restr{f}{U}; x\colon U \to X$ for all atoms $U \subseteq P$. Now, $\restr{f}{U}$ factors uniquely as a subdivision $s\colon U \cfun V$ followed by an inclusion $\imath\colon V \incl Q$ where $V$ is a $\cls{C}$\nbd molecule. We define $\restr{f}{U}; x \eqdef \alpha(s, \imath;x)$. This is compatible with inclusions of atoms so it extends uniquely to a morphism $f;x\colon P \to X$. Compatibility of a morphism of $\cls{C}$\nbd polygraphs with $\mmonad$\nbd algebra structures amounts to compatibility with this action of $\cls{C}$\nbd functors. 

We have defined opposite functors between $\nucat$ and $\algmon{\mmonad}$ over the respective forgetful functors to $\cpol$. We leave it to the reader to check that these functors are each other's weak inverse.
\end{proof}

\begin{prop} \label{prop:finitary_lfp}
$\mmonad$ and $\imonad$ are finitary monads on a locally finitely presentable category.
\end{prop}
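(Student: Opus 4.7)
The statement splits into two parts. Local finite presentability of $\cpol$ is immediate: by definition $\cpol$ is the category of presheaves on $\atomin$, which is essentially small (we have fixed a skeleton, and each atom is a finite poset with finite hom-sets). Every presheaf category on a small category is locally finitely presentable, with the representables forming a strong generator of finitely presentable objects.

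The main preliminary for finitariness is the observation that every $\cls{C}$\nbd molecule $V$, regarded as an object of $\cpol$ via the embedding $\dcpxin \incl \cpol$, is finitely presentable. By construction of that embedding through $\Gamma$\nbd continuous presheaves, and by Proposition \ref{prop:rdcpxin_colimits}, $V$ is the colimit in $\cpol$ of the finite diagram of inclusions of its atoms, computed via iterated pushouts starting from the initial object. Hence $V$ is a finite colimit of representables in $\cpol$, so $\cpol(V, -)$ preserves filtered colimits.

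Using this, the definitions of $\imonad$ and $\mmonad$ yield, for each atom $U$,
\begin{equation*}
	\cpol(U, \imonad X) \cong \coprod_{p\colon U \surj V} \cpol(V, X), \quad \cpol(U, \mmonad X) \cong \coprod_{s\colon U \cfun V} \cpol(V, X),
\end{equation*}
with $p$ ranging over surjective maps in $\atom$ out of $U$ and $s$ over subdivisions in $\dcpxfun$ out of $U$. Each summand preserves filtered colimits by the previous paragraph, and coproducts commute with filtered colimits in $\set$ (colimits commute with colimits), so the composite functors do too. Since filtered colimits in $\cpol$ are computed pointwise on $\atomin$, it follows that $\imonad$ and $\mmonad$ are both finitary.

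The only delicate point is the coproduct decomposition, which reduces to unpacking the definitions: a cell of $\imonad X$ (respectively $\mmonad X$) of shape $U$ is specified by a unique factorisation datum $(p, x)$ (respectively $(s, x)$), by virtue of the strict uniqueness of the surjection-followed-by-inclusion factorisation in $\atom$ (respectively the subdivision-followed-by-inclusion factorisation in $\dcpxfun$) established earlier. I do not anticipate any deeper obstacle.
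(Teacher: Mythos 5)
Your proof is correct, but it takes a genuinely different route from the paper. The paper appeals to the sufficiency criterion of Ad\'amek \cite{adamek2019finitary}: it checks that $\mmonad$ and $\imonad$ preserve monomorphisms and that every cell $(s,x)$ in $\mmonad X$ or $(p,x)$ in $\imonad X$ already factors through $\mmonad Y$ or $\imonad Y$ for the finite sub-presheaf $Y \subseteq X$ spanned by the finitely many cells $y \submol x$. You instead compute the representable sets $\cpol(U, \imonad X)$ and $\cpol(U, \mmonad X)$ explicitly as coproducts over factorisation data, establish that each $\cls{C}$\nbd molecule $V$ is finitely presentable in $\cpol$ (because the embedding $\dcpxin \incl \cpol$ preserves the iterated pushout construction of $V$ from its atoms, so $V$ is a finite colimit of representables), and then use that coproducts and filtered colimits commute in $\set$, combined with the fact that filtered colimits in $\cpol$ are computed pointwise. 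Your argument is more explicit and self-contained --- it does not depend on the external criterion and makes the structure of $\imonad X$ and $\mmonad X$ visible --- at the cost of requiring the coproduct decomposition (which is justified by the strict uniqueness of the two factorisation systems, as you note) and the preservation-of-colimits argument for the embedding. The paper's proof is more compact but delegates the heavy lifting to the citation. Both are sound.
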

\begin{proof}
The category $\cpol$ is a category of presheaves on a small category, so by \cite[Theorem 1.11]{adamek1994locally} it is locally finitely presentable. 

By inspection, $\mmonad$ and $\imonad$ preserve monomorphisms. Moreover, every cell $(s,x)$ in $\mmonad X$ and $(p, x)$ in $\imonad X$ lies in $\mmonad Y \incl \mmonad X$ and $\imonad Y \incl \imonad X$, respectively, for a finite sub-presheaf $Y \incl X$: namely, the one containing only the cells $y \submol x$, of which there are finitely many. Then the criterion of \cite{adamek2019finitary} applies, proving that $\mmonad$ and $\imonad$ are finitary.
\end{proof}

\begin{dfn}
Let $\monad{1}$, $\monad{2}$ be monads on a category $\cat{C}$. A \emph{$\{\monad{1},\monad{2}\}$\nbd algebra} is a pair of a $\monad{1}$\nbd algebra $\alpha_1\colon \monad{1}X \to X$ and a $\monad{2}$\nbd algebra $\alpha_2\colon \monad{2}X \to X$ on the same object of $\cat{C}$. 

A \emph{morphism} $f\colon (X, \alpha_1, \alpha_2) \to (Y, \beta_1, \beta_2)$ of $\{\monad{1},\monad{2}\}$\nbd algebras is a morphism $f\colon X \to Y$ in $\cat{C}$ that is compatible with both algebra structures. $\{\monad{1},\monad{2}\}$\nbd algebras and their morphisms form a category $\algmon{\{\monad{1},\monad{2}\}}$. 

There is a pullback square of forgetful functors
\begin{equation} \label{eq:monad_forgetful}
\begin{tikzpicture}[baseline={([yshift=-.5ex]current bounding box.center)}]
	\node (0) at (0,1.5) {$\algmon{\{\monad{1},\monad{2}\}}$};
	\node (1) at (2.5,1.5) {$\algmon{\monad{1}}$};
	\node (2) at (0,0) {$\algmon{\monad{2}}$};
	\node (3) at (2.5,0) {$\cat{C}$};
	\draw[1c] (0) to (1);
	\draw[1c] (0) to (2);
	\draw[1c] (2) to (3);
	\draw[1c] (1) to (3);
	\draw[edge] (0.9,1.3) to (0.9,0.8) to (0.2,0.8);
\end{tikzpicture}
\end{equation}
forgetting either algebra structure.
\end{dfn}

\begin{dfn}
An $\{\mmonad, \imonad\}$\nbd algebra is, equivalently, a diagrammatic set $X$ with an additional structure of non-unital $\cls{C}$\nbd $\omega$\nbd category on its underlying $\cls{C}$\nbd polygraph.

If $X$ is a $\{\mmonad, \imonad\}$\nbd algebra, we write $p\cdot x$ and $s\cdot x$ for the action of a surjective map $p$ or a subdivision $s$ on a cell or composable diagram $x$ in $X$.
\end{dfn}

\begin{dfn} \label{par:algebraic_coproduct}
The results of \cite[Section VIII]{kelly1980unified} imply that, when $\cat{C}$ has small limits, the following are equivalent:
\begin{enumerate}
	\item $\monad{1}$ and $\monad{2}$ have a coproduct $\monad{1}\oplus \monad{2}$ in the category of monads on $\cat{C}$ and monad morphisms;
	\item the forgetful functor $\algmon{\{\monad{1},\monad{2}\}} \to \cat{C}$ has a left adjoint.
\end{enumerate}
When either condition holds, $\monad{1}\oplus \monad{2}$ is the monad induced by the adjunction. In particular $\algmon{\{\monad{1},\monad{2}\}}$ is equivalent to $\algmon{(\monad{1}\oplus \monad{2})}$.

G.M.\ Kelly provided sufficient conditions for the existence of colimits of diagrams of monads. As observed by Ji\v{r}\'{i} Ad\'{a}mek \cite{adamek2014colimits}, these are verified if all monads in the diagram are accessible monads on a locally presentable category. In this case the colimit is also accessible with the same index.
\end{dfn}

\begin{lem} \label{lem:coproduct_exists}
The coproduct $\mimonad$ of $\mmonad$ and $\imonad$ exists and is finitary. 
\end{lem}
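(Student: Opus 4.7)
The proof plan here is essentially a direct invocation of the general theory summarised in \S\ref{par:algebraic_coproduct}, so I would not expect any substantive obstacle. First, I would recall from Proposition \ref{prop:finitary_lfp} that $\cpol$ is locally finitely presentable and that both $\mmonad$ and $\imonad$ are finitary monads on $\cpol$. In particular, they are $\aleph_0$-accessible.

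Next, I would appeal to Kelly's construction of colimits in categories of monads \cite{kelly1980unified}, in the refined form given by Ad\'amek in \cite{adamek2014colimits}: on a locally $\kappa$-presentable category, the category of $\kappa$-accessible monads has all small colimits, and the underlying endofunctor of such a colimit is again $\kappa$-accessible. Applying this result with $\kappa = \aleph_0$ to the two-object diagram consisting of $\mmonad$ and $\imonad$ yields a coproduct $\mimonad$ in the category of monads on $\cpol$, which is moreover finitary.

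There is really no step that stands out as hard: the content of the lemma is just to package these citations for later use, so that the category $\algmon{\mimonad}$ of algebras can be identified with the category $\algmon{\{\mmonad,\imonad\}}$ of pairs of compatible algebras via the equivalence mentioned in \S\ref{par:algebraic_coproduct}, and so that later arguments can exploit finitariness (for instance, to invoke local presentability of $\algmon{\mimonad}$ via the standard fact that finitary monads on locally finitely presentable categories have locally finitely presentable algebra categories).
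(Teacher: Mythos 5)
Your proof is correct and follows the same route as the paper's: the paper's own argument is exactly "Follows from Proposition \ref{prop:finitary_lfp} and the results discussed in \S \ref{par:algebraic_coproduct}," which is precisely the combination of local finite presentability plus finitariness with the Kelly--Ad\'amek existence and accessibility result that you invoke. You have simply made the citation a little more explicit, which is fine.
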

\begin{proof}
Follows from Proposition \ref{prop:finitary_lfp} and the results discussed in \S \ref{par:algebraic_coproduct}.
\end{proof}

\begin{prop} \label{prop:mimonad_lfp}
The category $\algmon{\{\mmonad,\imonad\}}$ is locally finitely presentable.
\end{prop}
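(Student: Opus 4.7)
The plan is to reduce the statement to a standard fact about categories of algebras for accessible monads on locally presentable categories.

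First, I would invoke the equivalence $\algmon{\{\mmonad,\imonad\}} \simeq \algmon{\mimonad}$ recalled in \S\ref{par:algebraic_coproduct}, which holds because $\cpol$ is complete (as it is locally finitely presentable by Proposition \ref{prop:finitary_lfp}) and the coproduct monad $\mimonad = \mmonad \oplus \imonad$ exists by Lemma \ref{lem:coproduct_exists}. It therefore suffices to show that $\algmon{\mimonad}$ is locally finitely presentable.

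Next, I would combine two facts: (i) by Proposition \ref{prop:finitary_lfp}, $\cpol$ is locally finitely presentable; (ii) by Lemma \ref{lem:coproduct_exists}, the monad $\mimonad$ is finitary, i.e.\ preserves filtered colimits. The result then follows from the standard theorem (see Adámek and Rosick\'y, \emph{Locally Presentable and Accessible Categories}, Theorem 2.78/Corollary) that for any $\lambda$\nbd accessible monad $\monad{}$ on a locally $\lambda$\nbd presentable category $\cat{C}$, the category of $\monad{}$\nbd algebras $\algmon{\monad{}}$ is again locally $\lambda$\nbd presentable, with the forgetful functor $\algmon{\monad{}} \to \cat{C}$ preserving filtered colimits and creating limits.

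There is no real obstacle here: the work has all been done in Proposition \ref{prop:finitary_lfp} and Lemma \ref{lem:coproduct_exists}, and this statement is a one-line corollary of the cited theorem. The only subtle point worth flagging is the preservation of finite presentability under the equivalence $\algmon{\{\mmonad,\imonad\}} \simeq \algmon{\mimonad}$, but this is automatic since the equivalence commutes with the respective forgetful functors to $\cpol$, and in both cases finitely presentable objects can be characterised as those whose hom-functor preserves filtered colimits.
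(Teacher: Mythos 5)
Your proof is correct and follows the paper's argument essentially step for step: both pass to $\algmon{\mimonad}$ via \S\ref{par:algebraic_coproduct} and Lemma \ref{lem:coproduct_exists}, then cite Ad\'amek--Rosick\'y \S 2.78 for Eilenberg--Moore categories of finitary monads on locally finitely presentable categories. The only cosmetic difference is that you justify completeness of $\cpol$ via local finite presentability rather than via its being a presheaf topos.
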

\begin{proof}
Because $\cpol$ is a presheaf topos, in particular it has small limits, so by \S \ref{par:algebraic_coproduct} and Lemma \ref{lem:coproduct_exists} the forgetful functor $\algmon{\{\mmonad,\imonad\}} \to \cpol$ has a left adjoint and $\algmon{\{\mmonad,\imonad\}}$ is equivalent to $\algmon{(\mimonad)}$. 

Then $\algmon{\{\mmonad,\imonad\}}$ is the Eilenberg-Moore category of a finitary monad on a locally finitely presentable category, and the statement follows from the Remark at the end of \cite[\S 2.78]{adamek1994locally}.
\end{proof}

\begin{dfn}
As an instance of (\ref{eq:monad_forgetful}), there is a pullback square of forgetful functors
\begin{equation} \label{eq:forgetful_mimonad}
\begin{tikzpicture}[baseline={([yshift=-.5ex]current bounding box.center)}]
	\node (0) at (0,1.5) {$\algmon{\{\mmonad,\imonad\}}$};
	\node (1) at (2.5,1.5) {$\nucat$};
	\node (2) at (0,0) {$\dgmset$};
	\node (3) at (2.5,0) {$\cpol$.};
	\draw[1c] (0) to (1);
	\draw[1c] (0) to (2);
	\draw[1c] (2) to (3);
	\draw[1c] (1) to (3);
	\draw[edge] (0.9,1.3) to (0.9,0.8) to (0.2,0.8);
\end{tikzpicture}
\end{equation}
\end{dfn}

\begin{prop} \label{prop:left_adjoints}
All functors in diagram (\ref{eq:forgetful_mimonad}) have left adjoints.
\end{prop}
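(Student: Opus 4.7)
The plan is to treat the two vertical arrows of (\ref{eq:forgetful_mimonad}) separately from the two horizontal ones, exploiting the fact that all four categories are locally finitely presentable. The arrows $\nucat \to \cpol$ and $\dgmset \to \cpol$ are monadic by construction of $\nucat$ and $\dgmset$ as the Eilenberg--Moore categories of $\mmonad$ and $\imonad$; their left adjoints are the free-algebra functors $X \mapsto \mmonad X$ and $X \mapsto \imonad X$, so no further work is needed in these cases.

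For the remaining two arrows, I would first invoke the identification $\algmon{\{\mmonad,\imonad\}} \simeq \algmon{\mimonad}$ from \S \ref{par:algebraic_coproduct} together with Lemma \ref{lem:coproduct_exists}, which presents the top forgetful functors as restriction of scalars along the canonical monad morphisms $\mmonad \to \mimonad$ and $\imonad \to \mimonad$. To produce a left adjoint for each, I would then apply the adjoint functor theorem for locally presentable categories: a functor between locally presentable categories has a left adjoint as soon as it preserves all small limits and is accessible.

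Continuity is immediate, since all four Eilenberg--Moore categories in the square have their limits created by the forgetful functor to the presheaf topos $\cpol$, and the square (\ref{eq:forgetful_mimonad}) is a pullback. Local finite presentability is already in place by Proposition \ref{prop:finitary_lfp} and Proposition \ref{prop:mimonad_lfp}. Accessibility of the top forgetful functors then reduces to the fact that filtered colimits in $\algmon{\{\mmonad,\imonad\}}$ are created by the forgetful functor to $\cpol$, which follows from the finitariness of $\mimonad$ given by Lemma \ref{lem:coproduct_exists}. I expect this last point to be the only place requiring real care: one must verify that the identification of $\algmon{\{\mmonad, \imonad\}}$ with $\algmon{\mimonad}$, together with the preservation of filtered colimits by a finitary monad on a locally finitely presentable category, really does yield creation of filtered colimits over $\cpol$. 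This is standard Eilenberg--Moore theory, but it is the one step where the cited results of \cite{kelly1980unified} and \cite{adamek2014colimits} must be assembled rather than merely quoted.
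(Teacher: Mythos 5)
Your proof is correct, but it takes a genuinely different route from the paper's. For the two top arrows, the paper invokes Linton's theorem directly: once $\algmon{\{\mmonad,\imonad\}} \simeq \algmon{\mimonad}$ is known to have all small colimits (which follows from Proposition \ref{prop:mimonad_lfp}), any functor induced by a monad morphism $\monad{} \to \mimonad$ automatically admits a left adjoint by the coequaliser construction of \cite{linton1969coequalizers}, and the coproduct inclusions $\mmonad \to \mimonad$, $\imonad \to \mimonad$ are precisely such monad morphisms. You instead run the adjoint functor theorem for locally presentable categories, which forces you to check two additional hypotheses — continuity and accessibility of the forgetful functors — that the paper's argument sidesteps. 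Both checks go through: limits in every corner of the square are created over $\cpol$, and finitariness of $\mimonad$ (Lemma \ref{lem:coproduct_exists}) gives creation of filtered colimits over $\cpol$, hence accessibility. The trade-off is that your route is more "external" and modular (it only uses off-the-shelf AFT machinery plus the local presentability facts already in hand), whereas the paper's route is more economical: Linton's theorem converts "has colimits" directly into "has a left adjoint" without any detour through preservation properties of the forgetful functor. One small imprecision worth flagging: you describe $\nucat \to \cpol$ and $\dgmset \to \cpol$ as "monadic by construction," but $\dgmset$ and $\nucat$ are defined as presheaf categories, and their monadicity over $\cpol$ is a proved result (Proposition \ref{prop:imonad_monadic} and its $\mmonad$-counterpart), not a definition.
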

\begin{proof}
We already know that the functors $\dgmset \to \cpol$ and $\nucat \to \cpol$ have left adjoints. 

By Proposition \ref{prop:mimonad_lfp}, $\algmon{\{\mmonad,\imonad\}}$ has all small colimits. Then every functor $\algmon{\{\mmonad,\imonad\}} \to \algmon{\monad{}}$ induced by a monad morphism $\monad{} \to \mimonad$ has a left adjoint \cite{linton1969coequalizers}. Since the forgetful functors $\algmon{\{\mmonad,\imonad\}} \to \nucat$ and $\algmon{\{\mmonad,\imonad\}} \to \dgmset$ are induced by coproduct inclusions of monads $\mmonad \to \mimonad$, $\imonad \to \mimonad$, we conclude.
\end{proof}

%%%%%%%%%%%%%%%%%%%%%%%%%%%%%

\subsection{Algebraic composites}

\begin{dfn}
Let $U \in \cls{C}$ be a molecule, $V \subseteq U$ a closed subset, and $s\colon U \cfun U'$ a subdivision. Then $s(V)$ is a closed subset of $U'$, and the subdivision 
\begin{equation*}
	\idd{} \gray s\colon O^1 \gray U \cfun O^1 \gray U'
\end{equation*}
descends through the quotient to a subdivision
\begin{equation*}
	s'\colon \slice{O^1 \gray U}{\sim_V} \cfun \slice{O^1 \gray U'}{\sim_{s(V)}}.
\end{equation*}
In particular, when $V = \bord U$, we write $\infl{s}$ for $s'\colon \infl{U} \cfun \infl{U'}$. Let
\begin{equation*}
	p\colon \slice{O^1 \gray U}{\sim_V} \surj U, \quad \quad p'\colon \slice{O^1 \gray U'}{\sim_{s(V)}} \surj U'
\end{equation*}
be the retractions obtained by factorising the natural maps $O^1 \gray U \surj U$ and $O^1 \gray U' \surj U'$ through the quotients. We represent this setup with the formal diagram
\begin{equation} \label{eq:cylinder_subdivision}
\begin{tikzpicture}[baseline={([yshift=-.5ex]current bounding box.center)}]
	\node (0) at (-1,1.5) {$\slice{O^1 \gray U}{\sim_V}$};
	\node (1) at (2.5,1.5) {$\slice{O^1 \gray U'}{\sim_{s(V)}}$};
	\node (2) at (-1,0) {$U$};
	\node (3) at (2.5,0) {$U'$.};
	\draw[1cloop] (0) to node[auto,arlabel] {$s'$} (1);
	\draw[1csurj] (0) to node[auto,arlabel,swap] {$p$} (2);
	\draw[1cloop] (2) to node[auto,arlabel,swap] {$s$} (3);
	\draw[1csurj] (1) to node[auto,arlabel] {$p'$} (3);
\end{tikzpicture}
\end{equation}
\end{dfn}

\begin{dfn}[Unit-merge compatibility]
Let $X$ be an $\{\mmonad, \imonad\}$\nbd algebra. We say that $X$ satisfies \emph{unit-merge compatibility} if for all atoms $U$, subdivisions $s\colon U \cfun U'$, closed subsets $V \subseteq \bord U$, and composable diagrams $x\colon U' \to X$, the equation
\begin{equation} \label{eq:unitmerge}
	p \cdot (s \cdot x) = s' \cdot (p' \cdot x)
\end{equation}
holds for $s', p, p'$ defined as in diagram (\ref{eq:cylinder_subdivision}).
\end{dfn}

\begin{dfn}[Diagrammatic $\omega$-category]
A \emph{diagrammatic $\omega$\nbd category} is an $\{\mmonad, \imonad\}$\nbd algebra $X$ that satisfies unit-merge compatibility.

A \emph{functor} of diagrammatic $\omega$\nbd categories is a morphism of $\{\mmonad, \imonad\}$\nbd algebras. Diagrammatic $\omega$\nbd categories and functors form a category $\dgmcat$.
\end{dfn}

\begin{lem} \label{lem:unitmerge}
Let $U \in \cls{C}$, let $s\colon U \cfun U'$ be a subdivision, and let $x$ be a composable diagram of shape $V$ in a diagrammatic $\omega$\nbd category. Then
\begin{equation*}
	\eps{}(s \cdot x) = \infl{s} \cdot (\eps{}x).
\end{equation*}
\end{lem}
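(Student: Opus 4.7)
The strategy is to test the equation atom-by-atom on $\infl U$, using that a morphism $\infl U \to X$ in $\cpol$ is determined by its restrictions to the atoms of $\infl U$ (since $\infl U$ is their colimit in $\Gamma$). The atoms of $\infl U = \slice{O^1 \gray U}{\sim_{\bord U}}$ come in two flavours, as recorded in the analysis following Proposition \ref{prop:spherical_quotient}: (i) for $z \in U$, the element $q(\undl{0}^\alpha \gray z)$ generates an atom isomorphic to $\clos\{z\}$; (ii) for $z \in U \setminus \bord U$, the element $q(\undl{1} \gray z)$ generates an atom isomorphic to $\slice{O^1 \gray \clos\{z\}}{\sim_{V_z}}$ where $V_z \eqdef \bord U \cap \clos\{z\} \subseteq \bord z$. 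For case (i) when $z \in \bord U$ the three representatives are identified, so no additional work is needed.

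For an atom of type (i), I would unwind the actions: the retraction $\tau\colon \infl U \surj U$ restricts to the isomorphism $\clos\{q(\undl{0}^\alpha \gray z)\} \incliso \clos\{z\}$, and $\infl s$ restricts to the subdivision induced by $s|_{\clos\{z\}}$, followed by the symmetric identification in $\infl{U'}$. Under these identifications, the restrictions of $\eps{}(s \cdot x) = \tau \cdot (s \cdot x)$ and $\infl s \cdot \eps{} x = \infl s \cdot (\tau \cdot x)$ both evaluate to the cell $s|_{\clos\{z\}} \cdot \restr{x}{s(\clos\{z\})}$. For an atom of type (ii), the restrictions of $\tau$ and $\infl s$ realise precisely the setup of diagram (\ref{eq:cylinder_subdivision}) for the atom $\clos\{z\}$, the subdivision $s|_{\clos\{z\}}\colon \clos\{z\} \cfun s(\clos\{z\})$, and the closed subset $V_z \subseteq \bord{\clos\{z\}}$; note that $s(V_z) = s(\bord U) \cap s(\clos\{z\}) = \bord{U'} \cap s(\clos\{z\})$ by Proposition \ref{prop:cfun_preserve_molecules} and injectivity of $s$ on closed subsets, so this closed subset is indeed the one indexing the quotient $\infl s(\clos\{z\})$ in $\infl{U'}$. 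The unit-merge compatibility equation (\ref{eq:unitmerge}), applied to this data, then yields the equality on this atom.

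Combining the two cases atom-by-atom, the equation $\eps{}(s \cdot x) = \infl s \cdot \eps{} x$ holds globally. The main technical obstacle is the bookkeeping required to identify, atom-by-atom, the restrictions of $\tau$ and $\infl s$ with the maps $p$, $p'$, $s'$ appearing in diagram (\ref{eq:cylinder_subdivision}); this amounts to naturality of the inflate construction under restriction to sub-directed-complexes, which is what makes the axiom (\ref{eq:unitmerge})---stated for atoms only---transfer cleanly to arbitrary $U \in \cls{C}$.
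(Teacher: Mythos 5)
Your proposal is correct and takes essentially the same approach as the paper: restrict to atoms of $\infl{U}$, recognise the data of diagram (\ref{eq:cylinder_subdivision}) in the restrictions of $\tau$ and $\infl{s}$, apply the unit-merge compatibility axiom, and collate. The paper streamlines the case analysis by restricting directly to the top-dimensional quotient atoms $\slice{O^1 \gray U_i}{\sim_{U_i \cap \bord U}}$ for $U_i$ ranging over the $n$-dimensional atoms of the pure molecule $U$, so your type (i) atoms and lower-dimensional type (ii) atoms --- already contained in the boundaries of these --- need not be checked separately, but this is a harmless redundancy rather than a gap.
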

\begin{proof}
Let $n \eqdef \dmn{U}$, let $\{U_i \incl U\}_{i \in I}$ be the $n$\nbd dimensional atoms in $U$, and let $s_i$ be the restriction of $s$ to $U_i$. Then the restriction of $\infl{s}$ to $U_i$ is of the form
\begin{equation*}
	s'_i\colon \slice{O^1 \gray U_i}{\sim_{V_i}} \cfun \slice{O^1 \gray s(U_i)}{\sim_{s(V_i)}}
\end{equation*}
where $V_i \eqdef U_i \cap \bord U \subseteq \bord U_i$.

Letting $p_i$ be the retraction $\slice{O^1 \gray U_i}{\sim_{V_i}} \surj U_i$ and $p'_i$ be the retraction $\slice{O^1 \gray s(U_i)}{\sim_{s(V_i)}} \surj s(U_i)$, we have
\begin{equation*}
	p_i \cdot (s_i \cdot x_i) = s'_i \cdot (p'_i \cdot x_i)
\end{equation*}
using (\ref{eq:unitmerge}). Since $U$ is pure, it is equal to the union of the $U_i$, and these equations can be collated to give $\eps{}(s \cdot x) = \infl{s} \cdot (\eps{}x)$.
\end{proof}

\begin{dfn}
An equivalence in a diagrammatic $\omega$\nbd category is an equivalence in its underlying diagrammatic set.
\end{dfn}

\begin{prop} \label{prop:unitmerge_dgmcat}
Let $X$ be a diagrammatic $\omega$\nbd category and $U, V \in \cls{C}$. If $s\colon U \cfun V$ is a subdivision and $x\colon V \to X$ an equivalence, then $s \cdot x$ is an equivalence.
\end{prop}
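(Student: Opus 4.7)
The plan is to reduce to weak invertibility via Theorem \ref{thm:equivalence_equals_invertible} and then proceed by coinduction. I would set
\[
A := \{\, t \cdot z \mid t\colon P \cfun Q \text{ a subdivision in } \dcpxfun,\ z\colon Q \to X \in \winv{X} \,\},
\]
which contains $\winv{X}$ via identity subdivisions and is closed under post-composition with further subdivisions by associativity of the $\mmonad$-action. It suffices to verify that $A \subseteq \winfun{A}$; coinduction will then give $A \subseteq \winv{X} = \equi{X}$, and in particular $s \cdot x \in \equi{X}$.

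Fix $y = s \cdot x \in A$ of dimension $n$, with $x\colon a \celto b$ weakly invertible, weak inverse $x^*$, and witnesses $h\colon x \cp{n-1} x^* \celto \eps{}a$, $h'\colon x^* \cp{n-1} x \celto \eps{}b$ in $\winv{X}$. I would first arrange, by acting on $x^*, h, h'$ with suitable subdivisions, that these diagrams have ``canonical'' atomic shapes $\oppn{n}{V}$, $(V \cp{n-1} \oppn{n}{V}) \celto \infl{\bord{}{-}V}$ and $(\oppn{n}{V} \cp{n-1} V) \celto \infl{\bord{}{+}V}$ respectively; these atoms lie in $\cls{C}$ by axioms \ref{ax:dual}, \ref{ax:submolecule}, \ref{ax:shift}, \ref{ax:inflate}, and the reshaped diagrams remain in $A$. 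Next I set $y^* := \oppn{n}{s} \cdot x^*$. Since $\oppn{n}{s}$ agrees with $s$ on the common boundary $\bord{}{+}U = \bord{}{-}\oppn{n}{U}$, the two glue into a subdivision $\tilde s\colon U \cp{n-1} \oppn{n}{U} \cfun V \cp{n-1} \oppn{n}{V}$, and by compatibility of the $\mmonad$-action with pushouts of inclusions (Proposition \ref{prop:fun_preserve_colimits}) I obtain $y \cp{n-1} y^* = \tilde s \cdot (x \cp{n-1} x^*)$.

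For the first higher witness, I would form the $\cls{C}$-atom $W_k := (U \cp{n-1} \oppn{n}{U}) \celto \infl{\bord{}{-}U}$ and define a $\cls{C}$-functor $\hat s\colon W_k \cfun W_h$ (with $W_h$ the shape of the reshaped $h$) by sending the top atom of $W_k$ to $W_h$, acting as $\tilde s$ on $\bord{}{-}W_k$, and as $\infl{s|_{\bord{}{-}U}}$ on $\bord{}{+}W_k$; the two restrictions agree on $\bord W_k$ since both collapse to $s|_{\bord(\bord{}{-}U)}$. This makes $\hat s$ a subdivision, and $k := \hat s \cdot h$ then lies in $A$ and, via Lemma \ref{lem:unitmerge}, has type $y \cp{n-1} y^* \celto \eps{}{\bord{}{-}y}$. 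The dual witness $k'\colon y^* \cp{n-1} y \celto \eps{}{\bord{}{+}y}$ is built analogously from $h'$ using the atom $(\oppn{n}{U} \cp{n-1} U) \celto \infl{\bord{}{+}U}$, completing the proof that $A \subseteq \winfun{A}$.

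The hard part will be the normalisation step, particularly when $V$ is not an atom: reshaping $x^*$ so that it has shape $\oppn{n}{V}$ requires a subdivision $\oppn{n}{V} \cfun V^*$ from the original shape $V^*$ of $x^*$, and the existence of such a subdivision is immediate when $\oppn{n}{V}$ is an atom (i.e.\ when $V$ is) but requires an additional reduction in general --- most naturally by using Corollary \ref{cor:twothree} to reduce to top-dimensional atoms of $V$ one at a time, and verifying that each piece of a weakly invertible pasting can itself be made weakly invertible by a further reshaping, which stays within the coinductive set $A$. Executing this reduction cleanly, together with the combinatorial verification of the boundary identifications needed for $\hat s$ to be a well-defined total subdivision, will be the principal technical work; Lemma \ref{lem:unitmerge} is the only place where unit-merge compatibility is used, and then only to identify $\eps{}{\bord{}{-}y}$ with $\infl{s|_{\bord{}{-}U}} \cdot \eps{}{a}$.
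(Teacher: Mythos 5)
Your overall strategy — set up a coinductive set $A$ of diagrams of the form ``subdivision acting on a weakly invertible diagram,'' show $A \subseteq \winfun{A}$, and produce the higher witnesses by pulling back $h$, $h'$ along subdivisions between atoms built by the $- \celto -$ construction, using Lemma~\ref{lem:unitmerge} to identify the resulting boundaries with units — is exactly the paper's. But your choice of $\oppn{n}{U}$ (and $\oppn{n}{V}$) as the canonical shape for the weak inverse introduces a genuine gap that you have noticed but not closed, and in fact it is not closable by the reduction you sketch.

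The problem is precisely the one you flag: for general $U$, the molecule $\oppn{n}{U}$ is \emph{not} an atom, and there is no reason a subdivision $\oppn{n}{V} \cfun V^*$ should exist from it to the actual shape $V^*$ of $e^*$, since two molecules with identical boundaries can have incompatible internal decomposition structures. Your proposed fix — reduce to top-dimensional atoms of $V$ one at a time via Corollary~\ref{cor:twothree} — does not obviously preserve membership in $A$ (the witnesses produced by that corollary come from $\twothree{\infty}{-}$ and $\degg{X}$, not from the $\mmonad$-action), so it would need substantial further argument, and even then it attacks the wrong variable: the obstacle lives in $U$, not $V$.

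The paper sidesteps this entirely by choosing as the inverse shape the \emph{atom} $U^* \eqdef \bord{}{+}U \celto \bord{}{-}U$ rather than the molecule $\oppn{n}{U}$. Because $U^*$ has a greatest element, prescribing $s'$ on $\bord U^*$ (by $\restr{s'}{\bord{}{\alpha}U^*} \eqdef \restr{s}{\bord{}{-\alpha}U}$) forces the top atom to go to $V^*$, and this yields a \emph{unique} subdivision $s'\colon U^* \cfun V^*$ with no reshaping of $e^*$ required. The same device applies to the higher witnesses: $W_s \eqdef (U \cp{n-1} U^*) \celto \infl{\bord{}{-}U}$ and $W'_s \eqdef (U^* \cp{n-1} U) \celto \infl{\bord{}{+}U}$ are atoms (hence in $\cls{C}$ by axioms~\ref{ax:shift}, \ref{ax:submolecule}, \ref{ax:inflate}), so subdivisions $t\colon W_s \cfun W$ and $t'\colon W'_s \cfun W'$ onto the actual shapes of $h$, $h'$ again exist automatically once the boundary data are fixed by $s$ and $s'$. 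If you replace $\oppn{n}{U}$, $\oppn{n}{V}$ by $U^*$, $V^*$ throughout, your ``normalisation step'' disappears and the rest of your argument goes through essentially as the paper's.

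Two smaller points. First, $\oppn{n}{V}$ is not an ``atomic shape'' when $V$ is not an atom, so the opening of your second paragraph overstates what that choice buys; only the two $- \celto -$ shapes you list are atoms. Second, when you assert $y \cp{n-1} y^* = \tilde s \cdot (x \cp{n-1} x^*)$, the paper does not need such an explicit gluing lemma at that stage — the boundary identifications are handled inside the definition of the subdivision $t$ acting on the atom $W_s$ — so you would be adding work that the better choice of shape renders unnecessary.
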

\begin{proof}
Let $A \subseteq \cdgm{X}$ be the set of composable diagrams equal to $s \cdot e$ for an equivalence $e\colon V \to X$ and a subdivision $s\colon U \cfun V$, and consider one such diagram. Then $e\colon x \celto y$ is weakly invertible, so there exist a weak inverse $e^*\colon y \celto x$ and equivalences $h\colon e \cp{n-1} e^* \celto \eps{}{x}$ and $h'\colon e^* \cp{n-1} e \celto \eps{}{y}$. 

Let $V^*$ be the shape of $e^*$, $W$ be the shape of $h$, and $W'$ be the shape of $h'$. We have the following relations:
\begin{itemize}
	\item$\bord{}{\alpha}V^* = \bord{}{-\alpha}V$ for all $\alpha \in \{+,-\}$,
	\item $\bord{}{-}W = V\cp{n-1}V^*$ and $\bord{}{+}W = \infl{\bord{}{-}V}$,
	\item $\bord{}{-}W' = V^* \cp{n-1} V$ and $\bord{}{+}W' = \infl{\bord{}{+}V}$. 
\end{itemize}
Let $U^* \eqdef \bord{}{+}U \celto \bord{}{-}U$. There is a unique subdivision $s'\colon U^* \cfun V^*$ such that $\restr{s'}{\bord{}{\alpha}U^*} \eqdef \restr{s}{\bord{}{-\alpha}U}$ for all $\alpha \in \{+,-\}$.

Let $W_s \eqdef (U \cp{n-1} U^*) \celto \infl{\bord{}{-}U}$ and $W'_s \eqdef (U^* \cp{n-1} U) \celto \infl{\bord{}{+}U}$. There are subdivisions $t\colon W_s \cfun W$ and $t'\colon W'_s \cfun W'$ defined by
\begin{enumerate}
	\item $\restr{t}{U} = \restr{t'}{U} \eqdef s$ on $U \incl \bord{}{-}W_s$ and $U \incl \bord{}{-}W'_s$,
	\item $\restr{t}{U^*} = \restr{t'}{U^*} \eqdef s'$ on $U^* \incl \bord{}{-}W_s$ and $U^* \incl \bord{}{-}W'_s$,
	\item $\restr{t}{\bord{}{+}W_s} \eqdef \infl{\restr{s}{\bord{}{-}U}}$ and $\restr{t}{\bord{}{+}W'_s} \eqdef \infl{\restr{s}{\bord{}{+}U}}$.
\end{enumerate}
Then $t \cdot h$ has type $(s \cdot e) \cp{n-1} (s' \cdot e^*) \celto \infl{\restr{s}{\bord{}{-}U}} \cdot \eps{}x$, while $t' \cdot h'$ has type $(s' \cdot e^*) \cp{n-1} (s \cdot e) \celto \infl{\restr{s}{\bord{}{+}U}} \cdot \eps{}y$. By Lemma \ref{lem:unitmerge}, these are equal to
\begin{equation*}
(s \cdot e) \cp{n-1} (s' \cdot e^*) \celto \eps{}(\restr{s}{\bord{}{-}U} \cdot x), \quad \quad (s' \cdot e^*) \cp{n-1} (s \cdot e) \celto \eps{}(\restr{s}{\bord{}{+}U} \cdot y),
\end{equation*} 
respectively. Now $t \cdot h$ and $t' \cdot h'$ belong to $A$, which proves that $s \cdot e \in \winfun{A}$. Then $A \subseteq \winfun{A}$, and by coinduction $A \subseteq \winv{X} = \equi{X}$. 
\end{proof}

\begin{dfn}[Algebraic composite]
Let $X$ be a diagrammatic $\omega$\nbd category and let $x$ be a composable diagram of shape $U$ in $X$. Let $\compos{-}$ be the unique subdivision $\compos{U} \cfun U$. The \emph{algebraic composite} of $x$ is the cell $\compos{-} \cdot x$.
\end{dfn}

\begin{prop} \label{prop:algebraic_weak_composite}
The algebraic composite of $x$ is a weak composite of $x$.
\end{prop}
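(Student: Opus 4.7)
The plan is to apply Proposition \ref{prop:composites_fibrant}, reducing the statement to the construction of an equivalence in $X$ of shape $U \celto \compos{U}$ whose input boundary is $x$ and whose output boundary is $\compos{x} \eqdef \compos{-} \cdot x$. I would realise such an equivalence as $h \eqdef s \cdot \eps{}x$, where $s$ is a suitable subdivision and $\eps{}x = \tau \cdot x$ is the unit on $x$; the unit is degenerate, hence an equivalence by Corollary \ref{cor:twothree}, and the conclusion that $h$ is itself an equivalence will then follow immediately from Proposition \ref{prop:unitmerge_dgmcat}.

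To set up $s$, I would first verify that $U \celto \compos{U}$ and $\infl{U}$ both lie in $\cls{C}$: the former by two applications of axiom \ref{ax:shift} (first producing $\compos{U} = \bord{}{-}U \celto \bord{}{+}U$ from $\bord{}{-}U, \bord{}{+}U \in \cls{C}$, then $U \celto \compos{U}$ since $U$ and $\compos{U}$ share the same boundaries), the latter by axiom \ref{ax:inflate}. I would then define $s\colon (U \celto \compos{U}) \cfun \infl{U}$ on atoms as follows: on each atom of the input boundary $U \submol U \celto \compos{U}$ it acts as the canonical inclusion of $U$ as $\bord{}{-}\infl{U} \submol \infl{U}$; on the output atom $\compos{U}$ it acts as the unique subdivision $\compos{-}\colon \compos{U} \cfun U$ followed by the canonical inclusion of $U$ as $\bord{}{+}\infl{U} \submol \infl{U}$; and on the top cell it acts by $\clos\{\top\} \mapsto \infl{U}$. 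Boundary compatibility for $\top$ reduces to $\bord{}{\alpha}\infl{U} = U$ and matches the two specifications above; preservation of binary intersections follows from the fact that $\bord{}{-}\infl{U}$ and $\bord{}{+}\infl{U}$ meet exactly along $\bord U \submol \infl{U}$, in correspondence with $U \cap \compos{U} = \bord U$ inside $U \celto \compos{U}$; and $s(U \celto \compos{U}) = \infl{U}$ is immediate from the assignment on $\top$, so $s$ is a subdivision.

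Using the compatibility of the $\mmonad$\nbd action with boundaries, I would compute $\bord{}{-}h = \idd{U} \cdot \bord{}{-}\eps{}x = x$ and $\bord{}{+}h = \compos{-} \cdot \bord{}{+}\eps{}x = \compos{x}$, so that $h$ has the required type $x \celto \compos{x}$. Since $\eps{}x$ is an equivalence and $s$ is a subdivision between molecules in $\cls{C}$, Proposition \ref{prop:unitmerge_dgmcat} yields that $h$ is an equivalence, exhibiting $x \simeq \compos{x}$ and completing the argument via Proposition \ref{prop:composites_fibrant}. The only genuinely non-routine step is pinning down $s$; once that is in hand, the preservation of equivalences under subdivisions — which is where unit-merge compatibility enters essentially — does the rest.
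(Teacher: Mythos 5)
Your proof is correct and follows essentially the same route as the paper: construct the subdivision $(U \celto \compos{U}) \cfun \infl{U}$, apply it to the unit $\eps{}x$, and invoke Proposition \ref{prop:unitmerge_dgmcat} to conclude that the result is an equivalence of type $x \celto \compos{-}\cdot x$. You give a more explicit description of the subdivision (which the paper leaves implicit) and you route the conclusion through Proposition \ref{prop:composites_fibrant}, but the latter citation is superfluous — once you have an equivalence $x \celto \compos{-}\cdot x$, the definition of weak composite already finishes the argument.
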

\begin{proof}
There is a unique subdivision $c\colon (U \celto \compos{U}) \cfun \infl{U}$ restricting to the identity on $U \incliso \bord{}{-}(U \celto \compos{U})$. Then $c(x) \eqdef c \cdot (\eps{}{x})$ is a cell of type $x \celto (\compos{-} \cdot x)$. Because $\eps{}{x}$ is an equivalence, $c(x)$ is also an equivalence, exhibiting $\compos{-} \cdot x$ as a weak composite of $x$.
\end{proof}

\begin{cor}
Every diagrammatic $\omega$\nbd category has weak composites.
\end{cor}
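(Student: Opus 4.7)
The proof plan is essentially to observe that the preceding proposition already does all the work: the corollary is just a matter of unpacking the definition of ``has weak composites.'' Concretely, I would proceed as follows.

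First, I would recall the definition: a diagrammatic set $X$ has weak composites iff every composable diagram $x\colon U \to X$ (with $U \in \cls{C}$) admits a cell $\compos{x}\colon \compos{U} \to X$ with $x \simeq \compos{x}$. Then, given a diagrammatic $\omega$\nbd category $X$ and a composable diagram $x\colon U \to X$, I would invoke the unique subdivision $\compos{-}\colon \compos{U} \cfun U$ furnished by Example \ref{exm:unique_subdivision} (which applies because $U$ has spherical boundary and $\compos{U}$ is an atom in $\cls{C}$, available by the convenience axioms, in particular \ref{ax:shift}). Using the $\mmonad$\nbd algebra structure on $X$, this produces the algebraic composite $\compos{-}\cdot x$, a cell of shape $\compos{U}$.

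Next I would cite Proposition \ref{prop:algebraic_weak_composite} directly: the algebraic composite $\compos{-}\cdot x$ is a weak composite of $x$, meaning $x \simeq \compos{-}\cdot x$ via the equivalence $c(x)$ constructed there. Since this produces a weak composite for an arbitrary composable $x$, the underlying diagrammatic set of $X$ satisfies the defining condition of having weak composites. Because ``equivalence in a diagrammatic $\omega$\nbd category'' is defined as ``equivalence in its underlying diagrammatic set,'' there is no mismatch between the two notions, and the conclusion follows.

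There is no substantive obstacle here: all of the content lives in Proposition \ref{prop:algebraic_weak_composite} (which in turn relies on Proposition \ref{prop:unitmerge_dgmcat} and unit-merge compatibility to know that the relevant degenerate cell $c(x) = c \cdot \eps{}{x}$ is an equivalence). The corollary itself is a single-sentence invocation of that proposition, and I would write it essentially as: \emph{By Proposition \ref{prop:algebraic_weak_composite}, every composable diagram in a diagrammatic $\omega$\nbd category admits a weak composite, namely its algebraic composite.}
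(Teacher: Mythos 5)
Your proposal matches the paper's (implicit) proof exactly: the corollary is an immediate consequence of Proposition \ref{prop:algebraic_weak_composite}, which shows that the algebraic composite $\compos{-}\cdot x$ — available for any composable $x$ because of the $\mmonad$\nbd algebra structure and the unique subdivision $\compos{U} \cfun U$ — is a weak composite of $x$. Your unpacking of the definitions and the pointer to Example \ref{exm:unique_subdivision} for the existence of the subdivision are correct and appropriate.
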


\begin{rmk}
By Proposition \ref{prop:basic_weak_composite}, morphisms of diagrammatic sets only weakly preserve weak composites, but functors of diagrammatic $\omega$\nbd categories strictly preserve algebraic composites.
\end{rmk}

\begin{dfn}
Because it is defined equationally, $\dgmcat$ is a reflective subcategory of $\algmon{\{\mmonad, \imonad\}}$. Combined with Proposition \ref{prop:left_adjoints}, this implies that the forgetful functor $\dgmcat \to \dgmset$ has a left adjoint $\tmonad\colon \dgmset \to \dgmcat$. 

This left adjoint can be constructed stepwise as follows. For each diagrammatic set $X$, we define sequences $\imonad_n X$ of diagrammatic sets and $\mmonad_n X$ of non-unital $\cls{C}$\nbd $\omega$\nbd categories, with the property that
\begin{enumerate}
	\item for all $n > 0$, there are inclusions
\begin{equation} \label{eq:tmonad_inclusions}
	\imonad_{n-1} X \incl \mmonad_{n-1} X \incl \imonad_n X \incl \mmonad_n X \incl \ldots
\end{equation}
of the underlying $\cls{C}$\nbd polygraphs, and
	\item $\imonad_{n-1} X \incl \imonad_n X$ is a morphism in $\dgmset$ and $\mmonad_{n-1} X \incl \mmonad_n X$ is a morphism in $\nucat$.
\end{enumerate}
We let $\imonad_0 X \eqdef X$ and $\mmonad_0 \eqdef \mmonad X$ with the free $\mmonad$\nbd algebra structure. Let $n > 0$ and suppose $\imonad_{n-1} X$ and $\mmonad_{n-1} X$ are defined.
\begin{itemize}
	\item We let $\imonad_n X$ be the quotient of the free $\imonad$\nbd algebra $\imonad (\mmonad_{n-1} X)$ by the relations
	\begin{enumerate}
		\item $(p, x) \sim (\idd{}, p\cdot x)$ whenever $x$ factors through $\imonad_{n-1} X$, and
		\item $(p, s \cdot x) \sim (\idd{}, s' \cdot (p'\cdot x))$ whenever $x$ factors through $\imonad_{n-1} X$, $s\cdot x$ is defined in $\mmonad_{n-1} X$, and $s', p, p'$ are as in diagram (\ref{eq:cylinder_subdivision}).
	\end{enumerate}
	\item We let $\mmonad_n X$ be the quotient of the free $\mmonad$\nbd algebra $\mmonad (\imonad_{n} X)$ by the relation $(s, x) \sim (\idd{}, s \cdot x)$ whenever $x$ factors through $\mmonad_{n-1} X$. 
\end{itemize}
Let $\tmonad X$ be the colimit in $\cpol$ of the sequence of inclusions (\ref{eq:tmonad_inclusions}). Then $\tmonad X$ is both a colimit of $\imonad$\nbd algebras and of $\mmonad$\nbd algebras, so it has an $\{\mmonad, \imonad\}$\nbd algebra structure. By construction it satisfies unit-merge compatibility, so it is a diagrammatic $\omega$\nbd category.

We have a natural inclusion $X \incl \tmonad X$ of diagrammatic sets. It is an exercise to show that it is universal from $X$ to the forgetful functor $\dgmcat \to \dgmset$. 
\end{dfn}

\begin{dfn}[Weak equivalence] \label{dfn:weak_equivalence}
Let $X, Y$ be diagrammatic sets with weak composites. A morphism $f\colon X \to Y$ is a \emph{weak equivalence} if 
\begin{enumerate}
	\item for all 0-cells $y$ in $Y$, there exists a 0-cell $x$ in $X$ such that $y \simeq f(x)$, and
	\item for all $n > 0$, parallel composable $(n-1)$\nbd diagrams $x_1, x_2$ in $X$, and $n$\nbd cells $y\colon f(x_1) \celto f(x_2)$ in $Y$, there exists an $n$\nbd cell $x\colon x_1 \celto x_2$ in $X$ such that $y \simeq f(x)$.
\end{enumerate}
\end{dfn}

\begin{comm}
This is based on the definition of weak equivalences of strict $\omega$\nbd categories in the folk model structure on $\omegacat$ defined by Lafont, M\'etayer, and Worytkiewicz \cite{lafont2010folk}. We expect it to characterise weak equivalences of fibrant objects in the model structure that we sketched in Remark \ref{rmk:model_structure}.
\end{comm}

\begin{conj} \label{conj:weak_equivalence}
If $X$ is a diagrammatic set with weak composites, the unit $X \to \tmonad X$ is a weak equivalence.
\end{conj}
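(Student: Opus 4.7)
The plan is to prove by induction on the stage of the filtration constructing $\tmonad X$ that every cell of $\tmonad X$ is $\simeq$-equivalent to a cell in the image of $\eta\colon X \incl \tmonad X$, and then promote this to produce a lift with prescribed boundary. First, observe that $\tmonad X$ has the same 0-cells as $X$ via $\eta$: neither $\imonad$ nor $\mmonad$ adjoins new 0-cells, since every surjective map or subdivision whose source is a 0-atom is an identity. So the first clause of Definition \ref{dfn:weak_equivalence} is automatic, and the task reduces to the second clause: for every $n$-cell $y\colon \eta(x_1) \celto \eta(x_2)$ in $\tmonad X$, find an $n$-cell $x\colon x_1 \celto x_2$ in $X$ with $y \simeq \eta(x)$.

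The main existence step is to prove the weaker statement that every cell $y$ of $\tmonad X$ is $\simeq$-equivalent to $\eta(x')$ for some cell $x' \in X$. I would proceed by induction on the stage at which $y$ first appears in the filtration $X = \imonad_0 X \incl \mmonad_0 X \incl \imonad_1 X \incl \ldots$, with an auxiliary structural induction to extend from cells to composable diagrams. For the step from $\imonad_n X$ to $\mmonad_n X$: a new cell has the form $s \cdot z$ for $z$ a composable diagram in $\imonad_n X$ and $s\colon U \cfun \compos{U}$ a subdivision. Assuming that $z \simeq \eta(z')$ for some composable diagram $z'$ in $X$, Proposition \ref{prop:unitmerge_dgmcat} transports the equivalence through $s$ to give $s \cdot z \simeq s \cdot \eta(z')$. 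But $s \cdot \eta(z')$ is the algebraic composite of $\eta(z')$, hence a weak composite by Proposition \ref{prop:algebraic_weak_composite}; and since $X$ has weak composites, $z'$ has a weak composite $\compos{z'}$ in $X$, with $\eta(\compos{z'})$ a second weak composite of $\eta(z')$ by Proposition \ref{prop:basic_weak_composite}. Uniqueness of weak composites up to equivalence gives $s \cdot \eta(z') \simeq \eta(\compos{z'})$, so $s \cdot z \simeq \eta(\compos{z'})$. For the step from $\mmonad_n X$ to $\imonad_{n+1} X$: a new cell has the form $p \cdot z$ for a non-identity surjective map $p$; if $h\colon z \celto \eta(z')$ is the inductively provided equivalence, then $p \cdot h$ is a degenerate diagram, hence an equivalence by Corollary \ref{cor:twothree}, witnessing $p \cdot z \simeq \eta(p \cdot z')$.

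To promote $y \simeq \eta(x')$ to a lift with prescribed boundary $x_1, x_2$, I would run an outer induction on dimension: at dimension $n$, assume the full weak equivalence statement in dimensions $< n$. The boundary equivalences $\bord{}{\alpha}h\colon \eta(x_\alpha) \celto \eta(\bord{}{\alpha}x')$ extracted from $h\colon y \celto \eta(x')$ have boundaries in $\eta(X)$; applying the inductive statement to their underlying $(n-1)$-cells gives, after one further step, equivalences $x_\alpha \celto \bord{}{\alpha}x'$ in $X$. Substituting these boundary equivalences along $x'$ inside $X$, which uses the existence of weak composites and Proposition \ref{prop:pasting_equivalence}, yields the desired $x\colon x_1 \celto x_2$, and $y \simeq \eta(x)$ then follows from the closure properties of equivalence in Corollary \ref{cor:twothree}.

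The principal obstacle is the auxiliary induction from cells to composable diagrams in the existence step: lifting cellwise equivalences $z_i \simeq \eta(x_i)$ for the atoms $z_i$ of a composable diagram $z$ to a diagram-level equivalence $z \simeq \eta(z')$. The individual lifts need not agree on the shared lower-dimensional boundaries required by the pasting decomposition of $z$, so one cannot simply glue them. Aligning them requires solving coherence problems inside $X$ that are essentially instances of the weak equivalence statement one dimension lower, producing an intertwined induction on dimension and on the diagrammatic complexity of $z$ whose well-foundedness is not evident. This is precisely the combinatorial coherence issue alluded to in the remark that ``a full proof seems beyond our current understanding'', and accounts for why this statement remains a conjecture.
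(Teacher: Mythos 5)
The statement you are attempting is a conjecture; the paper itself contains no proof, only a discussion (in the \emph{Comment} following Conjecture \ref{conj:weak_equivalence}) of why the author believes it and where an attempted proof gets stuck. Your proposal tracks that discussion closely: you run the same induction along the tower $X = \imonad_0 X \incl \mmonad_0 X \incl \imonad_1 X \incl \ldots$, you handle the merge step by recognising algebraic composites and invoking uniqueness of weak composites, you handle the inflate step by using degeneracy, and most importantly you correctly localise the sticking point to the passage from cellwise reductions to diagram-level reductions, which is exactly the obstruction the paper describes (``we cannot at present rule out that the alternation of inflate and merge creates new composable diagrams that are not reducible to composable diagrams in $X$''). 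Your last paragraph is essentially the same assessment the paper gives, and your honesty about the gap is appropriate: this is why the statement remains a conjecture.

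Two technical points to tighten if you ever try to make the sketch precise. First, the subdivision underlying an algebraic composite of a composable diagram of shape $V$ is $\compos{-}\colon \compos{V} \cfun V$, i.e.\ the \emph{source} is the atom and the target is the molecule; you wrote $s\colon U \cfun \compos{U}$ with the arrow reversed, which would not be a nontrivial subdivision. Second, when you write ``$p\cdot h$'' or invoke Proposition \ref{prop:unitmerge_dgmcat} to ``transport'' $z \simeq \eta(z')$ through $s$, the surjective map or subdivision you actually need to apply to the witness $h$ is not $p$ or $s$ themselves but a shifted analogue acting on the shape of $h$ (for instance an $\infl{-}$- or $\celto$-modified map restricting to $p$ or $s$ on the boundaries); the constructions in \S\ref{dfn:fattening} and Lemma \ref{lem:unitmerge} are the right template. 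None of this changes the location of the genuine obstacle you identify, but the notation as written elides a nontrivial shape-level construction.
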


\begin{comm}
This is a semi-strictification conjecture \emph{\`a la} Simpson for diagrammatic sets with weak composites. If proven true, it would allow us to consider $\tmonad X$ as a kind of coherent completion of $X$.

We suspect Conjecture \ref{conj:weak_equivalence} to be true at least when $\cls{C}$ is $\cls{S}$, the class of molecules with spherical boundary. The addition of units by $\imonad$ and composites by $\mmonad$ seem to be separately homotopically trivial: the latter is indicated by Henry's results on ``regular polygraphs and regular $\infty$\nbd categories'' \cite[Section 6.2]{henry2018regular}, to which our $\cls{S}$\nbd polygraphs and non-unital $\cls{S}$\nbd $\omega$\nbd categories are a combinatorial counterpart. 

Free $\{\mmonad,\imonad\}$\nbd algebras are constructed by the interleaving of these two operations, and unit-merge compatibility seems to be instrumental in avoiding the creation of non-joinable parallel pairs between ``first merge, then inflate'' and ``first inflate, then merge''.

An attempt to prove the conjecture would start by imitating the proof of its counterpart in the theory of regular polygraphs, namely \cite[Proposition 6.2.4]{henry2018regular}. Every cell in $\tmonad X$ factors through the inclusion of $\mmonad_n X$ or $\imonad_n X$, and we can proceed by induction.

The first non-trivial case are cells in $\mmonad_0 X$, which are of the form $(s,x)$ where $s\colon U \surj V$ is a non-trivial subdivision and $x\colon V \to X$ a composable diagram in $X$. If the boundary of $(s,x)$ is in $X$, the restriction of $s$ to $\bord U$ must be the identity, so $U = \compos{V}$ and $s = \compos{-}$. Then $(s,x)$ is the algebraic composite of $x$ in $\tmonad X$, so by Proposition \ref{prop:algebraic_weak_composite} $(s,x) \simeq x$. If $X$ has weak composites, then $x \simeq \compos{x}$ for some cell $\compos{x}$ in $X$, and $(s,x) \simeq \compos{x}$ in $\tmonad X$. 

Along similar lines, we can prove that, if both $\imonad_{n-1} X$ and $\imonad_n X$ have weak composites, then $\imonad_{n-1} X \incl \imonad_n X$ is a weak equivalence. The remaining step is to prove that $\imonad_n X$ has weak composites whenever $\imonad_{n-1} X$ does. 

Unfortunately, we cannot at present rule out that the alternation of inflate and merge creates new composable diagrams that are not reducible to composable diagrams in $X$. Topological considerations suggest that this should not be the case, but we will need a better understanding of the combinatorics of $\tmonad$ in order to prove it.
\end{comm}

\section{Nerves of strict $\omega$-categories} \label{sec:nerves}

\subsection{Algebraically free classes}

\begin{dfn}
The inclusion $\omegacat \incl \pomegacat$ has a left adjoint $-^*\colon \pomegacat \to \omegacat$. If $X$ is a partial $\omega$\nbd category, then $X^*$ is the free $\omega$\nbd category on the underlying reflexive $\omega$\nbd graph of $X$, quotiented by all the equations involving compositions that are defined in $X$. 
\end{dfn}

\begin{dfn}
Let $\cls{C}$ be a class of molecules, and let $f\colon P \to Q$ be a map of $\cls{C}$\nbd directed complexes. Because $f$ is compatible with boundaries and $\mol{}{P}$ is composition-generated by the atoms of $P$, the assignment $\clos\{x\} \mapsto \clos\{f(x)\}$ extends uniquely to a functor of partial $\omega$\nbd categories $\mol{}{P} \to \mol{}{Q}^*$, which by adjointness gives a functor of $\omega$\nbd categories $\mol{}{f}^*\colon \mol{}{P}^* \to \mol{}{Q}^*$. This determines a functor $\mol{}{-}^*\colon \dcpx \to \omegacat$. 

If $\cls{C}$ is algebraic, the assignment of Corollary \ref{cor:cfunctor_functor}, post-composed with $-^*$, also determines a functor $\mol{}{-}^*\colon \dcpxfun \to \omegacat$, which fits into a commutative square
\begin{equation} \label{eq:mol_star_functors}
\begin{tikzpicture}[baseline={([yshift=-.5ex]current bounding box.center)}]
	\node (0) at (-.5,1.5) {$\dcpxin$};
	\node (1) at (2.5,1.5) {$\dcpxfun$};
	\node (2) at (-.5,0) {$\dcpx$};
	\node (3) at (2.5,0) {$\omegacat$.};
	\draw[1cinc] (0) to (1);
	\draw[1cincl] (0) to (2);
	\draw[1c] (2) to node[auto,arlabel,swap] {$\mol{}{-}^*$} (3);
	\draw[1c] (1) to node[auto,arlabel] {$\mol{}{-}^*$} (3);
\end{tikzpicture}
\end{equation}
\end{dfn}

\begin{dfn}[Skeleta]
Let $X$ be an $\omega$\nbd category, $n \in \mathbb{N}$. The \emph{$n$\nbd skeleton} $\skel{n}{X}$ of $X$ is the restriction of $X$ to the sub-$\omega$\nbd graph
\begin{equation*}
\begin{tikzpicture}
	\node (0) at (0,0) {$X_0$};
	\node (1) at (2,0) {$\ldots$};
	\node (2) at (4,0) {$X_n$};
	\node (3) at (6,0) {$\eps{}(X_n)$};
	\node (4) at (8,0) {$\ldots$};
	\node (5) at (10,0) {$\eps{m}(X_n)$};
	\node (6) at (12,0) {$\ldots$};
	\draw[1c] (1.west |- 0,.15) to node[auto,swap,arlabel] {$\bord{}{+}$} (0.east |- 0,.15);
	\draw[1c] (1.west |- 0,-.15) to node[auto,arlabel] {$\bord{}{-}$} (0.east |- 0,-.15);
	\draw[1c] (2.west |- 0,.15) to node[auto,swap,arlabel] {$\bord{}{+}$} (1.east |- 0,.15);
	\draw[1c] (2.west |- 0,-.15) to node[auto,arlabel] {$\bord{}{-}$} (1.east |- 0,-.15);
	\draw[1c] (3.west |- 0,.15) to node[auto,swap,arlabel] {$\bord{}{+}$} (2.east |- 0,.15);
	\draw[1c] (3.west |- 0,-.15) to node[auto,arlabel] {$\bord{}{-}$} (2.east |- 0,-.15);
	\draw[1c] (4.west |- 0,.15) to node[auto,swap,arlabel] {$\bord{}{+}$} (3.east |- 0,.15);
	\draw[1c] (4.west |- 0,-.15) to node[auto,arlabel] {$\bord{}{-}$} (3.east |- 0,-.15);
	\draw[1c] (5.west |- 0,.15) to node[auto,swap,arlabel] {$\bord{}{+}$} (4.east |- 0,.15);
	\draw[1c] (5.west |- 0,-.15) to node[auto,arlabel] {$\bord{}{-}$} (4.east |- 0,-.15);
	\draw[1c] (6.west |- 0,.15) to node[auto,swap,arlabel] {$\bord{}{+}$} (5.east |- 0,.15);
	\draw[1c] (6.west |- 0,-.15) to node[auto,arlabel] {$\bord{}{-}$} (5.east |- 0,-.15);
\end{tikzpicture}
\end{equation*}
whose $m$\nbd cells are all in the image of $\eps{}\colon X_{m-1} \to X_m$ if $m > n$.
\end{dfn}

\begin{dfn}[Polygraph]
Let $\bord O^n \eqdef \skel{n-1}{O^n}$. A \emph{polygraph} is an $\omega$\nbd category $X$ together with a set $A = \bigcup_{n \in \mathbb{N}}A_n$ of \emph{generating} cells such that, for all $n \in \mathbb{N}$, 
\begin{equation*}
\begin{tikzpicture}[baseline={([yshift=-.5ex]current bounding box.center)}]
	\node (0) at (-.5,1.5) {$\coprod_{x \in A_n} \bord O^n$};
	\node (1) at (2.5,0) {$\skel{n}{X}$};
	\node (2) at (-.5,0) {$\skel{n-1}{X}$};
	\node (3) at (2.5,1.5) {$\coprod_{x \in A_n} O^n$};
	\draw[1cinc] (0) to (3);
	\draw[1c] (0) to (2);
	\draw[1cinc] (2) to (1);
	\draw[1c] (3) to node[auto,arlabel] {$(x)_{x \in A_n}$} (1);
	\draw[edge] (1.6,0.2) to (1.6,0.7) to (2.3,0.7);
\end{tikzpicture}
\end{equation*}
is a pushout in $\omegacat$. 
\end{dfn}

\begin{dfn}[Algebraically free class of molecules]
Let $\cls{C}$ be a class of molecules. We say that $\cls{C}$ is \emph{algebraically free} if $(\mol{}{P}^*, \{\clos\{x\}\}_{x \in P})$ is a polygraph for all $\cls{C}$\nbd directed complexes $P$.
\end{dfn}

\begin{exm} \label{exm:lf_algebraically_free}
By \cite[Theorem 2.13]{steiner1993algebra}, combined with [Theorem 2.17, \emph{ibid.}], the class $\cls{LF}$ of totally loop-free molecules is algebraically free.
\end{exm}

\begin{conj} \label{conj:algebraically_free}
The class $\cls{S}$ of molecules with spherical boundary is algebraically free.
\end{conj}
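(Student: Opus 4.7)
The plan is to proceed by induction on dimension, showing for each $n\in\mathbb{N}$ that the pushout square
\begin{equation*}
\begin{tikzpicture}[baseline={([yshift=-.5ex]current bounding box.center)}]
	\node (0) at (0,1.5) {$\coprod_{x \in P_n} \bord O^n$};
	\node (1) at (3,0) {$\skel{n}{\mol{}{P}^*}$};
	\node (2) at (0,0) {$\skel{n-1}{\mol{}{P}^*}$};
	\node (3) at (3,1.5) {$\coprod_{x \in P_n} O^n$};
	\draw[1cinc] (0) to (3);
	\draw[1c] (0) to (2);
	\draw[1cinc] (2) to (1);
	\draw[1c] (3) to node[auto,arlabel] {$(\clos\{x\})_{x \in P_n}$} (1);
\end{tikzpicture}
\end{equation*}
is a pushout in $\omegacat$, where $P_n$ is the set of $n$-dimensional elements of $P$. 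The base case $n=0$ holds because $0$-molecules in a regular directed complex are exactly $0$-atoms, so both sides are the discrete $\omega$-category on $P_0$. For the inductive step, using the construction of $\mol{}{P}^*$ as generated by atoms subject to the equations of $\mol{}{P}$, it suffices to prove a uniqueness statement: any two formal composites of $n$-atoms in the free $\omega$-category on $\skel{n-1}{\mol{}{P}^*} \cup P_n$ which evaluate to the same $n$-molecule $U$ in $\mol{n}{P}$ are already equal modulo the $\omega$-category axioms and the equations of $\mol{n-1}{P}^*$.

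First, I would establish the relative version of Lemma \ref{lem:composition_form}: every $n$-molecule with spherical boundary admits a decomposition $V_1 \cp{n-1} \ldots \cp{n-1} V_m$ (via Lemma \ref{lem:spherical_moves}) whose $n$-atoms enumerate the $n$-atoms of $U$ exactly once, in an order that is constrained by the covering relation on $(n-1)$-faces shared between consecutive $V_i$. The spherical boundary condition ensures via Lemma \ref{lem:disjoint_sbord} that the $n$-atoms of $U$ are exactly the maximal elements of $U$ of dimension $n$, and each such atom contributes one $V_i$ factor. The crux is then to show that any two such ``atomic'' decompositions differ only by the interchange relations between $n$-atoms whose $(n-1)$-boundaries are disjoint, i.e. by the $\omega$-category axioms of $\mol{}{P}$.

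For this, I would introduce the auxiliary poset of atomic decompositions ordered by ``elementary reorderings'': given a decomposition, swap two consecutive $V_i, V_{i+1}$ whose unique $n$-atoms $U_i, U_{i+1}$ share no common $(n-2)$-face in $\bord{}{+}U_i \cap \bord{}{-}U_{i+1}$ beyond $\bord{n-2}{}U$. Using Lemma \ref{lem:path_to_boundary} combined with the local picture around each $(n-1)$-face given by Lemma \ref{lem:molecule_codim}, one should prove this reordering poset is connected, and moreover that the basic swaps correspond to applications of the middle-four-interchange in the $\omega$-category axioms. Finally the same technique at lower dimensions (via Remark \ref{rmk:spherical_k}) plus the inductive hypothesis on the boundary handles the equations involving non-$n$-dimensional subcomposites.

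The principal obstacle — the reason this remains a conjecture — is exactly the step about connectedness of the reordering poset. In the totally loop-free case (Example \ref{exm:lf_algebraically_free}), Steiner's argument uses the acyclicity of $\hasseo{P}$ to pick a canonical ``left-greedy'' decomposition toward which every other decomposition reduces by swaps. Without total loop-freeness, there is no such canonical representative, and distinct atomic decompositions of a molecule with spherical boundary can in principle be linked only through intermediate composites that briefly violate a local ordering. A full proof will likely require producing an invariant (perhaps a ``flow'' valued in a quotient of the free group on $n$-atoms, or a cellular homology-like object read off from $\bord U$) that witnesses equality of two decompositions and whose triviality follows from sphericality of the boundary. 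This is precisely the combinatorial understanding of directed complexes and their morphisms whose absence the author flags in the introduction.
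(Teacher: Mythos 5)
This is Conjecture~\ref{conj:algebraically_free}: the paper states it explicitly as an \emph{open conjecture} and offers no proof. So there is no ``paper's own proof'' to compare against. What the paper does provide is Proposition~\ref{prop:free_preserve_gamma}, which characterises algebraic freeness as $\mol{}{-}^*$ preserving $\Gamma$-colimits, and a Comment immediately after the conjecture discussing why it is hard: the normalisation strategy used for the totally loop-free case (as in Steiner \cite{steiner1993algebra} and Forest--Mimram \cite{forest2019describing}) does not extend to general molecules with spherical boundary, since it relies on producing a canonical representative decomposition.

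You have correctly recognised that this is an open problem rather than a routine exercise, and your framing is essentially the one the paper itself envisages. Reducing to the pushout squares $\skel{n-1}{\mol{}{P}^*} \to \skel{n}{\mol{}{P}^*}$ is exactly what Proposition~\ref{prop:free_preserve_gamma} licenses, and the remaining content is indeed a uniqueness statement about formal composites of $n$-atoms. Your diagnosis of the obstacle --- that one wants a ``connectedness of the reordering poset'' argument, that in the totally loop-free case acyclicity of $\hasseo{P}$ gives a canonical greedy decomposition to normalise towards, and that sphericality alone provides no such canonical form --- matches the paper's own assessment closely. Your suggestion to look for an invariant witnessing equality of decompositions is speculative but not unreasonable.

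One small technical quibble in your sketch: the condition you give for when two consecutive factors $V_i, V_{i+1}$ may be swapped --- sharing no $(n-2)$-face beyond $\bord{n-2}{}U$ --- is not quite the usual criterion. The relevant condition for an interchange swap is that the $(n-1)$-dimensional overlap $\bord{}{+}U_i \cap \bord{}{-}U_{i+1}$ contains no $(n-1)$-dimensional element, i.e.\ that $U_i$ and $U_{i+1}$ are not in a covering relation through a shared codimension-one face. Phrasing it in terms of $(n-2)$-faces conflates the dimension at which the obstruction lives. This would need to be fixed before the reordering argument could be made precise, but it does not change the fact that the genuine gap --- establishing connectedness of the space of decompositions without a canonical normal form --- is exactly where you say it is, and exactly what the paper itself flags as unresolved.
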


\begin{rmk}
If $\cls{C}$ is algebraically free, then so is every class contained in $\cls{C}$. In particular, if Conjecture \ref{conj:algebraically_free} holds, then every convenient class is algebraically free.
\end{rmk}

\begin{comm}
This conjecture generalises and implies \cite[Conjecture 3]{hadzihasanovic2018combinatorial}. Via Proposition \ref{prop:cw_poset}, we think of it as a directed version of the classical result that regular CW complexes are determined up to cellular homeomorphism by their face poset \cite[Theorem 1.7]{lundell1969topology}: it implies that, if the \emph{oriented} face poset of a polygraph is a regular directed complex, we can use it to reconstruct the original polygraph up to isomorphism.

We believe the conjecture to be true and know no counterexamples, but the normalisation strategy used in proofs of special cases, as in \cite{steiner1993algebra} or \cite{forest2019describing}, does not extend to the general case. If, on the other hand, the conjecture turned out to be false, there are two possible scenarios:
\begin{enumerate}
	\item the conjecture is false for $\cls{S}$, but true for another convenient class $\cls{C}$;
	\item no convenient class is algebraically free. 
\end{enumerate}
In the first scenario, we may want to work with the restricted class, with no apparent loss. The second scenario would point to a deeper mismatch between diagrammatic sets and $\omega$\nbd categories. To this, we could respond in two ways.

One would be to try to realign them by intervening on the underlying combinatorics of diagrammatic sets. A counterexample may suggest what other information is needed to reconstruct a unique polygraph besides its oriented face poset, and we could try to reformulate the theory starting from oriented graded posets with this additional structure. 

On the other hand, regular CW complexes and their face posets are well-established as a combinatorial notion of space, and it seems perfectly justifiable to see \emph{face posets of regular CW complexes with an orientation, compatible with composition}, as a valid combinatorial notion of directed space. From this perspective, a counterexample could be seen as a failure in the algebra of $\omega$\nbd categories to identify what should be identical directed spaces, and we would have more reason to drop strict $\omega$\nbd categories and use the combinatorial framework.
\end{comm}

\begin{prop} \label{prop:free_preserve_gamma}
The following are equivalent:
\begin{enumerate}
	\item $\cls{C}$ is algebraically free;
	\item the functor $\mol{}{-}^*\colon \dcpxin \to \omegacat$ preserves pushouts and the initial object.
\end{enumerate}
\end{prop}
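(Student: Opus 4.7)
The plan hinges on the canonical skeletal filtration of a $\cls{C}$\nbd directed complex $P$ by its closed subsets $P^{(n)} \eqdef \clos\{x \in P \mid \dmn{x} \leq n\}$, with $P^{(-1)} = \emptyset$ and $P = \bigcup_n P^{(n)}$. For each $n$, the inclusion $P^{(n-1)} \incl P^{(n)}$ realises $P^{(n)}$ as a pushout in $\dcpxin$
\begin{equation*}
	\coprod_{\dmn{x} = n} \bord \clos\{x\} \; \incl \; P^{(n-1)}, \quad \quad \coprod_{\dmn{x} = n} \bord \clos\{x\} \; \incl \; \coprod_{\dmn{x} = n} \clos\{x\},
\end{equation*}
by Proposition \ref{prop:rdcpxin_colimits}, since on underlying posets $P^{(n)}$ is the union of $P^{(n-1)}$ with the atoms $\clos\{x\}$ at dimension $n$, overlapping precisely in $\bord x \subseteq P^{(n-1)}$.

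For (2) $\Rightarrow$ (1), I would assume $\mol{}{-}^*$ preserves the initial object and pushouts of inclusions, whence by induction it also preserves the finite coproducts and iterated pushouts assembled from these. Applying $\mol{}{-}^*$ to the skeletal filtration and composing inductively with the fact that $\mol{}{\clos\{x\}}^*$ for an $n$\nbd atom $\clos\{x\}$ is the free $\omega$\nbd category on one $n$\nbd cell with boundary $\mol{}{\bord \clos\{x\}}^*$, one obtains at each stage the pushout square
\begin{equation*}
	\coprod_{\dmn{x} = n} \bord O^n \to \skel{n-1}{\mol{}{P}^*}, \quad \quad \coprod_{\dmn{x} = n} \bord O^n \to \coprod_{\dmn{x} = n} O^n,
\end{equation*}
whose pushout is $\skel{n}{\mol{}{P}^*}$. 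This is exactly the defining diagram of a polygraph with generators $\{\clos\{x\}\}_{x \in P}$.

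For (1) $\Rightarrow$ (2), preservation of the initial object is immediate: $\emptyset$ has no atoms, and the polygraph on the empty set of generators is the initial $\omega$\nbd category. For a pushout $P_1 \hookleftarrow P_0 \hookrightarrow P_2$ with pushout $P$ in $\dcpxin$, the atoms of $P$ are the disjoint union of atoms of $P_1$ and $P_2$, with those coming from $P_0$ identified in the evident way. Assuming each $\mol{}{P_i}^*$ is the polygraph on atoms of $P_i$ and that $\mol{}{P}^*$ is likewise a polygraph on atoms of $P$, a skeleton-by-skeleton comparison of the two pushouts in $\omegacat$ shows the canonical comparison map is an isomorphism: both sides are built from the same $(n-1)$\nbd skeleton pushout by attaching the same $n$\nbd cells along the same boundaries.

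The main obstacle is the lemma needed in direction (2) $\Rightarrow$ (1) that $\mol{}{\clos\{x\}}^*$ for an $n$\nbd atom is the free $\omega$\nbd category on one $n$\nbd cell with boundary $\mol{}{\bord \clos\{x\}}^*$. One direction (the existence of the top cell and a comparison map) is easy, but ruling out extra $n$\nbd cells beyond the top element and compositions of lower-dimensional material requires controlling the set of $n$\nbd molecules contained in $\clos\{x\}$ and showing they are all generated, under $\omega$\nbd categorical composition, by the greatest element $x$ and degeneracies of $\bord x$. This is really the crux of the algebraic-freeness property; once established at the level of a single atom, the equivalence follows by iterated pushout bookkeeping.
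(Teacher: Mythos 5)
Your approach matches the paper's: both decompose a $\cls{C}$\nbd directed complex via the skeletal filtration, realise each step as a pushout of inclusions, and paste this with the per-atom pushout
\begin{equation*}
\begin{tikzpicture}[baseline={([yshift=-.5ex]current bounding box.center)}]
	\node (0) at (-.5,1.5) {$\bord O^n$};
	\node (1) at (2.5,0) {$\mol{}{(\clos\{x\})}^*$};
	\node (2) at (-.5,0) {$\mol{}{(\bord x)}^*$};
	\node (3) at (2.5,1.5) {$O^n$};
	\draw[1cinc] (0) to (3);
	\draw[1c] (0) to (2);
	\draw[1cinc] (2) to (1);
	\draw[1c] (3) to node[auto,arlabel] {$\clos\{x\}$} (1);
\end{tikzpicture}
\end{equation*}
to transfer between polygraph structure on $\mol{}{P}^*$ and preservation of the skeletal pushouts by $\mol{}{-}^*$. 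Your handling of initial-object preservation and the skeleton-by-skeleton comparison in the (1)~$\Rightarrow$~(2) direction is also essentially what the paper does.

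However, your closing paragraph misjudges where the difficulty lies. The per-atom pushout you flag is not ``the crux of the algebraic-freeness property''; it holds unconditionally for every atom in a regular directed complex, and it is precisely the general fact that the paper states and leaves to the reader. Here is why it holds. The $n$\nbd cells of the partial $\omega$\nbd category $\mol{}{\clos\{x\}}$ are molecules $U \subseteq \clos\{x\}$ with $\dmn{U} \leq n$; the only $n$\nbd dimensional such molecule is $\clos\{x\}$ itself (it is the only molecule containing $x$), and everything else lives inside $\bord x$. As for compositions, $\clos\{x\} \cp{k} V$ is defined only when $\clos\{x\} \cap V = \bord{k}{+}\clos\{x\} = \bord{k}{-}V$; since $V \subseteq \clos\{x\}$ this forces $V = \bord{k}{+}\clos\{x\}$, and dually on the other side, so the \emph{only} partial compositions in $\mol{}{\clos\{x\}}$ involving $\clos\{x\}$ are its unit laws. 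These are automatic in any $\omega$\nbd category, so the quotient defining $\mol{}{\clos\{x\}}^*$ imposes nothing beyond what $\mol{}{\bord x}^*$ already has, plus one free $n$\nbd cell --- which is exactly the universal property of the pushout above. There is no hidden appeal to algebraic freeness, no ``extra $n$\nbd cells'' to rule out, and no condition on $\cls{C}$ needed.

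What \emph{is} nontrivial, and what algebraic freeness actually encodes, is the global statement: whether $\mol{}{-}^*$ preserves the colimit exhibiting $P$ as assembled from its atoms. That is precisely condition~(2), and the proposition is a genuine equivalence between two global conditions, with the per-atom pushout serving as the fixed bridge in both directions of the argument. So the logic of your final sentence is inverted: the per-atom lemma is the routine, unconditional ingredient, while the iterated pushout bookkeeping --- i.e.\ the passage between (1) and (2) --- is where the actual content of the proposition sits.
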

\begin{proof}
Let $P$ be a $\cls{C}$\nbd directed complex and let $\skel{n}{P} \subseteq P$ be the subset of elements $x \in P$ with $\dmn{x} \leq n$. We will use the following general facts, whose proof we leave to the reader:
\begin{enumerate}
\item for all $n \in \mathbb{N}$, the diagram 
\begin{equation} \label{eq:poset_skeleton_pushout}
\begin{tikzpicture}[baseline={([yshift=-.5ex]current bounding box.center)}]
	\node (0) at (-1,1.5) {$\coprod_{\dmn{x} = n} \bord x$};
	\node (1) at (2.5,0) {$\skel{n}{P}$};
	\node (2) at (-1,0) {$\skel{n-1}{P}$};
	\node (3) at (2.5,1.5) {$\coprod_{\dmn{x} = n} \clos\{x\}$};
	\draw[1cinc] (0) to (3);
	\draw[1c] (0) to (2);
	\draw[1cinc] (2) to (1);
	\draw[1c] (3) to (1);
	\draw[edge] (1.6,0.2) to (1.6,0.7) to (2.3,0.7);
\end{tikzpicture}
\end{equation}
is a pushout in $\dcpx$ and can be built by coproducts and pushouts of inclusions;
\item $\mol{}{(\skel{n}{P})}^*$ and $\skel{n}{\mol{}{P}^*}$ are isomorphic $\omega$\nbd categories for all $n \in \mathbb{N}$;
\item for all $x \in P$ with $\dmn{x} = n$, the diagram 
\begin{equation} \label{eq:add_greatest_element}
\begin{tikzpicture}[baseline={([yshift=-.5ex]current bounding box.center)}]
	\node (0) at (-.5,1.5) {$\bord O^n$};
	\node (1) at (2.5,0) {$\mol{}{(\clos\{x\})}^*$};
	\node (2) at (-.5,0) {$\mol{}{(\bord x)}^*$};
	\node (3) at (2.5,1.5) {$O^n$};
	\draw[1cinc] (0) to (3);
	\draw[1c] (0) to (2);
	\draw[1cinc] (2) to (1);
	\draw[1c] (3) to node[auto,arlabel] {$\clos\{x\}$} (1);
	\draw[edge] (1.6,0.2) to (1.6,0.7) to (2.3,0.7);
\end{tikzpicture}
\end{equation}
is a pushout in $\omegacat$. 
\end{enumerate}

Suppose that $\mol{}{-}^*\colon \dcpxin \to \omegacat$ preserves pushouts and the initial object. Then the pushout (\ref{eq:poset_skeleton_pushout}) is preserved by $\mol{}{-}^*$. Pasting it together with the pushout diagrams (\ref{eq:add_greatest_element}), we obtain that $(\mol{}{P}^*, \{\clos\{x\}\}_{x \in P})$ is a polygraph.

Conversely, suppose that $\cls{C}$ is algebraically free. Using the pasting law for pushouts with (\ref{eq:add_greatest_element}) and the pushout diagrams
\begin{equation*}
\begin{tikzpicture}[baseline={([yshift=-.5ex]current bounding box.center)}]
	\node (0) at (-1,1.5) {$\coprod_{\dmn{x} = n} \bord O^n$};
	\node (1) at (2.5,0) {$\mol{}{(\skel{n}{P})}^*$};
	\node (2) at (-1,0) {$\mol{}{(\skel{n-1}{P})}^*$};
	\node (3) at (2.5,1.5) {$\coprod_{\dmn{x} = n} O^n$};
	\draw[1cinc] (0) to (3);
	\draw[1c] (0) to (2);
	\draw[1cinc] (2) to (1);
	\draw[1c] (3) to node[auto,arlabel] {$(\clos\{x\})_{\dmn{x} = n}$} (1);
	\draw[edge] (1.6,0.2) to (1.6,0.7) to (2.3,0.7);
\end{tikzpicture}
\end{equation*}
for all $x \in P$ and $n \in \mathbb{N}$, we find that $\mol{}{-}^*$ preserves the pushout diagrams (\ref{eq:poset_skeleton_pushout}). An inductive argument then shows that $\mol{}{-}^*$ preserves the canonical diagram exhibiting $P$ as the colimit of inclusions of its atoms. This suffices to conclude.
\end{proof}

%%%%%%%%%%%%%%%%%%%%%%%%%%%%%

\subsection{Diagrammatic nerve}

\begin{center}
\setlength{\fboxsep}{.6em}
\colorbox{gray!20}{In this section we assume that $\cls{C}$ is convenient and algebraically free.}
\end{center}

\begin{dfn}
By \cite[Theorem 2.13]{steiner1993algebra}, for each directed complex $P$ the adjunction unit $\mol{}{P} \to \mol{}{P}^*$ is injective on cells: since $P$ can be reconstructed from $\mol{}{P}$, it follows that all functors in (\ref{eq:mol_star_functors}) are faithful and injective on objects. We may then identify $\dcpxfun$ and $\dcpx$ with subcategories of $\omegacat$, and write $f\colon P \to Q$ instead of $\mol{}{f}^*\colon \mol{}{P}^* \to \mol{}{Q}^*$ for a map or $\cls{C}$\nbd functor seen as a functor of $\omega$\nbd categories.
\end{dfn}

\begin{dfn}
Let $\dcpxomega$ be the full subcategory of $\omegacat$ on $\cls{C}$\nbd directed complexes. By Proposition \ref{prop:free_preserve_gamma}, since $\cls{C}$ is algebraically free we have a square 
\begin{equation*}
\begin{tikzpicture}[baseline={([yshift=-.5ex]current bounding box.center)}]
	\node (0) at (0,1.5) {$\dcpxin$};
	\node (1) at (2.5,1.5) {$\dcpxfun$};
	\node (2) at (0,0) {$\dcpx$};
	\node (3) at (2.5,0) {$\dcpxomega$};
	\draw[1cinc] (0) to (1);
	\draw[1cincl] (0) to (2);
	\draw[1cinc] (2) to (3);
	\draw[1cincl] (1) to (3);
\end{tikzpicture}
\end{equation*}
of inclusions of subcategories which preserve the set $\Gamma$ of colimit diagrams comprising the initial object diagram and all pushout diagrams of inclusions. Applying $\psh{\Gamma}{-}$, we obtain a square
\begin{equation*}
\begin{tikzpicture}[baseline={([yshift=-.5ex]current bounding box.center)}]
	\node (0) at (-.5,1.5) {$\psh{\Gamma}{\dcpxomega}$};
	\node (1) at (2.5,1.5) {$\nucat$};
	\node (2) at (-.5,0) {$\dgmset$};
	\node (3) at (2.5,0) {$\cpol$};
	\draw[1c] (0) to (1);
	\draw[1c] (0) to (2);
	\draw[1c] (2) to (3);
	\draw[1c] (1) to (3);
\end{tikzpicture}
\end{equation*}
of restriction functors, all with left adjoints. By the universal property of the pullback square (\ref{eq:forgetful_mimonad}), this produces a functor $\psh{\Gamma}{\dcpxomega} \to \algmon{\{\mmonad,\imonad\}}$, which actually factors through a functor
\begin{equation} \label{eq:omega_restrict}
	\psh{\Gamma}{\dcpxomega} \to \dgmcat.
\end{equation}
This follows immediately from the fact that the formal diagram (\ref{eq:cylinder_subdivision}) is an actual commutative diagram in $\dcpxomega$.
\end{dfn}

\begin{dfn}
The category $\dcpxomega$ is small: it has countably many objects, and countably many morphisms between any pair of them. Then $\psh{\Gamma}{\dcpxomega}$ is locally small. Finally, $\omegacat$ has small colimits. 

By \cite[Theorem 1.2.1]{riehl2014categorical}, the left Kan extension of $\dcpxomega \incl \omegacat$ along the Yoneda embedding $\dcpxomega \incl \psh{\Gamma}{\dcpxomega}$ exists and has a right adjoint $N\colon \omegacat \to \psh{\Gamma}{\dcpxomega}$.
\end{dfn}

\begin{rmk}
$\dcpxomega$ contains a full subcategory isomorphic to Joyal's category $\Theta$ \cite{joyal1997disks}, namely, the one obtained by restricting first to $\cls{O}$\nbd directed complexes, where $\cls{O}$ is the class of all globes, and then to the molecules therein. The isomorphism is best seen through Makkai and Zawadowski's description of $\Theta$ as the full subcategory of $\omegacat$ on the \emph{simple} $\omega$\nbd categories \cite{makkai2001duality}.

By \cite[Theorem 1.12]{berger2002cellular}, $\Theta$ is a dense subcategory of $\omegacat$. Then $\dcpxomega$ is also a dense subcategory, which implies that $N$ is full and faithful. 
\end{rmk}

\begin{dfn}[Diagrammatic nerve]
The \emph{diagrammatic nerve} is the functor 
\begin{equation*}
	\nerve{}\colon \omegacat \to \dgmcat
\end{equation*}
obtained as the composite of $N\colon \omegacat \to \psh{\Gamma}{\dcpxomega}$ and (\ref{eq:omega_restrict}).
\end{dfn}

\begin{dfn}
Concretely, diagrams of shape $U$ in $\nerve{X}$ are functors $U \to X$, that is, $\mol{}{U}^* \to X$ in $\omegacat$. The action of a surjective map $p$ or a subdivision $s$ on a cell or composable diagram is precomposition with $p$ or $s$ in $\omegacat$.
\end{dfn}

\begin{lem} \label{lem:omegagph_faithful}
Let $X, Y$ be $\omega$\nbd categories and let $f, g\colon \nerve{X} \to \nerve{Y}$ be functors. Suppose that $f(x) = g(x)$ for all $n \in \mathbb{N}$ and cells $x$ of shape $O^n$. Then $f = g$.
\end{lem}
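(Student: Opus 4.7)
The plan is to reduce everything to the fact that a functor of diagrammatic $\omega$-categories is determined by its action on cells of atomic shape, together with the algebraic freeness of $\cls{C}$, which identifies each $\mol{}{U}^*$ with a polygraph generated by the elements of $U$. Since both $f$ and $g$ are morphisms in $\dgmcat$, they are in particular morphisms of the underlying diagrammatic sets (i.e., of presheaves on $\atom$), so they are determined by their values on cells of atomic shape. Hence it suffices to prove $f(y) = g(y)$ for every atom $U \in \atom$ and every cell $y\colon U \to \nerve{X}$.

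Fix such an atom $U$ and cell $y$. By definition, $f(y)$ and $g(y)$ are cells $U \to \nerve{Y}$, that is, functors $\mol{}{U}^* \to Y$ of $\omega$-categories. Since $\cls{C}$ is algebraically free, by Proposition \ref{prop:free_preserve_gamma} the $\omega$-category $\mol{}{U}^*$ is a polygraph with generating cells $\{\clos\{z\}\}_{z \in U}$, so any functor out of $\mol{}{U}^*$ is determined by its value on each generator $z \in U$, viewed as a $\dmn{z}$-cell. Thus it suffices to show that $f(y)$ and $g(y)$ agree on each $z \in U$.

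For each $z \in U$, let $\imath_z\colon \clos\{z\} \incl U$ be the inclusion in $\atom$ and let $s_z\colon O^{\dmn{z}} \cfun \clos\{z\}$ be the unique subdivision provided by Example \ref{exm:unique_subdivision}. The diagram $s_z \cdot (\imath_z; y)$ is a cell of $\nerve{X}$ of shape $O^{\dmn{z}}$; regarded as a functor $\mol{}{O^{\dmn{z}}}^* \to X$, it is the composite
\begin{equation*}
\mol{}{O^{\dmn{z}}}^* \xrightarrow{s_z} \mol{}{\clos\{z\}}^* \xrightarrow{\imath_z} \mol{}{U}^* \xrightarrow{y} X,
\end{equation*}
and hence corresponds to the $\dmn{z}$-cell $y(z)$ of $X$. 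The same identification holds for $\nerve{Y}$.

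Because $f$ is a morphism in $\dgmset$ it commutes with the action of $\imath_z$, and because it is a morphism of $\mmonad$-algebras it commutes with the action of $s_z$; the same is true of $g$. Therefore
\begin{equation*}
f\bigl(s_z \cdot (\imath_z; y)\bigr) = s_z \cdot \bigl(\imath_z; f(y)\bigr), \qquad g\bigl(s_z \cdot (\imath_z; y)\bigr) = s_z \cdot \bigl(\imath_z; g(y)\bigr),
\end{equation*}
and by the hypothesis the two left-hand sides coincide. Under the identification of cells of shape $O^{\dmn{z}}$ with $\dmn{z}$-cells of $Y$, this gives $f(y)(z) = g(y)(z)$ for every $z \in U$, hence $f(y) = g(y)$. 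The main obstacle is essentially bookkeeping: verifying that the composite $s_z \cdot (\imath_z;-)$ really does extract the generator $z$ out of $y$, which is where algebraic freeness is genuinely used; once this is in place, preservation of structure by $f$ and $g$ makes the argument immediate.
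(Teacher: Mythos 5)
Your proof is correct and takes a mildly different route from the paper's. Both reduce to showing that the functors $f(y), g(y)\colon \mol{}{U}^* \to Y$ agree on the polygraph generators $\{\clos\{z\}\}_{z \in U}$ (which is where algebraic freeness enters), and both extract a generator's image by precomposing with a globe-to-atom subdivision and using the $\mmonad$\nbd algebra compatibility of $f, g$ so that the resulting cell has shape $O^k$ and falls under the hypothesis. The difference is organisational: the paper inducts on $\dmn{U}$, applying the subdivision trick only to the top cell of $U$ and invoking the inductive hypothesis for the lower-dimensional atoms $\clos\{z\}$, whose restrictions $\imath_z;y$ have strictly smaller shape. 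You avoid the induction by combining the restriction $\imath_z;-$ with the subdivision $s_z\colon O^{\dmn{z}} \cfun \clos\{z\}$ uniformly for every $z \in U$, not just the top one, reducing each generator to a globe-shaped cell in a single step. This makes the proof more self-contained and spells out explicitly that only compatibility with inclusions (the underlying presheaf structure) and subdivisions (the $\mmonad$\nbd action) is used; the $\imonad$\nbd action on the nerve plays no role in either proof. One small notational slip: you write $y(z)$, but the corresponding generator of $\mol{}{U}^*$ is the atom $\clos\{z\}$, so this should read $y(\clos\{z\})$.
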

\begin{proof}
Let $x$ be a cell of shape $U$ in $\nerve{X}$. We will prove that $f(x) = g(x)$ by induction on $n \eqdef \dmn{U}$. If $n = 0$ or $n = 1$, then $U = O^n$ and $f(x) = g(x)$ holds by assumption.
 
Suppose $n > 1$. The cells $f(x)$ and $g(x)$ correspond uniquely to functors $f(x), g(x)\colon U \to Y$, and it suffices to show that they are equal on the atoms of $U$. For atoms of dimension $k < n$ this holds by the inductive hypothesis, so we only need to show that $f(x)(U) = g(x)(U)$. This is equivalent to $s;f(x) = s;g(x)$ where $s$ is the unique subdivision $O^n \cfun U$. But $s;f(x) = f(s\cdot x)$ and $s;g(x) = g(s\cdot x)$ because $f$ and $g$ are functors of diagrammatic $\omega$\nbd categories. Since $s \cdot x$ is a cell of shape $O^n$, we conclude.
\end{proof}

\begin{dfn} \label{dfn:omega_compositors}
For all $k \in \mathbb{N}$ and $n > k$, we define molecules $C_{n,k} \in \cls{C}$ with inclusions $\imath_{n,k}\colon O^n \cp{k} O^n \incl C_{n,k}$ and retractions $p_{n,k}\colon C_{n,k} \surj O^n \cp{k} O^n$. 

For each $k$, we let $C_{k+1,k} \eqdef O^{k+1} \cp{k} O^{k+1}$, and let both $\imath_{k+1,k}$ and $p_{k+1,k}$ be identities. These belong to $\cls{C}$ by axiom \ref{ax:submolecule} of convenient classes. 

If $n > k+1$, suppose that $C_{n-1,k}$, $\imath_{n-1, k}$, $p_{n-1,k}$ are defined and $C_{n-1,k} \in \cls{C}$. We let $C_{n,k}$ be the pushout
\begin{equation*}
\begin{tikzpicture}[baseline={([yshift=-.5ex]current bounding box.center)}]
	\node (0) at (-.5,1.5) {$O^{n-1} \cp{k} O^{n-1}$};
	\node (1) at (2.5,1.5) {$O^n \cp{k} O^n$};
	\node (2) at (-.5,0) {$\infl{C_{n-1,k}}$};
	\node (3) at (2.5,0) {$C_{n,k}$};
	\draw[1cinc] (0) to node[auto,arlabel] {$\imath^+$} (1);
	\draw[1cincl] (0) to node[auto,swap,arlabel] {$\imath_{n-1,k};\imath^-$} (2);
	\draw[1cinc] (2) to (3);
	\draw[1cincl] (1) to node[auto,arlabel] {$\imath_{n,k}$} (3);
	\draw[edge] (1.6,0.2) to (1.6,0.7) to (2.3,0.7);
\end{tikzpicture}
\end{equation*}
in $\dcpx$, where the $\imath^\alpha$ are inclusions of $(n-1)$\nbd boundaries. This also defines $\imath_{n,k}$. Then we let $p_{n,k}$ be the unique map such that
\begin{enumerate}
	\item $p_{n,k}$ is the identity on $O^n \cp{k} O^n$, and
	\item $p_{n,k}$ is the composite of $\tau\colon \infl{C_{n-1,k}} \surj C_{n-1,k}$ and $p_{n-1,k}$ on $\infl{C_{n-1,k}}$.
\end{enumerate}
The construction of $C_{n,k}$ can be factored as the pasting of $\infl{C_{n-1,k}}$ and one copy of $O^n$ along a submolecule of $\bord{}{-}\infl{C_{n-1,k}}$, followed by the pasting of another copy of $O^n$. By axioms \ref{ax:submolecule} and \ref{ax:inflate} of convenient classes, $C_{n,k} \in \cls{C}$.
\end{dfn}

\begin{thm}
The diagrammatic nerve is full and faithful.
\end{thm}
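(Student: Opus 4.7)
The nerve $\nerve{}$ factors as $\omegacat \xrightarrow{N} \psh{\Gamma}{\dcpxomega} \to \dgmcat$, where $N$ is fully faithful because $\dcpxomega$ is dense in $\omegacat$ (via Joyal's $\Theta$), and the second functor is the structural restriction defined after diagram~(\ref{eq:omega_restrict}). Faithfulness of $\nerve{}$ is then immediate: if $\nerve{f} = \nerve{g}$, comparing on cells of shape $O^n$ shows that $f$ and $g$ agree on the $n$-cells of $X$ for every $n$, and a functor of $\omega$-categories is determined by its action on cells. Equivalently, this follows from Lemma~\ref{lem:omegagph_faithful} combined with the bijection between $O^n$-cells of $\nerve{X}$ and $n$-cells of $X$.

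For fullness, given $F\colon \nerve{X} \to \nerve{Y}$ in $\dgmcat$, I define $f\colon X \to Y$ on $n$-cells by $f(x) \eqdef F(x)$, viewing $x \in X_n$ as an $O^n$-cell of $\nerve{X}$. Preservation of boundaries follows from $F$ commuting with the inclusions $\imath^\alpha\colon O^{n-1} \incl O^n$, while preservation of units follows from $F$ commuting with the surjection $\tau\colon O^n \surj O^{n-1}$ via the $\imonad$-algebra structure. Once $f$ is shown to be an $\omegacat$-functor, the identity $\nerve{f} = F$ follows from Lemma~\ref{lem:omegagph_faithful}, since they agree on globular cells by construction.

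The crux is verifying $f(x \cp{k} y) = f(x) \cp{k} f(y)$ for composable $n$-cells. The plan is to express the $O^n$-cell $x \cp{k} y$ canonically as a chain of $\dgmcat$-operations applied to the $(O^n \cp{k} O^n)$-cell $(x, y)$, using the compositors of Definition~\ref{dfn:omega_compositors}:
\begin{equation*}
	x \cp{k} y \;=\; \tilde\sigma_{n,k} \cdot \bigl(\compos{-} \cdot (p_{n,k} \cdot (x, y))\bigr),
\end{equation*}
where $p_{n,k}\colon C_{n,k} \surj O^n \cp{k} O^n$ is the retraction, $\compos{-}\colon \compos{C_{n,k}} \cfun C_{n,k}$ is the algebraic-composite subdivision, and $\tilde\sigma_{n,k}\colon O^n \cfun \compos{C_{n,k}}$ is a subdivision built by induction on $n-k$: the base case $\tilde\sigma_{k+1,k} = \idd{O^{k+1}}$ uses $\compos{C_{k+1,k}} \cong O^{k+1}$, and the inductive step sends the top atom to $\compos{C_{n,k}}$ while restricting on $(n-1)$-boundaries to a subdivision $\sigma_{n-1,k}\colon O^{n-1} \cfun C_{n-1,k}$ built by a parallel induction from $\sigma_{k+1,k} = \compos{-}\colon O^{k+1} \cfun O^{k+1} \cp{k} O^{k+1}$. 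Each arrow in the chain is the action of a map of oriented graded posets or a subdivision, hence is preserved by any $\dgmcat$-morphism; applying $F$ throughout and using $F(x, y) = (F(x), F(y))$ yields $F(x \cp{k} y) = F(x) \cp{k} F(y)$.

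The main obstacle will be the combinatorial verification of the displayed equation in the underlying $\omega$-category $X$: one must check that in $\mol{}{C_{n,k}}^*$, the total composite of the diagram obtained by pairing $(x, y)$ with the units on $\infl{C_{n-1,k}}$ equals the $n$-cell $x \cp{k} y$ equipped with its composed $(n-1)$-boundaries, and that reinterpreting through $\tilde\sigma_{n,k}$ on the shape $O^n$ does not change the underlying $n$-cell. The inductive construction of $C_{n,k}$ as iterated pushouts of inflations and pastings should let the verification proceed dimension by dimension, but each step has to be tracked against the globularity of $O^n$ and the unit-merge compatibility of $\nerve{X}$ and $\nerve{Y}$, which is what enables the iterated units to absorb cleanly in the final composite.
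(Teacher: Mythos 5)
Your proposal follows essentially the same route as the paper: restrict to globes to obtain a morphism of reflexive $\omega$\nbd graphs, invoke Lemma~\ref{lem:omegagph_faithful}, and prove preservation of composition by factoring the cell $x_1 \cp{k} x_2$ (as an $O^n$\nbd cell of $\nerve{X}$) through $C_{n,k}$ and $p_{n,k}$. However, two points deserve comment.

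First, the intermediate step through $\compos{C_{n,k}}$ is redundant. Since $C_{n,k}$ is a $\cls{C}$\nbd molecule with spherical boundary and $O^n \in \cls{C}$ by Lemma~\ref{lem:all_shapes}, Example~\ref{exm:unique_subdivision} already gives a \emph{unique} subdivision $s\colon O^n \cfun C_{n,k}$. Your $\tilde\sigma_{n,k};\compos{-}$ is necessarily equal to $s$, so the hand-built inductive construction of $\tilde\sigma_{n,k}$ (and the parallel auxiliary $\sigma_{n,k}$) buys you nothing — the paper invokes $s$ directly and writes $f'(x_1 \cp{k} x_2) = f(s\cdot(p_{n,k}\cdot x)) = s\cdot(p_{n,k}\cdot f(x)) = f'(x_1) \cp{k} f'(x_2)$, where the middle equality is just the compatibility of $F$ with the $\imonad$- and $\mmonad$-actions.

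Second, unit-merge compatibility is a red herring here. The combinatorial fact needed is that $\mol{}{p_{n,k}}^*(C_{n,k}) = O^n \cp{k} O^n$ inside $\mol{}{(O^n \cp{k} O^n)}^*$, and this is a plain $\omega$\nbd category computation: $p_{n,k}$ collapses $\infl{C_{n-1,k}}$ to $(n-1)$\nbd dimensional, so the $n$\nbd atoms of $C_{n,k}$ living there become units under $\mol{}{p_{n,k}}^*$, and the unit laws of $\omega$\nbd categories absorb them. Unit-merge compatibility of $\nerve{X}$ and $\nerve{Y}$ is what makes them objects of $\dgmcat$ in the first place (it follows automatically since diagram~(\ref{eq:cylinder_subdivision}) is an actual commutative diagram in $\dcpxomega$, as the paper observes before the statement), but it plays no role in the composition-preservation check. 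You would only need it to commute a $p\cdot$ past an $s\cdot$, which the chain you wrote down never does.

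So: right idea, right intermediate object $C_{n,k}$, right use of Lemma~\ref{lem:omegagph_faithful}, but the argument is cluttered by the unnecessary passage through $\compos{C_{n,k}}$ and by the mistaken appeal to unit-merge compatibility. The verification you flag as an ``obstacle'' is the content of the observation that $p_{n,k}$ collapses $\infl{C_{n-1,k}}$ into units, and this is available directly from the definition of $p_{n,k}$ and the $\omega$\nbd category axioms — no dimension-by-dimension tracking is needed once you have the unique subdivision $s$ in hand.
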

\begin{proof}
Let $X, Y$ be $\omega$\nbd categories and $f\colon \nerve{X} \to \nerve{Y}$ a functor. Passing to the underlying diagrammatic sets, then restricting to presheaves on the full subcategory of $\atom$ whose objects are the globes, we obtain a morphism $f'$ of the underlying reflexive $\omega$\nbd graphs of $X$ and $Y$, which by Lemma \ref{lem:omegagph_faithful} uniquely determines $f$.

To conclude, it suffices to show that $f'$ is compatible with compositions in $X$ and $Y$. Let $x_1, x_2$ be $n$\nbd cells in $X$ such that $x_1 \cp{k} x_2$ is defined. This composition is classified by a functor $x\colon O^n \cp{k} O^n \to X$ with the property that $x(O^n \cp{k} O^n) = x_1 \cp{k} x_2$. 

The functor classifying $O^n \cp{k} O^n$ as a cell of $\mol{}{(O^n \cp{k} O^n)}^*$ factors as the unique subdivision $s\colon O^n \cfun C_{n,k}$ followed by the map $p_{n,k}\colon C_{n,k} \surj O^n \cp{k} O^n$ as defined in \S \ref{dfn:omega_compositors}. Then
\begin{align*}
	f'(x_1 \cp{k} x_2) & = f'(x(O^n \cp{k} O^n)) = f(s \cdot (p_{n,k} \cdot x)) = \\
	& = s \cdot (p_{n,k} \cdot f(x)) = f(x)(O^n \cp{k} O^n) = f'(x_1) \cp{k} f'(x_2).
\end{align*}
This proves that $f'\colon X \to Y$ is a functor of $\omega$\nbd categories and $f = \nerve{f'}$. 
\end{proof}

\begin{rmk} \label{rmk:dgmset_nerve}
The composite of $\nerve{}$ with the forgetful functor to $\dgmset$ is faithful, but it is not full. For example, consider a 2-category $X$ generated by three 0-cells $x, y, z$, three 1-cells $f\colon x \celto y$, $g\colon y \celto z$, and $h: x \celto z$, and an isomorphism $\alpha: f \cp{0} g \celto h$. There is an involution on the diagrammatic set $\nerve{X}$ which swaps $f \cp{0} g$ and $h$, but it is not the image of any endofunctor of $X$ because it does not respect composition.
\end{rmk}

\section{Homotopy hypothesis} \label{sec:homotopy}

\subsection{Simplices as molecules}

We assume some basic knowledge of homotopy theory and simplicial sets. We mostly base our notation on Goerss and Jardine \cite{goerss2009simplicial}.

\begin{dfn}[Simplex category]
The category $\atom$ contains a subcategory $\deltacat$ isomorphic to the \emph{simplex category}. The $n$\nbd simplex in $\deltacat$ is $\Delta^n$ as defined in \S \ref{dfn:cubes_simplices}. For $0 \leq k \leq n$, if $!$ denotes the unique map to the terminal object $1$,
\begin{itemize}
	\item the $k$\nbd th co-face map $d^k\colon \Delta^{n-1} \incl \Delta^n$ is the inclusion 
	\begin{equation*}
		\idd{} \join ! \join \idd{}\colon \Delta^{k-1}\join \emptyset \join \Delta^{n-k-1} \incl \Delta^{k-1}\join 1 \join \Delta^{n-k-1},
	\end{equation*}
	\item the $k$\nbd th co-degeneracy map $s^k\colon \Delta^{n+1} \surj \Delta^n$ is the surjective map
	\begin{equation*}
		\idd{} \join ! \join \idd{}\colon \Delta^{k-1} \join \Delta^1 \join \Delta^{n-k-1} \surj \Delta^{k-1}\join 1 \join \Delta^{n-k-1}.
	\end{equation*}
\end{itemize}
\end{dfn}

\begin{dfn}
We write $\sset$ for the category of simplicial sets, that is, presheaves on $\deltacat$. Corresponding to the inclusion $\deltacat \incl \atom$, there is a restriction functor $\delres{-}\colon \dgmset \to \sset$ with a left adjoint $\imath_\deltacat\colon \sset \to \dgmset$. 

The functor $\imath_\deltacat$ is the left Kan extension of $\deltacat \incl \atom \incl \dgmset$ along the Yoneda embedding $\deltacat \incl \sset$. By the coend formula for left Kan extensions, cells of shape $U$ in $\imath_\deltacat X$ are represented by pairs $(f\colon U \to \Delta^n, x\colon \Delta^n \to X)$ where $f$ is a map in $\atom$, quotiented by the relation
\begin{equation*}
	(f\colon U \to \Delta^n, g;x\colon \Delta^n \to X) \sim (f;g\colon U \to \Delta^m, x\colon \Delta^m \to X)
\end{equation*}
for all maps $g\colon \Delta^n \to \Delta^m$ in $\deltacat$.
\end{dfn}

\begin{prop} \label{prop:delta_is_full}
$\deltacat$ is a full subcategory of $\atom$.
\end{prop}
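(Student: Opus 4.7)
I will show that every morphism $f\colon \Delta^n \to \Delta^m$ in $\atom$ arises from a unique order-preserving function $g\colon \{0,\dots,n\} \to \{0,\dots,m\}$. Faithfulness is automatic since $\Delta^n$ has $n+1$ distinct $0$-cells on which different order-preserving functions act differently; the content is fullness.

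First I would identify the underlying oriented graded poset of $\Delta^n = 1 \join \cdots \join 1$ ($n+1$ copies) with the poset of nonempty subsets $S \subseteq [n] \eqdef \{0,\dots,n\}$ ordered by inclusion, where a subset of cardinality $k+1$ is a $k$-dimensional element. By induction on $n$ using the boundary formulas for joins from Section~\ref{sec:operations}, I would verify that the $1$-cell $\{i,j\}$ with $i < j$ has $\bord{0}{-}\{i,j\} = \{i\}$ and $\bord{0}{+}\{i,j\} = \{j\}$, and more generally that $\clos\{S\}$ consists of all nonempty subsets of $S$. In particular, the set of $0$-dimensional elements of $\clos\{S\}$ is exactly $\{\{i\} \mid i \in S\}$, so an element of $\Delta^n$ is uniquely determined by its set of $0$-dimensional sub-elements.

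Given $f\colon \Delta^n \to \Delta^m$ in $\atom$, since the lemma preceding Definition~\ref{dfn:inclusion_cfunctor} guarantees that maps are dimension-non-increasing and $0$ is the minimum dimension, $f$ restricts to a function $g\colon [n] \to [m]$ on $0$-cells. To see that $g$ is order-preserving, fix $i < j$ and consider the $1$-cell $e = \{i,j\}$. Then $f(e)$ is either $0$- or $1$-dimensional, and compatibility with boundaries gives $\bord{0}{-}f(e) = \{g(i)\}$, $\bord{0}{+}f(e) = \{g(j)\}$. If $f(e)$ is a $1$-cell, then by the first paragraph it is the edge $\{g(i),g(j)\}$ oriented from the smaller vertex to the larger, forcing $g(i) < g(j)$; otherwise $f(e)$ is the $0$-cell $\{g(i)\} = \{g(j)\}$ and $g(i) = g(j)$.

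Let $\imath(g)\colon \Delta^n \to \Delta^m$ denote the map in $\atom$ determined by $g$ on $0$-cells, which on a subset $S \subseteq [n]$ returns $g(S) \subseteq [m]$. Both $f$ and $\imath(g)$ agree on $0$-cells by construction, and both are closed and order-preserving as maps of posets. For any element $S$ of $\Delta^n$, closedness gives $\clos\{f(S)\} = f(\clos\{S\})$, whose $0$-dimensional part is exactly $\{\{g(i)\} \mid i \in S\}$. By the uniqueness established in the first paragraph, applied to $\Delta^m$, the element $f(S) \in \Delta^m$ is the subset $g(S) = \imath(g)(S)$. Hence $f = \imath(g)$.

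\textbf{Main obstacle.} The only non-formal step is the initial identification of $\Delta^n$ as the poset of nonempty subsets of $[n]$ with the expected orientations. It is essentially bookkeeping through the join boundary formulas, but the orientation signs $(-)^i$ appearing in the join require an inductive setup; once this structural description is in hand, the remainder of the argument is straightforward.
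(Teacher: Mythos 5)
Your proof is correct, but it takes a genuinely different route from the paper's. The paper first factors an arbitrary map $f\colon \Delta^n\to\Delta^m$ in $\atom$ through its image as a surjection followed by an inclusion, then dispatches the two classes separately: inclusions are exhausted by a counting argument (there are exactly $n+1$ elements of codimension one in $\Delta^n$, so the co-face maps and the trivial automorphism account for everything), and surjections are handled by an inner induction on $n$, collating the restrictions $d^i;p$ along the faces into an order-preserving function on vertices and then checking that it agrees with $p$ on the boundary and the top element. You instead observe that every element of $\Delta^m$ is determined by its set of $0$\nbd dimensional sub-elements, use the fact that maps in $\atom$ are closed, order-preserving, and dimension-non-increasing to reduce $f$ to its restriction $g$ on $0$\nbd cells, and show directly that $g$ is order-preserving by examining the image of each $1$\nbd cell $\{i,j\}$. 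This avoids the factorisation and the inner induction entirely, and is arguably tidier. The trade-off is that your argument leans harder on the concrete subset-and-bitstring description of $\Delta^n$ (and of its orientation), which the paper does eventually record in the homotopy section (\S\ref{dfn:simplex_encoding}) but does not invoke in the proof of Proposition~\ref{prop:delta_is_full}; you would need to pull that description forward, or reprove the key facts $\bord{0}{-}\{i,j\}=\{\{i\}\}$, $\bord{0}{+}\{i,j\}=\{\{j\}\}$ from the join boundary formula, as you indicate. One small inaccuracy: you cite ``the lemma preceding Definition~\ref{dfn:inclusion_cfunctor}'' for dimension-monotonicity of maps, but the relevant statement is the unlabelled lemma immediately following the definition of maps in \S 1.1 (maps of oriented graded posets are closed, order-preserving, and dimension-non-increasing); the lemma you cite lives in a later section and says something else. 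The rest of the argument is sound.
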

\begin{proof}
Let $[n]$ be the linear order $\{0 < \ldots < n\}$. Morphisms $f\colon \Delta^n \to \Delta^m$ in $\deltacat$ corresponds bijectively to order-preserving functions $[n] \to [m]$. In particular, $d^k\colon [n-1] \to [n]$ identifies $[n-1]$ with the suborder 
\begin{equation*}
	\{0 < \ldots < k-1 < k+1 < \ldots < n\}.
\end{equation*}
Our strategy will be to prove that there are no maps $f\colon \Delta^n \to \Delta^m$ in $\atom$ besides these.

First of all, each atom of a simplex is a simplex, so each map $f\colon \Delta^n \to \Delta^m$ factors uniquely as a surjective map $p\colon \Delta^n \surj \Delta^k$ followed by an inclusion $\imath\colon \Delta^k \incl \Delta^m$ with $k \leq n,m$. Thus we can prove separately that surjective maps and inclusions are in $\Delta$.

Since $\Delta^n$ has exactly $(n+1)$\nbd elements of dimension $n -1$, the co-face maps $d^k$ exhaust the inclusions $\Delta^{n-1} \incl \Delta^n$, and the identity is the only automorphism of $\Delta^n$. This suffices to prove that all inclusions of simplices are in $\deltacat$.

Let $p\colon \Delta^n \surj \Delta^k$ be a surjective map. If $n = 0$ or $n = 1$ we can handle the few possibilities separately. 

If $n > 1$, we can assume the inductive hypothesis that, for all $j < n$, all maps $f\colon \Delta^{j} \to \Delta^m$ are in $\deltacat$. Then all the faces $d^i;p\colon \Delta^{n-1} \to \Delta^k$ are in $\deltacat$, so they correspond to a system of order-preserving functions $f_i\colon [n-1] \to [k]$ with the property that $d^i; f_j = d^{j-1}; f_i$ when $i < j$. This system can be collated to a unique function $f\colon [n] \to [k]$ such that $d^i;f = f_i$. 

Because $n > 1$, given any pair of elements $k_1 \leq k_2$ in $[n]$ we can pick $i \in [n] \setminus \{k_1,k_2\}$. Then $k_1 = d^i(k'_1)$ and $k_2 = d^i(k'_2)$ for some $k'_1 \leq k'_2$ in $[n-1]$, and $f(k_1) = f_i(k'_1) \leq f_i(k'_2) = f(k_2)$. This proves that $f$ is order-preserving.

Then $f$ determines a map $\Delta^n \surj \Delta^k$ in $\deltacat$ with the property that $d^i;f = d^i;p$ for $0 \leq i \leq n$, so $\restr{f}{\bord \Delta^n} = \restr{p}{\bord \Delta^n}$. Finally, both $f$ and $p$ must send the greatest element of $\Delta^n$ to the greatest element of $\Delta^k$, so $f = p$. 
\end{proof}

\begin{cor}
The functor $\imath_\deltacat\colon \sset \to \dgmset$ is full and faithful.
\end{cor}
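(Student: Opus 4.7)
The plan is to exploit the adjunction $\imath_\deltacat \dashv \delres{-}$ together with the standard criterion that a left adjoint is fully faithful precisely when the unit of the adjunction is a natural isomorphism. Hence it will suffice to show that, for every simplicial set $X$, the component $\eta_X \colon X \to \delres{\imath_\deltacat X}$ of the unit is an isomorphism in $\sset$.

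To carry this out, I would appeal to the coend presentation of the left Kan extension spelled out in the definition of $\imath_\deltacat$. Unwinding it at a simplex $\Delta^n$, the set of $n$\nbd simplices of $\delres{\imath_\deltacat X}$ is the coend
\begin{equation*}
    \int^{\Delta^m \in \deltacat} \atom(\Delta^n, \Delta^m) \times X(\Delta^m).
\end{equation*}
By Proposition \ref{prop:delta_is_full}, the hom-set $\atom(\Delta^n, \Delta^m)$ coincides with $\deltacat(\Delta^n, \Delta^m)$, so the coend collapses by the co-Yoneda lemma to $X(\Delta^n)$, with the canonical isomorphism sending $x \in X(\Delta^n)$ to the class of $(\idd{\Delta^n}, x)$. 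This is exactly the component of $\eta_X$ at $\Delta^n$, which is therefore bijective; naturality in $n$ is immediate from the construction, as is naturality in $X$. Concretely, an inverse can be exhibited by sending the class of $(f \colon \Delta^n \to \Delta^m, x \colon \Delta^m \to X)$ to $f;x \in X(\Delta^n)$: this is well-defined because the defining relation $(f, g;x) \sim (f;g, x)$ was designed to force precisely this, and surjectivity follows from the same relation applied with $g \eqdef f$.

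No real obstacle is anticipated: once Proposition \ref{prop:delta_is_full} is in hand, the statement reduces to the classical fact that the left Kan extension of a fully faithful functor along the Yoneda embedding is itself fully faithful. All the genuine combinatorial content has been absorbed into the preceding proposition.
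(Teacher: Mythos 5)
Your proof is correct and takes essentially the same route as the paper: both reduce the statement to showing that the unit of the adjunction $\imath_\deltacat \dashv \delres{-}$ is an isomorphism (the paper cites Mac Lane's Theorem IV.3.1 for exactly the criterion you invoke), and both derive the isomorphism from Proposition \ref{prop:delta_is_full}. You spell out the coend computation and the explicit inverse where the paper just remarks that fullness forces every class to have the unique representative $(\idd{\Delta^n}, x)$, but this is a difference of detail, not of strategy.
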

\begin{proof}
Let $K$ be a simplicial set. Since $\deltacat$ is a full subcategory of $\atom$, each $n$\nbd simplex in $\delres{(\imath_\deltacat K)}$ is uniquely represented by a pair $(\idd{\Delta_n}, x\colon \Delta^n \to K)$. It follows that each unit component $K \to \delres{(\imath_\deltacat K)}$ is an isomorphism, and \cite[Theorem IV.3.1]{maclane1971cats} applies.
\end{proof}

\begin{dfn}
Let $p\colon U \surj V$ be a surjective map of atoms of the same dimension. Then $p(\bord U) = \bord V$ and $\invrs{p}(\bord V) = \bord U$, so the map $\idd{} \gray p\colon O^1 \gray U \surj O^1 \gray V$ descends through the quotient to a map 
\begin{equation*}
	\infl{p}\colon \infl{U} \surj \infl{V}.
\end{equation*}
This is also a surjective map of atoms of the same dimension. The construction can then be iterated to produce a sequence $\{O^n(p)\colon O^n(U) \surj O^n(V)\}_{n \in \mathbb{N}}$.
\end{dfn}

\begin{dfn} \label{dfn:fattening}
Let $p\colon U \surj V$ be a surjective map of atoms with $\dmn{U} = \dmn{V} + 1$. There is a unique surjective map $p_\prec\colon U \surj \infl{V}$ such that the triangle
\begin{equation*}
\begin{tikzpicture}[baseline={([yshift=-.5ex]current bounding box.center)}]
	\node (0) at (-1.5,1.25) {$U$};
	\node (1) at (0,0) {$\infl{V}$};
	\node (2) at (1.5,1.25) {$V$};
	\draw[1csurj] (0) to node[auto,arlabel] {$p$} (2);
	\draw[1csurj] (0) to node[auto,swap,arlabel] {$p_\prec$} (1);
	\draw[1csurj] (1) to node[auto,swap,arlabel] {$\tau$} (2);
\end{tikzpicture}
\end{equation*}
commutes. This map sends the greatest element of $U$ to the greatest element of $\infl{V}$, and sends each $x \in \bord{}{\alpha}U$ to $\imath^\alpha(p(x)) \in \bord{}{\alpha}\infl{V}$, where $\imath^\alpha$ is the inclusion of $V$ as $\bord{}{\alpha}\infl{V}$ for each $\alpha \in \{+,-\}$.
\end{dfn}

\begin{dfn} \label{dfn:simplex_encoding}
Every element of $\Delta^n$ can be uniquely encoded as a string $b_0\ldots b_n$ of $(n+1)$ bits at least one of which is 1. The encoding identifies the string $1^{j_1}0^{k_1}\ldots 1^{j_m}0^{k_m}$ with the element
\begin{equation*}
	\underbrace{\top \gray \ldots \gray \top}_{j_1} \gray \underbrace{\bot \gray \ldots \gray \bot}_{k_1} \gray \ldots \gray \underbrace{\top \gray \ldots \gray \top}_{j_m} \gray \underbrace{\bot \gray \ldots \gray \bot}_{k_m}
\end{equation*}
of the $(n+1)$\nbd fold Gray product of $1_\bot$, used in the definition of $\Delta^n$ as an iterated join, where $\top$ denotes the unique element of $1$. 

We have $\dmn{b_0\ldots b_n} = \sum_{i=0}^n b_i - 1$ and
\begin{align*}
	\sbord{}{-}(b_0 \ldots b_n) & = \{b_0 \ldots b_{k-1} 0 b_{k+1} \ldots b_n \mid \text{$b_k = 1$ and $\sum_{i=0}^{k-1} b_i$ is odd} \}, \\
	\sbord{}{+}(b_0 \ldots b_n) & = \{b_0 \ldots b_{k-1} 0 b_{k+1} \ldots b_n \mid \text{$b_k = 1$ and $\sum_{i=0}^{k-1} b_i$ is even} \}.
\end{align*}
\end{dfn}

\begin{dfn}[Folding of simplices onto globes] \label{dfn:folding}
For all $n \in \mathbb{N}$, we define surjective maps $a_n\colon \Delta^n \surj O^n$. 

We have $O^0 = \Delta^0$ and we let $a_0 \eqdef \idd{}$. If $a_{n-1}$ is defined, we let $a_n$ be the composite of $s^0_\prec\colon \Delta^n \surj \infl{\Delta^{n-1}}$ and of $\infl{a_{n-1}}\colon \infl{\Delta^{n-1}} \to O^n$.

In terms of the encoding of \S \ref{dfn:simplex_encoding}, the map $a_n$ is defined by
\begin{alignat*}{3}
	& 1^{n+1} && \mapsto \; \undl{n}, && \\
	& 0^k 1^{n-k+1} && \mapsto \; \undl{n-k}^+ && \; \text{if $k > 0$}, \\
	& \ldots 10^k 1^j && \mapsto \; \undl{j}^- && \; \text{if $k > 0$}.
\end{alignat*}
Using this explicit definition, the reader can check that the diagrams 
\begin{equation} \label{eq:folding_faces}
\begin{tikzpicture}[baseline={([yshift=-.5ex]current bounding box.center)}]
	\node (0) at (0,1.5) {$\Delta^n$};
	\node (1) at (2.5,0) {$O^{n+1}$,};
	\node (2) at (0,0) {$\Delta^{n+1}$};
	\node (3) at (2.5,1.5) {$O^n$};
	\draw[1csurj] (0) to node[auto,arlabel] {$a_n$} (3);
	\draw[1cincl] (0) to node[auto,swap,arlabel] {$d^0$} (2);
	\draw[1csurj] (2) to node[auto,swap,arlabel] {$a_{n+1}$} (1);
	\draw[1cincl] (3) to node[auto,arlabel] {$\imath^+$} (1);
\end{tikzpicture} \quad \quad 
\begin{tikzpicture}[baseline={([yshift=-.5ex]current bounding box.center)}]
	\node (0) at (0,1.5) {$\Delta^n$};
	\node (1) at (2.5,0) {$O^{n+1}$};
	\node (2) at (0,0) {$\Delta^{n+1}$};
	\node (3) at (2.5,1.5) {$O^n$};
	\draw[1csurj] (0) to node[auto,arlabel] {$a_n$} (3);
	\draw[1cincl] (0) to node[auto,swap,arlabel] {$d^1$} (2);
	\draw[1csurj] (2) to node[auto,swap,arlabel] {$a_{n+1}$} (1);
	\draw[1cincl] (3) to node[auto,arlabel] {$\imath^-$} (1);
\end{tikzpicture}
\end{equation} 
commute for all $n \in \mathbb{N}$.
\end{dfn}

\begin{dfn} \label{dfn:globe_compositors}
For all $n > 0$, let $\Phi^{n+1}$ be the atom $O^n \celto (O^n \cp{n-1} O^n)$. 

For all $\alpha \in \{+,-\}$, $\bord{n-1}{\alpha}\Phi^{n+1}$ is isomorphic to $\bord{n-1}{\alpha}O^{n+1}$ and we name the elements of $\bord{n-1}{}\Phi^{n+1}$ accordingly. We write $\undl{n}^-$ for the single element of $\sbord{}{-}\Phi^{n+1}$ and $\undl{n}_1^+, \undl{n}_2^+$ for the elements of $\sbord{}{+}\Phi^{n+1}$ corresponding to the first and second copy of $O^n$ in $O^n \cp{n-1} O^n$. Finally, we write $\undl{n-1}^0$ for the unique element of $\sbord{}{+}\undl{n}_1^+ \cap \sbord{}{-}\undl{n}_2^+$ and $\undl{n+1}$ for the greatest element of $\Phi^{n+1}$.

For all $i \in \{1,2\}$, we write $\imath_i^+\colon O^n \incl \Phi^{n+1}$ for the inclusion defined by $\undl{n} \mapsto \undl{n}^+_i$.
\end{dfn}

\begin{rmk}
By Lemma \ref{lem:all_shapes} and axioms \ref{ax:shift} and \ref{ax:submolecule}, every convenient class contains all the $\Phi^{n}$. 
\end{rmk}

\begin{dfn}[Folding of compositors] \label{dfn:folding_compositor}
For all $n > 0$, we define surjective maps $c_{n+1}\colon \Delta^{n+1} \surj \Phi^{n+1}$. In terms of the encoding of \S \ref{dfn:simplex_encoding}, $c_{n+1}$ is defined by
\begin{alignat*}{3}
	& 1101^{n-1} && \mapsto \; \undl{n}^+_1, && \\
	& 0111^{n-1} && \mapsto \; \undl{n}^+_2, && \\
	& 0101^{n-1} && \mapsto \; \undl{n-1}^0, &&
\end{alignat*}
and the same as $a_{n+1}$ on all other elements. The reader can check that the diagrams
\begin{equation} \label{eq:cn_faces}
\begin{tikzpicture}[baseline={([yshift=-.5ex]current bounding box.center)}]
	\node (0) at (0,1.5) {$\Delta^n$};
	\node (1) at (2.5,0) {$\Phi^{n+1}$,};
	\node (2) at (0,0) {$\Delta^{n+1}$};
	\node (3) at (2.5,1.5) {$O^n$};
	\draw[1csurj] (0) to node[auto,arlabel] {$a_n$} (3);
	\draw[1cincl] (0) to node[auto,swap,arlabel] {$d^0$} (2);
	\draw[1csurj] (2) to node[auto,swap,arlabel] {$c_{n+1}$} (1);
	\draw[1cincl] (3) to node[auto,arlabel] {$\imath_2^+$} (1);
\end{tikzpicture} \quad
\begin{tikzpicture}[baseline={([yshift=-.5ex]current bounding box.center)}]
	\node (0) at (0,1.5) {$\Delta^n$};
	\node (1) at (2.5,0) {$\Phi^{n+1}$,};
	\node (2) at (0,0) {$\Delta^{n+1}$};
	\node (3) at (2.5,1.5) {$O^n$};
	\draw[1csurj] (0) to node[auto,arlabel] {$a_n$} (3);
	\draw[1cincl] (0) to node[auto,swap,arlabel] {$d^1$} (2);
	\draw[1csurj] (2) to node[auto,swap,arlabel] {$c_{n+1}$} (1);
	\draw[1cincl] (3) to node[auto,arlabel] {$\imath^-$} (1);
\end{tikzpicture} \quad
\begin{tikzpicture}[baseline={([yshift=-.5ex]current bounding box.center)}]
	\node (0) at (0,1.5) {$\Delta^n$};
	\node (1) at (2.5,0) {$\Phi^{n+1}$};
	\node (2) at (0,0) {$\Delta^{n+1}$};
	\node (3) at (2.5,1.5) {$O^n$};
	\draw[1csurj] (0) to node[auto,arlabel] {$a_n$} (3);
	\draw[1cincl] (0) to node[auto,swap,arlabel] {$d^2$} (2);
	\draw[1csurj] (2) to node[auto,swap,arlabel] {$c_{n+1}$} (1);
	\draw[1cincl] (3) to node[auto,arlabel] {$\imath_1^+$} (1);
\end{tikzpicture}
\end{equation}  
commute for all $n > 0$.
\end{dfn}

\begin{dfn} \label{dfn:extr}
For all $n > 1$ and $k \in \mathbb{N}$, we define molecules $\extr{k}{n}$ with spherical boundary, with the property that $\bord{}{\alpha}\extr{k}{n}$ is isomorphic to $\bord{}{\alpha}O^k(\Delta^n)$ for all $\alpha \in \{+,-\}$, together with inclusions of submolecules 
\begin{equation*}
	j_{k,n}\colon O^{k+1}(\Delta^{n-1}) \incl \extr{k}{n}.
\end{equation*}

For all $n > 1$, we let $\extr{0}{n}$ be the pasting of $\infl{\Delta^{n-1}}$ and $\Delta^n$ along $d^0(\Delta^{n-1}) \submol \bord{}{+}\Delta^n$, that is, the pushout
\begin{equation*}
\begin{tikzpicture}[baseline={([yshift=-.5ex]current bounding box.center)}]
	\node (0) at (0,1.5) {$\Delta^{n-1}$};
	\node (1) at (2.5,0) {$\extr{0}{n}$};
	\node (2) at (0,0) {$\Delta^n$};
	\node (3) at (2.5,1.5) {$\infl{\Delta^{n-1}}$};
	\draw[1cinc] (0) to node[auto,arlabel] {$\imath^-$} (3);
	\draw[1cincl] (0) to node[auto,swap,arlabel] {$d^0$} (2);
	\draw[1cinc] (2) to (1);
	\draw[1cincl] (3) to node[auto,arlabel] {$j_{0,n}$} (1);
	\draw[edge] (1.6,0.2) to (1.6,0.7) to (2.3,0.7);
\end{tikzpicture}
\end{equation*}
in $\rdcpxin$. Its boundaries are isomorphic to those of $O^0(\Delta^n) = \Delta^n$.

If $k > 0$, suppose that $\extr{k-1}{n}$ and $j_{k-1,n}$ have been defined. We define $\extr{k}{n}$ to be the colimit of the diagram
\begin{equation*} 
\begin{tikzpicture}[baseline={([yshift=-.5ex]current bounding box.center)}]
	\node (0) at (-2,-1.5) {$O^{k-1}(\Delta^n) \celto \extr{k-1}{n}$};
	\node (1) at (0,0) {$O^{k}(\Delta^{n-1})$};
	\node (2) at (2,-1.5) {$O^{k+1}(\Delta^{n-1})$};
	\node (3) at (4,0) {$O^{k}(\Delta^{n-1})$};
	\node (4) at (6,-1.5) {$\extr{k-1}{n} \celto O^{k-1}(\Delta^n)$};
	\draw[1cincl] (1) to node[auto,swap,arlabel] {$j_{k-1,n};\imath^+$} (0);
	\draw[1cinc] (1) to node[auto,arlabel] {$\imath^-$} (2);
	\draw[1cincl] (3) to node[auto,swap,arlabel] {$\imath^+$} (2);
	\draw[1cinc] (3) to node[auto,arlabel] {$j_{k-1,n};\imath^-$} (4);
\end{tikzpicture}
\end{equation*}
in $\rdcpxin$. This can be constructed as a sequence of two pastings along submolecules, and $\bord{}{\alpha}\extr{k}{n}$ is isomorphic to $O^{k-1}(\Delta^n) = \bord{}{\alpha}O^k(\Delta^n)$. We let $j_{k,n}$ be the inclusion of $O^{k+1}(\Delta^{n-1})$ into the colimit. 
\end{dfn}

\begin{rmk} \label{rmk:extr_convenient}
The $\extr{k}{n}$ are constructed from simplices using only the $\infl{-}$ construction, the $- \celto -$ construction, and pasting along submolecules. By Lemma \ref{lem:all_shapes} and axioms \ref{ax:shift}, \ref{ax:submolecule}, and \ref{ax:inflate}, they are contained in every convenient class.
\end{rmk}

\begin{dfn}
For all $n > 1$ and $k \in \mathbb{N}$, we define a retraction
\begin{equation*}
	 r_{k,n}\colon \extr{k}{n} \surj O^{k+1}(\Delta^{n-1})
\end{equation*}
such that $j_{k,n};r_{k,n} = \idd{}$ and the diagram
\begin{equation} \label{eq:retract_rkn}
\begin{tikzpicture}[baseline={([yshift=-.5ex]current bounding box.center)}]
	\node (0) at (-.5,1.5) {$\bord \extr{k}{n}$};
	\node (1) at (2.5,0) {$O^{k+1}(\Delta^{n-1})$};
	\node (2) at (-.5,0) {$\extr{k}{n}$};
	\node (3) at (2.5,1.5) {$O^{k}(\Delta^n)$};
	\draw[1cinc] (0) to (3);
	\draw[1cincl] (0) to (2);
	\draw[1csurj] (2) to node[auto,swap,arlabel] {$r_{k,n}$} (1);
	\draw[1csurj] (3) to node[auto,arlabel] {$O^k(s^0_\prec)$} (1);
\end{tikzpicture}
\end{equation}
commutes in $\rdcpx$. 

First, $\extr{0}{n}$ decomposes as $\Delta^n \cup \infl{\Delta^{n-1}}$. Then $r_{0,n}$ is defined by
\begin{enumerate}
	\item $\restr{r_{0,n}}{\Delta^n} \eqdef s^0; \imath^-$ and
	\item $\restr{r_{0,n}}{\infl{\Delta^{n-1}}} \eqdef \idd{}$.
\end{enumerate}
If $k > 0$, suppose $r_{k-1,n}$ is defined. Now $\extr{k}{n}$ decomposes as
\begin{equation*}
	(O^{k-1}(\Delta^n) \celto \extr{k-1}{n}) \cup O^{k+1}(\Delta^{n-1}) \cup (\extr{k-1}{n} \celto O^{k-1}(\Delta^n)).
\end{equation*}
Then $r_{k,n}$ is defined by
\begin{enumerate}
	\item $\restr{r_{k,n}}{O^{k-1}(\Delta^n) \celto \extr{k-1}{n}}$ sends the greatest element to the greatest element of $\bord{}{-}O^{k+1}(\Delta^{n-1})$, is equal to $O^{k-1}(s^0_\prec);\imath^-$ on the input boundary, and is equal to $r_{k-1,n};\imath^-$ on the output boundary,
	\item $\restr{r_{k,n}}{O^{k+1}(\Delta^{n-1})} \eqdef \idd{}$, and
	\item $\restr{r_{k,n}}{\extr{k-1}{n} \celto O^{k-1}(\Delta^n)}$ sends the greatest element to the greatest element of $\bord{}{+}O^{k+1}(\Delta^{n-1})$, is equal to $r_{k-1,n};\imath^+$ on the input boundary, and is equal to $O^{k-1}(s^0_\prec);\imath^+$ on the output boundary.
\end{enumerate}
\end{dfn}

\begin{dfn} \label{dfn:extrtil}
For all $n > 1$ and $k \in \mathbb{N}$, we define a molecule $\extrtil{k}{n}$ such that $\bord{}{\alpha}\extrtil{k}{n}$ is isomorphic to $\bord{}{\alpha}\extr{k}{n}$. We let
\begin{align*}
	\extrtil{k}{2} & \eqdef \extr{k}{2}, \\
	\extrtil{k}{n} & \eqdef \extr{k}{n}[\extrtil{k+1}{n-1}/O^{k+1}(\Delta^{n-1})] \; \text{for $n > 2$}.
\end{align*}
Here $O^{k+1}(\Delta^{n-1})$ is seen as a submolecule of $\extr{k}{n}$ through $j_{k,n}$. 

By construction we have a sequence of inclusions of submolecules
\begin{equation*}
	\extrtil{k}{n} \supmol \extrtil{k+1}{n-1} \supmol \ldots \supmol \extrtil{k+n-2}{2} \supmol O^{k+n-1}(\Delta^1) = O^{k+n},
\end{equation*}
and the $r_{k,n}$ induce a sequence of retractions
\begin{equation*}
	\extrtil{k}{n} \surj \extrtil{k+1}{n-1} \surj \ldots \surj \extrtil{k+n-2}{2} \surj O^{k+n}
\end{equation*}
whose composite we call $r'_{k,n}\colon \extrtil{k}{n} \surj O^{k+n}$.

In particular, for all $n > 1$ we have an inclusion $O^n \submol \extrtil{0}{n}$ and a retraction of $\extrtil{0}{n}$ onto $O^n$. By definition of $a_n$ and commutativity of (\ref{eq:retract_rkn}), the diagram
\begin{equation} \label{eq:retract_r0n}
\begin{tikzpicture}[baseline={([yshift=-.5ex]current bounding box.center)}]
	\node (0) at (0,1.5) {$\bord{}{}\Delta^n$};
	\node (1) at (2.5,0) {$O^n$};
	\node (2) at (0,0) {$\extrtil{0}{n}$};
	\node (3) at (2.5,1.5) {$\Delta^n$};
	\draw[1cinc] (0) to (3);
	\draw[1cincl] (0) to (2);
	\draw[1csurj] (2) to node[auto,swap,arlabel] {$r'_{0,n}$} (1);
	\draw[1csurj] (3) to node[auto,arlabel] {$a_n$} (1);
\end{tikzpicture}
\end{equation}
commutes in $\rdcpx$.

By Lemma \ref{lem:substitution_convenient} and Remark \ref{rmk:extr_convenient}, every convenient class contains all the $\extrtil{k}{n}$. 
\end{dfn}

%%%%%%%%%%%%%%%%%%%%%%%%%%%%%

\subsection{Combinatorial homotopy groups}

\begin{dfn}[Horn]
Let $V \incl U$ be an inclusion of atoms in $\dcpx$ such that $\dmn{U} = \dmn{V} + 1$, and let	$\horn{V}{U} \eqdef \bord U \setminus (V \setminus \bord V)$. We call the inclusion $\horn{V}{U} \incl U$ a \emph{horn of $U$}.
\end{dfn}

\begin{dfn}[Filler]
Let $X$ be a diagrammatic set and $U \in \cls{C}$ an atom. A \emph{horn of $U$ in $X$} is a pair of a horn $\horn{V}{U} \incl U$ and a morphism $h\colon \horn{V}{U} \to X$. A \emph{filler} for the horn is a cell $f\colon U \to X$ such that
the triangle
\begin{equation*}
\begin{tikzpicture}[baseline={([yshift=-.5ex]current bounding box.center)}]
	\node (0) at (0,1.5) {$\horn{V}{U}$};
	\node (1) at (2.5,1.5) {$X$};
	\node (2) at (0,0) {$U$};
	\draw[1c] (0) to node[auto,arlabel] {$h$} (1);
	\draw[1cincl] (0) to (2);
	\draw[1c] (2) to node[auto,swap,arlabel] {$f$} (1);
\end{tikzpicture}
\end{equation*}
commutes in $\dgmset$.
\end{dfn}

\begin{dfn}[Kan diagrammatic set]
A diagrammatic set $X$ is \emph{Kan} if every horn in $X$ has a filler.
\end{dfn}

\begin{rmk}
The horns of $\Delta^n$ as an atom correspond bijectively to the standard horns $\{\Lambda_k^n \incl \Delta^n\}_{k=0}^n$ of the $n$\nbd simplex. Thus the defining property of Kan diagrammatic sets extends that of Kan complexes from $\deltacat$ to $\atom$. \emph{A fortiori}, if $X$ is a Kan diagrammatic set, then $\delres{X}$ is a Kan complex.
\end{rmk}

\begin{prop}
Let $X$ be a Kan diagrammatic set. Then
\begin{enumerate}[label=(\alph*)]
	\item all composable diagrams in $X$ are equivalences, and
	\item $X$ has weak composites.
\end{enumerate}
\end{prop}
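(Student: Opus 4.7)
The plan is to prove (a) by coinduction, using Kan-ness as an on-demand source of lax divisions, and then derive (b) from (a) by a single horn filling.

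For (a), I would let $A \eqdef \cdgm{X}$ be the set of all composable diagrams in $X$ and show that $A \subseteq \equifun{}{A}$, from which the coinductive principle yields $A \subseteq \equi{X}$. Fix $x\colon U \to X$ in $A$ and a dividend $(x_0\colon U_0 \to X, \imath\colon \bord{}{\alpha}U \incl \bord{}{\alpha}U_0)$, and without loss of generality take $\alpha = -$. The plan is to construct an $n$\nbd atom
\begin{equation*}
	U_- \eqdef \bord{}{-}U_0[\bord{}{+}U/\imath(\bord{}{-}U)] \celto \bord{}{+}U_0,
\end{equation*}
which is well-defined because sphericity of the boundaries forces its two halves to share the same boundary (Lemma \ref{lem:boundary_isomorphism} and Proposition \ref{prop:substitution}(b)), and lies in $\cls{C}$ by Lemma \ref{lem:substitution_convenient} and axiom \ref{ax:shift}. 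The pasting $U \cup U_-$ along $\bord{}{+}U \submol \bord{}{-}U_-$ is then defined, in $\cls{C}$ by axiom \ref{ax:submolecule}, and parallel to $U_0$ by Proposition \ref{prop:spherical_pasting}, with the inclusion $\bord{}{-}U \incl \bord{}{-}(U \cup U_-)$ matching $\imath$. Hence the $(n+1)$\nbd atom $W \eqdef (U \cup U_-) \celto U_0$ is defined and in $\cls{C}$.

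Unfolding the horn construction, $\horn{U_-}{W}$ is the diagrammatic subspace $U \cup U_0$ glued along $\imath(\bord{}{-}U)$, so $x$ on $U$ and $x_0$ on $U_0$ jointly define a morphism $\horn{U_-}{W} \to X$, consistent on the overlap by the dividend condition. Kan-ness then yields a filler $h\colon W \to X$, and setting $x_- \eqdef \restr{h}{U_-}$ exhibits $h$ as a composable $(n+1)$\nbd diagram of type $x \cup x_- \celto x_0$, that is, a lax division of $(x_0, \imath)$ by $x$. Since $h$ is itself a composable diagram, $h \in A$, which completes the coinductive step.

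For (b), given a composable $n$\nbd diagram $x\colon U \to X$, I would consider the $(n+1)$\nbd atom $W \eqdef U \celto \compos{U}$, which is in $\cls{C}$ by axiom \ref{ax:shift} applied twice (since $\compos{U} = \bord{}{-}U \celto \bord{}{+}U$). Unfolding gives $\horn{\compos{U}}{W} = U$, so $x$ itself serves as horn data; a filler $h\colon W \to X$ then produces a cell $\compos{x} \eqdef \restr{h}{\compos{U}}$ of shape $\compos{U}$, and $h$ is a composable diagram of type $x \celto \compos{x}$. By (a), $h$ is an equivalence, so $\compos{x}$ is a weak composite of $x$. The main technical obstacle I expect is the bookkeeping in (a): verifying that $U_-$ is a well-defined atom in $\cls{C}$, that $U \cup U_-$ is truly parallel to $U_0$ with the submolecule inclusion matching $\imath$ (which reduces via Proposition \ref{prop:substitution} to the substitution identity $\bord{}{-}U_-[\bord{}{-}U/\bord{}{+}U] \cong \bord{}{-}U_0$), and that the horn on $W$ identifies with $U \cup U_0$ glued over $\imath(\bord{}{-}U)$. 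Once these are in hand, (b) is an immediate and simpler instance of the same horn-filling pattern.
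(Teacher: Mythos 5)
Your proposal is correct and follows essentially the same strategy as the paper: construct the atom $V \eqdef \bord{}{-}U_0[\bord{}{+}U/\bord{}{-}U] \celto \bord{}{+}U_0$ (your $U_-$), set $W \eqdef (U \cup V) \celto U_0$, identify $\horn{V}{W}$ with the union of $U$ and $U_0$, fill the horn to get a lax division, and run the coinduction; part (b) is the same single horn filling on $U \celto \compos{U}$. The only cosmetic difference is that the paper closes (b) by invoking Proposition~\ref{prop:composites_fibrant}, while you argue directly that the filler is an equivalence exhibiting the weak composite — both are fine.
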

\begin{proof}
Let $x$ be a composable diagram of shape $U$ in $X$ and let $(x_0\colon U_0 \to X,$ $\imath\colon \bord{}{\alpha}U \incl \bord{}{\alpha}U_0)$ be a dividend for $x$. Without loss of generality, suppose $\alpha = -$. Then 
\begin{equation*}
	V \eqdef \bord{}{-}U_0[\bord{}{+}U/\bord{}{-}U] \celto \bord{}{+}U_0
\end{equation*}
is defined, and so are the pasting $U \cup V$ along the submolecule $\bord{}{+}U \submol \bord{}{-}V$ and $W \eqdef (U \cup V) \celto U_0$. By Lemma \ref{lem:substitution_convenient} and axioms \ref{ax:shift} and \ref{ax:submolecule} of convenient classes, $W \in \cls{C}$. 

The horn $\horn{V}{W}$ of $W$ is the union of $U \incl \bord{}{-}W$ and $U_0 \incliso \bord{}{+}W$ in $W$, so there is a horn $h\colon \horn{V}{W} \to X$ defined by $\restr{h}{U} \eqdef x$ and $\restr{h}{U_0} \eqdef x_0$. By assumption, this has a filler $f\colon W \to X$, which is a cell of type $x \cup x_- \celto x_0$, that is, a lax division of $(x_0, \imath)$ by $x$.

This proves that $\cdgm{X} \subseteq \equifun{}{\cdgm{X}}$, and it follows by coinduction that $\cdgm{X} = \equi{X}$.

For the second statement, observe that for all molecules $U \in \cls{C}$ the pair of the inclusion $\imath^-\colon U \incl (U \celto \compos{U})$ together with a composable diagram $x\colon U \to X$ is a horn of $U \celto \compos{U}$ in $X$. This has a filler $c(x)\colon (U \celto \compos{U}) \to X$, which by the first part is an equivalence, and Proposition \ref{prop:composites_fibrant} applies.
\end{proof}

\begin{cor}
Let $e\colon x \celto y$ be a composable diagram in a Kan diagrammatic set. Then $x \simeq y$.
\end{cor}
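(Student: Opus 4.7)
The plan is to observe that this corollary is essentially an immediate consequence of the preceding proposition: in a Kan diagrammatic set $X$, every composable diagram is an equivalence. Given $e\colon x \celto y$ composable in $X$, the previous proposition directly gives that $e$ is an equivalence. Since $x$ and $y$ are parallel (as the input and output boundaries of $e$), and since the definition of $\simeq$ requires exactly the existence of an equivalence $h\colon x \celto y$ between parallel composable diagrams, the diagram $e$ itself witnesses $x \simeq y$.

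So the entire proof is just: by the previous proposition applied to $X$ Kan, $e \in \equi{X}$; then by definition of $\simeq$, $x \simeq y$. There is no real obstacle — this is a one-line invocation of the preceding result, essentially a restatement tailored to the relation $\simeq$ rather than to the set $\equi{X}$. The only thing worth noting is that we use no additional content beyond the preceding proposition; in particular we do not need weak composites, nor any horn-filling argument, since the work has already been done in establishing that all composable diagrams are equivalences.
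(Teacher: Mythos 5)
Your proof is correct and matches the paper's intent: the corollary is stated without a proof precisely because it follows immediately from part (a) of the preceding proposition together with the definition of $\simeq$, exactly as you say.
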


\begin{dfn}[Homotopy groups of a Kan diagrammatic set]
Let $X$ be a Kan diagrammatic set. We define $\pin{0}{}{X}$ to be the set of 0\nbd cells $v\colon 1 \to X$, quotiented by $\simeq$.

Let $v$ be a 0\nbd cell in $X$. For all $n > 0$, we define $\pin{n}{}{X,v}$ to be the set of $n$\nbd cells $x\colon O^n \to X$ such that $\bord x =\ !;v$, where $!$ denotes the unique morphism to $1$, quotiented by $\simeq$.

This becomes a group with the following structure.
\begin{enumerate}
	\item Let $[x], [y]$ be classes with representatives $x, y$. Then $x \cp{n-1} y$ is a composable diagram in $X$ and has a weak composite $\compos{x \cp{n-1} y}$. We define $[x] * [y] \eqdef [\compos{x \cp{n-1} y}]$.
	\item The unit is the class $[!;v]$. 
	\item Let $[x]$ be a class with representative $x$. Then $x$ has a weak inverse cell $x^*$, and we define $\invrs{[x]} \eqdef [x^*]$. 
\end{enumerate}
Let $f\colon X \to Y$ be a morphism of Kan diagrammatic sets. For all $n > 0$, we define functions
\begin{equation*}
	\pin{0}{}{f}\colon \pin{0}{}{X} \to \pin{0}{}{Y}, \quad \quad \pin{n}{}{f}\colon \pin{n}{}{X,v} \to \pin{n}{}{Y,f(v)}
\end{equation*}
by $[x] \mapsto [f(x)]$. These are well-defined as functions of sets by Corollary \ref{cor:morphism_preserve_equivalence}.
\end{dfn}

\begin{prop}
Let $f\colon X \to Y$ be a morphism of Kan diagrammatic sets and $v$ a 0\nbd cell in $X$. For all $n > 0$,
\begin{enumerate}[label=(\alph*)]
	\item $\pin{n}{}{X,v}$ is well-defined as a group, and
	\item $\pin{n}{}{f}\colon \pin{n}{}{X,v} \to \pin{n}{}{Y,f(v)}$ is a homomorphism of groups.
\end{enumerate}
\end{prop}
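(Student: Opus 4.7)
The plan is to verify that the operations descend to equivalence classes, check the group axioms, and then deduce functoriality, relying throughout on the machinery of Section \ref{sec:equivalences}: specifically Proposition \ref{prop:pasting_equivalence} (substituting an equivalent subdiagram preserves the equivalence class), Proposition \ref{prop:unitors} (units act trivially up to equivalence), Proposition \ref{prop:basic_weak_composite} (any two weak composites of the same diagram are equivalent and morphisms send weak composites to weak composites), and Theorem \ref{thm:equivalence_equals_invertible} (which, combined with the fact that every composable diagram in a Kan diagrammatic set is an equivalence, gives that every cell is weakly invertible).

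First I would show that the operations descend to equivalence classes. For the product, if $x \simeq x'$ and $y \simeq y'$ then two applications of Proposition \ref{prop:pasting_equivalence} within $x \cp{n-1} y$ yield $x \cp{n-1} y \simeq x' \cp{n-1} y'$, and Proposition \ref{prop:basic_weak_composite}(a) then gives $\compos{x \cp{n-1} y} \simeq \compos{x' \cp{n-1} y'}$. For the inverse $\invrs{[x]} \eqdef [\compos{\tilde x}]$ where $\tilde x$ is a weak inverse of $x$, independence of the choice of $\tilde x$ follows from the chain $\tilde x \simeq \tilde x \cp{n-1} \eps{n} v \simeq \tilde x \cp{n-1} x \cp{n-1} \tilde x' \simeq \eps{n} v \cp{n-1} \tilde x' \simeq \tilde x'$ for any two weak inverses $\tilde x, \tilde x'$, by Proposition \ref{prop:unitors} and Proposition \ref{prop:pasting_equivalence}; independence of the representative $x$ is analogous.

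For the group axioms, the right identity $[x] * [\eps{n} v] = [x]$ reduces to $x \cp{n-1} \eps{n} v \simeq x$, which is Proposition \ref{prop:unitors} after observing that $\eps{}(\bord{n-1}{+} x) = \eps{n} v$ because $\bord{n-1}{+} x = !;v$; the left identity is dual. The inverse law $[x] * \invrs{[x]} = [\eps{n} v]$ is the defining property of weak invertibility (Theorem \ref{thm:equivalence_equals_invertible}) after passing to weak composites. Associativity reduces both $\compos{\compos{x \cp{n-1} y} \cp{n-1} z}$ and $\compos{x \cp{n-1} \compos{y \cp{n-1} z}}$ to a common representative $x \cp{n-1} y \cp{n-1} z$ by substituting the inner weak composite via Proposition \ref{prop:pasting_equivalence}, after which transitivity of $\simeq$ closes the argument.

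Functoriality follows because $f$ strictly preserves pasting and units, so $f(\eps{n} v) = \eps{n} f(v)$, while by Proposition \ref{prop:basic_weak_composite}(c) the cell $f(\compos{x \cp{n-1} y})$ is a weak composite of $f(x) \cp{n-1} f(y)$, hence equivalent to $\compos{f(x) \cp{n-1} f(y)}$ by Proposition \ref{prop:basic_weak_composite}(a); an analogous observation handles inverses. The proof is conceptually routine; the main point requiring care is notational, namely keeping track of shapes to confirm that every auxiliary cell constructed has shape $O^n$ with boundary $!;v$, so that it continues to represent an element of $\pin{n}{}{X,v}$.
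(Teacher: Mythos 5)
Your proposal is correct and follows essentially the same route as the paper's own proof: Proposition \ref{prop:pasting_equivalence} for well-definedness of the product and for associativity, Proposition \ref{prop:unitors} for the unit law, weak invertibility for the inverse law, and Proposition \ref{prop:basic_weak_composite} for functoriality. The only (harmless) divergence is that you explicitly verify that $\invrs{[x]}$ is independent of the choice of weak inverse via an Eckmann--Hilton-style chain, whereas the paper leaves this implicit, relying on the fact that once a two-sided inverse is shown to exist in a monoid it is automatically unique, so the formula is well-defined without further argument.
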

\begin{proof}
There are several things to check.
\begin{itemize}
	\item \emph{$[x] * [y]$ is independent of the representatives $x, y$.} Suppose that $x \simeq x'$ and $y \simeq y'$. By Proposition \ref{prop:pasting_equivalence}, 
	\begin{equation*}
		x \cp{n-1} y \simeq ((x \cp{n-1} y)[y'/y])[x'/x] = x' \cp{n-1} y',
	\end{equation*}
	so $\compos{x \cp{n-1} y} \simeq \compos{x' \cp{n-1} y'}$ for any choice of weak composites.
	\item \emph{Multiplication is associative.} Using Proposition \ref{prop:pasting_equivalence} again,
	\begin{equation*}
		\compos{\compos{x \cp{n-1} y}\cp{n-1}z} \simeq x \cp{n-1} y \cp{n-1} z \simeq \compos{x \cp{n-1} \compos{y\cp{n-1}z}}.
	\end{equation*}
	\item \emph{$[!;v]$ is a unit.} Immediate from Proposition \ref{prop:unitors} since $\eps{}{(!;v)} =\ !;v$.
	\item \emph{$\invrs{[x]}$ is the inverse of $[x]$.} Because $x^*$ is a weak inverse of $x$, there are equivalences exhibiting $x \cp{n-1} x^* \simeq\ !;v$ and $x^* \cp{n-1} x \simeq\ !;v$.
\end{itemize}
This proves that $\pin{n}{}{X,v}$ is a group. Finally, Proposition \ref{prop:basic_weak_composite} implies that $\pin{n}{}{f}$ is compatible with the group multiplication, and $f(!;v) =\ !;f(v)$ that it is compatible with the unit.
\end{proof}

\begin{dfn}
Let $x\colon \Delta^n \to K$ be an $n$\nbd simplex in a simplicial set. For $0 \leq k \leq n$, we write $d_k x \eqdef d^k;x$ and $\bord x = (d_0 x, \ldots, d_n x)$. 
\end{dfn}

\begin{dfn}[Homotopy groups of a Kan complex]
Let $K$ be a Kan complex. We define $\pin{0}{\Delta}{K}$ to be the set of 0\nbd simplices $v\colon \Delta^0 \to X$, quotiented by the relation $v \sim w$ if and only if there exists a 1\nbd simplex $x$ with $\bord x = (w,v)$.

Let $v$ be a 0\nbd simplex in $K$. For all $n > 0$, we define $\pin{n}{\Delta}{X,v}$ to be the set of $n$\nbd simplices $x\colon \Delta^n \to K$ such that $\bord x =\ (!;v,\ldots,!;v)$, where $!$ denotes the unique morphism to $\Delta^0$, quotiented by the relation $x \sim y$ if and only if there exists a morphism $h\colon \Delta^0 \times \Delta^n \to K$ such that the diagrams
\begin{equation*}
\begin{tikzpicture}[baseline={([yshift=-.5ex]current bounding box.center)}]
	\node (0) at (0,1.5) {$\Delta^0 \times \bord \Delta^n$};
	\node (1) at (2.5,1.5) {$\Delta^0$};
	\node (2) at (0,0) {$\Delta^0 \times \Delta^n$};
	\node (3) at (2.5,0) {$K$,};
	\draw[1c] (0) to node[auto,arlabel] {$!$} (1);
	\draw[1c] (2) to node[auto,swap,arlabel] {$h$} (3);
	\draw[1cincl] (0) to (2);
	\draw[1c] (1) to node[auto,arlabel] {$v$} (3);
\end{tikzpicture} \quad \quad \quad
\begin{tikzpicture}[baseline={([yshift=-.5ex]current bounding box.center)}]
	\node (0) at (-.5,1.5) {$\Delta^n$};
	\node (1) at (2.5,0) {$K$};
	\node (2) at (2.5,1.5) {$\Delta^1 \times \Delta^n$};
	\node (3) at (5.5,1.5) {$\Delta^n$};
	\draw[1cinc] (0) to node[auto,arlabel] {$(!;d^1,\idd{})$} (2);
	\draw[1cincl] (3) to node[auto,swap,arlabel] {$(!;d^0,\idd{})$} (2);
	\draw[1c] (0) to node[auto,swap,arlabel] {$x$} (1);
	\draw[1c] (2) to node[auto,arlabel] {$h$} (1);
	\draw[1c] (3) to node[auto,arlabel] {$y$} (1);
\end{tikzpicture}
\end{equation*}
commute in $\sset$. This becomes a group with the following structure.
\begin{enumerate}
	\item Let $[x], [y]$ be classes with representatives $x, y$. There is a horn $\Lambda^{n+1}_1 \to K$ equal to $y$ on $d^0$, to $x$ on $d^2$, and to $!;v$ on $d^k$ for all $k > 2$. This horn has a filler $f\colon \Delta^{n+1} \to K$, and we define $[x] * [y] \eqdef [d_1 f]$.
	\item The unit is the class $[!;v]$.
	\item Let $[x]$ be a class with representative $x$. There is a horn $\Lambda^{n+1}_0 \to K$ equal to $x$ on $d^2$ and to $!;v$ on all other faces. This horn has a filler $f\colon \Delta^{n+1} \to K$, and we define $\invrs{[x]} \eqdef [d_0 f]$.
\end{enumerate}
Let $f\colon K \to L$ be a morphism of Kan complexes. For all $n > 0$, we define functions
\begin{equation*}
	\pin{0}{\Delta}{f}\colon \pin{0}{\Delta}{K} \to \pin{0}{}{L}, \quad \quad \pin{n}{\Delta}{f}\colon \pin{n}{\Delta}{K,v} \to \pin{n}{\Delta}{L,f(v)}
\end{equation*}
by $[x] \mapsto [f(x)]$. 
\end{dfn}

\begin{comm}
We adopted the opposite convention with respect to \cite[Section I.7]{goerss2009simplicial}, by having the relevant faces of a simplex be the first few, as opposed to the last few. This is to avoid oscillations between the input and output boundary of the simplex seen as an atom. 
\end{comm}

\begin{prop}
Let $f\colon K \to L$ be a morphism of Kan complexes and $v$ a 0\nbd simplex in $K$. Then
\begin{enumerate}[label=(\alph*)]
	\item $\pin{n}{\Delta}{K,v}$ is well-defined as a group for all $n > 0$,
	\item $\pin{n}{\Delta}{K,v}$ is abelian for all $n > 1$, and
	\item $\pin{n}{\Delta}{f}$ is well-defined for all $n \in \mathbb{N}$ and a homomorphism of groups for all $n > 0$.
\end{enumerate}
\end{prop}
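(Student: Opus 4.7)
The proposition is a standard result about homotopy groups of Kan complexes, so the plan is to follow the classical development as in \cite[Section I.7]{goerss2009simplicial}, adapted only to our opposite indexing convention for faces. The entire argument rests on iterated horn-filling in $K$ and $L$, and the strictness of morphisms of simplicial sets with respect to faces and degeneracies. I would handle (a), (b), (c) in that order.

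For (a), I would first show that $\sim$ is an equivalence relation on the set of $n$\nbd simplices $x\colon \Delta^n \to K$ with $\bord x = (!;v, \ldots, !;v)$. Reflexivity comes from taking $h$ to be the composite of the projection $\Delta^1 \times \Delta^n \to \Delta^n$ with $x$; symmetry and transitivity are exhibited by filling $\Lambda^{n+1}_k$\nbd horns whose faces are the given homotopies and appropriate constant simplices. For well-definedness of $[x] * [y]$, given two pairs $(x,y)$ and $(x',y')$ with $x \sim x'$, $y \sim y'$ and chosen fillers $f, f'\colon \Delta^{n+1} \to K$, one constructs a $\Lambda^{n+2}_1$\nbd horn whose 1st face is absent, whose 0th and 2nd faces are the filled homotopies, and whose remaining faces are degenerate at $v$; a filler for this horn has 1st face exhibiting $d_1 f \sim d_1 f'$. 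Associativity is the ``prism'' argument: from fillers witnessing $([x]*[y])*[z]$ and $[x]*([y]*[z])$ one assembles a $\Lambda^{n+2}_k$\nbd horn whose unique filler produces the required homotopy. The unit laws follow by taking the degenerate simplex $s^0;x$ or $s^2;x$ as a filler of the relevant horn defining $[!;v] * [x]$ or $[x] * [!;v]$. For inverses one uses the dual $\Lambda^{n+1}_0$\nbd filler, and checks $[x] * [x^{-1}] = [!;v]$ by exhibiting a further $\Lambda^{n+2}_k$\nbd filler whose 1st face is degenerate.

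For (b), with $n > 1$, I would apply the Eckmann--Hilton argument: one constructs a second multiplication on $\pin{n}{\Delta}{K,v}$ using a $\Lambda^{n+1}_k$\nbd horn in a different ``direction'' (index $k \geq 2$), shows it shares the unit $[!;v]$ and is compatible with the first via the middle-four-interchange law arising from a single filler of a $\Lambda^{n+2}_k$\nbd horn in $K$, and concludes commutativity.

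For (c), the fact that $f\colon K \to L$ commutes strictly with all face and degeneracy operators immediately gives that $f$ sends $n$\nbd simplices with trivial boundary at $v$ to $n$\nbd simplices with trivial boundary at $f(v)$, sends homotopies to homotopies (hence $\sim$ to $\sim$), and sends any horn filler used to define $[x]*[y]$ to a filler of the image horn; consequently the induced map on quotients is well-defined and a group homomorphism. The only genuine obstacle is bookkeeping — keeping track of which face index corresponds to ``input'' versus ``output'' under the convention of this paper — but no new ideas beyond the classical theory are required.
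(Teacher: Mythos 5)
Your proposal is correct in outline and follows essentially the same route as the paper: the paper's own proof is simply the citation to \cite[Theorem I.7.2]{goerss2009simplicial}, and what you have written is a sketch of that classical argument transposed to the paper's low-index convention. The only caveat is the bookkeeping you already flag: with the $\Lambda^{n+1}_1$\nbd horn carrying $y$ on $d^0$ and $x$ on $d^2$, the unit-law fillers are $s^0 x$ and $s^1 x$ rather than $s^0 x$ and $s^2 x$.
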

\begin{proof}
This is \cite[Theorem I.7.2]{goerss2009simplicial}.
\end{proof}

\begin{prop} \label{prop:alpha_0_isomorphism}
Let $X$ be a Kan diagrammatic set. The sets $\pin{0}{}{X}$ and $\pin{0}{\Delta}{\delres{X}}$ are naturally isomorphic.
\end{prop}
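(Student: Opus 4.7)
The plan is to verify that the two quotient sets are built from the same underlying set of $0$-cells of $X$ and that the two equivalence relations coincide. Since $\Delta^0 = 1$ in $\atom$, the map sending a $0$-cell $v\colon 1 \to X$ to itself, regarded as a $0$-simplex $v\colon \Delta^0 \to \delres{X}$, is a bijection between the underlying sets. So the only substantive content is to match the two relations.

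First I would identify $1$-simplices in $\delres{X}$ with $1$-cells in $X$: since $\Delta^1 = O^1$ as atoms, a $1$-simplex $x\colon \Delta^1 \to \delres{X}$ is precisely a cell $h\colon O^1 \to X$. Using the commutative squares (\ref{eq:folding_faces}) for $n = 1$, where $a_1$ is the identity, one checks that $d_0 x = \bord{}{+} h$ and $d_1 x = \bord{}{-} h$. Thus $\bord x = (w, v)$ in the simplicial sense is exactly the statement that $h$ is a $1$-cell of type $v \celto w$.

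Next I would prove the two equivalence relations coincide on this common underlying set. For the ``simplicial implies diagrammatic'' direction: given a $1$-simplex $x$ with $d_0 x = w$, $d_1 x = v$, the corresponding $h\colon v \celto w$ is a composable $1$-diagram, hence an equivalence because $X$ is Kan (every composable diagram in a Kan diagrammatic set is an equivalence), so $v \simeq w$. For the ``diagrammatic implies simplicial'' direction: if $v \simeq w$, then by definition some equivalence $e\colon v \celto w$ of some $1$-molecule shape $U$ exists. Since $X$ has weak composites (again by being Kan), $e$ has a weak composite $\compos{e}\colon O^1 \to X$, which by Proposition \ref{prop:basic_weak_composite} has type $v \celto w$. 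This $\compos{e}$ is a $1$-cell, hence a $1$-simplex with $d_0 \compos{e} = w$ and $d_1 \compos{e} = v$, witnessing $v \sim w$.

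Combining these gives a well-defined bijection $\pin{0}{}{X} \iso \pin{0}{\Delta}{\delres{X}}$ sending $[v]_\simeq \mapsto [v]_\sim$. Naturality in $X$ is immediate: for a morphism $f\colon X \to Y$ of Kan diagrammatic sets, $f$ acts on $0$-cells in the same way as $\delres f$ acts on $0$-simplices, so both induced maps on quotients are computed by the same assignment $[v] \mapsto [f(v)]$. There is no real obstacle here; the only care needed is the orientation bookkeeping that identifies $(d_0, d_1)$ on $\Delta^1$ with $(\bord{}{+}, \bord{}{-})$ on $O^1$, which is already encoded in Definition \ref{dfn:folding}.
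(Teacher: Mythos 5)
Your proof is correct and takes essentially the same approach as the paper's: the paper's own proof is a single sentence recording the bijection between $n$-cells of $X$ and $n$-simplices of $\delres{X}$ for $n \in \{0,1\}$ and the boundary convention $\bord x = (w,v) \leftrightarrow x\colon v \celto w$. You supply the detail the paper leaves implicit (passing from an arbitrary equivalence $e\colon v \celto w$ to a single $1$-cell via a weak composite, using that Kan diagrammatic sets have weak composites), which is the right way to fill the gap.
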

\begin{proof}
If $n \in \{0,1\}$, the $n$\nbd cells in $X$ are in bijection with the $n$\nbd simplices in $\delres{X}$, and a 1\nbd cell $x\colon v \celto w$ corresponds to a 1\nbd simplex with $\bord x = (w,v)$.
\end{proof}

\begin{dfn}
Let $X$ be a Kan diagrammatic set and $v$ a 0\nbd cell in $X$. For all $n > 0$, we define a function
\begin{align*}
	\alpha_n\colon \pin{n}{}{X,v} & \to \pin{n}{\Delta}{\delres{X},v}, \\
	[x] & \mapsto [a_n;x],
\end{align*}
where $a_n\colon \Delta^n \surj O^n$ is the folding map of \S \ref{dfn:folding}.

We show that this is well-defined, that is, independent of the representative of $[x]$. If $x \simeq y$, there exists a cell $e\colon x \celto y$ of shape $O^{n+1}$. Then $a_{n+1};e$ is an $(n+1)$\nbd simplex in $\delres{X}$ with
\begin{equation*}
	\bord(a_{n+1};e) = (a_n;y, a_n;x, !;v, \ldots, !;v)
\end{equation*}
by the commutativity of the two diagrams in (\ref{eq:folding_faces}). This exhibits $[a_n;x]$ as $[!;v]*[a_n;y] = [a_n;y]$ in $\pin{n}{\Delta}{\delres{X},v}$, so $\alpha_n[x] = \alpha_n[y]$.

We show that $\alpha_n$ is a homomorphism of groups. First of all, $a_n;!;v =\ !;v$ so $\alpha_n$ preserves the unit. 

Suppose $[x]*[y] = [z]$ in $\pin{n}{}{X,v}$. Then $z \simeq x \cp{n-1} y$, so there exists a cell $e\colon z \celto x \cp{n-1} y$ which has shape $\Phi^{n+1}$. 

Let $c_{n+1}\colon \Delta^{n+1} \surj \Phi^{n+1}$ be the folding map of \S \ref{dfn:folding_compositor}. Then $c_{n+1}; e$ is an $(n+1)$\nbd simplex in $\delres{X}$ with
\begin{equation*}
	\bord(c_{n+1};e) = (a_n;y, a_n;z, a_n;x, !;v, \ldots, !;v)
\end{equation*}
by the commutativity of the three diagrams in (\ref{eq:cn_faces}). This exhibits $[a_n;z]$ as $[a_n;x] * [a_n;y]$ in $\pin{n}{\Delta}{\delres{X},v}$, so $\alpha_n[x]*\alpha_n[y] = \alpha_n[z]$.

Finally, the $\alpha_n$ are natural in the sense that, if $f\colon X \to Y$ is a morphism of Kan diagrammatic sets, then the square
\begin{equation} \label{eq:naturality_alpha_n}
\begin{tikzpicture}[baseline={([yshift=-.5ex]current bounding box.center)}]
	\node (0) at (0,1.5) {$\pin{n}{}{X,v}$};
	\node (1) at (4,1.5) {$\pin{n}{}{Y,f(v)}$};
	\node (2) at (0,0) {$\pin{n}{\Delta}{\delres{X},v}$};
	\node (3) at (4,0) {$\pin{n}{\Delta}{\delres{Y},\delres{f}(v)}$};
	\draw[1c] (0) to node[auto,arlabel] {$\pin{n}{}{f}$} (1);
	\draw[1c] (2) to node[auto,swap,arlabel] {$\pin{n}{\Delta}{\delres{f}}$} (3);
	\draw[1c] (0) to node[auto,swap,arlabel] {$\alpha_n$} (2);
	\draw[1c] (1) to node[auto,arlabel] {$\alpha_n$} (3);
\end{tikzpicture}
\end{equation}
commutes in the category of groups and homomorphisms.
\end{dfn}

\begin{thm} \label{thm:alpha_n_isomorphism}
Let $X$ be a Kan diagrammatic set and $v$ a 0\nbd cell in $X$. Then $\alpha_n\colon \pin{n}{}{X,v} \to \pin{n}{\Delta}{\delres{X},v}$ is an isomorphism for all $n > 0$.
\end{thm}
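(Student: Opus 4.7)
The plan is to establish surjectivity and injectivity of $\alpha_n$ separately, in both cases exploiting the molecules $\extrtil{0}{m}$ from Definition \ref{dfn:extrtil} as intermediaries between $m$\nbd simplices and $m$\nbd globes. Recall that $\extrtil{0}{m}$ has boundary isomorphic to $\bord{}{}\Delta^m$, contains $\Delta^m$ and $O^m$ both as submolecules, and admits a retraction $r'_{0,m}\colon \extrtil{0}{m} \surj O^m$ whose restriction to $\bord{}{}\Delta^m$ agrees with $a_m|_{\bord{}{}\Delta^m}$ via the commutative diagram (\ref{eq:retract_r0n}). Because every convenient class contains all the $\extrtil{k}{n}$, these shapes are available as cell shapes in any Kan diagrammatic set.

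For surjectivity, given $y\colon \Delta^n \to X$ representing $[y] \in \pin{n}{\Delta}{\delres{X}, v}$, I first extend $y$ to a diagram $\tilde{y}\colon \extrtil{0}{n} \to X$ whose restriction to $\bord{}{}\extrtil{0}{n} = \bord{}{}\Delta^n$ remains the constant map $!;\eps{n-1}{v}$. Since $\extrtil{0}{n}$ is obtained from $\Delta^n$ by iterated pasting and substitution of inflate/globular submolecules (all belonging to $\cls{C}$ by Remark \ref{rmk:extr_convenient} and Lemma \ref{lem:substitution_convenient}), this extension is built by successively filling horns whose data is either already defined or constantly $!;\eps{n-1}{v}$; each filling uses the Kan property of $X$. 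I then define $x \eqdef \tilde{y}|_{O^n}$ via the submolecule $O^n \submol \extrtil{0}{n}$; the boundary of $x$ is pulled back from $\bord{}{}\extrtil{0}{n}$ along the retraction and is therefore $!;v$. To conclude $[a_n;x] = [y]$, I construct a suitable $(n+1)$\nbd simplex witnessing this: using the pasting $\extrtil{0}{n} \celto \Delta^n$ (which is defined since the two shapes share a boundary) together with $\tilde{y}$ and a horn-filling step in $\delres{X}$, one obtains an $(n+1)$\nbd simplex with boundary of the form $(a_n;x, y, !;\eps{n}{v}, \ldots, !;\eps{n}{v})$, as required.

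For injectivity, suppose $x_1, x_2\colon O^n \to X$ satisfy $[a_n;x_1] = [a_n;x_2]$, witnessed by an $(n+1)$\nbd simplex $h\colon \Delta^{n+1} \to X$ with $\bord{}{}h = (a_n;x_2, a_n;x_1, !;\eps{n}{v}, \ldots, !;\eps{n}{v})$. The same strategy one dimension up applies: extend $h$ to $\tilde{h}\colon \extrtil{0}{n+1} \to X$ keeping the boundary constant at $!;\eps{n}{v}$, and define $e \eqdef \tilde{h}|_{O^{n+1}}$. The combinatorial structure of $\extrtil{0}{n+1}$, together with the commutativities in diagrams (\ref{eq:folding_faces}) and (\ref{eq:cn_faces}) for $a_{n+1}$ and $c_{n+1}$, identifies the boundary of $e$ with the pair $(x_1, x_2)$ up to pasting with left and right unitors on the constant boundary data. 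Composing $e$ with these unitors (all equivalences by Corollary \ref{cor:twothree}) yields an equivalence $x_1 \celto x_2$ in $X$, so $x_1 \simeq x_2$ and $[x_1] = [x_2]$ in $\pin{n}{}{X,v}$.

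The main obstacle is the bookkeeping involved in the extension-and-restriction step: one must verify at each stage that the horn produced by the partially-defined data on the submolecules of $\extrtil{0}{m}$ is genuinely a horn (i.e., correctly defined on the union of all but one face of the atom being filled), and that the boundary of the resulting cell on $O^m \submol \extrtil{0}{m}$ coincides on the nose with the desired data up to the unitors furnished by the $\lunitor{}{}{\alpha}$ and $\runitor{}{}{\alpha}$ constructions. This relies decisively on the compatibility of the $r_{k,n}$ retractions with the foldings $a_n, c_{n+1}$, which was arranged precisely in the constructions of Definition \ref{dfn:folding} and Definition \ref{dfn:extr}. Once this compatibility is unpacked, the argument becomes a routine iteration of horn filling in $X$ and in $\delres{X}$.
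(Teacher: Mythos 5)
Your idea of mediating between $\Delta^n$ and $O^n$ through the molecules $\extrtil{0}{n}$ is exactly the paper's, but the way you propose to exploit them has a real gap in both directions. For surjectivity, you want to extend $y\colon\Delta^n\to X$ to $\tilde{y}\colon\extrtil{0}{n}\to X$ "by successively filling horns" and then set $x\eqdef\tilde{y}|_{O^n}$. The trouble is that $\extrtil{0}{n}$ is an $n$\nbd molecule, so the cells you need to create are themselves $n$\nbd dimensional; the Kan condition produces a new $n$\nbd cell only as the \emph{missing face} of a filled horn of an $(n+1)$\nbd atom, and you never specify which $(n+1)$\nbd atoms those horns are horns of. If you instead fill the new atoms of $\extrtil{0}{n}$ with the only boundary-compatible constants, then $\tilde{y}|_{O^n}=\ !;\eps{n-1}v$ and the surjectivity claim fails for any nontrivial $[y]$. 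The paper resolves this by forming the single $(n+1)$\nbd atom $U\eqdef\Delta^n\celto\extrtil{0}{n}$ and filling the horn $\horn{O^n}{U}$ — the missing face \emph{is} the desired globular cell — and then constructing the witnessing $(n+1)$\nbd simplex explicitly as $s^0_\prec;k$ where $k$ is a weak composite of the filler $f$ with an auxiliary cell $p;x'$ built from the retraction $r'_{0,n}$; your sketch of the witnessing simplex ("the pasting $\extrtil{0}{n}\celto\Delta^n$ together with $\tilde{y}$ and a horn-filling step in $\delres{X}$") does not pin down where the required homotopy comes from.

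For injectivity there is a more concrete error: you say the extension $\tilde{h}\colon\extrtil{0}{n+1}\to X$ should "keep the boundary constant at $!;\eps{n}v$," but the witnessing simplex $h$ has $\bord h = (a_n;x_2,\, a_n;x_1,\, !;v,\ldots,!;v)$, and only the face $d^0$ becomes interior in $\extrtil{0}{n+1}$ (the pasting is along $d^0(\Delta^n)\submol\bord{}{+}\Delta^{n+1}$); the face $d^1h = a_n;x_1$ remains on the boundary of $\extrtil{0}{n+1}$ and is not constant in general. So the stated constraint on $\tilde{h}$ is inconsistent with the data you start from. The paper again works one dimension higher: it fills a horn of the $(n+2)$\nbd atom $\Delta^{n+1}\celto\extr{0}{n+1}$ to manufacture a cell $e\colon\ !;v\celto a_n;x$ of shape $\infl{\Delta^n}$, then fills a horn of another $(n+2)$\nbd atom whose output boundary pastes the $\extrtil{0}{n}$ gadgets around $O^{n+1}$, finally extracting $e'\colon\ !;v\celto x$. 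The moral is that in both directions the paper never attempts to extend a morphism \emph{within} $\extrtil{0}{m}$; it always builds a carefully chosen atom one dimension up whose input and output boundaries house the simplicial and globular data respectively, so that a single Kan filling provides the missing cell and its boundary relations at once. You should recast your plan in that form.
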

\begin{proof}
We will first prove that $\alpha_n$ is surjective, that is, for all classes $[x]$ in $\pin{n}{\Delta}{\delres{X},v}$, represented by an $n$\nbd simplex $x$ in $\delres{X}$, there exists an $n$\nbd cell $x'\colon O^n \to X$ such that $[a_n;x'] = [x]$. 

Then we will prove that $\alpha_n$ is injective, by showing it has a trivial kernel, that is, if $[a_n;x] = [!;v]$ in $\pin{n}{\Delta}{\delres{X},v}$ then $[x] = [!;v]$ in $\pin{n}{}{X,v}$.

We will repeatedly use the molecules and maps defined in \S \ref{dfn:extr} and \S \ref{dfn:extrtil}.
\begin{itemize}
	\item \emph{Surjectivity.} If $n = 1$, we have $O^1 = \Delta^1$ and $a_1 = \idd{}$ so there is nothing to prove. \\
	If $n > 1$, let $x$ be an $n$\nbd simplex in $\delres{X}$. Let $U \eqdef \Delta^n \celto \extrtil{0}{n}$, and let $O^n \incl U$ be the inclusion of $O^n$ as a submolecule of $\extrtil{0}{n} = \bord{}{+}U$. Because $\bord{}{-}x =\ !;v$, there is a horn $h\colon \horn{O^n}{U} \to X$ equal to $x$ on $\bord{}{-}U$ and to $!;v$ everywhere else. This horn has a filler $f\colon U \to X$ and we define $x' \eqdef \restr{f}{O^n}$. \\
	Let $U' \eqdef \extrtil{0}{n} \celto \Delta^n$, and $p\colon U' \surj O^n$ be the map defined by
	\begin{enumerate}
		\item $\restr{p}{\bord{}{-}U} \eqdef r'_{0,n}$,
		\item $\restr{p}{\bord{}{+}U} \eqdef a_n$, and
		\item $p$ sends the greatest element of $U'$ to $\undl{n}$.
	\end{enumerate}
	This is well-defined by the commutativity of (\ref{eq:retract_r0n}). Consider the $(n+1)$\nbd cell $p;x'$. Because $r'_{0,n}$ is a retraction onto $O^n$, the diagram $\bord{}{-}(p;x')$ is equal to $x'$ on $O^n$ and to $!;v$ everywhere else, so $\bord{}{-}(p;x') = \bord{}{+}f$. By construction, $\bord{}{+}(p;x')$ is equal to $a_n;x'$. \\
	It follows that the pasting $f \cp{n} (p;x')$ is defined. Let $k$ be a weak composite. Then $k$ has shape $\infl{\Delta^n}$ and type $x \celto a_n;x'$. \\
	Let $s^0_\prec\colon \Delta^{n+1} \surj \infl{\Delta^n}$ be defined as in \S \ref{dfn:fattening}. Then
	\begin{equation*}
		\bord (s^0_\prec; k) = (a_n;x', x, !;v, \ldots, !;v),
	\end{equation*}
	which exhibits $[x] = [!;v] * [a_n;x'] = [a_n;x']$ in $\pin{n}{\Delta}{\delres{X},v}$.
	\item \emph{Injectivity.} Let $x\colon O^n \to X$ be such that $[a_n;x] = [!;v]$ in $\pin{n}{\Delta}{\delres{X},v}$. By \cite[Lemma I.7.4]{goerss2009simplicial}, there is an $(n+1)$\nbd simplex $y$ in $\delres{X}$ with 
	\begin{equation*}
	\bord y = (a_n;x, !;v, \ldots, !;v). 
	\end{equation*}
	Let $U \eqdef \Delta^{n+1} \celto \extr{0}{n+1}$, and let $\infl{\Delta^n} \incl U$ be $j_{0,n};\imath^+$. There is a horn $h\colon \horn{\infl{\Delta^n}}{U} \to X$ equal to $y$ on $\bord{}{-}U$ and to $!;v$ everywhere else. This horn has a filler $f\colon U \to X$ and we let $e \eqdef \restr{f}{\infl{\Delta^n}}$. This is an $(n+1)$\nbd cell of type $!;v \celto a_n;x$. \\
	Let $W$ be the colimit in $\dcpxin$ of the diagram
	\begin{equation*} 
	\begin{tikzpicture}[baseline={([yshift=-.5ex]current bounding box.center)}]
	\node (0) at (-2,-1.5) {$\Delta^n \celto \extrtil{0}{n}$};
	\node (1) at (0,0) {$O^n$};
	\node (2) at (2,-1.5) {$O^{n+1}$};
	\node (3) at (4,0) {$O^n$};
	\node (4) at (6,-1.5) {$\extrtil{0}{n} \celto \Delta^n$};
	\draw[1cincl] (1) to node[auto,swap,arlabel] {$j^+$} (0);
	\draw[1cinc] (1) to node[auto,arlabel] {$\imath^-$} (2);
	\draw[1cincl] (3) to node[auto,swap,arlabel] {$\imath^+$} (2);
	\draw[1cinc] (3) to node[auto,arlabel] {$j^-$} (4);
	\end{tikzpicture}
	\end{equation*}
	where $j^+, j^-$ are the inclusions of $O^n$ as a submolecule of $\extrtil{0}{n}$. This can be constructed as a sequence of two pastings along submolecules, so $W \in \cls{C}$. Then we define $U' \eqdef \infl{\Delta^n} \celto W$, and let $O^{n+1} \incl U'$ be the inclusion into the colimit $W = \bord{}{+}U'$. \\
	There is a horn $h'\colon \horn{O^{n+1}}{U'} \to X$ that is equal to $e$ on $\bord{}{-}U'$, to $!;v$ on $(\Delta^n \celto \extrtil{0}{n}) \incl \bord{}{+}U'$, and to $p;x$ on $(\extrtil{0}{n} \celto \Delta^n) \incl \bord{}{+}U'$, where $p$ is the map defined in the proof of surjectivity. This is well-defined because $\bord{}{-}(!;v) =\ !;v = \bord{}{-}e$ and $\bord{}{+}(p;x) = a_n;x = \bord{}{+}e$. \\
	This horn has a filler $f'\colon U' \to X$ and we define $e' \eqdef \restr{f'}{O^{n+1}}$. Then $e'$ has type $!;v \celto x$, so it exhibits $[!;v] = [x]$ in $\pin{n}{}{X,v}$.
\end{itemize}
This completes the proof.
\end{proof}

\begin{dfn}
In the classical model structure on $\sset$, a weak equivalence of Kan complexes is a morphism $f\colon K \to L$ with the property that $\pin{n}{\Delta}{f}$ is an isomorphism for all $n \in \mathbb{N}$ and all choices of $v\colon \Delta^0 \to K$. 

By analogy, we define a weak equivalence of Kan diagrammatic sets to be a morphism $f\colon X \to Y$ such that $\pin{n}{}{f}$ is an isomorphism for all $n \in \mathbb{N}$ and all choices of $v\colon 1 \to X$. A weak equivalence in the sense of \S \ref{dfn:weak_equivalence} is also a weak equivalence in this sense.
\end{dfn}

\begin{cor}
If $f\colon X \to Y$ is a weak equivalence of Kan diagrammatic sets, then $\delres{f}\colon \delres{X} \to \delres{Y}$ is a weak equivalence of Kan complexes.
\end{cor}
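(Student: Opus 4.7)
The plan is to deduce the statement directly from the naturality of the comparison maps $\alpha_n$ together with the isomorphism results already proved. Given a weak equivalence $f\colon X \to Y$ of Kan diagrammatic sets and any 0-cell $v$ in $X$, for each $n > 0$ I would invoke the commutative square (\ref{eq:naturality_alpha_n})
\begin{equation*}
\begin{tikzpicture}[baseline={([yshift=-.5ex]current bounding box.center)}]
    \node (0) at (0,1.5) {$\pin{n}{}{X,v}$};
    \node (1) at (4,1.5) {$\pin{n}{}{Y,f(v)}$};
    \node (2) at (0,0) {$\pin{n}{\Delta}{\delres{X},v}$};
    \node (3) at (4,0) {$\pin{n}{\Delta}{\delres{Y},\delres{f}(v)}$.};
    \draw[1c] (0) to node[auto,arlabel] {$\pin{n}{}{f}$} (1);
    \draw[1c] (2) to node[auto,swap,arlabel] {$\pin{n}{\Delta}{\delres{f}}$} (3);
    \draw[1c] (0) to node[auto,swap,arlabel] {$\alpha_n$} (2);
    \draw[1c] (1) to node[auto,arlabel] {$\alpha_n$} (3);
\end{tikzpicture}
\end{equation*}
By hypothesis the top horizontal map is an isomorphism of groups, and by Theorem \ref{thm:alpha_n_isomorphism} the two vertical maps are isomorphisms. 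Hence the bottom map $\pin{n}{\Delta}{\delres{f}}$ is an isomorphism.

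For the case $n = 0$, I would run the analogous two-out-of-three argument using the natural isomorphism from Proposition \ref{prop:alpha_0_isomorphism} in place of $\alpha_n$, so that $\pin{0}{\Delta}{\delres{f}}$ is a bijection. The set of basepoints to check coincides with the 0-cells of $X$, which under the restriction functor are in bijection with the 0-simplices of $\delres{X}$, so quantifying over basepoints of $\delres{X}$ is the same as quantifying over basepoints of $X$. This covers all the data required by the definition of weak equivalence of Kan complexes.

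There is no hard step here: all substantive content has been extracted into Theorem \ref{thm:alpha_n_isomorphism} and Proposition \ref{prop:alpha_0_isomorphism}, and naturality of $\alpha_n$ has been verified at the point of its construction. The only minor bookkeeping is to observe that $\pin{0}{\Delta}{\delres{f}}$ being a bijection follows from $\pin{0}{}{f}$ being a bijection via Proposition \ref{prop:alpha_0_isomorphism}, which was formulated naturally in $X$. The result will thus be a three-line corollary of the machinery already developed.
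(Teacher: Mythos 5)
Your argument is exactly the one the paper intends: the naturality square (\ref{eq:naturality_alpha_n}) with vertical isomorphisms from Theorem \ref{thm:alpha_n_isomorphism} forces the bottom map to be an isomorphism when the top one is, and Proposition \ref{prop:alpha_0_isomorphism} handles $n = 0$. The small point you add about basepoints of $\delres{X}$ coinciding with $0$-cells of $X$ is correct (since $\Delta^0 = 1$) and worth noting, but otherwise this is the paper's proof.
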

\begin{proof}
Follows from the combination of Proposition \ref{prop:alpha_0_isomorphism}, Theorem \ref{thm:alpha_n_isomorphism}, and commutativity of (\ref{eq:naturality_alpha_n}).
\end{proof}

%%%%%%%%%%%%%%%%%%%%%%%%%%%%%

\subsection{Geometric realisation}

\begin{dfn}[Nerve of a poset]
Let $P$ be a poset. The \emph{nerve} of $P$ is the simplicial set $NP$ whose $n$\nbd simplices are chains $(x_0 \leq \ldots \leq x_n)$ in $P$ and
\begin{align*}
	d_k(x_0 \leq \ldots \leq x_n) & \eqdef (x_0 \leq \ldots \leq x_{k-1} \leq x_{k+1} \leq \ldots \leq x_n), \\
	s_k(x_0 \leq \ldots \leq x_n) & \eqdef (x_0 \leq \ldots \leq x_{k} \leq x_{k} \leq \ldots \leq x_n)
\end{align*}
for $0 \leq k \leq n$. This extends to a functor $N\colon \pos \to \sset$ defined by
\begin{equation*}
	Nf\colon (x_0 \leq \ldots \leq x_n) \mapsto (f(x_0) \leq \ldots \leq f(x_n))
\end{equation*}
for each order-preserving function $f$ of posets.
\end{dfn}

\begin{dfn} \label{dfn:exfun}
Precomposing $N\colon \pos \to \sset$ with the forgetful functor $\atom \to \pos$, we obtain a functor $k\colon \atom \to \sset$. 

By \cite[Theorem 1.2.1]{riehl2014categorical} the left Kan extension of this functor along the Yoneda embedding $\atom \incl \dgmset$ exists, producing a functor 
\begin{equation*}
	k\colon \dgmset \to \sset
\end{equation*}
with a right adjoint $p\colon \sset \to \dgmset$.

Thus we have a pair of adjunctions
\begin{equation*}
\begin{tikzpicture}[baseline={([yshift=-.5ex]current bounding box.center)}]
	\node (0) at (0,0) {$\sset$};
	\node (1) at (3,0) {$\dgmset$};
	\node (2) at (6,0) {$\sset$};
	\draw[1c, out=30,in=150] (0.east |- 0,.15) to node[auto,arlabel] {$\imath_\Delta$} (1.west |- 0,.15);
	\draw[1c, out=-150,in=-30] (1.west |- 0,-.15) to node[auto,arlabel] {$\delres{-}$} (0.east |- 0,-.15);
	\node at (1.5,0) {$\bot$};
	\draw[1c, out=30,in=150] (1.east |- 0,.15) to node[auto,arlabel] {$k$} (2.west |- 0,.15);
	\draw[1c, out=-150,in=-30] (2.west |- 0,-.15) to node[auto,arlabel] {$p$} (1.east |- 0,-.15);
	\node at (4.5,0) {$\bot$};
\end{tikzpicture}
\end{equation*}
and we claim that $k\imath_\Delta$ is naturally isomorphic to the barycentric subdivision endofunctor $\subdiv$, while $\delres{(p-)}$ is naturally isomorphic to its right adjoint $\exfun$ \cite[Section 4.6]{fritsch1990cellular}.

First of all, the restriction of the forgetful functor $\atom \to \pos$ to $\deltacat$ is precisely the functor sending the $n$\nbd simplex to its poset of non-degenerate simplices ordered by inclusion. By definition, its post-composition with $N$ is the barycentric subdivision $\subdiv\colon \Delta \to \sset$. Then the diagram of functors 
\begin{equation*}
\begin{tikzpicture}[baseline={([yshift=-.5ex]current bounding box.center)}]
	\node (0) at (-1.5,1.5) {$\sset$};
	\node (1) at (1,1.5) {$\dgmset$};
	\node (0b) at (-1.5,0) {$\deltacat$};
	\node (1b) at (1,0) {$\atom$};
	\node (2b) at (3.5,0) {$\sset$};
	\draw[1c] (0) to node[auto,arlabel] {$\imath_\Delta$} (1);
	\draw[1c,out=0,in=105] (1) to node[auto,arlabel] {$k$} (2b);
	\draw[1cinc] (0b) to (1b);
	\draw[1c] (1b) to node[auto,arlabel] {$k$} (2b);
	\draw[1cinc] (0b) to (0);
	\draw[1cinc] (1b) to (1);
	\draw[1c,out=-30,in=-150] (0b) to node[auto,swap,arlabel] {$\subdiv$} (2b);
\end{tikzpicture}
\end{equation*}
commutes up to natural isomorphism. 

Because $\imath_\deltacat$ is the left Kan extension of $\deltacat \incl \dgmset$ along $\deltacat \incl \sset$, and $k$ as a left adjoint preserves left Kan extensions, $k\imath_\Delta$ is the left Kan extension of $\subdiv\colon \deltacat \to \sset$ along $\deltacat \incl \sset$. By definition $k\imath_\Delta$ is equal to $\subdiv\colon \sset \to \sset$ up to natural isomorphism. 

Finally, $\exfun$ is defined up to natural isomorphism as the right adjoint of $\subdiv$, and $\delres{(p-)}$ is right adjoint to $k\imath_\Delta$.
\end{dfn}

\begin{dfn}[Last vertex map]
Let $[n]$ be the linear order $\{0 < \ldots < n\}$. There is an order-preserving map $\gamma_n\colon \Delta^n \to [n]$ defined by
\begin{equation*}
	\ldots 1^j 0^k \mapsto n-k \quad \text{if $j > 0$}
\end{equation*}
with the encoding of \S \ref{dfn:simplex_encoding}. 

The family $\{N(\gamma_n)\colon \subdiv \Delta^n \to \Delta^n\}_{n\in \mathbb{N}}$ of morphisms in $\sset$ is natural over $\deltacat$, so it extends uniquely to a natural transformation $d\colon \subdiv \to \bigid{\sset}$. By adjointness this produces a natural transformation $e\colon \bigid{\sset} \to \exfun$. 
\end{dfn}

\begin{prop} \label{prop:exfun_weak_equivalence}
Let $K$ be a Kan complex. Then $\exfun K$ is a Kan complex and $e_K\colon K \to \exfun K$ is a weak equivalence.
\end{prop}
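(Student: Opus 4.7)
The plan is to derive both parts by transferring classical results of Kan into the present framework, using the identification $\exfun \cong \delres{(p(-))}$ established in Definition \ref{dfn:exfun}.

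For the first part, I would proceed by adjunction. A horn $h\colon \Lambda^n_k \to \exfun K$ transposes under $\subdiv \dashv \exfun$ to a morphism $h'\colon \subdiv \Lambda^n_k \to K$, and a filler for $h$ corresponds exactly to an extension of $h'$ along the inclusion $\subdiv \Lambda^n_k \incl \subdiv \Delta^n$. It then suffices to show that this inclusion is \emph{anodyne}, that is, obtainable as a finite composite of pushouts of standard horn inclusions $\Lambda^{m+1}_j \incl \Delta^{m+1}$. Granted this, the Kan property of $K$ permits the extension of $h'$ step by step, producing the desired filler.

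The anodyne claim is a classical combinatorial fact that I would verify by shelling. The non-degenerate top-dimensional simplices of $\subdiv \Delta^n$ correspond to maximal chains in the face poset of $\Delta^n$, and I would order the ones not already in $\subdiv \Lambda^n_k$ so that each new chain $\sigma$ differs from some earlier, already-attached simplex by exactly one vertex; the intersection of $\sigma$ with the previously built subcomplex is then a single horn of $\Delta^n$. Choosing the shelling so that the missing vertex depends in a controlled way on $k$ guarantees compatibility with the inclusion of $\subdiv \Lambda^n_k$.

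For the second part, I would first establish that the last-vertex map $d_K\colon \subdiv K \to K$ induces isomorphisms on all homotopy groups when $K$ is Kan. The standard argument passes through geometric realisation, where $\realis{d_K}$ factors as a canonical homeomorphism $\realis{\subdiv K} \incliso \realis{K}$ followed by a self-map of $\realis{K}$ homotopic to the identity (the linear straight-line homotopy along each simplex). Since $e_K$ is the transpose of $d_K$ under $\subdiv \dashv \exfun$, the triangle identities yield $d_K = \epsilon_K \circ \subdiv(e_K)$, where $\epsilon$ denotes the counit. Because $\exfun K$ is Kan by the first part and $\subdiv$ sends weak equivalences of Kan complexes to weak equivalences, a two-out-of-three argument applied to this factorisation forces $e_K$ to be a weak equivalence.

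The main obstacle is the anodyne-extension claim for $\subdiv \Lambda^n_k \incl \subdiv \Delta^n$: the explicit shelling, while standard, requires a careful bookkeeping of which maximal chains in the face poset of $\Delta^n$ avoid the vertex $d^k$ and in what order to attach them. The weak equivalence portion is comparatively straightforward if one is willing to invoke classical facts about barycentric subdivision, but obtaining a self-contained combinatorial proof within the framework developed so far would require either restating Kan's original argument in full or passing through $\imath_\Delta K$ and exploiting the comparison isomorphisms $\alpha_n$ of Theorem \ref{thm:alpha_n_isomorphism}.
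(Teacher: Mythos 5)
The paper's own proof is a one-line citation to Fritsch--Piccinini; you are attempting a self-contained reconstruction, which is a legitimately different (and more ambitious) route, so it deserves a careful comparison.

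Your argument for the first part --- that $\exfun K$ is Kan because $\subdiv\Lambda^n_k \incl \subdiv\Delta^n$ is anodyne and horns in $\exfun K$ transpose to extension problems for $K$ along these inclusions --- is the standard correct strategy, and the shelling by maximal chains of the face poset of $\Delta^n$ is the right way to establish the anodyne claim. The bookkeeping you flag is real but the idea is sound.

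The second part, however, contains a genuine circularity. You factor $d_K = \epsilon_K \circ \subdiv(e_K)$ and want to conclude that $e_K$ is a weak equivalence by two-out-of-three. But two-out-of-three only yields one leg from the other two: since you know $d_K$ is a weak equivalence, you would need to already know that \emph{either} $\epsilon_K\colon \subdiv\exfun K \to K$ is a weak equivalence \emph{or} that $\subdiv(e_K)$ is, before you can conclude anything about the remaining leg. The fact you cite --- that $\subdiv$ preserves (and, via realisation, reflects) weak equivalences --- would let you pass from $\subdiv(e_K)$ being a weak equivalence to $e_K$ being one, but it gives you no way to establish that $\subdiv(e_K)$ is a weak equivalence in the first place. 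And knowing that $\epsilon_K$ is a weak equivalence is, in the presence of the other facts you have in hand, logically equivalent to the claim you are proving, so assuming it is begging the question. This is precisely why the classical proofs of this statement (Kan's original, Goerss--Jardine's, Fritsch--Piccinini's, Moss's) all need a separate, non-formal argument at this point --- typically a direct comparison of homotopy groups, or an explicit geometric homotopy relating $|e_K|$ to a homeomorphism, neither of which is recovered by the adjunction-juggling you describe. Your concluding sentence gestures at the right alternatives (re-running Kan's argument, or routing through $\imath_\Delta$ and Theorem \ref{thm:alpha_n_isomorphism}), but the two-out-of-three sketch as written does not close the gap.
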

\begin{proof}
This is \cite[Corollary 4.6.21]{fritsch1990cellular}.
\end{proof}

\begin{dfn}
We write $\cghaus$ for the category of compactly generated Hausdorff spaces and continuous maps. The geometric realisation of simplicial sets is a functor $\realis{-}_\deltacat\colon \sset \to \cghaus$ with a right adjoint $S_\deltacat\colon \cghaus \to \sset$.
\end{dfn}

\begin{dfn}[Geometric realisation] The \emph{geometric realisation} of diagrammatic sets is the functor
\begin{equation*}
	\realis{-}\colon \dgmset \to \cghaus
\end{equation*}
obtained as the composite of $k\colon \dgmset \to \sset$ and $\realis{-}_\deltacat\colon \sset \to \cghaus$. This functor has a right adjoint
\begin{equation*}
	\sing{}\colon \cghaus \to \dgmset
\end{equation*}
obtained as the composite of $S_\deltacat$ and $p$. Diagrams of shape $U$ in $\sing{X}$ are continuous maps $\realis{U} \to X$.
\end{dfn}

\begin{dfn}
For each $n \in \mathbb{N}$, we let $D^n$ be the topological closed $n$\nbd ball and $\bord D^n$ the topological $(n-1)$\nbd sphere. 
\end{dfn}

\begin{comm}
In what follows we use some results in combinatorial topology that refer to the geometric realisation of the \emph{order complex} of a poset, which is an ordered simplicial complex, rather than the nerve, a simplicial set. It seems to be folklore that the two realisations are equal up to homeomorphism. 

We will also use the fact that these realisations are compatible with unions and intersections of closed subsets.
\end{comm}

\begin{prop} \label{prop:realisation_sphere}
Let $U$ be an $n$\nbd molecule with spherical boundary. Then
\begin{enumerate}[label=(\alph*)]
	\item $\realis{U}$ is homeomorphic to $D^n$ and
	\item $\realis{\bord U}$ is homeomorphic to $\bord D^n$.
\end{enumerate}
If $U$ is an atom, $n > 0$, and $\imath\colon \horn{V}{U} \incl U$ is a horn of $U$, then $\realis{\imath}$ is an embedding of $D^{n-1}$ into the boundary of $D^n$.
\end{prop}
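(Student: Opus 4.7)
The plan is to prove (a) and (b) simultaneously by induction on $n = \dmn{U}$, with the strengthened statement that the homeomorphism $\realis{U} \approx D^n$ restricts to a homeomorphism $\realis{\bord U} \approx \bord D^n$. The base case $n = 0$ is immediate since $U = 1$ and $\bord U = \emptyset$. For the inductive step, I would first establish (b): by definition of spherical boundary, the pieces $\bord{}{\alpha}U$ are $(n-1)$-molecules with spherical boundary, and $\bord U$ is the pushout of $\bord{}{+}U$ and $\bord{}{-}U$ along their intersection $\bord(\bord{}{\alpha}U)$. Since $\realis{-}$ is a left adjoint it preserves pushouts, and by the inductive hypothesis $\realis{\bord{}{\alpha}U} \approx D^{n-1}$ with intersection realising to their common topological boundary $S^{n-2}$; gluing two $(n-1)$-disks along their boundary sphere yields $S^{n-1}$.

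Next, for (a), I would split into two cases. When $U$ is an atom, the underlying poset of $U$ is $\bord U$ with a single greatest element $\top$ adjoined, so the nerve $NU$ is the simplicial join $N(\bord U) \star \Delta^0$ and its realisation is the topological cone on $\realis{\bord U} \approx S^{n-1}$, which is $D^n$; the inclusion $\realis{\bord U} \incl \realis{U}$ is the base of the cone, matching the boundary inclusion $S^{n-1} \incl D^n$. When $U$ is not an atom, Lemma \ref{lem:spherical_moves} gives a decomposition $U = V_1 \cp{n-1} \ldots \cp{n-1} V_m$ where each $V_i$ has spherical boundary and contains a unique $n$-atom. By Lemma \ref{lem:disjoint_sbord}(a), each $V_i$ is pure $n$-dimensional, and since its only maximal element is $n$-dimensional, it must itself be an atom; hence $\realis{V_i} \approx D^n$ by the atom case. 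The intersections $V_i \cap V_{i+1} = \bord{}{+}V_i$ are $(n-1)$-molecules with spherical boundary by Remark \ref{rmk:spherical_k}, so by inductive (a) they realise to $(n-1)$-disks embedded in the boundary spheres of the adjacent $\realis{V_i}$. Again using that $\realis{-}$ preserves pushouts, $\realis{U}$ is obtained by iteratively gluing $n$-disks along $(n-1)$-disks in their boundaries, which yields $D^n$ with the expected boundary.

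Finally, for the horn case, $V$ is an atom of dimension $n-1$ and $\horn{V}{U} = \bord U \setminus \{\top_V\}$, where $\top_V$ is the top element of $V$; topologically we are removing the open $(n-1)$-cell indexed by $\top_V$, i.e., the interior of $\realis{V} \approx D^{n-1}$ sitting inside $\realis{\bord U} \approx S^{n-1}$. What remains is $S^{n-1} \setminus \mathrm{int}(D^{n-1}) \approx D^{n-1}$, embedded via $\realis{\imath}$ into $\realis{\bord U} = \bord \realis{U} \subset \realis{U} \approx D^n$.

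The main technical obstacle I anticipate is the iterative topological gluing in the non-atom case of (a): one must verify that successive pasting regions in the boundaries of the $\realis{V_i}$ interact only in codimension at least two, so that the union of two $n$-disks along an $(n-1)$-disk is again an $n$-disk with the remainder of the boundary sphere available for the next gluing. This requires exploiting the combinatorial description of $\bord V_i$ as a union $\bord{}{+}V_i \cup \bord{}{-}V_i$ with intersection in $\bord(\bord{}{\alpha}V_i)$ (Remark \ref{rmk:spherical_k}), so that the faces of $V_i$ glued to $V_{i-1}$ and $V_{i+1}$ lie respectively in $\bord{}{-}V_i$ and $\bord{}{+}V_i$ and meet only in the $(n-2)$-dimensional sphere $\realis{\bord(\bord{}{\alpha}V_i)}$; the rest then reduces to the standard topological lemma that $D^n \cup_{D^{n-1}} D^n \approx D^n$.
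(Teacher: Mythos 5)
Your part (b) matches the paper's argument (glue two $(n-1)$-balls along their common $(n-2)$-sphere, a consequence of Zeeman's results on combinatorial balls), and your treatment of the atom case of (a) via the cone construction is a valid and arguably more elementary alternative to the paper's citation of Björner's Proposition 3.1: since $U$ has a greatest element, $NU$ is a simplicial cone on $N(\bord U)$, and the cone on $S^{n-1}$ is $D^n$. The horn case is also fine. But the non-atom case of (a) has a genuine gap.

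You claim that ``by Lemma \ref{lem:disjoint_sbord}(a), each $V_i$ is pure $n$-dimensional, and since its only maximal element is $n$-dimensional, it must itself be an atom.'' This fails for two reasons. First, Lemma \ref{lem:disjoint_sbord}(a) requires that $V_i$ have \emph{spherical} boundary, which Lemma \ref{lem:spherical_moves} does not guarantee: it guarantees only that the \emph{boundaries} $\bord{}{\alpha}V_i$ have spherical boundary. Second, in general the $V_i$ genuinely fail to be atoms: for instance, decomposing the horizontal composite $a \cp{0} b$ of two $2$\nbd atoms as $V_1 \cp{1} V_2$ produces $V_1 = a \cp{0} \bord{1}{-}b$, which is an atom whiskered by a $1$\nbd cell that is a maximal element of dimension $1$. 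Such a $V_i$ is not pure, not an atom, and not a molecule with spherical boundary, so neither the cone argument nor the inductive hypothesis of the proposition applies to it. Replacing $V_i$ by the genuine atoms $U_i \subseteq V_i$ does not save the naive iterative gluing either, since the intersections $U_i \cap U_{i+1}$ can be far from $(n-1)$\nbd balls — in the example above, $a \cap b$ is a single $0$\nbd cell. This is precisely the ``interact only in codimension $\geq 2$'' issue you flagged as the main obstacle, but it is not merely a bookkeeping point: as stated, the gluing step has no valid hypothesis. The paper resolves it with a detour: it prepends an auxiliary atom $U'_0 \eqdef \bord{}{-}U \celto \bord{}{-}U$ and shows inductively that each $\realis{U'_k}$, $U'_k \eqdef U'_{k-1} \cp{n-1} V_k$, is an $n$\nbd ball — at each step one glues the atom $U_k$ (not $V_k$) along $\bord{}{-}U_k$, which by Lemma \ref{lem:atom_boundary} is an $(n-1)$\nbd ball embedded in the boundary sphere of $\realis{U'_{k-1}}$. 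One then recovers $\realis{U}$ from $\realis{U'_m}$ by the closed-complement argument in an auxiliary $(n+1)$\nbd atom $W$, using Zeeman's Theorem 3. Some such device seems essential, and your proposal needs it.
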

\begin{proof}
If $n = 0$ this is clear. If $n > 0$ we may assume that $\realis{\bord{}{+}U}$ and $\realis{\bord{}{-}U}$ are $(n-1)$\nbd balls and, since $U$ has spherical boundary, their intersection $\realis{\bord{n-2}{}U}$ is an $(n-2)$\nbd sphere. It follows from \cite[Theorem 2]{zeeman1966seminar} that $\realis{\bord U}$ is an $(n-1)$\nbd sphere. 

If $U$ is an atom, by \cite[Proposition 3.1]{bjorner1984posets} this suffices to prove that $\realis{U}$ is an $n$\nbd ball. Otherwise, decompose $U$ as $V_1 \cp{n-1} \ldots \cp{n-1} V_m$, where $V_i$ contains a single atom $U_i$, as by Lemma \ref{lem:spherical_moves}. Let
\begin{align*}
	U'_0 & \eqdef \bord{}{-}U \celto \bord{}{-}U, \\
	U'_k & \eqdef U'_{k-1} \cp{n-1} V_k \quad \text{for $1 \leq k \leq m$}.
\end{align*}
We prove by induction on $k$ that $\realis{U'_k}$ is an $n$\nbd ball. First, $U'_0$ is an $n$\nbd dimensional atom with spherical boundary, so $\realis{U'_0}$ is an $n$\nbd ball. 

If $k > 0$, the molecule $U'_k$ is the pasting of $U'_{k-1}$ and the atom $U_k$ along a submolecule of $\bord{}{+}U'_{k-1}$. By the inductive hypothesis, $\realis{U'_k}$ is the union of the closed $n$\nbd balls $\realis{U'_{k-1}}$ and $\realis{U_k}$ along a closed $(n-1)$\nbd ball in their boundaries. Then $\realis{U'_k}$ is a closed $n$\nbd ball, again by \cite[Theorem 2]{zeeman1966seminar}.

Let $W \eqdef (\bord{}{-}U \celto \bord{}{+}U) \celto U'_m$. We show that $\realis{\bord W}$ is an $n$\nbd sphere as in the first part of the proof. Because $U$ has spherical boundary, the intersection of the atoms $(\bord{}{-}U \celto \bord{}{-}U) \incl \bord{}{+}W$ and $(\bord{}{-}U \celto \bord{}{+}U) \incliso \bord{}{-}W$ in $W$ is their common input boundary $\bord{}{-}U$. Let $V$ be their union. 

As the union of two closed $n$\nbd balls along a closed $(n-1)$\nbd ball in their boundaries, $\realis{V}$ is an $n$\nbd ball. Then $\realis{V} \incl \realis{\bord W}$ is the embedding of a closed $n$\nbd ball into an $n$\nbd sphere. By \cite[Theorem 3]{zeeman1966seminar} its closed complement $\realis{U}$ is a closed $n$\nbd ball. The statement about horns of atoms holds as a consequence of the same result.
\end{proof}

\begin{dfn}[Face poset]
Let $X$ be a CW complex with a set $A$ of attaching maps. The \emph{face poset} of $X$ is the poset $\face{X}$  whose set of elements is $A$, ordered by the relation $(x\colon D^n \to X) \leq (y\colon D^m \to X)$ if and only if $x(D^n) \subseteq y(D^m)$.
\end{dfn}

\begin{dfn}[Regular CW complex]
A CW complex is \emph{regular} if each attaching map is a homeomorphism onto its image.
\end{dfn}

\begin{prop} \label{prop:cw_poset}
Let $P$ be a regular directed complex. The underlying poset of $P$ is the face poset of a regular CW complex.
\end{prop}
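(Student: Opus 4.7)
The plan is to equip $\realis{P}$ with a CW structure indexed by $P$ whose characteristic maps come from Proposition \ref{prop:realisation_sphere}, then check regularity and identify the face poset. For each $x \in P$ with $n \eqdef \dmn{x}$, Proposition \ref{prop:realisation_sphere} applied to the atom $\clos\{x\}$ and to the horn-free boundary $\bord x$ supplies a homeomorphism $\Phi_x\colon D^n \iso \realis{\clos\{x\}}$ that restricts to a homeomorphism $\bord D^n \iso \realis{\bord x}$. I would take these $\Phi_x$ as candidate characteristic maps and set $e_x \eqdef \realis{\clos\{x\}} \setminus \realis{\bord x}$ as the corresponding open cells.

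First I would verify that the $e_x$ partition $\realis{P}$. The geometric realisation $\realis{-}$ sends the canonical expression of $P$ as the union of its $\clos\{x\}$, and of $\clos\{x\}$ as $\{x\} \cup \bord x$, to the corresponding unions of subspaces; combining this with the description of the nerve of a poset in terms of chains $x_0 < \ldots < x_k$, each point of $\realis{P}$ lies in the interior of a unique such simplex and is thereby assigned a unique top element $x_k$, giving $\realis{P} = \bigsqcup_{x \in P} e_x$ and $\realis{\clos\{x\}} = e_x \sqcup \bigsqcup_{y < x} e_y$. The skeletal filtration $\realis{\skel{n}{P}}$ then fits into a pushout square
\begin{equation*}
\begin{tikzpicture}[baseline={([yshift=-.5ex]current bounding box.center)}]
\node (0) at (-.7,1.5) {$\coprod_{\dmn{x}=n} \bord D^n$};
\node (1) at (3,1.5) {$\coprod_{\dmn{x}=n} D^n$};
\node (2) at (-.7,0) {$\realis{\skel{n-1}{P}}$};
\node (3) at (3,0) {$\realis{\skel{n}{P}}$,};
\draw[1cincl] (0) to (1);
\draw[1c] (0) to (2);
\draw[1c] (2) to (3);
\draw[1c] (1) to node[auto,arlabel] {$(\Phi_x)_{\dmn{x}=n}$} (3);
\end{tikzpicture}
\end{equation*}
obtained by applying $\realis{-}$ to the analogous pushout in $\rdcpxin$ (cf.\ the proof of Proposition \ref{prop:free_preserve_gamma}). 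Since $P$ is finite, closure-finiteness and the weak topology hold automatically, so this exhibits $\realis{P}$ as a CW complex with attaching maps $\{\Phi_x\}_{x \in P}$. Regularity is immediate: by construction each $\Phi_x$ is a homeomorphism onto its image.

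Finally I would identify the face poset. Two attaching maps satisfy $\Phi_x(D^{\dmn{x}}) \subseteq \Phi_y(D^{\dmn{y}})$ iff $\realis{\clos\{x\}} \subseteq \realis{\clos\{y\}}$, and since distinct closed subsets of $P$ realise to distinct subspaces (their order complexes being different simplicial subcomplexes of $\realis{P}$) this is equivalent to $\clos\{x\} \subseteq \clos\{y\}$, hence to $x \leq y$. Thus $\face{\realis{P}} \cong P$. The main technical point is the first one, namely matching the nerve-based realisation used in Definition \ref{dfn:exfun} with the order-complex realisation implicit in Proposition \ref{prop:realisation_sphere} and checking that $\realis{-}$ really preserves the relevant unions and intersections of closed subsets; once this bookkeeping is settled, the rest is a direct unpacking of the definitions.
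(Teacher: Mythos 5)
Your proof is correct, but it takes a more explicit route than the paper. The paper's proof is a one-liner: it cites Bj\"orner's characterisation of CW posets (\cite[Proposition 3.1]{bjorner1984posets}), which says that a finite graded poset with a least element adjoined is the face poset of a regular CW complex if and only if, for each $x$, the order complex of the open interval below $x$ --- here $\realis{\bord x}$ --- is a sphere of the right dimension; Proposition~\ref{prop:realisation_sphere} applied to each atom $\clos\{x\}$ supplies exactly that hypothesis, and one is done. You instead construct the regular CW complex directly on $\realis{P}$, attaching a cell $e_x$ per $x$ via the skeletal pushouts and then reading off the face poset. This is in effect an inlining of the proof of the ``if'' direction of Bj\"orner's result, so the argument is sound and self-contained, at the cost of redoing a standard piece of combinatorial topology; the paper's citation-based route is shorter and leans on exactly the same geometric input.

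Two small points worth tightening. First, Proposition~\ref{prop:realisation_sphere} literally produces separate homeomorphisms $\realis{\clos\{x\}} \cong D^n$ and $\realis{\bord x} \cong \bord D^n$; to get a homeomorphism of pairs $\bigl(\realis{\clos\{x\}}, \realis{\bord x}\bigr) \cong (D^n, \bord D^n)$ one should note that $N(\clos\{x\})$ is the simplicial cone on $N(\bord x)$ with cone point $x$, so the sphere $\realis{\bord x}$ is genuinely the topological boundary of the ball. Second, at the end you call the $\Phi_x$ the ``attaching maps''; they are the characteristic maps, the attaching maps being their restrictions to $\bord D^n$. Neither affects the substance.
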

\begin{proof}
Follows from Proposition \ref{prop:realisation_sphere} and \cite[Proposition 3.1]{bjorner1984posets}.
\end{proof}

\begin{prop}
For all spaces $X$, the diagrammatic set $\sing{X}$ is Kan.
\end{prop}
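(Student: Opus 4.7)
The plan is to unfold the horn-filling condition to a topological extension problem for the space $X$, then solve it using Proposition \ref{prop:realisation_sphere} together with a classical retraction in combinatorial topology.

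First, I would unfold definitions. Fix a horn $h\colon \horn{V}{U} \to \sing{X}$ with inclusion $\imath\colon \horn{V}{U} \incl U$, where $\dmn{U} = \dmn{V} + 1 =: n$. By the adjunction $\realis{-} \dashv \sing{}$, the morphism $h$ corresponds to a continuous map $\tilde{h}\colon \realis{\horn{V}{U}} \to X$, and a filler $f\colon U \to \sing{X}$ corresponds to a continuous extension $\tilde{f}\colon \realis{U} \to X$ of $\tilde{h}$ along $\realis{\imath}$. Since $\realis{-}$ is left adjoint, it preserves the pushout defining $\horn{V}{U}$ and commutes with the inclusion of this horn into $U$. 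So it suffices to produce $\tilde{f}$.

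Next, I would apply Proposition \ref{prop:realisation_sphere}. Because $U$ is an atom in $\cls{C}$, it has spherical boundary, so $\realis{U}$ is homeomorphic to $D^n$ and the last clause of Proposition \ref{prop:realisation_sphere} shows that $\realis{\imath}$ realises as an embedding of a closed $(n-1)$-ball into $\bord D^n$.

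The key geometric input is then the classical fact that any closed $(n-1)$-ball $B$ embedded in the boundary sphere $\bord D^n$ is a retract of $D^n$. One concrete argument: the pair $(D^n, B)$ is homeomorphic to the standard pair $(D^n, D^{n-1}_-)$, where $D^{n-1}_-$ is the closed lower hemisphere (for $n=0$ the statement is trivial; for $n \geq 1$ this uses only that $B$ is a closed $(n-1)$-ball collared in $\bord D^n$, c.f.\ \cite[Theorem 3]{zeeman1966seminar}), and the lower hemisphere is a strong deformation retract of $D^n$ via a vertical projection. This gives a continuous retraction $r\colon \realis{U} \surj \realis{\horn{V}{U}}$ with $\realis{\imath};r = \idd{}$.

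Finally, I would set $\tilde{f} \eqdef r; \tilde{h}\colon \realis{U} \to X$. Then $\realis{\imath}; \tilde{f} = \realis{\imath}; r; \tilde{h} = \tilde{h}$, so by adjointness the corresponding cell $f\colon U \to \sing{X}$ satisfies $\imath; f = h$ and thus is a filler for the given horn. There is no real obstacle beyond citing the classical retraction lemma; the content of the argument is that Proposition \ref{prop:realisation_sphere} reduces every abstract horn filling problem in $\sing{X}$ to this single, well-known topological extension.
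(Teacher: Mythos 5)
Your proposal is correct and follows essentially the same route as the paper: the paper's proof is a one-line appeal to adjointness, Proposition \ref{prop:realisation_sphere}, and the fact that maps $D^{n-1} \to X$ extend along embeddings $D^{n-1} \incl D^n$, which is exactly the argument you have unfolded, including the retraction of $D^n$ onto the embedded ball as the mechanism for extension.
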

\begin{proof}
Follows by adjointness from Proposition \ref{prop:realisation_sphere} and the fact that all maps $D^{n-1} \to X$ admit extensions along embeddings $D^{n-1} \incl D^n$.
\end{proof}

\begin{prop} \label{prop:simplicial_realisation_iso}
Let $X$ be a Kan diagrammatic set and $v$ a 0\nbd cell in $X$. There are natural isomorphisms 
\begin{equation*}
	\pin{0}{}{X} \to \pin{0}{}{\realis{\delres{X}}_\Delta}, \quad \quad \pin{n}{}{X,v} \to \pin{n}{}{\realis{\delres{X}}_\Delta,\realis{v}_\Delta} \text{ for all $n$ > 0}.
\end{equation*}
\end{prop}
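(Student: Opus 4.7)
The plan is to obtain this proposition as the composite of two natural isomorphisms. The first is the content of Proposition \ref{prop:alpha_0_isomorphism} for $n = 0$ and of Theorem \ref{thm:alpha_n_isomorphism} for $n > 0$: these supply natural isomorphisms $\pin{0}{}{X} \iso \pin{0}{\Delta}{\delres{X}}$ and $\pin{n}{}{X,v} \iso \pin{n}{\Delta}{\delres{X},v}$ for any Kan diagrammatic set $X$ with chosen 0-cell $v$, realised via the folding maps $a_n$ of \S \ref{dfn:folding}. Naturality in $X$ follows from the commutative square (\ref{eq:naturality_alpha_n}), and naturality in the basepoint is built into the definition.

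The second isomorphism is classical. For any Kan complex $K$ with 0-simplex $v$, the unit of the realisation-singular adjunction at $K$ is a weak equivalence, inducing natural isomorphisms between $\pin{n}{\Delta}{K,v}$ and $\pin{n}{}{\realis{K}_\Delta,\realis{v}_\Delta}$; see for instance \cite[Proposition I.11.1, Theorem I.11.4]{goerss2009simplicial}. Specialising to $K \eqdef \delres{X}$, which is a Kan complex whenever $X$ is a Kan diagrammatic set, and composing with the first isomorphism yields the statement. The deep combinatorial work has already been carried out in Theorem \ref{thm:alpha_n_isomorphism}; the only remaining ingredient is the purely classical simplicial-to-topological comparison, so no new obstacle arises here.
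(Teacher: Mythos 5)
Your proposal is essentially identical to the paper's proof: both compose the natural isomorphisms $\pin{n}{}{X,v} \iso \pin{n}{\Delta}{\delres{X},v}$ from Proposition~\ref{prop:alpha_0_isomorphism} and Theorem~\ref{thm:alpha_n_isomorphism} with the classical simplicial-to-topological comparison isomorphisms from \cite[Proposition~I.11.1]{goerss2009simplicial}, applied to the Kan complex $\delres{X}$. There is no meaningful difference in decomposition or key lemmas.
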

\begin{proof}
It suffices to compose the natural isomorphisms 
\begin{equation*}
	\pin{0}{}{X} \to \pin{0}{\Delta}{\delres{X}}, \quad \quad \pin{n}{}{X,v} \to \pin{n}{\Delta}{\delres{X},v}
\end{equation*}
from the previous section with the natural isomorphisms 
\begin{equation*}
	\pin{0}{\Delta}{K} \to \pin{0}{}{\realis{K}_\Delta}, \quad \quad \pin{n}{\Delta}{K,v} \to \pin{n}{}{\realis{K}_\Delta,\realis{v}_\Delta}
\end{equation*}
defined after \cite[Proposition I.11.1]{goerss2009simplicial}.
\end{proof}

\begin{cor} \label{cor:weak_equivalence_dgmset_space}
If $f\colon X \to Y$ is a weak equivalence of Kan diagrammatic sets, then $\realis{\delres{f}}_\Delta\colon \realis{\delres{X}}_\Delta \to \realis{\delres{Y}}_\Delta$ is a weak equivalence of spaces.
\end{cor}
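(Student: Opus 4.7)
The plan is to deduce this corollary directly from Proposition \ref{prop:simplicial_realisation_iso} by combining its naturality with the standard fact that a map of spaces is a weak equivalence as soon as it induces isomorphisms on $\pi_n$ at one basepoint in each path-component.

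First, by hypothesis $\pin{0}{}{f}$ is a bijection and for every $0$-cell $v\colon 1 \to X$ and every $n > 0$ the map $\pin{n}{}{f}\colon \pin{n}{}{X,v} \to \pin{n}{}{Y,f(v)}$ is an isomorphism of groups. By the naturality of the comparison maps in Proposition \ref{prop:simplicial_realisation_iso}, for every such $v$ we have a commutative square
\begin{equation*}
\begin{tikzpicture}[baseline={([yshift=-.5ex]current bounding box.center)}]
	\node (0) at (0,1.5) {$\pin{n}{}{X,v}$};
	\node (1) at (4.5,1.5) {$\pin{n}{}{Y,f(v)}$};
	\node (2) at (0,0) {$\pin{n}{}{\realis{\delres{X}}_\Delta,\realis{v}_\Delta}$};
	\node (3) at (4.5,0) {$\pin{n}{}{\realis{\delres{Y}}_\Delta,\realis{f(v)}_\Delta}$};
	\draw[1c] (0) to node[auto,arlabel] {$\pin{n}{}{f}$} (1);
	\draw[1c] (2) to node[auto,swap,arlabel] {$\pin{n}{}{\realis{\delres{f}}_\Delta}$} (3);
	\draw[1c] (0) to (2);
	\draw[1c] (1) to (3);
\end{tikzpicture}
\end{equation*}
in which the two vertical arrows are isomorphisms. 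The analogous square for $n = 0$ is a square of sets. It follows that $\realis{\delres{f}}_\Delta$ induces a bijection on $\pi_0$ and an isomorphism on $\pin{n}{}{-,\realis{v}_\Delta}$ for every $0$-cell $v$ of $X$ and every $n > 0$.

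Next, I would observe that the set $S \eqdef \{\realis{v}_\Delta \mid v \in X_0\}$ meets every path-component of $\realis{\delres{X}}_\Delta$: indeed, since the vertical map $\pin{0}{}{X} \to \pin{0}{}{\realis{\delres{X}}_\Delta}$ is a bijection and each class in $\pin{0}{}{\realis{\delres{X}}_\Delta}$ is a path-component, every point of $\realis{\delres{X}}_\Delta$ is path-connected to some $\realis{v}_\Delta$ with $v \in X_0$.

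To conclude, I invoke the classical fact that a continuous map $g\colon A \to B$ between spaces is a weak homotopy equivalence whenever $\pi_0(g)$ is a bijection and $\pi_n(g, a)$ is an isomorphism for every $n > 0$ and every $a$ in some subset $S \subseteq A$ intersecting each path-component: given any $a \in A$, pick $a_0 \in S$ and a path $\gamma$ from $a_0$ to $a$; conjugation by $\gamma$ in $A$ and by $g \circ \gamma$ in $B$ gives a commutative square relating $\pi_n(g, a_0)$ to $\pi_n(g, a)$, so the latter is an isomorphism too. Applying this with $g \eqdef \realis{\delres{f}}_\Delta$ and $S$ as above finishes the proof. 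No step here is a real obstacle; the only thing to check carefully is that the naturality of Proposition \ref{prop:simplicial_realisation_iso} is available simultaneously in the diagrammatic set and in the basepoint, which is immediate from the construction of the comparison map as the composite $\alpha_n$ followed by the natural comparison $\pin{n}{\Delta}{-} \to \pin{n}{}{\realis{-}_\Delta}$ for simplicial sets.
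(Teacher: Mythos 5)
Your proof is correct and takes essentially the same route as the paper, which leaves the argument implicit after Proposition~\ref{prop:simplicial_realisation_iso}: naturality of the comparison isomorphisms plus the standard change-of-basepoint fact. You spell out the only potentially delicate point — that the basepoints $\realis{v}_\Delta$ meet every path component, which follows from the $\pi_0$ bijection — and that is exactly what the paper's elision relies on.
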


\begin{thm} \label{thm:span_weak_equivalences}
Each space $X$ is weakly equivalent to $\realis{\delres{(\sing{X})}}_\Delta$ via a natural span of weak equivalences.
\end{thm}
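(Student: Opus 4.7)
The plan is to reduce the statement to classical simplicial homotopy theory by identifying $\delres{\sing{X}}$ with $\exfun(S_\Delta X)$, after which the span is assembled from the counit of the $\realis{-}_\Delta \dashv S_\Delta$ adjunction and the unit map $e$ of $\subdiv \dashv \exfun$.

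First I would establish the natural isomorphism $\delres{\sing{X}} \cong \exfun(S_\Delta X)$ via a chain of adjunctions. Since $\realis{-} = \realis{-}_\Delta \circ k$ and $\sing{} = p \circ S_\Delta$ by \S\ref{dfn:exfun}, for each simplicial set $K$ and each space $X$ we have
\begin{align*}
\sset(K, \delres{\sing{X}}) &\cong \dgmset(\imath_\deltacat K, \sing{X}) \cong \cghaus(\realis{\imath_\deltacat K}, X) \\
&\cong \cghaus(\realis{k\imath_\deltacat K}_\Delta, X) \cong \cghaus(\realis{\subdiv K}_\Delta, X) \\
&\cong \sset(\subdiv K, S_\Delta X) \cong \sset(K, \exfun(S_\Delta X)),
\end{align*}
using the natural isomorphism $k\imath_\deltacat \cong \subdiv$ established in \S\ref{dfn:exfun}. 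By the Yoneda lemma, $\delres{\sing{X}} \cong \exfun(S_\Delta X)$ naturally in $X$, and consequently $\realis{\delres{\sing{X}}}_\Delta \cong \realis{\exfun(S_\Delta X)}_\Delta$.

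Second, I would assemble the span from three ingredients. Milnor's theorem provides a natural weak equivalence $\varepsilon_X\colon \realis{S_\Delta X}_\Delta \to X$, the counit of $\realis{-}_\Delta \dashv S_\Delta$. Since $S_\Delta X$ is a Kan complex, Proposition~\ref{prop:exfun_weak_equivalence} gives that the unit $e_{S_\Delta X}\colon S_\Delta X \to \exfun(S_\Delta X)$ is a weak equivalence of Kan complexes. Because classical geometric realisation sends weak equivalences of simplicial sets to weak equivalences of spaces (one half of the Quillen equivalence between $\sset$ and $\cghaus$), the map $\realis{e_{S_\Delta X}}_\Delta$ is a weak equivalence. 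Composing it with the isomorphism from the first step yields the natural span
\[
X \xleftarrow{\varepsilon_X} \realis{S_\Delta X}_\Delta \xrightarrow{\realis{e_{S_\Delta X}}_\Delta} \realis{\delres{\sing{X}}}_\Delta
\]
of weak equivalences.

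Naturality in $X$ is automatic since every arrow is a component of a natural transformation between functors of $X$. I do not anticipate any serious obstacle: the identification in the first step turns the statement into a routine assembly of classical results together with Proposition~\ref{prop:exfun_weak_equivalence}. The one point requiring care is bookkeeping in the chain of adjoints, in particular making sure that the identification $k\imath_\deltacat \cong \subdiv$ is invoked on the \emph{left} of the two composite adjunctions, so that it dualises on the right to $\delres{p(-)} \cong \exfun(-)$ and thereby produces the desired isomorphism of simplicial sets on the nose.
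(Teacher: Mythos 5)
Your proof is correct and follows essentially the same route as the paper: identify $\delres{\sing{X}}$ with $\exfun(S_\Delta X)$ (the paper cites \S\ref{dfn:exfun} for this rather than re-deriving the Yoneda chain, but the content is identical), then assemble the span from the counit $\epsilon^\Delta_X$ and the realisation of the unit $e_{S_\Delta X}$, invoking Proposition~\ref{prop:exfun_weak_equivalence} for the latter. Your explicit adjunction-chasing to establish the isomorphism is just an expansion of what the paper packages into the earlier definition.
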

\begin{proof}
Let $\epsilon^\Delta$ be the counit of the adjunction between $\realis{-}_\Delta$ and $S_\Delta$. Because this is a Quillen equivalence, all the components of $\epsilon^\Delta$ are weak equivalences. 

By \S \ref{dfn:exfun}, for all spaces $X$ the Kan complexes $\exfun (S_\Delta X)$ and $(\sing{X})_\Delta$ are naturally isomorphic. By Proposition \ref{prop:exfun_weak_equivalence}, we have a family 
\begin{equation*}
	e_{S_\Delta X}\colon S_\Delta X \to (\sing{X})_\Delta
\end{equation*}
of weak equivalences of Kan complexes, natural in $X$. It follows that
\begin{equation*}
	\begin{tikzpicture}[baseline={([yshift=-.5ex]current bounding box.center)}]
	\node (0) at (-2,-1.5) {$X$};
	\node (1) at (0,0) {$\realis{S_\Delta X}_\Delta$};
	\node (2) at (2,-1.5) {$\realis{\delres{(\sing{X})}}_\Delta$};
	\draw[1c] (1) to node[auto,swap,arlabel] {$\epsilon^\Delta_X$} (0);
	\draw[1c] (1) to node[auto,arlabel] {$\realis{e_{S_\Delta X}}_\Delta$} (2);
	\end{tikzpicture}
\end{equation*}
is a span of weak equivalences in $\cghaus$, natural in $X$. 
\end{proof}

\begin{cor} \label{cor:singular_dgmset_iso}
Let $X$ be a space and $v$ a point in $X$. There are natural isomorphisms 
\begin{equation*}
	\pin{0}{}{X} \to \pin{0}{}{\sing{X}}, \quad \quad \pin{n}{}{X,v} \to \pin{n}{}{\sing{X}, \sing{v}} \text{ for all $n$ > 0}.
\end{equation*}
\end{cor}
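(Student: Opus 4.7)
The plan is to deduce Corollary \ref{cor:singular_dgmset_iso} by composing two results already available in the excerpt: Theorem \ref{thm:span_weak_equivalences}, which compares the space $X$ to the geometric realisation $\realis{\delres{(\sing{X})}}_\Delta$, and Proposition \ref{prop:simplicial_realisation_iso}, which compares the homotopy groups of a Kan diagrammatic set to those of the geometric realisation of its simplicial restriction. Note that $\sing{X}$ is Kan by the proposition preceding Corollary \ref{cor:weak_equivalence_dgmset_space}, so Proposition \ref{prop:simplicial_realisation_iso} applies to it.

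First I would instantiate Proposition \ref{prop:simplicial_realisation_iso} with the Kan diagrammatic set $\sing{X}$ and the $0$\nbd cell $\sing{v}\colon 1 \to \sing{X}$ corresponding to $v \in X$, obtaining natural isomorphisms
\begin{equation*}
    \pin{0}{}{\sing{X}} \iso \pin{0}{}{\realis{\delres{(\sing{X})}}_\Delta}, \qquad \pin{n}{}{\sing{X}, \sing{v}} \iso \pin{n}{}{\realis{\delres{(\sing{X})}}_\Delta, \realis{\delres{\sing{v}}}_\Delta}
\end{equation*}
for $n > 0$. Next I would apply Theorem \ref{thm:span_weak_equivalences} to obtain the natural span of weak equivalences of spaces
\begin{equation*}
    X \xleftarrow{\epsilon^\Delta_X} \realis{S_\Delta X}_\Delta \xrightarrow{\realis{e_{S_\Delta X}}_\Delta} \realis{\delres{(\sing{X})}}_\Delta,
\end{equation*}
each of whose arrows induces natural isomorphisms on the classical homotopy groups $\pin{n}{}{-}$ and $\pin{n}{}{-,-}$ of spaces. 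The base-point $v \in X$ lifts through $\epsilon^\Delta_X$ to a basepoint in $\realis{S_\Delta X}_\Delta$ (via the unit of the adjunction, or equivalently as the $0$\nbd cell corresponding to the point $v$), and this is sent by $\realis{e_{S_\Delta X}}_\Delta$ to precisely $\realis{\delres{\sing{v}}}_\Delta$, so the basepoints are identified consistently.

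Finally I would compose the resulting natural isomorphisms
\begin{equation*}
    \pin{n}{}{X, v} \iso \pin{n}{}{\realis{S_\Delta X}_\Delta, -} \iso \pin{n}{}{\realis{\delres{(\sing{X})}}_\Delta, \realis{\delres{\sing{v}}}_\Delta} \iso \pin{n}{}{\sing{X}, \sing{v}},
\end{equation*}
with the analogous composition for $\pin{0}{}{-}$. Naturality in $X$ follows from naturality of each ingredient. The only step requiring any care is verifying basepoint compatibility under the zig-zag, which is routine given that every functor and natural transformation involved is constructed in a canonical, basepoint-preserving way; there is no serious obstacle, as all the real work is already contained in Theorem \ref{thm:span_weak_equivalences} and Proposition \ref{prop:simplicial_realisation_iso}.
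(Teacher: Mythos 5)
Your proposal is correct and follows exactly the paper's own proof, which simply composes the natural isomorphisms of Proposition~\ref{prop:simplicial_realisation_iso} (applied to the Kan diagrammatic set $\sing{X}$) with those induced on homotopy groups by the span of weak equivalences of Theorem~\ref{thm:span_weak_equivalences}. Your additional care about basepoint compatibility across the zig-zag is appropriate but implicit in the paper's one-line argument.
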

\begin{proof}
Compose the natural isomorphisms of Proposition \ref{prop:simplicial_realisation_iso} with those induced by the natural span of Theorem \ref{thm:span_weak_equivalences}. 
\end{proof}

\begin{cor} \label{cor:weak_equivalence_space_dgmset}
If $f\colon X \to Y$ is a weak equivalence of spaces, then $\sing{f}\colon \sing{X} \to \sing{Y}$ is a weak equivalence of Kan diagrammatic sets.
\end{cor}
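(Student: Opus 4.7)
The plan is to reduce this to the topological statement using the naturality of the isomorphisms established in Corollary \ref{cor:singular_dgmset_iso}. By definition of weak equivalence of Kan diagrammatic sets, I need to show that $\pin{0}{}{\sing{f}}$ is a bijection and that for every $0$-cell $v$ in $\sing{X}$ and every $n > 0$ the induced homomorphism $\pin{n}{}{\sing{f}}\colon \pin{n}{}{\sing{X}, v} \to \pin{n}{}{\sing{Y}, \sing{f}(v)}$ is an isomorphism.

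First I would observe that a $0$-cell $v\colon 1 \to \sing{X}$ corresponds by adjunction to a continuous map $\realis{1} \to X$, that is, a point of $X$, so there is a canonical identification between basepoints of $\sing{X}$ and of $X$. Under this identification, the naturality of the isomorphisms in Corollary \ref{cor:singular_dgmset_iso} gives, for each basepoint $v \in X$ and each $n > 0$, a commutative square
\begin{equation*}
\begin{tikzpicture}[baseline={([yshift=-.5ex]current bounding box.center)}]
	\node (0) at (0,1.5) {$\pi_n(X,v)$};
	\node (1) at (4.5,1.5) {$\pi_n(Y,f(v))$};
	\node (2) at (0,0) {$\pin{n}{}{\sing{X}, \sing{v}}$};
	\node (3) at (4.5,0) {$\pin{n}{}{\sing{Y}, \sing{f(v)}}$};
	\draw[1c] (0) to node[auto,arlabel] {$\pi_n(f)$} (1);
	\draw[1c] (2) to node[auto,swap,arlabel] {$\pin{n}{}{\sing{f}}$} (3);
	\draw[1c] (0) to (2);
	\draw[1c] (1) to (3);
\end{tikzpicture}
\end{equation*}
in which the vertical maps are isomorphisms of groups. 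An analogous square holds for $\pi_0$ in the category of pointed sets.

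Since $f\colon X \to Y$ is a weak equivalence of spaces, $\pi_n(f)$ is an isomorphism for all $n \geq 0$ and all basepoints $v$. Two-out-of-three for isomorphisms applied to the square above immediately gives that $\pin{n}{}{\sing{f}}$ is an isomorphism for all $n > 0$ and all basepoints of the form $\sing{v}$. To finish, I need to know that every basepoint of $\sing{X}$ arises (up to the equivalence relation used in $\pi_0$) in this way, but this is precisely the adjunction identification mentioned above, and the same identification handles $\pin{0}{}{\sing{f}}$. There is no real obstacle here: the statement is a formal consequence of Corollary \ref{cor:singular_dgmset_iso} and the definition of weak equivalence, so the entire proof reduces to invoking naturality of the isomorphisms already constructed.
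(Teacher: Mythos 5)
Your proof is correct and takes exactly the approach the paper intends: the corollary is stated without explicit proof precisely because it is the formal consequence of Corollary \ref{cor:singular_dgmset_iso} and naturality that you spell out. (One small imprecision: the correspondence between $0$-cells of $\sing{X}$ and points of $X$ is an outright bijection via the adjunction, so the parenthetical ``up to the equivalence relation used in $\pi_0$'' is unnecessary.)
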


\begin{comm}
It follows from Corollary \ref{cor:weak_equivalence_dgmset_space} and Corollary \ref{cor:weak_equivalence_space_dgmset} that $\realis{\delres{-}}_\Delta$ and $\sing{}$ descend to functors between the homotopy categories of spaces and Kan diagrammatic sets, that is, their localisations at the weak equivalences. By Theorem \ref{thm:span_weak_equivalences}, $\sing{}$ is a homotopical right inverse to $\realis{\delres{-}}_\Delta$. This proves Simpson's homotopy hypothesis for Kan diagrammatic sets as $\infty$\nbd groupoids.

We believe that $\sing{}$ is in fact a two-sided homotopical inverse both to $\realis{\delres{-}}_\Delta$ and to $\realis{-}$, and that the adjunction of $\realis{-}$ and $\sing{}$ is a Quillen equivalence between an appropriate model structure on $\dgmset$ and the classical model structure on $\cghaus$. 

Proving this stronger result with the same combinatorial approach that we have adopted so far seems to require a theory of oriented simplicial subdivisions of atoms. We leave its development to future work.
\end{comm}

\begin{dfn}[Coskeleta] 
Let $n \in \mathbb{N}$, and let $\atom_n$ be the full subcategory of $\atom$ on the atoms $U$ with $\dmn{U} \leq n$. The restriction functor $-_{\leq n}\colon \dgmset \to \psh{}{\atom_n}$ has a right adjoint. 

Let $\coskel{n}{}$ be the monad induced by this adjunction. The \emph{$n$\nbd coskeleton} of a diagrammatic set $X$ is the unit $\eta_X\colon X \to \coskel{n}{X}$.

The coskeleta of $X$ form a sequence of morphisms under $X$
\begin{equation} \label{eq:coskel-tower}
	\ldots \to \coskel{n}{X} \to \coskel{n-1}{X} \to \ldots \to \coskel{0}{X}.
\end{equation}
\end{dfn}

\begin{thm}
Let $X$ be a Kan diagrammatic set and $v$ a 0\nbd cell in $X$. For all $n \in \mathbb{N}$,
\begin{enumerate}[label=(\alph*)]
	\item $\coskel{n+1}{X}$ is a Kan diagrammatic set,
	\item $\pin{m}{}{\coskel{n+1}{X},v}$ is trivial for all $m > n$, and
	\item $\pin{k}{}{\eta_X}$ is an isomorphism for all $k \leq n$.
\end{enumerate}
\end{thm}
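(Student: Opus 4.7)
The key to the proof is the adjoint characterisation of $\coskel{n+1}{X}$: cells of shape $U$ with $\dmn{U} \leq n+1$ coincide with cells of shape $U$ in $X$, while cells of shape $U$ with $\dmn{U} > n+1$ correspond bijectively to morphisms $U_{\leq n+1} \to X_{\leq n+1}$ in $\psh{}{\atom_{n+1}}$. My plan is to unravel this adjunction on a case-by-case basis indexed by the dimension of the atom involved.

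For part (a), I would distinguish three cases for the dimension of the target atom $W \in \cls{C}$ of a horn $\horn{V}{W} \to \coskel{n+1}{X}$. If $\dmn{W} \leq n+1$, both the horn and any potential filler lie entirely in the common $(n+1)$-truncation of $X$ and $\coskel{n+1}{X}$, so Kan-ness of $X$ applies directly. If $\dmn{W} = n+2$, then $\horn{V}{W}$ is of dimension $n+1$, so the horn is a genuine morphism $\horn{V}{W} \to X$; Kan-ness of $X$ applied to it as a horn of the atom $W$ yields a cell $\tilde{w}\colon W \to X$, whose image $\eta_X(\tilde{w})$ is the required filler in $\coskel{n+1}{X}$. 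If $\dmn{W} > n+2$, the omitted maximal element of $V$ has dimension $> n+1$, hence $\horn{V}{W}_{\leq n+1} = W_{\leq n+1}$ as subpresheaves, and the horn extends trivially via the adjunction. The main subtlety is the case $\dmn{W} = n+2$: one must resist the temptation to fill $\bord V \to X$ directly (which is generally impossible, since arbitrary spheres in a Kan diagrammatic set need not bound) and instead invoke Kan-ness on the horn of the ambient atom $W$.

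Part (b) splits on $m$. For $m > n+1$, any $m$-cell $x$ of shape $O^m$ in $\coskel{n+1}{X}$ with $\bord x =\ !;v$ is forced to equal the unit $!;v$, because its restriction to the $(n+1)$-skeleton of $O^m$ is contained in $\bord O^m$ and hence is dictated by $\bord x$ to be constantly $v$; this uniqueness makes $\pin{m}{}{\coskel{n+1}{X},v}$ trivial. For $m = n+1$, given $x$ with $\bord x =\ !;v$, the pair $(x,\ !;v)$ defines a morphism $\bord O^{n+2} \to X$ by matching boundaries, which under the adjunction lifts to an $(n+2)$-cell $e\colon x \celto\ !;v$ in $\coskel{n+1}{X}$; Kan-ness from part (a) makes $e$ an equivalence, witnessing $[x] = [!;v]$.

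For part (c), when $k \leq n$ the $k$-cells of shape $O^k$ in $X$ and in $\coskel{n+1}{X}$ coincide, as do the $(k+1)$-cells of shape $O^{k+1}$ exhibiting equivalences between them (since $k+1 \leq n+1$); hence $\pin{k}{}{\eta_X}$ is a bijection of underlying sets. Weak composites and weak inverses chosen in $X$ remain weak composites and weak inverses in $\coskel{n+1}{X}$ by Proposition \ref{prop:basic_weak_composite} and Proposition \ref{prop:morphism_preserve_equivalence}, so $\pin{k}{}{\eta_X}$ is a group homomorphism, completing the argument.
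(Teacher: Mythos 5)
Your proof is correct and is essentially a transcription, into the diagrammatic setting, of the classical argument for Kan complexes that the paper cites (May, Proposition 8.8) without unwinding; the approach is the same.

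A few remarks on places where you elide a step that deserves a word. In part~(c), you assert that the $(k+1)$-cells of shape $O^{k+1}$ ``exhibiting equivalences'' coincide, and deduce that $\pin{k}{}{\eta_X}$ is a bijection. The coincidence of the raw cell sets is immediate from the $(n+1)$-truncation argument, but whether a given cell $h\colon x \celto y$ actually witnesses $x \simeq y$ is a property involving all higher dimensions, which a priori differ between $X$ and $\coskel{n+1}{X}$. What saves you is that \emph{both} sides are Kan (the left by hypothesis, the right by your part~(a)), so that every composable diagram is an equivalence and the relation $x \simeq y$ reduces in both to mere existence of a cell of type $x \celto y$; you should make this dependence on part~(a) explicit. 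The same fact is doing the work in your part~(b), $m = n+1$, when you declare $e$ an equivalence. Finally, in part~(a), the case $\dmn{W} = n+2$ rests on the observation that $\eta_X$ is an isomorphism on cells of atoms of dimension $\leq n+1$, so that the horn $\horn{V}{W} \to \coskel{n+1}{X}$ factors uniquely through $\eta_X$; this is true (the right adjoint to restriction is a right Kan extension and the unit is iso in low dimensions) but is worth stating, since it is also the ingredient that makes the case $\dmn{W} \leq n+1$ work. None of these are gaps, only points to spell out.
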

\begin{proof}
Same as the proof for Kan complexes \cite[Proposition 8.8]{may1992simplicial}.
\end{proof}

\begin{comm}
It follows that (\ref{eq:coskel-tower}) is a Postnikov tower for a Kan diagrammatic set $X$. Combined with Proposition \ref{prop:simplicial_realisation_iso} and Theorem \ref{thm:span_weak_equivalences}, this implies that, for each space $X$, the image through $\realis{\delres{-}}_\Delta$ of the Postnikov tower of $\sing{X}$ is a Postnikov tower for $X$ up to natural weak equivalence.

This completes the proof of Simpson's homotopy hypothesis for Kan diagrammatic sets. 
\end{comm}

\begin{dfn} \label{cons:kv_mistake}
We conclude by exhibiting an explicit mistake in the proof of \cite[Lemma 3.4]{kapranov1991infty}.

The functor $\Pi$ in Kapranov--Voevodsky corresponds to the left adjoint to the functor $\omegacat \to \dgmset$ of Remark \ref{rmk:dgmset_nerve}. This can be computed by the coend formula for the left Kan extension of $\atom \incl \omegacat$ along $\atom \incl \dgmset$, which given a diagrammatic set $X$ presents $\Pi X$ as the colimit of a diagram of $\omega$\nbd categories $\mol{}{U}^*$ indexed by cells $x\colon U \to X$. 

If $U$ is a molecule and $\mol{}{-}^*$ preserves the colimit diagram of inclusions of the atoms of $U$, for example if we are working with an algebraically free class of molecules, then a diagram $x\colon U \to X$ determines a diagram $x'\colon \mol{}{U}^* \to \Pi X$. Our counterexample only uses totally loop-free molecules, so this is guaranteed by Example \ref{exm:lf_algebraically_free}. The diagram $x$ is what Kapranov and Voevodsky call a \emph{materialisation} of the cell $x'(U)$ in $\Pi X$. 

The crucial step in the proof may be rephrased as follows. Suppose that there is a cell $x$ in $\Pi X$ with two different materialisations $x_1$ and $x_2$ in $X$ such that $\bord x_1$ is equal to $\bord x_2$. Then Kapranov and Voevodsky claim that there exists a materialisation $e\colon x_1 \celto x_2$ in $X$ of the unit cell $\eps{}{x}$ in $\Pi X$. Because $X$ is an arbitrary diagrammatic set, this materialisation must be a diagram of degenerate cells, but we will not use this hypothesis. 

We produce a counterexample to this claim. Unsurprisingly, it amounts in essence to a pair of braidings collapsed by the strict Eckmann--Hilton argument, which is also the source of Simpson's refutation. Let $X$ be a diagrammatic set with a 0\nbd cell $v$ and two 2\nbd cells $a, b$ of type $!;v \celto\ !;v$. We construct two ``braiding'' 3\nbd diagrams $x_1, x_2$ with spherical boundary and type $a \cp{1} b \celto b \cp{1} a$ whose 3\nbd cells are all degenerate. 

The diagram $x_1$ has the form
\begingroup
\allowdisplaybreaks
\begin{align*}
\begin{tikzpicture}[baseline={([yshift=-.5ex]current bounding box.center)},scale=.8]
\begin{scope}
	\path[fill, color=gray!20] (-1.5,0) to [out=75,in=105,looseness=1.6] (1.5,0) -- cycle;
	\node[0c] (0) at (-1.5,0) {};
	\node[0c] (1) at (1.5,0) {};
	\draw[1c, out=75, in=105, looseness=1.5] (0) to (1);
	\draw[1c, out=-75, in=-105, looseness=1.5] (0) to (1);
	\draw[1c] (0) to (1);
	\draw[2c] (0,-1.2) to node[auto,swap,arlabel] {$\,a$} (0,-.1);
	\draw[2c] (0,.1) to node[auto,swap,arlabel] {$\,b$} (0,1.2);
	\draw[3c1] (1.65,0) to node[above=6pt,arlabel] {$p_2;b$} (3.1,0);
	\draw[3c2] (1.65,0) to (3.1,0);
	\draw[3c3] (1.65,0) to (3.1,0);
\end{scope}
\begin{scope}[shift={(4.5,0)}]
	\path[fill, color=gray!20] (-1.5,0) to [out=-75,in=-105,looseness=1.6] (1.5,0) -- cycle;
	\node[0c] (0) at (-1.5,0) {};
	\node[0c] (1) at (1.5,0) {};
	\node[0c] (m1) at (-.375,.75) {};
	\draw[1c, out=75, in=105, looseness=1.5] (0) to (1);
	\draw[1c, out=-75, in=-105, looseness=1.5] (0) to (1);
	\draw[1c] (0) to (1);
	\draw[1c, out=45,in=-165] (0) to (m1);
	\draw[1c, out=30,in=120] (m1) to (1);
	\draw[1c, out=-45,in=170] (m1) to (1);
	\draw[2c] (0,-1.2) to node[auto,swap,arlabel] {$\,a$} (0,-.1);
	\draw[2c] (.5,.1) to node[auto,swap,arlabel] {$\,b$} (.5,.9);
	\draw[2c] (-.4,-.1) to (-.4,.8);
	\draw[2c] (-.6,.6) to (-.6,1.4);
	\draw[3c1] (1.65,0) to node[above=6pt,arlabel] {$p_1;a$} (3.1,0);
	\draw[3c2] (1.65,0) to (3.1,0);
	\draw[3c3] (1.65,0) to (3.1,0);
\end{scope}
\begin{scope}[shift={(9,0)}]
	\path[fill, color=gray!20] (-1.5,0) to [out=-10,in=135] (.375,-.75) to [out=15,in=-135] (1.5,0) to [out=170,in=-45] (-.375,.75) to [out=-165,in=45] (-1.5,0);
	\node[0c] (0) at (-1.5,0) {};
	\node[0c] (1) at (1.5,0) {};
	\node[0c] (m1) at (-.375,.75) {};
	\node[0c] (m2) at (.375,-.75) {};
	\draw[1c, out=75, in=105, looseness=1.5] (0) to (1);
	\draw[1c, out=-75, in=-105, looseness=1.5] (0) to (1);
	\draw[1c] (0) to (1);
	\draw[1c, out=45,in=-165] (0) to (m1);
	\draw[1c, out=30,in=120] (m1) to (1);
	\draw[1c, out=-45,in=170] (m1) to (1);
	\draw[1c, out=15,in=-135] (m2) to (1);
	\draw[1c, out=-60,in=-150] (0) to (m2);
	\draw[1c, out=-10,in=135] (0) to (m2);
	\draw[2c] (-.6,-.9) to node[auto,swap,pos=.4,arlabel] {$\,a$} (-.6,-.1);
	\draw[2c] (.5,.1) to node[auto,swap,arlabel] {$\,b$} (.5,.9);
	\draw[2c] (-.4,-.1) to (-.4,.8);
	\draw[2c] (-.6,.6) to (-.6,1.4);
	\draw[2c] (.4,-.8) to (.4,.1);
	\draw[2c] (.6,-1.4) to (.6,-.6);
	\draw[3c1] (1.65,0) to node[above=6pt,arlabel] {$!;v$} (3.1,0);
	\draw[3c2] (1.65,0) to (3.1,0);
	\draw[3c3] (1.65,0) to (3.1,0);
\end{scope}
\end{tikzpicture}
 \\
\begin{tikzpicture}[baseline={([yshift=-.5ex]current bounding box.center)},scale=.8]
\begin{scope}
	\path[fill, color=gray!20] (-.375,.75) to (.375,-.75) to [out=15,in=-135] (1.5,0) to [out=120,in=30, looseness=1.1] (-.375,.75);
	\node[0c] (0) at (-1.5,0) {};
	\node[0c] (1) at (1.5,0) {};
	\node[0c] (m1) at (-.375,.75) {};
	\node[0c] (m2) at (.375,-.75) {};
	\draw[1c, out=75, in=105, looseness=1.5] (0) to (1);
	\draw[1c, out=-75, in=-105, looseness=1.5] (0) to (1);
	\draw[1c] (m1) to (m2);
	\draw[1c, out=45,in=-165] (0) to (m1);
	\draw[1c, out=30,in=120] (m1) to (1);
	\draw[1c, out=-45,in=170] (m1) to (1);
	\draw[1c, out=15,in=-135] (m2) to (1);
	\draw[1c, out=-60,in=-150] (0) to (m2);
	\draw[1c, out=-10,in=135] (0) to (m2);
	\draw[2c] (-.6,-.9) to node[auto,swap,pos=.4,arlabel] {$\,a$} (-.6,-.1);
	\draw[2c] (.5,.1) to node[auto,swap,arlabel] {$\,b$} (.5,.9);
	\draw[2c] (-.5,-.3) to (-.5,.6);
	\draw[2c] (-.6,.6) to (-.6,1.4);
	\draw[2c] (.5,-.6) to (.5,.3);
	\draw[2c] (.6,-1.4) to (.6,-.6);
	\draw[3c1] (1.65,0) to node[above=6pt,arlabel] {$q_1;b$} (3.1,0);
	\draw[3c2] (1.65,0) to (3.1,0);
	\draw[3c3] (1.65,0) to (3.1,0);
\end{scope}
\begin{scope}[shift={(4.5,0)}]
	\path[fill, color=gray!20] (-1.5,0) to [out=-60,in=-150, looseness=1.1] (.375,-.75) to (-.375,.75) to [out=-165, in=45] (-1.5,0);
	\node[0c] (0) at (-1.5,0) {};
	\node[0c] (1) at (1.5,0) {};
	\node[0c] (m1) at (-.375,.75) {};
	\node[0c] (m2) at (.375,-.75) {};
	\draw[1c, out=75, in=105, looseness=1.5] (0) to (1);
	\draw[1c, out=-75, in=-105, looseness=1.5] (0) to (1);
	\draw[1c] (m1) to (m2);
	\draw[1c, out=45,in=-165] (0) to (m1);
	\draw[1c, out=30,in=120] (m1) to (1);
	\draw[1c, out=15,in=-135] (m2) to (1);
	\draw[1c, out=90,in=150] (m2) to (1);
	\draw[1c, out=-60,in=-150] (0) to (m2);
	\draw[1c, out=-10,in=135] (0) to (m2);
	\draw[2c] (-.6,-.9) to node[auto,swap,pos=.4,arlabel] {$\,a$} (-.6,-.1);
	\draw[2c] (.9,-.6) to node[auto,pos=.3,arlabel] {$b$} (.9,.2);
	\draw[2c] (-.5,-.3) to (-.5,.6);
	\draw[2c] (.4,-.2) to (.4,.8);
	\draw[2c] (-.6,.6) to (-.6,1.4);
	\draw[2c] (.6,-1.4) to (.6,-.6);
	\draw[3c1] (1.65,0) to node[above=6pt,arlabel] {$q_2;a$} (3.1,0);
	\draw[3c2] (1.65,0) to (3.1,0);
	\draw[3c3] (1.65,0) to (3.1,0);
\end{scope}
\begin{scope}[shift={(9,0)}]
	\path[fill, color=gray!20] (-1.5,0) to [out=-60,in=-150, looseness=1.1] (.375,-.75) to [out=90,in=150, looseness=1.1] (1.5,0) to [out=120,in=30, looseness=1.1] (-.375,.75) to [out=-90, in=-30, looseness=1.1] (-1.5,0);
	\node[0c] (0) at (-1.5,0) {};
	\node[0c] (1) at (1.5,0) {};
	\node[0c] (m1) at (-.375,.75) {};
	\node[0c] (m2) at (.375,-.75) {};
	\draw[1c, out=75, in=105, looseness=1.5] (0) to (1);
	\draw[1c, out=-75, in=-105, looseness=1.5] (0) to (1);
	\draw[1c] (m1) to (m2);
	\draw[1c, out=45,in=-165] (0) to (m1);
	\draw[1c, out=-30,in=-90] (0) to (m1);
	\draw[1c, out=30,in=120] (m1) to (1);
	\draw[1c, out=15,in=-135] (m2) to (1);
	\draw[1c, out=90,in=150] (m2) to (1);
	\draw[1c, out=-60,in=-150] (0) to (m2);
	\draw[2c] (-.9,-.2) to node[auto,swap,pos=.7,arlabel] {$a$} (-.9,.6);
	\draw[2c] (.9,-.6) to node[auto,pos=.3,arlabel] {$b$} (.9,.2);
	\draw[2c] (-.4,-.8) to (-.4,.2);
	\draw[2c] (.4,-.2) to (.4,.8);
	\draw[2c] (-.6,.6) to (-.6,1.4);
	\draw[2c] (.6,-1.4) to (.6,-.6);
	\draw[3c1] (1.65,0) to node[above=6pt,arlabel] {$!;v$} (3.1,0);
	\draw[3c2] (1.65,0) to (3.1,0);
	\draw[3c3] (1.65,0) to (3.1,0);
\end{scope}
\end{tikzpicture}
 \\[-10pt]
\begin{tikzpicture}[baseline={([yshift=-.5ex]current bounding box.center)},scale=.8]
\begin{scope}
	\path[fill, color=gray!20] (-1.5,0) to [out=75,in=105,looseness=1.6] (1.5,0) to [out=135,in=-45, looseness=1.1] (-1.5,0);
	\node[0c] (0) at (-1.5,0) {};
	\node[0c] (1) at (1.5,0) {};
	\node[0c] (m1) at (-.375,.75) {};
	\node[0c] (m2) at (.375,-.75) {};
	\draw[1c, out=75, in=105, looseness=1.5] (0) to (1);
	\draw[1c, out=-75, in=-105, looseness=1.5] (0) to (1);
	\draw[1c, out=-45, in=135] (0) to (1);
	\draw[1c, out=45,in=-165] (0) to (m1);
	\draw[1c, out=-30,in=-90] (0) to (m1);
	\draw[1c, out=30,in=120] (m1) to (1);
	\draw[1c, out=15,in=-135] (m2) to (1);
	\draw[1c, out=90,in=150] (m2) to (1);
	\draw[1c, out=-60,in=-150] (0) to (m2);
	\draw[2c] (-.9,-.2) to node[auto,swap,pos=.7,arlabel] {$a$} (-.9,.6);
	\draw[2c] (.9,-.6) to node[auto,pos=.3,arlabel] {$b$} (.9,.2);
	\draw[2c] (-.2,-1) to (-.2,0);
	\draw[2c] (.2,-0) to (.2,1);
	\draw[2c] (-.6,.6) to (-.6,1.4);
	\draw[2c] (.6,-1.4) to (.6,-.6);
	\draw[3c1] (1.65,0) to node[above=6pt,arlabel] {$p_1';a$} (3.15,0);
	\draw[3c2] (1.65,0) to (3.1,0);
	\draw[3c3] (1.65,0) to (3.1,0);
\end{scope}
\begin{scope}[shift={(4.5,0)}]
	\path[fill, color=gray!20] (-1.5,0) to [out=-75,in=-105,looseness=1.6] (1.5,0) to [out=135,in=-45, looseness=1.1] (-1.5,0);
	\node[0c] (0) at (-1.5,0) {};
	\node[0c] (1) at (1.5,0) {};
	\node[0c] (m2) at (.375,-.75) {};
	\draw[1c, out=75, in=105, looseness=1.5] (0) to (1);
	\draw[1c, out=-75, in=-105, looseness=1.5] (0) to (1);
	\draw[1c, out=-45, in=135] (0) to (1);
	\draw[1c, out=15,in=-135] (m2) to (1);
	\draw[1c, out=90,in=150] (m2) to (1);
	\draw[1c, out=-60,in=-150] (0) to (m2);
	\draw[2c] (0,.1) to node[auto,swap,arlabel] {$\,a$} (0,1.2);
	\draw[2c] (.9,-.6) to node[auto,pos=.3,arlabel] {$b$} (.9,.2);
	\draw[2c] (-.2,-1) to (-.2,0);
	\draw[2c] (.6,-1.4) to (.6,-.6);
	\draw[3c1] (1.65,0) to node[above=6pt,arlabel] {$p_2';b$} (3.1,0);
	\draw[3c2] (1.65,0) to (3.1,0);
	\draw[3c3] (1.65,0) to (3.1,0);
\end{scope}
\begin{scope}[shift={(9,0)}]
	\node[0c] (0) at (-1.5,0) {};
	\node[0c] (1) at (1.5,0) {};
	\draw[1c, out=75, in=105, looseness=1.5] (0) to (1);
	\draw[1c, out=-75, in=-105, looseness=1.5] (0) to (1);
	\draw[1c, out=-45, in=135] (0) to (1);
	\draw[2c] (0,.1) to node[auto,swap,arlabel] {$\,a$} (0,1.2);
	\draw[2c] (0,-1.2) to node[auto,swap,arlabel] {$\,b$} (0,-.1);
	\node at (1.75,-1) {,};
\end{scope}
\end{tikzpicture}

\end{align*}
\endgroup 
where the shaded area in each diagram indicates the input boundary of the following 3\nbd cell. The unlabelled cells are all equal to $!;v$. The maps $p_1, p_2,$ $q_1, q_2,$ $p'_1, p'_2$ are all surjective maps over $O^2$ with $p'_i = \rev{p_i}$ for each $i \in \{1,2\}$. The reader can reconstruct explicit expressions for these maps.

The diagram $x_2$ is constructed dually by exchanging the r\^oles of $a$ and $b$ and reversing all the 3\nbd cells in $x_1$. 

In $\Pi X$, let $a$ and $b$ also denote the cells whose materialisations are $a$ and $b$. Then we have $a \cp{1} b = b \cp{1} a$ by the strict Eckmann--Hilton argument. Because degenerate cells in $X$ are materialisations of units in $\Pi X$ and composites of units are units, both $x_1$ and $x_2$ are materialisations of $\eps{}(a \cp{1} b)$. We claim that there need not be any diagram of type $x_1 \celto x_2$ in $X$. 

Construct the topological 2-sphere $S^2$ as the quotient of $\realis{O^2}$ by the subspace $\realis{\bord O^2}$. By adjointness, the quotient map $f\colon \realis{O^2} \to S^2$ corresponds to a 2-cell $f\colon !;v \celto\ !;v$ of shape $O^2$ in $\sing{S^2}$, where $v$ corresponds to the image of $\realis{\bord O^2}$ through $f$. 

Let $a = b \eqdef f$. The diagrams $x_1$ and $x_2$ correspond to 3\nbd cells in $S^2$. If there were a diagram of type $x_1 \celto x_2$ in $\sing{S^2}$ it would correspond to a homotopy between these cells relative to their boundary. Such a homotopy would cause the Whitehead product $[f,f]$ to vanish in $S^2$, which is not the case. 

This proves that there is no materialisation $e\colon x_1 \celto x_2$ in $\sing{S^2}$ of the unit $\eps{4}(f \cp{1} f)$ in $\Pi(\sing{S^2})$.
\end{dfn}

\subsection*{Acknowledgements}

This work was supported by a JSPS Postdoctoral Research Fellowship, by JSPS KAKENHI Grant Number 17F17810, and by an FSMP Postdoctoral Research Fellowship.

I am grateful to Pierre-Louis Curien, Yves Guiraud, Masahito Hasegawa, Simon Henry, Alex Kavvos, Chaitanya Leena-Subramaniam, Sophie Turner, and Jamie Vicary for various forms of help.

\bibliographystyle{alpha}
\small \bibliography{main}

\end{document}